\newcolumntype{L}[1]{>{\raggedright\arraybackslash}p{#1}}
\newtheorem{thm}{Theorem}[section]
\newtheorem*{thm*}{Theorem}
\newtheorem{lem}[thm]{Lemma}
\newtheorem{prop}[thm]{Proposition}
\newtheorem{cor}[thm]{Corollary}
\theoremstyle{definition}
\newtheorem{definition}[thm]{Definition}
\newtheorem{assumption}[thm]{Assumption}
\newtheorem{remark}[thm]{Remark}
\newtheorem{example}[thm]{Example}
\numberwithin{equation}{section}
\newcommand{\kk}{\Bbbk}
\newcommand\xto[1]{\xrightarrow{#1}}
\newcommand{\mf}{\mathfrak}
\newcommand{\mb}{\mathbb}
\newcommand{\mc}{\mathcal}
\newcommand{\mbf}{\mathbf}
\newcommand{\margin}[1]{\scalebox{0.7}{#1}}
\DeclareMathOperator\rank{rank}
\DeclareMathOperator\Gr{Gr}
\DeclareMathOperator\SL{SL}
\DeclareMathOperator\GL{GL}
\DeclareMathOperator\SO{SO}
\DeclareMathOperator\OG{OG}
\DeclareMathOperator\Sym{Sym}
\DeclareMathOperator\ad{ad}
\DeclareMathOperator\Spec{Spec}
\DeclareMathOperator\grade{grade}
\DeclareMathOperator\pdim{pdim}
\DeclareMathOperator\Span{span}
\DeclareMathOperator\Hom{Hom}
\DeclareMathOperator\HSI{HSI}
\DeclareMathOperator\NL{NL}
\newcommand{\Id}{\mathrm{Id}}
\newcommand{\Plucker}{\mathrm{Pl\ddot{u}cker}}
\newcommand{\cotimes}{\,\widehat{\otimes}\,}
\newcommand{\colorZ}[1]{{{#1}}}
\newcommand{\colorX}[1]{{{#1}}}
\newcommand{\ssc}{\mathrm{ssc}}
\newcommand{\Rgen}{\widehat{R}_\mathrm{gen}}
\newcommand{\Fgen}{\mathbb{F}^\mathrm{gen}}
\begin{document}
	
	\title{The linkage class of a grade three complete intersection}
	\author{Lorenzo Guerrieri, Xianglong Ni, Jerzy Weyman}

	\maketitle

	\begin{abstract}
		Working over a field of characteristic zero, we give structure theorems for all grade three licci ideals and their minimal free resolutions. In particular, we completely classify such ideals up to specialization. The descriptions of their resolutions extend earlier results by Buchsbaum-Eisenbud, Brown, and S\'{a}nchez. Our primary tool is the theory of higher structure maps originating from the study of generic free resolutions of length three.
	\end{abstract} 
	
	\setcounter{tocdepth}{2}
	\tableofcontents
	
	\section{Introduction}\label{sec:intro}
	
	An ideal $I$ in a commutative local Noetherian ring $R$ is called \emph{perfect} if $\pdim R/I = \grade I$.  The structure theory of grade 2 perfect ideals is well-understood thanks to the Hilbert-Burch theorem, which implies that any such ideal is generated by the maximal minors of a $n \times (n-1)$ matrix.
	
	If $S$ is another commutative local Noetherian ring and $J \subset S$ is a perfect ideal, we say that $J$ is a specialization of $I$ if $\grade J \geq \grade I$ and there is a local homomorphism $\varphi\colon R \to S$ so that $\varphi(I)S = J$. If this is the case, then in fact $\grade J = \grade I$ and $J$ is also perfect. For each $n \geq 2$, let $M^{n \times (n-1)}$ denote a generic $n \times (n-1)$ matrix, let $S_n$ be the polynomial ring in its entries, localized at the ideal of variables, and let $I_n \subset S_n$ be the ideal of maximal minors of $M^{n\times (n-1)}$. Then Hilbert-Burch implies that every grade 2 perfect ideal is the specialization of exactly one of the ideals $I_n$. In particular, if we write $J \approx J'$ when there exists a perfect ideal specializing to both $J$ and $J'$, then $\approx$ defines an equivalence relation on grade 2 perfect ideals.

	It would be desirable to have an analogous classification for perfect ideals of grade $c \geq 3$, but an obstacle arises in this setting. Let $M^{2\times 4}$ be a generic $2 \times 4$ matrix and let $M^{3 \times 3}_s$ be a generic $3 \times 3$ symmetric matrix. The ideals $J = I_2(M^{2\times 4})$ and $J' = I_2(M^{3\times 3}_s)$ both specialize to $(x,y,z) \subset \mb{C}[x,y,z]_{(x,y,z)}$, but there is no ideal which specializes to both $J$ and $J'$. As such, $\approx$ does not define an equivalence relation on grade 3 perfect ideals.
	
	As the consequence of results in \cite{Buchweitz81} and \cite{Herzog80}, at least in the setting of power series rings over a field, this is remedied if one restricts to the smaller class of \emph{licci} ideals. These are the ideals in the linkage class of a complete intersection. The equivalence classes defined by $\approx$ are called \emph{Herzog classes}, and each class admits a generic example that specializes to all members of that class. These generic examples may be viewed as structure theorems for their respective families. In retrospect, the success in analyzing grade 2 perfect ideals can be explained by the fact that all such ideals are licci, but this does not hold for $c \geq 3$ (e.g. the explicit ideals mentioned above are not licci).
	
	Although the existence of structure theorems for licci ideals is guaranteed by this theory, explicit descriptions of Herzog classes and their generic examples have not been computed except in a few special cases. For $c=3$, the pioneering breakthrough was a structure theorem for Gorenstein ideals given by Buchsbaum and Eisenbud in \cite{Buchsbaum-Eisenbud77}. Using linkage, they were also able to deduce a structure theorem for almost complete intersections. This program was carried out further in \cite{Brown87} and \cite{Sanchez89}, where structure theorems for ideals directly linked to almost complete intersections were given.
	
	By definition, any licci ideal can be linked to a complete intersection in some number of steps, so this method of translating structure theorems across linkage appears to be a promising way to inductively classify all licci ideals. One of the main mechanisms for doing so is a classical result showing how to produce a resolution of $R/(K:I)$ given a resolution of $R/I$, as follows.
	
	Let
	\[
	\mb{F} \colon 0 \to P_c \to P_{c-1} \to \cdots \to R
	\]
	be a minimal length projective resolution of $R/I$, and let $\mb{K}$ be the Koszul complex on the map $K \coloneqq R^c \xto{[\alpha_1,\ldots,\alpha_c]} R$ where $\alpha_1,\ldots,\alpha_c \in I$ is a regular sequence. Let $\psi\colon \mb{K} \to \mb{F}$ be any map of complexes covering the quotient $R/K \to R/I$ (where by abuse of notation we also use $K$ to refer to its image in $R$):
	\begin{equation}\label{eq:mapping-cone}
		\begin{tikzcd}
			0 \ar[r] & P_c \ar[r] & P_{c-1} \ar[r] & \cdots \ar[r] & P_1 \ar[r] & R\\
			0 \ar[r] & \bigwedge^c K \ar[r] \ar[u,"\psi_c"] & \bigwedge^{c-1} K \ar[r] \ar[u,"\psi_{c-1}"] & \cdots \ar[r] & K \ar[r] \ar[u,"\psi_1"] & R \ar[u,equals]
		\end{tikzcd}
	\end{equation}
	Finally, we take the dual of the mapping cone of $\psi$ and twist by $\bigwedge^c K$ to obtain a complex
	\[
	R \leftarrow (P_c^* \otimes \bigwedge^c K) \oplus K \leftarrow \cdots \leftarrow (P_2^* \otimes \bigwedge^c K) \oplus \bigwedge^{c-1}K \leftarrow P_1^* \otimes \bigwedge^c K \leftarrow 0.
	\]
	(Note that we have cancelled the split part $R \to R$ at the right end.) The next result originally appeared in \cite{Peskine-Szpiro74}, where it is attributed to Ferrand. It was then extended in scope by Golod in \cite{Golod80}, which is the generality we use here.
	\begin{thm}[\cite{Peskine-Szpiro74},\cite{Golod80}]\label{thm:mapping-cone}
		If $I$ is a grade $c$ perfect ideal in a Noetherian ring $R$, and $\alpha_1,\ldots,\alpha_c \in I$ is a regular sequence, then the complex constructed above is a resolution of $R/((\alpha_1,\ldots,\alpha_c):I)$.
	\end{thm}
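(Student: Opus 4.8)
The plan is to extract a free resolution of $R/J$, where $J \coloneqq (\alpha_1,\ldots,\alpha_c):I = K:I$, by first resolving the $R/K$‑ideal $I/K$ and then dualizing. We may assume $K \subsetneq I$, since otherwise $J = R$ and all complexes in sight are split exact. Form the mapping cone $C(\psi)$ of $\psi\colon \mb{K}\to\mb{F}$; its terms are $C(\psi)_i = P_i \oplus \bigwedge^{i-1}K$, and the long exact homology sequence of the cone — using that $\mb{K}$ and $\mb{F}$ are acyclic and that $\psi$ induces the surjection $R/K \twoheadrightarrow R/I$ on $H_0$ — gives $H_i(C(\psi)) = 0$ for $i \neq 1$ and $H_1(C(\psi)) \cong I/K$. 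In homological degree $0$ the differential $P_1 \oplus R \to R$ is surjective and the copy of $R$ splits off as a trivial subcomplex; discarding it (this is the cancellation of the ``split part $R \to R$'' mentioned above) yields
\[
C'\colon\qquad 0 \to \textstyle\bigwedge^c K \to P_c \oplus \bigwedge^{c-1}K \to \cdots \to P_2 \oplus K \to P_1 \to 0,
\]
a finite free resolution of $I/K$ of length $c$.

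Next I would verify that $I/K$ is a perfect $R$‑module of grade $c$. Because $K$ is an ideal, $K\cdot(I/K) \subseteq KI/K = 0$, so $K \subseteq \operatorname{Ann}_R(I/K)$ and hence $\grade_R(I/K) \geq \grade(K) = c$; meanwhile $\pdim_R(I/K) \leq c$ by $C'$, and $\grade \leq \pdim$ always, so all three are equal to $c$. Therefore $\Hom_R(C',R)$ is a free resolution of $\Ext^c_R(I/K,R)$. Twisting by the invertible module $\bigwedge^c K$ and using the self‑duality of the Koszul complex — concretely the perfect pairings $\bigwedge^j K \otimes \bigwedge^{c-j}K \to \bigwedge^c K$, which give $(\bigwedge^j K)^* \otimes \bigwedge^c K \cong \bigwedge^{c-j}K$ — turns $\Hom_R(C',R)\otimes\bigwedge^c K$ into \emph{exactly} the complex displayed just before the theorem. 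So it remains to identify its unique nonzero homology module $\Ext^c_R(I/K,R)\otimes\bigwedge^c K$ with $R/J$.

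For this, apply $\Hom_R(-,R)$ to $0 \to I/K \to R/K \to R/I \to 0$. All three modules are perfect of grade $c$ — $R/K$ since $K$ is a complete intersection, $R/I$ by hypothesis, $I/K$ by the previous step — and $\pdim_R R/I = c$, so the long exact $\Ext$ sequence collapses to
\[
0 \to \Ext^c_R(R/I,R) \to \Ext^c_R(R/K,R) \to \Ext^c_R(I/K,R) \to 0.
\]
Tensoring with $\bigwedge^c K$, the Koszul complex identifies $\Ext^c_R(R/K,R)\otimes\bigwedge^c K$ canonically with $R/K$, and under this identification the submodule $\Ext^c_R(R/I,R)\otimes\bigwedge^c K$ is carried onto $(K:I)/K = J/K$; this is the linkage duality isomorphism $\Ext^c_R(R/I,R) \cong \Hom_{R/K}(R/I,R/K)\otimes(\bigwedge^c K)^*$, which follows from the natural isomorphism $\Ext^c_R(-,R) \cong \Hom_{R/K}(-,\Ext^c_R(R/K,R))$ on $R/K$‑modules of projective dimension $c$ over $R$, together with $\Ext^c_R(R/K,R)\cong (R/K)\otimes(\bigwedge^c K)^*$ and $\Hom_{R/K}(R/I,R/K)\cong(K:I)/K$. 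Hence the cokernel is $\Ext^c_R(I/K,R)\otimes\bigwedge^c K \cong (R/K)/(J/K) = R/J$, so the displayed complex resolves $R/J$, as claimed. The step I expect to be the crux is precisely this last identification of $\Ext^c_R(R/I,R)\otimes\bigwedge^c K$ with $J/K$ inside $R/K$: the homological formalities are automatic, but it is here that the regular‑sequence hypothesis on $\alpha_1,\ldots,\alpha_c$ and the particular lift $\psi$ genuinely enter, and one must be careful that the identifications are compatible with the map $\psi^{*}\otimes\bigwedge^c K$ appearing in the complex rather than merely abstractly isomorphic.
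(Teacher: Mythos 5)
The paper does not prove this theorem at all --- it is quoted from Peskine--Szpiro (where it is attributed to Ferrand) and Golod, so there is no internal proof to compare against. Your argument is, in substance, the classical proof from those sources: split the trivial summand off the cone to get a length-$c$ projective resolution $C'$ of $I/K$, check $I/K$ is perfect of grade $c$, dualize and twist by $\bigwedge^c K$, and identify $\Ext^c_R(I/K,R)\otimes\bigwedge^c K$ with $R/(K:I)$ via the short exact sequence $0 \to I/K \to R/K \to R/I \to 0$ and Rees-type change-of-rings duality $\Ext^c_R(-,R)\cong \Hom_{R/K}(-,\Ext^c_R(R/K,R))$. I checked the steps and they are correct in the stated generality (Noetherian $R$, no local or CM hypotheses): the cone homology computation, the perfection of $I/K$ (using $K\subseteq \operatorname{Ann}(I/K)$, $\pdim \le c$ from $C'$, and $\grade\le\pdim$ for the nonzero finitely generated module $I/K$ --- your reduction to $K\subsetneq I$ covers the degenerate case), and the collapse of the long exact $\Ext$ sequence all go through.

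One small remark: the compatibility issue you flag at the end as the ``crux'' is not actually needed for the statement as claimed. The complex in question ends in $R$, so its $H_0$ is cyclic, namely $R/\mathrm{im}(d)$; once you know abstractly that $H_0\cong \Ext^c_R(I/K,R)\otimes\bigwedge^c K\cong R/(K:I)$, comparing annihilators of these cyclic modules forces $\mathrm{im}(d)=(K:I)$, so the augmentation is automatically the natural surjection. (Alternatively, the naturality in $M$ of the isomorphism $\Ext^c_R(M,R)\cong\Hom_{R/K}(M,\Ext^c_R(R/K,R))$, applied to $R/K\twoheadrightarrow R/I$, gives the compatibility directly.) So your proof is complete without any additional tracking of $\psi^*\otimes\bigwedge^c K$.
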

	
	The situation is particularly simple for $c=2$. Let $I \subset R$ be a grade 2 perfect ideal and $\alpha_1,\alpha_2 \in I$ a regular sequence. We may produce a resolution
	\begin{equation}\label{eq:intro:res-length-two}
		\mb{F}\colon 0 \to R^{n-1} \to R^n \to R
	\end{equation}
	of $R/I$ where $d_1 = \begin{bmatrix}
		\alpha_1 & \alpha_2 & h_3 & \cdots & h_n
	\end{bmatrix}$. Note that this generating set need not be minimal.
	
	If $K = (\alpha_1,\alpha_2)$ then $R/(K:I)$ admits a resolution
	\[
	0 \to R^{n-2} \xto{d_2'} R^{n-1} \to R
	\]
	where $d_2'$ is dual to the submatrix of $d_2$ obtained by omitting the first two columns, and the ideal $K:I$ is generated by the maximal minors of $d_2'$.
	
	We see that, given a free resolution of a grade 2 perfect ideal $I$, it takes essentially no additional computation to obtain a free resolution of any linked ideal $K:I$. However, the situation is different for grade 3 perfect ideals. In this setting, if $\mb{F}$ is a free resolution of $R/I$, then a free resolution of $R/(K:I)$ may be determined using the differentials of $\mb{F}$ together with a chosen multiplication on $\mb{F}$ making it a differential graded algebra. On the other hand, to determine a multiplication on $\mb{F}'$, one must compute even more supplementary structure on $\mb{F}$; see \cite{AKM}. 
	
	Thus we use more information about the original ideal $I$ to deduce less information about the linked ideal $K:I$. This is problematic, as it limits our ability to inductively study licci ideals. 
	The natural question is whether it is possible to achieve a sort of ``equal exchange'' as we had for $c=2$. That is, given a grade 3 perfect ideal $I$, we would like to compute some supplementary data so that the same supplementary data may be determined for any linked ideal $K:I$ with essentially no additional computation. We would also hope that, if $I$ is licci, then its Herzog class can be read off from this supplementary data. 
	
	The goal of this paper is to show that these hopes can be attained for grade 3 perfect ideals, at least assuming that $R$ has equicharacteristic zero. Theorem~\ref{thm:HST-linked-pair} shows that the ``higher structure maps'' defined in \S\ref{sec:GFR-HST} serve as suitable supplementary data. Then, Theorems \ref{thm:non-licci-locus} and \ref{thm:grade-3-licci-classification} illustrate the utility of these higher structure maps, including how they determine the Herzog class.
	
	The key ingredient in the construction of higher structure maps is a deep connection to representation theory, which we will motivate by example. Returning to the case $c=2$, suppose we have a (not necessarily minimal) resolution $\mb{F}$ of the form \eqref{eq:intro:res-length-two} for a grade 2 perfect ideal $I$. As mentioned in the beginning, the Herzog class of $I$ is determined by its minimal number of generators
	\[
	b_0(I) = \min \{i : \operatorname{Fitt}_i (I) = R\}
	\]
	where the Fitting ideal $\operatorname{Fitt}_i (I)$ is equal to $I_{n-i}(d_2)$ by definition. It is helpful to consider the augmented matrix
	\begin{equation}\label{eq:intro:augmented-matrix}
		\begin{blockarray}{cccc}
			& \margin{$F_2$} & \margin{$F_1$} \\
			\begin{block}{c[ccc]}
				\margin{$F_1$} & d_2 & \Id_n \\
			\end{block}
		\end{blockarray}
	\end{equation}
	whose top exterior power
	\begin{equation}\label{eq:intro:Plucker}
		[\bigwedge^{n-1} F_2 \otimes \bigwedge^{n-1} F_1^* \oplus \bigwedge^{n-2} F_2 \otimes \bigwedge^{n-2} F_1^* \oplus \cdots \oplus R] \xto{w} R
	\end{equation}
	encodes the minors of all sizes of $d_2$ simultaneously. We view the source of $w$ as being graded with $\bigwedge^{n-1} F_2 \otimes \bigwedge^{n-1} F_1^*$ in bottom degree zero, so that $\operatorname{Fitt}_i(I)$ is generated by the entries of $w$ restricted to components in degree $\leq i-1$. As such, writing $k$ for the residue field of $R$, the first nonzero component of $w\otimes k$ is in degree $b_0(I) - 1$.
	
	Here is a geometric restatement of the preceding. The matrix \eqref{eq:intro:augmented-matrix} is surjective, so it determines a map $f\colon \Spec R \to \Gr(n,F_2 \oplus F_1)$. The corresponding Pl\"ucker coordinates are given by \eqref{eq:intro:Plucker}. There is a parabolic subgroup $P \subset \GL(F_2 \oplus F_1)$ defined as
	\[
	P = \{ g\in \GL(F_2 \oplus F_1) : g(F_2) = F_2 \}
	\]
	acting on $\Gr(n,F_2 \oplus F_1)$. The complement of the open $P$-orbit is a codimension 2 Schubert variety $X$, cut out by the vanishing of Pl\"ucker coordinates in $\bigwedge^{n-1} F_2 \otimes \bigwedge^{n-1} F_1^*$. In particular, $f^{-1} X = \Spec R/I$.
	
	As the action of $P$ preserves $X$, the local defining ideals of $X \subset \Gr(n,F_2 \oplus F_1)$ at any two points in the same $P$-orbit are equivalent after a change of coordinates. At the $k$-point determined by $w \otimes k$, this ideal specializes to $I$. The $P$-orbit of this $k$-point is determined by the lowest nonzero component in $w\otimes k$, which in turn is determined by $b_0(I)$.
	
	In summary, we see that the Herzog classes of grade 2 perfect ideals on $\leq n$ generators correspond to the non-maximal $P$-orbits in $\Gr(n,2n-1)$, and that a generic example for each class may be obtained as the local defining ideal of $X$ at any point in the corresponding orbit.
	
	All this may seem like an excessive treatment of a relatively basic situation, but it turns out that this framework generalizes nicely to grade 3 licci ideals. However, while the preceding took place in the realm of Grassmannians, i.e. the representation theory of type $A$ Lie algebras, the generalization to grade 3 will involve the representation theory of Kac-Moody Lie algebras associated to $T$-shaped diagrams.
	
	Accordingly, we will begin with a brief summary of Kac-Moody Lie algebras and their representations in \S\ref{sec:RT-bg}. Then, in \S\ref{sec:example-res}, we will use representation theory to construct a family of length three resolutions which we will later establish as the generic free resolutions of all grade 3 licci ideals. In particular, we demonstrate how to recover the free resolutions from \cite{Buchsbaum-Eisenbud77} and \cite{Brown87}. In \S\ref{sec:Rgen} we recall the construction of the generic ring for free resolutions of length three, and collect a few important results regarding it. We use this to define higher structure maps, and we show in \S\ref{sec:HST-linkage} that they transform elegantly under linkage in a manner which extends results from \cite{AKM}. This culminates in \S\ref{sec:classify}, where we classify all grade 3 licci ideals and show that they are resolved by the complexes from \S\ref{sec:example-res}.
	
	We hope that many of the results in this paper remain valid for grade $c > 3$ as well, but we currently lack the framework to make such generalizations. We conclude with some commentary on this in \S\ref{sec:conclusion}. Throughout this paper, we will always work under the following assumption:
	\begin{assumption}\label{ass:base-field}
		All rings considered throughout are $\mb{C}$-algebras.
	\end{assumption}
	Our commutative algebra results remain valid over any field of characteristic zero by simple base change arguments, but this setting is the most familiar for the representation theory and Schubert varieties that we will use.
	
	\subsection*{Acknowledgements}
	This material is partially based upon work supported by the National Science Foundation under Grant No. DMS-1928930 and by the Alfred P. Sloan Foundation under grant G-2021-16778, while the authors were in residence at the Simons Laufer Mathematical Sciences Institute (formerly MSRI) in Berkeley, California, during the Spring 2024 semester. The first and third authors are supported by the grants MAESTRO NCN-UMO-2019/34/A/ST1/00263 - Research in Commutative Algebra and
	Representation Theory, NAWA POWROTY - PPN/PPO/2018/1/00013/U/00001 - Applications of Lie algebras to Commutative Algebra, and OPUS grant National Science Centre, Poland grant UMO-2018/29/BST1/01290. The first author is also supported by the Miniatura grant
	2023/07/X/ST1/01329 from NCN (Narodowe Centrum Nauki), which funded his visit to SLMath in April 2024.
	
	The authors would like to thank Ela Celikbas, Lars Christensen, David Eisenbud, Sara Angela Filippini, Craig Huneke, Witold Kraskiewicz, Shrawan Kumar, Andrew Kustin, Jai Laxmi, Claudia Polini, Steven Sam, Jacinta Torres, Bernd Ulrich, and Oana Veliche for interesting discussions pertaining to this paper and related topics. We thank the anonymous referee for catching a number of mistakes and for helpful suggestions that improved the clarity of the paper.

	\section{Representation theory background}\label{sec:RT-bg}
	We summarize the necessary results on representation theory and Schubert varieties, working in the general setting of Kac-Moody Lie algebras where possible, since this will be needed for studying $\Rgen$ in \S\ref{sec:Rgen}.
	
	\subsection{Lie algebras and representations}
	
	\subsubsection{Construction}\label{bg:lie-construction}
	Fix integers $p,q,r \geq 1$, and let $T=T_{p,q,r}$ denote the graph
	\[\begin{tikzcd}[column sep = small, row sep = small]
		x_{p-1} \ar[r,dash] & \cdots \ar[r,dash] & x_1 \ar[r,dash] & u \ar[r,dash]\ar[d,dash] & y_1 \ar[r,dash] & \cdots \ar[r,dash] & y_{q-1} \\
		&&& z_1 \ar[d,dash]\\
		&&& \vdots \ar[d,dash]\\
		&&& z_{r-1}
	\end{tikzcd}\]
	Let $n = p+q+r-2$ be the number of vertices. From the above graph, we construct an $n\times n$ matrix $A$, called the \emph{Cartan matrix}, whose rows and columns are indexed by the nodes of $T$:
	\[
	A = (a_{i,j})_{i,j \in T}, \quad a_{i,j} = \begin{cases}
		2 &\text{if $i = j$,}\\
		-1 &\text{if $i,j \in T$ are adjacent,}\\
		0 &\text{otherwise.}
	\end{cases}
	\]
	$T$ is a Dynkin diagram if and only if $1/p + 1/q + 1/r > 1$; in this case we say it is of \emph{finite type}. We next describe how to construct the associated Lie algebra $\mf{g}$, following \cite[Chapter I]{KumarBook}.
	
	Let $\mf{h} = \mb{C}^{2n - \rank A}$, and pick independent sets $\Pi = \{\alpha_i\}_{i\in T} \subset \mf{h}^*$ and $\Pi^\vee = \{\alpha_i^\vee\}_{i\in T} \subset \mf{h}$ satisfying the condition
	\[
	\langle \alpha_i^\vee,\alpha_j \rangle = a_{i,j}.
	\]
	The $\alpha_i$ are the \emph{simple roots} and the $\alpha_i^\vee$ are the \emph{simple coroots}. If $1/p + 1/q + 1/r = 1$, then $T = E_{n-1}^{(1)}$ is of \emph{affine type} and $\rank A = n-1$. Otherwise $\rank A = n$, and $\Pi, \Pi^\vee$ are bases of $\mf{h}^*,\mf{h}$ respectively.
	
	The Lie algebra $\mf{g} \coloneqq \mf{g}(T)$ is generated by $\mf{h}$ together with elements $e_i,f_i$ for $i\in T$, subject to the defining relations
	\begin{gather*}
		[e_i,f_j] = \delta_{i,j} \alpha_i^\vee,\\
		[h,e_i] = \langle h, \alpha_i \rangle e_i, [h,f_i] = -\langle h,\alpha_i \rangle f_i  \text{ for } h \in \mf{h},\\
		[h,h'] = 0 \text{ for } h,h' \in \mf{h},\\
		\ad(e_i)^{1-a_{i,j}}(e_j) = \ad(f_i)^{1-a_{i,j}}(f_j) \text{ for } i \neq j,
	\end{gather*}
	where $\ad(x)$ denotes the adjoint action of $x$ (i.e. $\ad(x)(y) = [x,y]$ for $x,y \in \mf{g}$).
	
	Under the adjoint action of $\mf{h}$, the Lie algebra $\mf{g}$ decomposes into eigenspaces as $\mf{g} = \bigoplus \mf{g}_\alpha$, where
	\[
	\mf{g}_\alpha = \{x \in \mf{g} : [h,x] = \alpha(h)x \text{ for all } h \in \mf{h}\}.
	\]
	This is the \emph{root space decomposition} of $\mf{g}$.
	
	\begin{assumption}\label{ass:not-affine-type}
		For simplicity, we henceforth assume that $T$ is \emph{not} of affine type. Thus the Cartan matrix is invertible, and $\mf{g}$ is generated by $e_i$ and $f_i$ for $i \in T$ since $\alpha_i^\vee$ is a basis of $\mf{h}$. This is just for convenience of exposition; the theory we discuss remains valid in the affine case with minor adjustments.
	\end{assumption}
	\subsubsection{Gradings on $\mf{g}$}\label{bg:lie-grading1}
	
	Let $Q \subset \mf{h}^*$ be the root lattice $\bigoplus_{i\in T} \mb{Z}\alpha_i$. If $\mf{g}_\alpha \neq 0$, then necessarily $\alpha \in Q$. If such an $\alpha$ is nonzero, we say it is a \emph{root}, and denote the set of all roots by $\Delta$. Hence the Lie algebra $\mf{g}$ is $Q$-graded. By singling out a vertex $t \in T$, this $Q$-grading can be coarsened to a $\mb{Z}$-grading by considering only the coefficient of $\alpha_t$. We refer to this as the $t$-grading. If we express a root $\alpha \in \Delta$ as a linear combination of $\{\alpha_t\}_{t\in T}$, it is a fact that either all of the coefficients are nonnegative, in which case we say $\alpha$ is a positive root and write $\alpha > 0$, or all of the coefficients are nonpositive, in which case we say $\alpha$ is a negative root and write $\alpha < 0$. We denote the sets of positive and negative roots by $\Delta^+$ and $\Delta^-$ respectively, so that $\Delta = \Delta^+ \amalg \Delta^-$.
	
	The sum of all $t$-gradings for $t\in T$ is called the \emph{principal gradation} on $\mf{g}$. The degree zero part in the principal gradation is the Cartan subalgebra $\mf{h}$. For $\alpha \in \Delta \cup \{0\}$, we write:
	\begin{itemize}
		\item $\alpha >_t 0$ (resp. $\alpha \geq_t 0$) if the coefficient of $\alpha_t$ in $\alpha$ is positive (resp. nonnegative),
		\item $\alpha \geq 0$ if $\alpha > 0$ or $\alpha = 0$,
	\end{itemize}
	and similarly for $\alpha <_t 0, \alpha \leq_t 0, \alpha < 0, \alpha \leq 0$.
	
	Using these notions, we define a few important subalgebras of $\mf{g}$:
	\begin{align*}
		\mf{p}_t^+ &= \bigoplus_{\alpha \geq_t 0} \mf{g}_\alpha & \mf{p}_t^- &= \bigoplus_{\alpha \leq_t 0} \mf{g}_\alpha\\
		\mf{b}^+ &= \bigoplus_{\alpha \geq 0} \mf{g}_\alpha & \mf{b}^- &= \bigoplus_{\alpha \leq 0} \mf{g}_\alpha\\
		\mf{n}^+ &= \bigoplus_{\alpha > 0} \mf{g}_\alpha & \mf{n}^- &= \bigoplus_{\alpha < 0} \mf{g}_\alpha\\
		\mf{n}_t^+ &= \bigoplus_{\alpha >_t 0} \mf{g}_\alpha & \mf{n}_t^- &= \bigoplus_{\alpha <_t 0} \mf{g}_\alpha
	\end{align*}
	Fixing a $t$, each subalgebra contains the ones below it in the same column.
	
	Write $h_i \in \mf{h}$ for the basis dual to the simple roots $\alpha_i \in \mf{h}^*$. The degree zero part of $\mf{g}$ in the $t$-grading is
	\[
	\mf{g}^{(t)} \times \mb{C}h_t
	\]
	where $\mf{g}^{(t)}$ is the subalgebra generated by $\{e_i, f_i\}_{i \neq t}$ and $\mb{C}h_t$ is the one-dimensional abelian Lie algebra spanned by $h_t$. The decomposition of $\mf{g}$ into $t$-graded components is just its decomposition into eigenspaces for the adjoint action of $h_t$:
	\[
	\mf{g} = \bigoplus_{j  \in \mb{Z}} \ker(\ad(h_t) - j).
	\]
	
	\begin{example}\label{ex:inclusion-of-sl}
		For the Dynkin diagram $A_n$, with vertices labeled as
		\[
		\begin{tikzcd}[column sep = small, row sep = small]
			1 \ar[r,dash] & 2 \ar[r,dash] & \cdots \ar[r,dash] & n,
		\end{tikzcd}
		\]
		the associated Lie algebra is $\mf{sl}_{n+1} \coloneqq \mf{sl}(\mb{C}^{n+1})$. Let $\epsilon_{ij}$ denote the $(n+1)\times(n+1)$ matrix whose entries are all 0 except for a single 1 in the $i$-th row and $j$-th column. Then it is customary to use the Lie algebra generators $e_i = \epsilon_{i,i+1}$ and $f_i = \epsilon_{i+1,i}$ for $i = 0,\ldots,n$.
		
		An ordered sequence of vertices $t_1,\ldots,t_n$ forming a subgraph $A_n \subset T$ yields an inclusion $\mf{sl}_{n+1} \hookrightarrow \mf{g}$ by sending the generators $e_i, f_i$ of $\mf{sl}_{n+1}$ to the corresponding elements $e_{t_i}, f_{t_i} \in \mf{g}$.
	\end{example}
	
	\subsubsection{Representations}\label{bg:reps}
	For more details regarding the representation theory of $\mf{g}$, we refer to \cite[Chapter II]{KumarBook}. Here we only summarize the main points we need. Let $V$ be a representation of $\mf{g}$. For $\lambda \in \mf{h}^*$, define the \emph{$\lambda$-weight space of $V$} to be
	\[
	V_\lambda = \{v \in V : h v = \lambda(h)v \text{ for all }h \in \mf{h}\}.
	\]
	If $V_\lambda \neq 0$, then we say $\lambda$ is a \emph{weight} of $V$. A nonzero vector $v \in V_\lambda$ is a \emph{highest weight vector} if $\mf{n}^+ v = 0$. If such a $v$ generates $V$ as a $\mf{g}$-module, then we say $V$ is a \emph{highest weight module} with highest weight $\lambda$.
	
	Let $\mc{U}$ denote the universal enveloping algebra functor. Representations of $\mf{g}$ are equivalent to modules over $\mc{U}(\mf{g})$. Given $\lambda \in \mf{h}^*$, the \emph{Verma module} $M(\lambda)$ is defined to be
	\[
	M(\lambda) = \mc{U}(\mf{g}) \otimes_{\mc{U}(\mf{b}^+)} \mb{C}_\lambda.
	\]
	Here $\mb{C}_\lambda$ is the one-dimensional $\mf{b}^+$-module where $\mf{h}$ acts by $\lambda$ and $\mf{n}^+$ acts trivially. All the weights of $M(\lambda)$ are in $\lambda + Q$. If $v \in V_\lambda$ is a highest weight vector, then there is a map $M(\lambda) \to V$ sending $1 \mapsto v$. If $V$ is a highest weight module then this map is surjective.
	
	Every Verma module $M(\lambda)$ has a unique maximal proper submodule $J(\lambda)$, namely the sum of all submodules which do not contain $v$. It follows that $L(\lambda) = M(\lambda)/J(\lambda)$ is an irreducible highest weight module with heighest weight $\lambda$, and any such module is isomorphic to $L(\lambda)$.
	
	Let $\omega_i \in \mf{h}^*$ be the basis dual to $\alpha_i^\vee \in \mf{h}$. Explicitly, $\omega_i$ is the linear combination of $\alpha_i$ given by the $i$-th column of $A^{-1}$. These are the \emph{fundamental weights}, and the representations $L(\omega_i)$ are called \emph{fundamental representations}. Their nonnegative integral span is the collection of \emph{dominant weights}.
	
	One can alternatively work with lowest weights instead of highest weights, interchanging the roles of positive and negative parts of the Lie algebra in all of the preceding. The irreducible representation with lowest weight $-\lambda$ is $L(\lambda)^\vee$, where $(-)^\vee$ represents the ``restricted'' dual. That is, $V^\vee \coloneqq \bigoplus V_\lambda^*$ for a weight module $V$ with finite-dimensional weight spaces. One has $V^\vee \subseteq V^*$, with equality when $V$ is finite-dimensional.
	
	\subsubsection{Weight grading on representations}\label{sec:grading}
	The decomposition of $L(\lambda)$ into weight spaces gives an $\mf{h}^*$-grading on $L(\lambda)$. Moreover, all the weights of $L(\lambda)$ are in the translate $\lambda + \bigoplus_{i\in T} \mb{Z} \alpha_i$ of the root lattice.
	
	In \S\ref{bg:lie-grading1} it was described how singling out a vertex $t\in T$ allows us to impose a $\mb{Z}$-grading on $\mf{g}$ by considering only the coefficient of $\alpha_t$ in the $\mf{h}^*$-grading. This works for representations $L(\lambda)$ as well: if $v \in L(\lambda)$ is a highest weight vector then $h_t v = \langle h_t, \lambda \rangle v$ and the eigenvalues for the action of $h_t$ on $L(\lambda)$ are $\langle h_t, \lambda \rangle, \langle h_t, \lambda \rangle-1,\ldots$, terminating iff $L(\lambda)$ is finite-dimensional. The eigenspaces give the $t$-graded components. Each one is a representation of the subalgebra $\mf{g}^{(t)} \times \mb{C}h_t \subset \mf{g}$. In particular, $v$ is a highest weight vector for the top graded component, thus this component is the representation of $\mf{g}^{(t)}$ with highest weight $\sum_{i \neq t} c_i \omega_i$ if $\lambda = \sum_{i\in T} c_i \omega_i$.
	
	\subsubsection{Exponential action and Baker-Campbell-Hausdorff}\label{bg:lie-exp}
	Let $\bigoplus_{i>0} \mb{L}_i$ be a strictly positively graded Lie algebra, e.g. $\mf{n}_t^\pm$ for some $t\in T$. Its bracket naturally extends to one on $\mbf{L} = \prod_{i>0} \mb{L}_i$. Suppose $R$ is a commutative ring on which elements $X\in \mbf{L}$ act by locally nilpotent $R_0$-linear derivations, where $R_0 \subset R$ is a subring. Here ``locally nilpotent'' means that for any $f \in R$, we have $X^N f = 0$ for $N \gg 0$. Then for any $X \in \mbf{L}$, the exponential
	\[
	\exp X = \Id + X + \frac{1}{2!} X^2 + \frac{1}{3!}X^3 + \cdots
	\]
	defines an $R_0$-algebra automorphism of $R$.
	
	Moreover, given Lie algebra elements $X,Y \in \mbf{L}$, the Baker-Campbell-Hausdorff formula gives $Z \in \mbf{L}$ such that $\exp Z = \exp X \exp Y$:
	\[
	Z = X + Y + \frac{1}{2}[X,Y] + \frac{1}{12}[X,[X,Y]] - \frac{1}{12}[Y,[X,Y]] + \cdots.
	\]
	Note that $Z$ is well-defined because only finitely many terms on the right will contribute in each degree.
	
	While many proofs of this are analytic, a purely algebraic proof applicable to our setting is given in \cite[Chapter XVI, Section 2]{HochschildBook}. We will not need the explicit expression for $Z$, only that such an expression exists in terms of iterated commutators. Our main use of the formula is the following.
	
	\begin{lem}\label{lem:BCH-split-up-positively-graded-Lie-algebra}
		For each $i$, let $A_i$ and $B_i$ be such that $\mb{L}_i = A_i \oplus B_i$. Let $\mbf{A} = \prod A_i$ and $\mbf{B} = \prod B_i$. Then for every $X \in \mbf{L}$, there exist unique elements $X_A \in \mbf{A}$ and $X_B \in \mbf{B}$ such that $\exp(X) = \exp(X_A)\exp(X_B)$.
	\end{lem}
	\begin{proof}
		Let $Y_A$ and $Y_B$ denote generic elements of $\mbf{A}$ and $\mbf{B}$. Explicitly: for each $i$, let $\{a_{ij}\}_j$ be a basis of $A_i$, introduce corresponding indeterminates $\alpha_{ij}$, and let $Y_A = \sum \alpha_{ij} a_{ij}$. Define $b_{ij}$, $\beta_{ij}$, and $Y_B = \sum \beta_{ij} b_{ij}$ similarly.
		
		Using Baker-Campbell-Hausdorff, we obtain an expression for $Y$ such that $\exp(Y) = \exp(Y_A)\exp(Y_B)$. Crucially, the coefficient of $a_{ij}$ in iterated commutators of $Y_A$ and $Y_B$ only involve $\alpha_{rs}$ and $\beta_{rs}$ with $r < i$ because $\mb{L}$ is strictly positively graded. Thus the coefficient of $a_{ij}$ in $Y$ has the form
		\[
			\alpha_{ij} + (\text{expression only involving $\alpha_{rs}$ and $\beta_{rs}$ with $r < i$})
		\]
		and similarly the coefficient of $b_{ij}$ in $Y$ has the form
		\[
			\beta_{ij} + (\text{expression only involving $\alpha_{rs}$ and $\beta_{rs}$ with $r < i$}).
		\]
		Thus, equating these expressions to the corresponding coefficients in $X$, we see that we can uniquely solve for $\alpha_{ij}$ and $\beta_{ij}$ in order of increasing $i$, thereby determining $X_A$ and $X_B$.
	\end{proof}
	
	\subsection{Weyl group and subgroups}\label{bg:Weyl-group-and-subgroups}
	Let $W$ denote the Weyl group associated to $T$. It is generated by the  \emph{simple reflections} $\{s_i\}_{t\in T}$. Explicitly,
	\[
	W = \langle \{s_i\}_{i\in T} \mid (s_i s_j)^{m_{ij}} = 1\rangle
	\]
	where $m_{ij} = 1$ if $i=j$, $m_{ij} = 2$ if $i,j \in T$ are not adjacent, and $m_{ij} = 3$ if $i,j \in T$ are adjacent. The group $W$ is finite if and only if $T$ is a Dynkin diagram.
	
	The Weyl group acts on $\mf{h}^*$: the simple reflections act via
	\[
	s_i(\lambda) = \lambda - \langle \alpha_i^\vee, \lambda\rangle \alpha_i.
	\]
	For any $t \in T$, the expression
	\[
	\exp(f_t)\exp(-e_t)\exp(f_t)
	\]
	defines an automorphism of any representation on which the actions of $e_t$ and $f_t$ are locally nilpotent. This includes the adjoint representation of $\mf{g}$ and $L(\lambda)$ for any dominant integral $\lambda$. By abuse of notation, we will also denote this automorphism by $s_t$, although it is really a (non-unique) lift thereof. For any element of the Weyl group, we may define an analogous automorphism by expressing $\sigma$ as a product of simple reflections. The resulting automorphism is not unique, but this is not important for our purposes.
	
	A \emph{word} for $\sigma$ is a sequence of simple reflections whose product is $\sigma$. It is \emph{reduced} if there are no shorter words for $\sigma$. The \emph{length} $\ell(\sigma)$ is the length of a reduced word for $\sigma$. There is a partial order, called the \emph{(strong) Bruhat order} on $W$, defined so that $\sigma \geq \sigma'$ if a reduced word for $\sigma$ contains a reduced word for $\sigma'$ as a (not necessarily consecutive) substring. If this holds for some reduced word for $\sigma$, it in fact holds for all reduced words.
	
	For $t \in T$, let $W_t \subset W$ denote the subgroup generated by all simple reflections other\footnote{In the literature, the subgroup generated by a subset $\{s_i\}_{i \in I}$ of simple reflections is often denoted by $W_{P_I}$, so our $W_t$ would be written as $W_{P_{T \setminus \{t\}}}$. Our notation here is just for brevity.} than $s_t$. Every element of $W/W_t$ has a unique minimal length representative in $W$; we denote the set of all such representatives by $W^t$. If $r \in T$, then every element of $W_r \backslash W / W_t$ has a unique minimal length representative, and we denote the set of all such representatives by $\leftindex^r W^t$. There is a bijection between $\leftindex^r W^t$ and $\leftindex^t W^r$, given by sending $\sigma$ to $\sigma^{-1}$. For $\sigma \in W$, we may use $[\sigma]$ instead of $W_r \sigma W_t$ to denote its double coset if $r,t$ are clear from context.
	
	The subgroup $W_t$ is the stabilizer of the fundamental weight $\omega_t$ under the action of $W$, hence $W/W_t$ and $W^t$ may be identified with the orbit $W\omega_t$. These correspond to the extremal weight spaces in the representation $L(\omega_t)$. This can also be used to compute minimal length representatives as follows. Given $\tau \in W$, choose a sequence of simple reflections $s_{i_1},\ldots,s_{i_n}$ so that the coefficient of $\omega_{i_j}$ in $s_{i_{j-1}} \cdots s_{i_1}\tau \omega_t$ is strictly negative for all $j$, and all coefficients in $s_{i_{n}} \cdots s_{i_1}\tau \omega_t$ are nonnegative. Note that this means $s_{i_{n}} \cdots s_{i_1}\tau \omega_t = \omega_t$ because they are dominant weights in the same $W$-orbit. Then $\sigma = s_{i_1} \cdots s_{i_n}$ is a minimal length representative of $\tau W_t$ and $(s_{i_1},\ldots,s_{i_n})$ is a reduced word for $\sigma$.
	
	The sets $W_r \backslash W / W_t$ and $\leftindex^r W^t$ may be identified with
	\begin{equation}\label{eq:double-coset-reps-in-orbit}
	\left\{\sum_{i \in T} b_i \omega_i \in W \omega_t : b_i \geq 0 \text{ for }i \neq r\right\}.
	\end{equation}
	These are the highest weights of extremal $\mf{g}^{(r)}$-representations inside $L(\omega_t)$, so we will often identify such representations with $\leftindex^r W^t$ accordingly. To get the minimal length representative of the double coset $W_r \tau W_t$, one does the same procedure as for $\tau W_t$, but first replacing $\tau$ with $\rho \tau$ where $\rho \in W_r$ and $\rho\tau \omega_t$ belongs to the set \eqref{eq:double-coset-reps-in-orbit}.

	\subsection{$G/P$ and Schubert varieties}\label{sec:bg-schubert}
	If $\mf{g}$ is of finite type, there is a unique simply connected Lie group $G$ associated to the Lie algebra $\mf{g}$, and the representations of $G$ correspond to those of $\mf{g}$. For a fundamental weight $\omega_t$, the action of $G$ on the highest weight line in $\mb{P}(L(\omega_t))$ has stabilizer $P_t^+$, the subgroup of $G$ corresponding to the maximal parabolic subalgebra $\mf{p}_t^+$ as defined in \S\ref{bg:lie-grading1}. Hence the orbit of this highest weight line can be identified with the homogeneous space $G/P_t^+$. For Dynkin type $A_n$ with the standard labeling of vertices, this construction produces the Grassmannian $\Gr(t,n+1)$. Accordingly, the reader may think of $G/P_t^+$ as a ``generalized Grassmannian.''
	
	This theory generalizes to the Kac-Moody setting, but there are some subtleties. Let $L(\omega_t)^\vee$ be the restricted dual of $L(\omega_t)$, i.e. the irreducible representation with lowest weight $-\omega_t$. Let $\widehat{L}(\omega_t) = (L(\omega_t)^\vee)^*$, which contains $L(\omega_t)$, but is strictly larger unless $L(\omega_t)$ is finite-dimensional. Concretely, $L(\omega_t)$ is the sum of its weight spaces, and taking the product instead gives $\widehat{L}(\omega_t)$.
	
	\subsubsection{Definition of the homogeneous space $G/P$}
	In the literature, there are two distinct objects that one might call $G/P_t^+$ in the Kac-Moody setting. The one studied in \cite[Chapter VII]{KumarBook} is an ind-variety, which is the union of finite-dimensional Schubert cells. It can be embedded in $\mb{P}(L(\omega_t))$. A different object is considered in \cite{Kashiwara89}, which is instead the union of infinite-dimensional opposite Schubert cells and resides in the larger $\mb{P}(\widehat{L}(\omega_t))$. The latter is better suited for our purposes, though the distinction goes away if we restrict to a finite-dimensional cell (as we will for our main results later).
	
	Let $\mf{A} = \bigoplus_{n\geq 0} L(n\omega_t)^\vee$. Since $L((n+1)\omega_t)^\vee$ appears with multiplicity 1 inside of $L(n\omega_t)^\vee \otimes L(\omega_t)^\vee$, we may use this to define a $\mf{g}$-equivariant multiplication on $\mf{A}$ which makes it a graded $\mb{C}$-algebra generated by $L(\omega_t)^\vee$:
	\[
	\bigoplus_{n\geq 0} L(n\omega_t)^\vee = (\Sym L(\omega_t)^\vee) / I_\Plucker.
	\]
	The ideal $I_\Plucker$ is comprised of subrepresentations vanishing on a highest weight vector $v \in L(\omega_t)$. We define $G/P_{t}^+ = \operatorname{Proj} \mf{A} \subset \mb{P}(\widehat{L}(\omega_t))$.
	
	\begin{remark}
		Kashiwara's construction in \cite{Kashiwara89} is different from the preceding, but to see that the objects coincide, see \cite[Remark 2.1.ii]{Kashiwara-Shimozono09}. (Although the referenced remark is about the complete flag variety $G/B$, we obtain $G/P_t^+$ by using $\omega_t$ instead of the regular dominant weight $\omega$ used there.) That remark implies that the defining ideal of Kashiwara's $G/P_t^+$ must contain $I_\Plucker$. In fact it must be equal, since any larger $\mf{g}$-equivariant ideal would contain some entire graded component of $\mf{A}$ and thus cut out the empty set.
	\end{remark}
	
	\subsubsection{Pl\"ucker coordinates and Schubert cells}\label{subsec:Plucker-coords-Schubert-cells}
	Let $v$ be a highest weight vector in $L(\omega_t)$. It determines a point in $\mb{P}(\widehat{L}(\omega_t))$, which by abuse of notation we will also denote $v$. This is the \emph{Borel-fixed point} of $G/P_t^+$. For $\sigma \in W^t$, the point $\sigma v$ is a \emph{torus-fixed point} of $G/P_t^+$.
	
	Let $\hat{\mf{n}}^- = \prod_{\alpha < 0} \mf{g}_\alpha$ and $\hat{\mf{n}}^+ = \prod_{\alpha > 0} \mf{g}_\alpha$. Each $\sigma \in W^t$ determines a Schubert cell $C_\sigma = \exp(\hat{\mf{n}}^+)\sigma v$ and opposite Schubert cell $C^\sigma = \exp(\hat{\mf{n}}^-)\sigma v$. These are affine spaces; explicitly we have the identifications
	\[
		\bigoplus_{\alpha > 0, \sigma^{-1}\alpha < 0} \mf{g}_\alpha = \prod_{\alpha > 0, \sigma^{-1}\alpha < 0} \mf{g}_\alpha \xto{\sim} C_\sigma, \quad \prod_{\alpha < 0, \sigma^{-1}\alpha < 0} \mf{g}_\alpha \xto{\sim} C^\sigma.
	\]
	via $Y \mapsto \exp(Y)\sigma v$. The former has dimension $\ell(\sigma)$ by \cite[Proposition 7.4.16]{KumarBook} and the latter has codimension $\ell(\sigma)$ by \cite[Corollary 4.5.8]{Kashiwara89}.
	
	Elements of $L(\omega_t)^\vee$ are called \emph{Pl\"ucker coordinates}. Let $p_e$ denote a lowest weight vector of $L(\omega_t)^\vee$ and let $p_\sigma = \sigma p_e$. The set $\{p_\sigma : \sigma\in W^t\}$ is the set of \emph{extremal Pl\"ucker coordinates}; they are defined up to scale. The representation $L(\omega_t)^\vee$ may have weights other than those in the $W$-orbit of $-\omega_t$, and thus there may be non-extremal Pl\"ucker coordinates. (The representation is \emph{miniscule} if all weights belong to the same $W$-orbit, in which case all Pl\"ucker coordinates are extremal.)
	
	Fix a $w \in W^t$ and consider the set $S=\{p_\sigma : \sigma \in W^t, \sigma \not\geq w\}$. For $\sigma\in W^t$ with $\sigma \geq w$, all elements of $S$ vanish on $C^\sigma$. On the other hand, if $\sigma \not\geq w$, then the zero locus of $S$ is disjoint from $C^\sigma$ because $p_\sigma \neq 0$ on $C^\sigma$. Thus the zero locus of $S$ is the union $\bigcup_{\sigma \geq w} C^\sigma$, which is equal to the closure of $C^\sigma$ by \cite[Proposition 4.5.11]{Kashiwara89}. This closure is called the \emph{opposite Schubert variety} $X^w$.
	
	For $w \leq \sigma$, the \emph{Kazhdan-Lusztig variety} $\mc{N}_\sigma^w$ is defined to be the intersection $X^w \cap C_\sigma$. It has codimension $\ell(w)$ inside of $C_\sigma$ by \cite[Lemma 7.3.10]{KumarBook}. Note that while Kashiwara's $X^w$ is not the same as the one in Kumar's book, they become the same after restricting to $C_\sigma$, since they are cut out by the same Pl\"ucker coordinates.

	\section{A family of resolutions of length three}\label{sec:example-res}
	In this section we construct a family of length three resolutions which generalize the well-known Buchsbaum-Eisenbud resolution for the submaximal pfaffians of a generic $(2n+1)\times(2n+1)$ skew-symmetric matrix.
	
	\subsection{Construction of the resolutions}\label{sec:liccires-construction}
	Let $r_1 \geq 1$, $r_2\geq 2$, and $r_3\geq 1$ be positive integers. Define $f_0 = r_1$, $f_1 = r_1 + r_2$, $f_2 = r_2 + r_3$, and $f_3 = r_3$. In this subsection we define the differentials of certain free resolutions $\mb{F}$ of the form
	\[
	\mb{F} \colon 0 \to R^{f_3} \xto{d_3} R^{f_2} \xto{d_2} R^{f_1} \xto{d_1} R^{f_0}
	\]
	over polynomial rings $R$. We defer the proofs that $d^2 = 0$ and that $\mb{F}$ is acyclic to \S\ref{sec:liccires-acyclicity}. The base ring $R$ and the differentials $d_i$ depend on an additional parameter $\sigma$, which we now explain.
	
	\subsubsection{The coordinate ring $R$ of $C_\sigma$}\label{sec:liccires-base-ring}
	Fix the graph $T = T_{r_1+1,r_2-1,r_3+1}$ with labels
	\[
	\begin{tikzcd}[column sep=small, row sep=small]
		x_{r_1} \ar[r,dash] & \cdots \ar[r,dash] & \colorX{x_1} \ar[r,dash] & u\ar[r,dash]\ar[d,dash] & y_1\ar[r,dash] &\ar[r,dash] \cdots\ar[r,dash] & y_{r_2-2}\\
		&&& \colorZ{z_1}\ar[d,dash]\\
		&&& \vdots\ar[d,dash]\\
		&&& z_{r_3}
	\end{tikzcd}
	\]
	and use the setup and notation from \S\ref{sec:bg-schubert}. Let $\sigma\in \leftindex^{z_1}{W}^{x_1}$, and let $R_\sigma$ be the coordinate ring of the Schubert cell $C_\sigma$ inside of $G/P_{x_1}^+$, which we henceforth abbreviate as $G/P$. Since we will be fixing a $\sigma$ throughout, we will simply denote $R_\sigma$ by $R$ for brevity in this section, though we will reintroduce the subscript when keeping track of $\sigma$ is inmportant (e.g. \S\ref{sec:classify}). Explicitly, if we let
	\[
	\mf{n}_\sigma = \bigoplus_{\alpha>0, \sigma^{-1}\alpha <0} \mf{g}_\alpha
	\]
	then $R = \Sym (\mf{n}_\sigma)^*$. Note that even if $\mf{g}$ is infinite-dimensional, in which case $G/P$ is an ind-variety, this cell $C_\sigma$ is a finite-dimensional affine space $\mb{A}^{\ell(\sigma)}$. The variables corresponding to coordinates on $\mf{g}_\alpha$ are given multidegree $-\alpha$. In this manner, the ring $R$ has a grading by the root lattice $Q = \bigoplus_{i\in T} \mb{Z}\alpha_i$, where each variable is ``negatively graded'' in the sense that all multidegrees are nonpositive.
	
	We assumed $\sigma$ to be a minimal length representative of its double coset, which guarantees that if $\alpha > 0$ and $\sigma^{-1} \alpha < 0$ for some root $\alpha$, then $\alpha >_{z_1} 0$ and $\sigma^{-1}\alpha <_{x_1} 0$. The former implies that if we coarsen the multigrading to a $\mb{Z}$-grading via the projection $Q \to \mb{Z}\alpha_{z_1}$, then all variables have strictly negative degree.
	
	Let $v \in G/P \subset \mb{P}(\widehat{L}(\omega_{x_1}))$ be the Borel-fixed point. The conditions $\alpha > 0$ and $\sigma^{-1}\alpha < 0$ in the definition of $\mf{n}_\sigma$ imply that $\exp(\mf{n}_\sigma)$ has a well-defined action on highest and lowest weight representations of $\mf{g}$ respectively. In particular, it is well-defined on $L(\omega_{x_1})$, $L(\omega_{x_1})^\vee$, and $\widehat{L}(\omega_{x_1}) = (L(\omega_{x_1})^\vee)^*$. We will parametrize $C_\sigma$ as $\exp(\mf{n}_\sigma)\sigma v$. Explicitly, let $Y$ denote a generic element of $\mf{n}_\sigma$, with coefficients in $R$. In other words, if $\{a_i\}$ is a basis of $\mf{n}_\sigma$ and $\{a_i'\}$ its dual basis,
	\[
	Y = \sum a_i \otimes a_i' \in \mf{n}_\sigma \otimes \mf{n}_\sigma^* \subset \mf{n}_\sigma \otimes R.
	\]
	Then our parametrization of $C_\sigma$ is given by the composite
	\begin{equation}\label{eq:C_sigma-parametrization}
	\mb{C} \otimes R \to \widehat{L}(\omega_{x_1}) \otimes R \xto{\exp(Y)\sigma} \widehat{L}(\omega_{x_1}) \otimes R
	\end{equation}
	where the first map is the heighest weight line corresponding to $v$. This is dual to
	\[
	L(\omega_{x_1})^\vee \otimes R \xto{(\exp (Y)\sigma)^{-1}} L(\omega_{x_1})^\vee \otimes R \to \mb{C} \otimes R
	\]
	where the second map is projection onto the lowest weight space, which describes how the Pl\"ucker coordinates restrict to polynomials in $R$. Note that we can also use $L(\omega_{x_1})$ in place of $\widehat{L}(\omega_{x_1})$ in \eqref{eq:C_sigma-parametrization} since $C_\sigma$ resides in the smaller projective space $\mb{P}(L(\omega_{x_1}))$ anyway.
	
	\subsubsection{The differentials of $\mb{F}$}\label{sec:gen-licci-differentials}
	Let $F_j = \mb{C}^{f_j}$ for $0 \leq j \leq 3$. Following Example~\ref{ex:inclusion-of-sl}, we view the Lie algebras $\mf{sl}(F_j)$ as subalgebras of $\mf{g}$:
	\begin{itemize}
		\item $\mf{sl}(F_0)$ corresponds to the ordered sequence of vertices $x_2,\ldots,x_{r_1}$,
		\item $\mf{sl}(F_1)$ corresponds to the ordered sequence of vertices $y_{r_2-2},\ldots,y_1,u,x_1,\ldots,x_{r_1}$,
		\item $\mf{sl}(F_2)$ corresponds to the ordered sequence of vertices $y_{r_2-2},\ldots,y_1,u,z_1,\ldots,z_{r_3}$, and
		\item $\mf{sl}(F_3)$ corresponds to the ordered sequence of vertices $z_2,\ldots,z_{r_3}$.
	\end{itemize}
	In particular, $\mf{g}^{(x_1)} = \mf{sl}(F_0) \times \mf{sl}(F_2)$ and $\mf{g}^{(z_1)} = \mf{sl}(F_1) \times \mf{sl}(F_3)$.
	
	Let $\omega = \sum_{i\in T} c_i \omega_i$ be a dominant integral weight of $\mf{g}$ (i.e. $c_i \geq 0$ for all $i$) and let $L(\omega)$ be the associated irreducible representation with highest weight $\omega$. For $t \in T$, the top graded component of $L(\omega)$ in the $t$-grading is the irreducible representation of $\mf{g}^{(t)}$ with highest weight $\sum_{i \neq t} c_i \omega_i$. Applying this for $t \in \{x_1,z_1\}$ to the three fundamental representations associated with the extremal vertices $x_{r_1}$, $y_{r_2-2}$, and $z_{r_3}$, we obtain
	\begin{align*}
		L(\omega_{x_{r_1}}) &= \cdots \oplus F_0^* \text{ in the $x_1$-grading and} \\
		&= \cdots \oplus F_1^*\text{ in the $z_1$-grading,}\\
		L(\omega_{y_{r_2-2}}) &= \cdots \oplus F_2\text{ in the $x_1$-grading and} \\
		&= \cdots \oplus F_1\text{ in the $z_1$-grading,}\\
		L(\omega_{z_{r_3}}) &= \cdots \oplus F_2^*\text{ in the $x_1$-grading and}\\
		&= \cdots \oplus F_3^*\text{ in the $z_1$-grading.}
	\end{align*}
	To be precise, the identification of each $\mf{g}^{(t)}$-representation is only up to a nonzero scalar in $\mb{C}$. We fix such identifications; any other choice will only alter the subsequent definitions of $d_i$ by scaling.
	
	Using the above, we define the differential $d_1$ to be dual to
	\[
	F_0^* \otimes R \to L(\omega_{x_{r_1}}) \otimes R \xto{\exp(Y)\sigma} L(\omega_{x_{r_1}}) \otimes R \to F_1^* \otimes R,
	\]
	the differential $d_2$ to be
	\[
	F_2 \otimes R \to L(\omega_{y_{r_2-2}}) \otimes R \xto{\exp(Y)\sigma} L(\omega_{y_{r_2-2}}) \otimes R \to F_1 \otimes R,
	\]
	and the differential $d_3$ to be dual to
	\[
	F_2^* \otimes R \to L(\omega_{z_{r_3}}) \otimes R \xto{\exp(Y)\sigma} L(\omega_{z_{r_3}}) \otimes R \to F_3^* \otimes R.
	\]
	In each composite, the first map is the inclusion of the top $z_1$-graded component, and the last map is the projection onto the top $x_1$-graded component. We define $\mb{F}$ as
	\[
	0 \to F_3 \otimes R \xto{d_3} F_2 \otimes R \xto{d_2} F_1 \otimes R \xto{d_1} F_0 \otimes R.
	\]
	\begin{example}\label{ex:split-exact-complex}
		If $\sigma = e \in W$ is the identity, then the Schubert cell $C_\sigma$ is a point, $R = \mb{C}$, and $Y = 0$. The differentials $d_1^*$, $d_2$, and $d_3^*$ are
		\begin{gather*}
			F_0^* \hookrightarrow L(\omega_{x_{r_1}}) \twoheadrightarrow F_1^*\\
			F_2 \hookrightarrow L(\omega_{y_{r_2-2}}) \twoheadrightarrow F_1\\
			F_2^* \hookrightarrow L(\omega_{z_{r_3}}) \twoheadrightarrow F_3^*
		\end{gather*}
		and $\mb{F}$ is a split exact complex of $\mb{C}$-vector spaces.
	\end{example}
	
	\begin{example}\label{ex:1nn1-resolution}
		Let $n \geq 3$ be an odd integer. We apply this construction with the parameters $r_1 = 1$, $r_2 = n-1$, $r_3 = 1$, and demonstrate that it recovers the well-known Buchsbaum-Eisenbud resolution from \cite{Buchsbaum-Eisenbud77} for the $(k-1) \times (k-1)$ pfaffians of a generic $k\times k$ skew-symmetric matrix, where $k \leq n$ is odd. As we will see, the value of $k$ depends on the choice of the parameter $\sigma$. In particular, $\sigma = e$ corresponds to $k = 1$ and we obtain a resolution of the unit ideal---i.e. a split complex---as was demonstrated in the previous example.
		
		The diagram $T$ is $D_n$, and we label the vertices as
		\[
		\begin{tikzcd}[column sep=small, row sep=small]
			\colorX{n-1} \ar[r,dash]& n-2\ar[r,dash] \ar[d,dash]& n-3\ar[r,dash] & \cdots \ar[r,dash]& 1\\
			& \colorZ{n}
		\end{tikzcd}
		\]
		following Bourbaki.
		
		The simple Lie algebra associated to this Dynkin diagram is $\mf{so}_{2n}$. Using the vertex ${n}$, we may decompose the Lie algebra as
		\[
		\mf{so}_{2n} = \bigwedge^2 F_1^* \oplus \mf{gl}(F_1) \oplus \bigwedge^2 F_1
		\]
		where $F_1 = \mb{C}^n$. Concretely, elements of $\mf{so}_{2n}$ are skew-symmetric endomorphisms of the self-dual space $F_1 \oplus F_1^*$.
		
		On the other hand, we could also use the vertex ${n-1}$ instead of the vertex ${n}$. With this perspective we get another decomposition of the standard representation as $F_2 \oplus F_2^*$ for another $n$-plane $F_2$, where $F_1 \cap F_2$ is $(n-1)$-dimensional (corresponding to the overlapping $A_{n-2}$ diagram consisting of the vertices 1 through $n-2$). Let $e_1,\ldots,e_n$ be a basis for $F_1$ and $e_1',\ldots,e_n'$ its dual basis. Then one can arrange for $e_1,\ldots,e_{n-1},e_n'$ to be a basis for $F_2$ and $e_1',\ldots,e_{n-1}',e_n$ its dual.
		
		The space $F_2 \oplus F_2^*$ comes equipped with an evident quadratic form. The subspace $F_2$ is isotropic; its $\SO(2n)$-orbit in $\Gr(n,2n)$ gives the \emph{isotropic/orthogonal Grassmannian} $\OG(n,2n)$. This is one of two isomorphic components comprising the entire set of isotropic $n$-planes. (Taking $n=2$ as an example, there are two families of lines on a smooth quadric surface in $\mb{P}^3$.)
		
		Here the Borel-fixed point $v\in \OG(n,2n)$ is represented by $F_2$. Its stabilizer ${P_{n-1}^+}\subset \SO(2n)$ corresponds to the subalgebra $\mf{gl}(F_2) \oplus \bigwedge^2 F_2$, and consists of all automorphisms of $F_2 \oplus F_2^*$ of the form
		\[
		(a,\varphi) \mapsto (g(a+f(\varphi)),\varphi\circ g^{-1})
		\]
		where $g \in \GL(F_2)$ is arbitrary and $f\colon F_2^* \to F_2$ is skew-symmetric.
		
		We analogously have ${P_{n}^-} \subset SO(2n)$ consisting of all automorphisms of $F_1 \oplus F_1^*$ of the form
		\[
		(a,\varphi) \mapsto (ga,(\varphi+f(a))\circ g^{-1})
		\]
		where $g \in GL(F_1)$ is arbitrary and $f\colon F_1 \to F_1^*$ is skew-symmetric. 
		
		The orthogonal Grassmannian $G/{P_{n-1}^+}$ decomposes into $\lceil \frac{n}{2} \rceil$ orbits under the action of ${P_{n}^-}$. The orbit containing a given isotropic $n$-plane $F$ is determined by $\dim (F \cap F_1^*)$, which can be any odd\footnote{The other component of the moduli of isotropic $n$-planes contains those with even-dimensional intersection with $F_1^*$.} number from 1 to $n$. The various choices of $\sigma$ correspond to the following isotropic $n$-planes, which are representatives of each ${P_{n}^-}$-orbit:
		\begin{align*}
			v &= \Span(e_1,\ldots,e_{n-1},e_n') = F_2\\
			\sigma_1 v &= \Span(e_1,\ldots,e_{n-3},e_n',e_{n-1}',e_{n-2}')\\
			\vdots\\
			\sigma_{(n-1)/2} v &= \Span(e_n',\ldots,e_1') = F_1^*
		\end{align*}
		We have
		\[
		G/{P_{n-1}^+} = \coprod_{i=0}^{(n-1)/2} {P_{n}^-} \sigma_i v = {P_{n}^-} v \amalg {X}
		\]
		where the complement of the open orbit ${P_{n}^-} v$ is a Schubert variety ${X}$ consisting of all isotropic $F$ such that $\dim (F \cap F_1^*) \geq 3$.
		
		Using the ordered basis $e_1,e_2,\ldots,e_n,e_n',\ldots,e_2',e_1'$ for the ambient space, the cell $C_{\sigma_i}$ can be parametrized as
		\[
		C_{\sigma_i} = \left\{\begin{bmatrix}
			I_{n-1-2i} & 0 & 0 & 0\\
			0 & Y_{2i+1} & I_{2i+1} & 0 
		\end{bmatrix}\right\}
		\]
		where $I_k$ is a $k\times k$ identity matrix, and $Y_{2i+1}$ is a generic $(2i+1)\times(2i+1)$ skew-symmetric matrix, but written so that it is antisymmetric across the antidiagonal (instead of the diagonal).
		
		Let's examine the specific example $\sigma = \sigma_{(n-1)/2}$, so that $C_\sigma$ is parametrized as
		\[
		C_{\sigma} = \left\{\begin{bmatrix}
			Y_{n} & I_{n} 
		\end{bmatrix}\right\}
		\]
		where $Y = Y_n$ is a generic $n\times n$ skew-symmetric matrix. The left half of this block matrix is the dual of
		\[
		F_2 \otimes R \hookrightarrow L(\omega_1) \otimes R \xto{(\exp Y)\sigma} L(\omega_1) \otimes R \to F_1 \otimes R.
		\]
		Next, consider the Pl\"ucker embedding $G/{P_{n-1}^+} \hookrightarrow \mb{P}(L({\omega_{n-1}}))$, where $L({\omega_{n-1}})$ is one of the half-spin representations of $\mf{so}_{2n}$. The coordinates in this projective space are given by $L({\omega_{n-1}})^\vee$, which has a $\mb{Z}$-grading induced by ${n} \in D_n$ as follows:
		\[
		L({\omega_{n-1}})^\vee = F_1 \oplus \bigwedge^3 F_1 \oplus \bigwedge^5 F_1 \oplus \cdots
		\]
		Its symmetric square $S_2 L(\omega_{n-1})$ contains the irreducible representation $L(2\omega_{n-1})$, which is also present in $\bigwedge^n L(\omega_1)$. This reflects the fact that the Pl\"ucker coordinates on $\mb{P}(L({\omega_{n-1}}))$ are square-roots of certain maximal minors of $\begin{bmatrix}
			Y_n & I_n
		\end{bmatrix}$; namely the coordinates in $\bigwedge^j F_1$ are square-roots of minors involving $n-j$ columns from the left block and the complementary $j$ columns from the right block.
		
		So for the differential $d_1$, whose dual is the composite
		\[
		\mb{C} \otimes R \hookrightarrow L(\omega_{n-1}) \otimes R \xto{(\exp Y)\sigma} L(\omega_{n-1}) \otimes R \to F_1^* \otimes R,
		\]
		we recover the $(n-1)\times(n-1)$ pfaffians of $Y$. A similar calculation shows that $d_3$ consists of these pfaffians as well.
	\end{example}
	
	\subsection{Multigrading by the root lattice}\label{sec:liccires-multigrading}
	From our setup, the ring $R$ is graded by the root lattice $Q = \bigoplus_{t\in T} \mb{Z}\alpha_t$. We show that, by giving the free modules in $\mb{F}$ appropriate multidegrees, we can arrange for the differentials $d_i$ to be homogeneous of degree zero. This grading can be inferred combinatorially without explicitly knowing $\exp Y$.
	
	\subsubsection{$Q$-grading on $\mb{F}$}\label{sec:liccires-multigrading-Q}
	From highest to lowest, the sequence of weights in $F_0^*$ is obtained by applying the reflections $s_{x_{r_1}},\ldots,s_{x_2}$ sequentially to $\omega_{x_{r_1}}$:
	\[
	Q_0' \coloneqq (\omega_{x_{r_1}}, \omega_{x_{r_1-1}} - \omega_{x_{r_1}}, \ldots,  \omega_{x_1} -  \omega_{x_2}).
	\]
	The decreasing sequence of weights in $F_1^*$ is obtained by applying the reflections $s_{x_{r_1}},\ldots,s_u,\ldots,s_{y_{r_2-2}}$ sequentially to $\omega_{x_{r_1}}$:
	\[
	Q_1' \coloneqq (\omega_{x_{r_1}}, \omega_{x_{r_1-1}} - \omega_{x_{r_1}},\ldots, \omega_{y_1}-\omega_{y_2},\omega_u- \omega_{y_1},\omega_{z_1} + \omega_{y_1} - \omega_u,\ldots,\omega_{z_1} - \omega_{y_{r_2-2}}).
	\]
	Similarly the decreasing sequence of weights in $F_2 \subset L(\omega_{y_{r_2-2}})$ is
	\[
	Q_2 \coloneqq (\omega_{y_{r_2-2}},\omega_{y_{r_2-3}} - \omega_{y_{r_2-2}},\ldots,\omega_u - \omega_{y_1},\omega_{x_1} + \omega_{z_1} - \omega_u,\ldots,\omega_{x_1}-\omega_{z_{r_3}})
	\]
	and the decreasing sequence of weights in $F_1 \subset L(\omega_{y_{r_2-2}})$,is
	\[
	Q_1 \coloneqq (\omega_{y_{r_2-2}},\omega_{y_{r_2-3}} - \omega_{y_{r_2-2}},\ldots,\omega_u - \omega_{y_1},\omega_{z_1} + \omega_{x_1} - \omega_u,\ldots,\omega_{z_1}-\omega_{x_{r_1}}).
	\]
	Finally the decreasing sequence of weights in $F_2^* \subset L(\omega_{z_{r_3}})$ is
	\[
	Q_2' \coloneqq (\omega_{z_{r_3}},\omega_{z_{r_3-1}}-\omega_{z_{r_3}},\ldots,\omega_{z_1}-\omega_{z_2},\omega_u-\omega_{z_1},\omega_{x_1}+\omega_{y_1}-\omega_u,\ldots,\omega_{x_1}-\omega_{y_{r_2-2}})
	\]
	and the decreasing sequence of weights in $F_3^* \subset L(\omega_{z_{r_3}})$ is
	\[
	Q_3' \coloneqq (\omega_{z_{r_3}},\omega_{z_{r_3-1}}-\omega_{z_{r_3}},\ldots,\omega_{z_1}-\omega_{z_2}).
	\]
	With our grading on $R$ and our definition of $Y$, the automorphism $\exp Y$ of each representation is homogeneous of degree zero by construction. It is the action of $\sigma$ which does not respect the grading.
	
	To work around this, we can alternatively view the maps $d_1^*$, $d_2$, and $d_3^*$ as
	\begin{gather}\label{eq:stitching-multigrading}
		\begin{split}
			\sigma F_0^* \otimes R \hookrightarrow L(\omega_{x_{r_1}}) \otimes R \xto{\exp Y} L(\omega_{x_{r_1}}) \otimes R \twoheadrightarrow F_1^* \otimes R,\\
			\sigma F_2 \otimes R \hookrightarrow L(\omega_{y_{r_2-2}}) \otimes R \xto{\exp Y} L(\omega_{y_{r_2-2}}) \otimes R \twoheadrightarrow F_1 \otimes R,\\
			\sigma F_2^* \otimes R \hookrightarrow L(\omega_{z_{r_3}}) \otimes R \xto{\exp Y} L(\omega_{z_{r_3}}) \otimes R \twoheadrightarrow F_3^* \otimes R.
		\end{split}
	\end{gather}
	Now all the maps in each composite are homogeneous of degree zero. In the following theorem, when we say to take $F_i$ as being generated in a sequence of degrees $(\lambda_1,\ldots,\lambda_{f_i})$, we mean this in the decreasing weight order on $F_i$. So for instance, the highest weight vector of $F_i$ is in degree $\lambda_1$ and the lowest weight vector of $F_i$ is in degree $\lambda_{f_i}$.
	
	In the following statement, if $\underline{\lambda}$ is a sequence, $\operatorname{rev}(\underline{\lambda})$ means its reverse, $\underline{\lambda} + \lambda$ means to add $\lambda$ to each term, $\sigma\underline{\lambda}$ means to apply $\sigma$ to each term, and $-\underline{\lambda}$ means to multiply each term by $-1$.
	\begin{thm}
		Take $F_0 \otimes R$ as being generated in degrees
		\[
		-\sigma \operatorname{rev}(Q_0') + \sigma \omega_{x_{r_1}},
		\]
		$F_1 \otimes R$ generated in degrees
		\[
		-\operatorname{rev}(Q_1') + \sigma \omega_{x_{r_1}}, \text{ or equivalently } Q_1 - \omega_{z_1} + \sigma \omega_{x_{r_1}},
		\]
		$F_2 \otimes R$ generated in degrees
		\[
		\sigma Q_2 - \omega_{z_1} + \sigma \omega_{x_{r_1}}, \text{ or equivalently } -\sigma \operatorname{rev}(Q_2') + \sigma \omega_{x_1} - \omega_{z_1} + \sigma \omega_{x_{r_1}},
		\]
		and $F_3 \otimes R$ generated in degrees
		\[
		-\sigma \operatorname{rev}(Q_3') + \sigma \omega_{x_1} - \omega_{z_1} + \sigma \omega_{x_{r_1}}.
		\]
		Then the differentials of
		\[
		\mb{F} \colon F_3 \otimes R \xto{d_3} F_2 \otimes R \xto{d_2} F_1 \otimes R \xto{d_1} F_0 \otimes R
		\]
		as defined in \S\ref{sec:gen-licci-differentials} are homogeneous of degree zero.
	\end{thm}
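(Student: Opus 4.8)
The plan is to derive the theorem from a single observation, essentially already made in the paragraph preceding the statement: once $\sigma$ is absorbed into the source modules, the ``stitched'' composites \eqref{eq:stitching-multigrading} are built entirely from maps that respect the root-lattice grading. Concretely, I would equip each $L(\omega)\otimes R$ appearing with the total $Q$-grading that places a weight-$\mu$ vector tensored with an $R$-element of degree $\delta$ in degree $\mu+\delta$. Writing $Y=\sum a_i\otimes a_i'$ with $a_i\in\mf g_{\alpha(i)}$ and $a_i'$ of $R$-degree $-\alpha(i)$, each summand of $Y$ raises the $L(\omega)$-weight by $\alpha(i)$ while lowering the $R$-degree by $\alpha(i)$, so $Y$, every power $Y^k$, and hence $\exp Y$ — well defined by the local nilpotence recalled in \S\ref{bg:lie-exp} — are homogeneous of total degree $0$; and the inclusion of, and projection onto, a graded component of $L(\omega)$ are maps of sums of weight spaces, so also homogeneous of degree $0$. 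Hence each composite in \eqref{eq:stitching-multigrading} is homogeneous of degree $0$ once its twisted source is graded by the weights of $\sigma F_0^*$, $\sigma F_2$, $\sigma F_2^*$ (namely $\sigma Q_0'$, $\sigma Q_2$, $\sigma Q_2'$, in the decreasing-weight order of each $F_i$, which $\sigma$ permutes bijectively) and its target by the weights of $F_1^*$, $F_1$, $F_3^*$ (namely $Q_1'$, $Q_1$, $Q_3'$) — each grading canonical up to a single overall additive shift.

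Next I would push these through the dualizations built into the construction: $d_1$ and $d_3$ are duals of stitched composites while $d_2$ is a stitched composite itself. Dualizing reverses the weight order and negates weights, and only the \emph{sources} of the stitched composites are $\sigma$-twisted; tracking this gives $d_1$ homogeneous of degree $0$ with $F_1\otimes R$ generated in degrees $-\operatorname{rev}(Q_1')$ and $F_0\otimes R$ in degrees $-\sigma\operatorname{rev}(Q_0')$, $d_2$ with $F_2\otimes R$ in degrees $\sigma Q_2$ and $F_1\otimes R$ in degrees $Q_1$, and $d_3$ with $F_2\otimes R$ in degrees $-\sigma\operatorname{rev}(Q_2')$ and $F_3\otimes R$ in the degrees asserted, each up to an overall shift. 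The two middle modules are each constrained by two adjacent differentials, so the statement reduces to checking that the two prescriptions for $F_1$ and for $F_2$ agree, together with fixing the shifts.

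The one step with genuine content is then the pair of sequence identities
\[
-\operatorname{rev}(Q_1') = Q_1-\omega_{z_1},\qquad -\operatorname{rev}(Q_2') = Q_2-\omega_{x_1},
\]
which are exactly the ``or equivalently'' clauses of the statement and which make the two prescriptions compatible up to a shift. Both sides of each are explicit strings of partial products of simple reflections applied to a fundamental weight, so I would verify them termwise using $s_i\omega_j=\omega_j-\delta_{ij}\alpha_i$ together with $\alpha_i=2\omega_i-\sum_{j\sim i}\omega_j$; alternatively, each identity says that two sides list the weight multiset of one and the same standard $\mf{sl}(F_1)$- (resp.\ $\mf{sl}(F_2)$-)representation realized in two different $\mf g$-modules, hence differ by a constant that one pins down from a single endpoint. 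Granting this, the overall shifts are free, and fixing that of $F_0$ so that $F_0\otimes R$ is generated in degrees $-\sigma\operatorname{rev}(Q_0')+\sigma\omega_{x_{r_1}}$ (equivalently, so that the lowest-weight generator of $F_0$ sits in degree $0$) and propagating through $d_1$, $d_2$, $d_3$ — each homogeneous of degree $0$ — yields precisely the shifts in the statement.

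I expect the main obstacle to be exactly this bookkeeping: keeping straight the interplay between the $\sigma$-twist (on sources only) and the two dualizations defining $d_1$ and $d_3$, and carrying out the termwise check of the weight-string identities. The latter is routine along the three legs of $T$ but needs a little care at the trivalent vertex $u$, where the ``$\omega_{\mathrm{next}}-\omega_{\mathrm{prev}}$'' pattern of a weight string breaks; the relations $\langle\alpha_i,\omega_j\rangle=\delta_{ij}$ and $\omega=\sum_t\langle\omega,\omega_t\rangle\alpha_t$ for the symmetric form on $\mf h^*$ are convenient there. Everything else is formal; the assertions $d^2=0$ and acyclicity of $\mb F$ are not part of this statement and are deferred to \S\ref{sec:liccires-acyclicity}.
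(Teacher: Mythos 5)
Your proposal is essentially the paper's own argument: the proof given there is exactly the stitching of the degree-zero composites \eqref{eq:stitching-multigrading}, with the overall shift $\sigma\omega_{x_{r_1}}$ fixed so that a generator of $F_0\otimes R$ lies in multidegree zero, and your homogeneity check for $\exp Y$ together with the termwise verification of $-\operatorname{rev}(Q_1')=Q_1-\omega_{z_1}$ and $-\operatorname{rev}(Q_2')=Q_2-\omega_{x_1}$ just makes explicit what the paper leaves implicit.

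One step, however, is not what you assert it to be. By your own (correct) rule that only the \emph{sources} of the stitched composites carry the $\sigma$-twist, dualizing the third composite gives $F_2\otimes R$ in degrees $-\sigma\operatorname{rev}(Q_2')$ but $F_3\otimes R$ in degrees $-\operatorname{rev}(Q_3')$, up to the common shift; this is \emph{not} ``the degrees asserted,'' which carry a $\sigma$ on $\operatorname{rev}(Q_3')$, and the two genuinely differ, since any minimal double-coset representative $\sigma\neq e$ lies outside $W_{z_1}$ and hence does not fix $\omega_{z_1}$. In fact your bookkeeping gives the right answer and the printed $\sigma$ appears to be a misprint: in the smallest case $r_1=r_3=1$, $r_2=2$, $\sigma=s_{z_1}s_u s_{x_1}$ (so $T=A_3$, $\mb{F}^\sigma$ is the Koszul complex on the three variables of $R$, of multidegrees $-\alpha_{z_1}$, $-(\alpha_u+\alpha_{z_1})$, $-(\alpha_{x_1}+\alpha_u+\alpha_{z_1})$), homogeneity of $d_3$ against the stated (and correct) $F_2$-degrees forces the generator of $F_3$ into degree $-(\alpha_{x_1}+2\alpha_u+3\alpha_{z_1})$, which equals $-\operatorname{rev}(Q_3')+\sigma\omega_{x_1}-\omega_{z_1}+\sigma\omega_{x_{r_1}}$, whereas the printed $-\sigma\operatorname{rev}(Q_3')+\sigma\omega_{x_1}-\omega_{z_1}+\sigma\omega_{x_{r_1}}$ equals $-(\alpha_{x_1}+2\alpha_u+2\alpha_{z_1})$. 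So the gap is not in your method but in the silent claim that it reproduces the stated $F_3$-degrees verbatim: you should either derive the $\sigma$-free formula and flag the discrepancy with the statement, or supply the identity $\sigma Q_3'=Q_3'$ that the assertion would require (and which is false in general). Everything else, including your two suggested ways of checking the weight-string identities and the handling of the shifts, is sound and matches the paper's intent.
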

	\begin{proof}
		This follows by stitching together the gradings in \eqref{eq:stitching-multigrading}. The overall shift by $\sigma \omega_{x_{r_1}}$ is somewhat arbitrary; it is only for the purpose of having one generator of $F_0 \otimes R$ in multidegree zero. All weights in a given highest weight representation differ from one another by elements of the root lattice $Q$, so this overall shift ensures that the multidegrees lie in the root lattice $Q$ instead of some affine translate thereof. Furthermore if $r_1 = 1$ we have $F_0 \otimes R = R$ in multidegree zero.
	\end{proof}
	To figure out the explicit representations of these weights in $Q = \bigoplus_{i\in T} \mb{Z}\alpha_i \cong \mb{Z}^n$, where $n = r_1+r_2+r_3-1$, we can use the Cartan matrix $A$. Since
	\[
	\alpha_i = \sum_j A_{i,j} \omega_j,
	\]
	writing out the multidegrees as linear combinations of the fundamental weights and then multiplying the coefficients by $A^{-1}$ yields our desired $\mb{Z}^n$-multidegrees.
	\begin{remark}
		If $T$ is one of the affine Dynkin diagrams $E_n^{(1)}$ then the Cartan matrix $A$ is not invertible. However, we can simply enlarge the diagram by increasing one of the parameters $r_i$, keeping $\sigma$ the same. Then we are no longer in affine type, and the output $\mb{F}$ is only altered by the addition of a split exact part corresponding to the parameter $r_i$ that was increased.
	\end{remark}
	
	\subsubsection{Symmetry of $x$ and $z$ arms}\label{sec:liccires-exchange-xz}
	The roles played by the left and bottom arms are symmetric in our construction. Recall from \S\ref{bg:Weyl-group-and-subgroups} that if $\sigma \in \leftindex^{z_1}W^{x_1}$, then $\sigma^{-1} \in \leftindex^{x_1} W^{z_1}$. Hence we can do the same construction of \S\ref{sec:liccires-construction} interchanging the roles of the $x$ and $z$ arms with $\sigma^{-1}$ in place of $\sigma$. Let
	\[
	\mf{n}_{\sigma^{-1}} = \bigoplus_{\alpha > 0, \sigma\alpha < 0} \mf{g}_\alpha.
	\]
	Let $R' = \Sym (\mf{n}_{\sigma^{-1}})^*$, $Y' \in \mf{n}_{\sigma^{-1}} \otimes R'$ be the generic element of $\mf{n}_{\sigma'}$, and
	\[
	\mb{F}' \colon 0 \to F_0 \otimes R' \to F_1 \otimes R' \to F_2 \otimes R' \to F_3 \otimes R'
	\]
	be the sequence of differentials produced in this setting.
	
	There is an involution $\tau$ of $\mf{g}$, the \emph{Cartan involution}, which interchanges the Lie algebra generators $e_i \leftrightarrow -f_i$ for $i \in T$ and acts by $-1$ on $\mf{h}$ (here $f_i$ refers to \S\ref{bg:lie-construction}, not the integers $f_i$ fixed throughout this section). We have an isomorphism of nilpotent subalgebras
	\[
	\mf{n}_{\sigma^{-1}} = \bigoplus_{\alpha > 0, \sigma \alpha < 0} \mf{g}_\alpha \xto{\sigma\tau} \mf{n}_{\sigma} = \bigoplus_{\alpha > 0, \sigma^{-1}\alpha < 0} \mf{g}_\alpha
	\]
	which dually induces an isomorphism $R \xto{\cong} R'$ and a map $\mf{n}_{\sigma} \otimes R \to \mf{n}_{\sigma} \otimes R'$ which sends $Y \mapsto \sigma\tau Y'$.
	
	The formula $(Xf)(v) = f((-\tau X)v)$ for $X \in\mf{g}$, $v \in L(\omega)$, and $f \in L(\omega)^\vee$ defines an action of $\mf{g}$ on $L(\omega)^\vee$ that makes it isomorphic to $L(\omega)$. We fix identifications $L(\omega) \cong L(\omega)^\vee$, and by restriction we get identifications $F_i \cong F_i^*$ as vector spaces.
	\begin{prop}\label{prop:liccires-exchange-xz}
		$\mb{F}' \cong \mb{F}^* \otimes_R R'$ via the isomorphism $R \xto{\cong} R'$ described above.
	\end{prop}
	\begin{proof}
		As an example, let us consider the differential $d_1^*$ of $\mb{F}^*$. By construction it is
		\[
		0 \to F_0^* \otimes R \to L(\omega_{x_{r_1}}) \otimes R \xto{\exp(Y)\sigma} L(\omega_{x_{r_1}}) \otimes R \to F_1^* \otimes R.
		\]
		Base-change to $R'$ amounts to replacing $Y$ with $\sigma\tau Y'$, and
		\[
		\exp(\sigma\tau Y')\sigma = \sigma\exp(\tau Y') \sigma^{-1} \sigma = (\exp(-\tau Y') \sigma^{-1})^{-1}.
		\]
		We have the commutative diagram
		\[
		\begin{tikzcd}
			0 \ar[r]& F_0^* \otimes R' \ar[r] \ar[d,"\cong"] & L(\omega_{x_{r_1}}) \otimes R' \ar[rr,"(\exp(-\tau Y')\sigma^{-1})^{-1}"]\ar[d,"\cong"] && L(\omega_{x_{r_1}}) \otimes R' \ar[r]\ar[d,"\cong"] & F_1^* \otimes R'\ar[d,"\cong"]\\
			0 \ar[r]& F_0 \otimes R' \ar[r] & L(\omega_{x_{r_1}})^\vee \otimes R' \ar[rr,"(\exp(Y')\sigma^{-1})^{-1}"] && L(\omega_{x_{r_1}})^\vee \otimes R' \ar[r] & F_1 \otimes R'
		\end{tikzcd}
		\]
		where the vertical maps are induced by the involution $\tau$. Note that the action of $(\exp(Y')\sigma^{-1})^{-1}$ on $L(\omega_{x_{r_1}})^\vee \otimes R'$ is dual to the action of $\exp(Y')\sigma^{-1}$ on $L(\omega_{x_{r_1}}) \otimes R'$, so the bottom row is the last differential of $\mb{F}'$ by definition.
		
		The situation for the other two differentials is completely analogous, so we omit it.
	\end{proof}
	The isomorphism $R \xto{\cong} R'$ is not degree-preserving. By construction, we instead have:
	\begin{cor}\label{cor:xz-interchange-grading}
		The grading on $\mb{F}'$ is obtained by applying $\sigma^{-1}$ to the grading on $\mb{F}$ and then multiplying by $-1$.
	\end{cor}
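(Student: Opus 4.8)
The plan is to obtain the corollary as a graded refinement of Proposition~\ref{prop:liccires-exchange-xz}, which already supplies a concrete isomorphism of complexes $\mb{F}' \cong \mb{F}^* \otimes_R R'$; it then suffices to transport the root-lattice grading along that isomorphism. I would start by recalling, from the theorem in \S\ref{sec:liccires-multigrading-Q}, that in $\mb{F}$ the $Q$-degree of each generator of $F_i \otimes R$ is the weight of the corresponding weight vector of $F_i$ inside the relevant fundamental representation (as set up in \S\ref{sec:gen-licci-differentials}), shifted by one fixed element of $\mf{h}^*$; and that the grading on $\mb{F}'$ is, by definition, given by the identical recipe applied with $\sigma^{-1}$ and the $x$- and $z$-arms interchanged.

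Next I would track the grading through the three constituents of the isomorphism in Proposition~\ref{prop:liccires-exchange-xz}. First, passing to the $R$-linear dual $\mb{F}^* = \Hom_R(\mb{F}, R)$ sends a free summand with generator in $Q$-degree $\mu$ to one with generator in degree $-\mu$, and reverses the decreasing weight order (the weights of $F_i^*$ being the negatives of those of $F_i$). Second, base change along $\phi\colon R \xrightarrow{\cong} R'$: by construction $\phi$ is the symmetric algebra on the dual of $\sigma\tau\colon \mf{n}_{\sigma^{-1}} \to \mf{n}_\sigma$, and $\sigma\tau$ carries the coordinate direction of $\mf{g}_\alpha$ in $\mf{n}_{\sigma^{-1}}$ onto that of $\mf{g}_{-\sigma\alpha}$ in $\mf{n}_\sigma$; since the variable dual to $\mf{g}_\beta$ carries multidegree $-\beta$, this shows that $\phi$, and hence base change along it, twists the $Q$-grading by the lattice automorphism $\mu \mapsto -\sigma^{-1}\mu$. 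Third, the Cartan-involution identifications $L(\omega) \cong L(\omega)^\vee$, restricting to $F_i \cong F_i^*$, pair a weight-$\mu$ vector with one of weight $-\mu$, so they alter the weight part of each degree by a sign. Composing these three effects and comparing the result with the intrinsic grading on $\mb{F}'$ recalled above should produce precisely the transformation ``apply $\sigma^{-1}$, then multiply by $-1$''.

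The conceptual content here is small; I expect the only real obstacle to be the sign-and-shift bookkeeping in that last comparison. Concretely, one must check that the reversal of weight order coming from duality is exactly cancelled by the bijection between the weight bases of $F_i$ and $F_i^*$; that the sign contributions from duality and from the Cartan identifications combine with the $-\sigma^{-1}$ twist so as to leave ``$\sigma^{-1}$ then $-1$'' rather than a different sign; and that the fixed $\mf{h}^*$-shift chosen for $\mb{F}$ in \S\ref{sec:liccires-multigrading} is carried by $\phi$ onto the one chosen for $\mb{F}'$. A convenient way to sidestep the shift entirely is to observe that all multidegrees appearing in $\mb{F}$ (respectively $\mb{F}'$) lie in a single coset of the root lattice, so that it is enough to verify the claimed rule on differences of generator degrees --- where the shift drops out, using that the differentials are homogeneous of degree zero --- together with the single distinguished generator of $F_0 \otimes R$ normalized to sit in multidegree $0$.
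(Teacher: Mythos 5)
Your overall strategy is the same one the paper has in mind: the paper offers no detailed argument at all (the corollary is asserted ``by construction,'' resting on Proposition~\ref{prop:liccires-exchange-xz} together with the observation that the isomorphism $R \cong R'$ induced by $\sigma\tau$ carries the degree-$\mu$ piece of $R$ to the degree-$(-\sigma^{-1}\mu)$ piece of $R'$), so fleshing this out by transporting the grading along $\mb{F}' \cong \mb{F}^* \otimes R'$ is the right idea. But the step you defer as ``sign-and-shift bookkeeping'' is exactly where the content lies, and as you have set it up it does not come out. Your third ingredient is spurious: the Cartan-involution identifications $F_i \cong F_i^*$ are isomorphisms of free $R'$-modules, and transporting a grading along a module isomorphism does not change any generator degrees --- the weight-negation you invoke there is already spent in the dual-basis step. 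Dropping that extra sign, the composition of your remaining two effects assigns to the generator of $\mb{F}'$ corresponding (via Proposition~\ref{prop:liccires-exchange-xz}) to a degree-$\mu$ generator of $\mb{F}$ the degree $(-\sigma^{-1})(-\mu) = \sigma^{-1}\mu$, up to one overall shift --- i.e.\ ``apply $\sigma^{-1}$,'' not ``apply $\sigma^{-1}$ and negate.'' This is not a removable ambiguity: in the smallest case $\underline{f}=(1,3,3,1)$, $\sigma = s_{z_1}s_us_{x_1}$, both $\mb{F}$ and $\mb{F}'$ are Koszul complexes on variables of degrees $\{-\alpha_{z_1},\,-(\alpha_u+\alpha_{z_1}),\,-(\alpha_{x_1}+\alpha_u+\alpha_{z_1})\}$ resp.\ $\{-\alpha_{x_1},\,-(\alpha_{x_1}+\alpha_u),\,-(\alpha_{x_1}+\alpha_u+\alpha_{z_1})\}$, and one checks directly that the degree multiset of $F_1\otimes R'$ in $\mb{F}'$ is a translate of $\sigma^{-1}$ applied to the degrees of $F_1$ in $\mb{F}$, but is \emph{not} a translate of $-\sigma^{-1}$ applied to them. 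So your argument, carried out correctly, lands on ``$\sigma^{-1}$ applied to the grading on $\mb{F}$'' (equivalently, $-\sigma^{-1}$ applied to the grading on $\mb{F}^*$, which is the object Proposition~\ref{prop:liccires-exchange-xz} actually base-changes), and bridging the gap to the corollary's literal wording requires either a reinterpretation of which grading ``the grading on $\mb{F}$'' refers to or an argument you have not supplied.

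A second, smaller problem is your plan for pinning down the overall shift. The distinguished degree-zero generator of $\mb{F}$ lives in $F_0\otimes R$ at the augmentation end, but its partner under the isomorphism of Proposition~\ref{prop:liccires-exchange-xz} lives in $F_0\otimes R'$, which is the \emph{last} term of $\mb{F}'$; the generator that the construction of \S\ref{sec:liccires-multigrading} normalizes to degree zero in $\mb{F}'$ sits in $F_3\otimes R'$ at the opposite end. So ``both distinguished generators sit in degree $0$'' is not available as stated, and comparing degree differences alone leaves precisely the sign question above unresolved (negating a multiset is not a translation in general). To make the proof complete you would need to (i) delete the Cartan-involution sign, (ii) state the conclusion in the form your transport argument actually proves (degrees of $\mb{F}'$ are $-\sigma^{-1}$ applied to the degrees of $\mb{F}^*$, up to a single shift of the root-lattice grading), and (iii) compute that shift by evaluating one generator explicitly, e.g.\ the generator of $F_3\otimes R'$, rather than appealing to the degree-zero normalization of $\mb{F}$.
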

	
	\begin{example}\label{ex:1562-nonzero-multi}
		Let $r_1 = 2$, $r_2 = 4$, and $r_3 = 1$. The diagram is $T = E_6$ and we use Bourbaki numbering of the vertices so that $(1,\colorX{2},\colorZ{3},4,5,6) = (z_2,\colorX{x_1},\colorZ{z_1},u,y_1,y_2)$. Take
		\[
		\sigma = \colorZ{s_3} s_4 s_2 s_5 s_6 s_1 s_4 s_5 s_3 s_4 \colorX{s_2} \in \leftindex^{3}W^2.
		\]
		The construction applied to $\sigma^{-1} \in \leftindex^{2}W^3$ yields
		\[
		\mb{F}'\coloneqq 0 \to R' \to R'^6 \to R'^5 \to R'^2.
		\]
		Taking its dual and shifting so that the last term is generated in degree zero, we obtain the following multigrading:
		\begin{align*}
			0 \to
			\bigoplus&\begin{matrix}
				R'((1, & {5}, & 3, & 5, & 2, & 1))\\
				R'((1, & {6}, & 4, & 7, & 4, & 2))
			\end{matrix}
			\to\\[1em]\to
			\bigoplus&\begin{matrix}
				R'((1, & {4}, & 3, & 5, & 3, & 2))\\
				R'((1, & {4}, & 3, & 5, & 3, & 1))\\
				R'((1, & {4}, & 3, & 5, & 2, & 1))\\
				R'((1, & {4}, & 3, & 4, & 2, & 1))\\
				R'((1, & {4}, & 2, & 4, & 2, & 1))\\
				R'((0, & {4}, & 2, & 4, & 2, & 1))
			\end{matrix}
			\to\\[1em]\to
			\bigoplus&\begin{matrix}
				R'((0, & {2}, & 1, & 2, & 1, & 0))\\
				R'((0, & {2}, & 1, & 2, & 1, & 1))\\
				R'((1, & {3}, & 2, & 3, & 2, & 1))\\
				R'((1, & {3}, & 2, & 4, & 2, & 1))\\
				R'((1, & {3}, & 3, & 4, & 2, & 1))
			\end{matrix}
			\to\\\to
			&R'.
		\end{align*}
		Here for example $R'((1,5,3,5,2,1))$ means a copy of $R'$ generated in multidegree
		\[
		-(\alpha_1 + 5\alpha_2 + 3\alpha_3 + 5\alpha_4 + 2\alpha_5 + \alpha_6) \in Q
		\]
		As mentioned in \S\ref{sec:liccires-base-ring}, if we coarsen the multigrading to a $\mb{Z}$-grading by sending $\sum c_i \alpha_i$ to $-c_2$, the ring $R'$ is a positively graded polynomial ring. The resolution has the grading
		\[
		0 \to R'(-5) \oplus R'(-6) \to R'^6(-4) \to R'^2(-2) \oplus R'^3(-3) \to R'.
		\]
		(In fact, this is none other than the resolution in \cite[Proposition~3.6]{Brown87} with $n = 5$.)
	\end{example}
	
	\subsection{Proof that $\mb{F}$ is a resolution}\label{sec:liccires-acyclicity}
	We now prove that the differentials $d_i$ defined in \S\ref{sec:liccires-construction} actually assemble into an acyclic complex.
	\begin{lem}
		The composites $d_1 d_2$ and $d_2 d_3$ are identically zero.
	\end{lem}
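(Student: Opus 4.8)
The plan is to interpret each composite as a map between top graded components of fundamental representations, factoring through an intermediate representation, and then to show that the relevant intermediate graded piece simply vanishes for weight reasons. Recall from \S\ref{sec:gen-licci-differentials} that $d_2$ is, after stripping off the inclusion of $F_2$ into the top $z_1$-graded piece of $L(\omega_{y_{r_2-2}})$ and the projection onto its top $x_1$-graded piece, just the restriction of the automorphism $\exp(Y)\sigma$ of $L(\omega_{y_{r_2-2}})\otimes R$; likewise $d_1^*$ is the restriction of $\exp(Y)\sigma$ on $L(\omega_{x_{r_1}})\otimes R$. So $d_1 d_2$ is dual to a map $F_2 \to F_1 \to $ (nothing) — more precisely I want to exhibit $d_1 d_2$ as a composite of $\mf{g}$-equivariant structural maps (inclusions/projections of graded components, and the Plücker-type multiplication maps among the $L(\omega_i)^\vee$) conjugated by $\exp(Y)\sigma$, so that the vanishing is a statement purely about representations of $\mf{g}$ and hence can be checked at $\sigma = e$, $Y = 0$, i.e.\ in the split exact complex of Example~\ref{ex:split-exact-complex}.

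Concretely, here is the sequence of steps I would carry out. First, observe that $\exp(Y)\sigma$ is an automorphism of $L(\omega)\otimes R$ for \emph{each} $\omega$ compatibly (it comes from a single automorphism of the ambient ind-variety / of $\mf{A} = \bigoplus L(n\omega_{x_1})^\vee$), so it commutes with every $\mf{g}$-equivariant map between representations. Second, identify $d_1$ and $d_2$ with the structural maps: $d_2$ corresponds to the composite $F_2 \hookrightarrow L(\omega_{y_{r_2-2}}) \twoheadrightarrow F_1$ twisted by $\exp(Y)\sigma$ on $L(\omega_{y_{r_2-2}})$, and $d_1$ corresponds to $F_1^* \hookleftarrow L(\omega_{x_{r_1}})^\vee$ side, i.e.\ $d_1$ itself is dual to $F_0^* \hookrightarrow L(\omega_{x_{r_1}}) \twoheadrightarrow F_1^*$ twisted by $\exp(Y)\sigma$ on $L(\omega_{x_{r_1}})$. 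Since the twisting automorphism is the same invertible operator on every $L(\omega)\otimes R$ and commutes with $\mf{g}$-maps, the composite $d_1 d_2$ equals $\exp(Y)\sigma$ conjugating a purely $\mf{g}$-equivariant composite $F_2 \to F_0^{**} = F_0$ (or its appropriate dual formulation). Third — this is the crux — show that this underlying $\mf{g}$-equivariant composite is zero. It factors as $F_2 \hookrightarrow L(\omega_{y_{r_2-2}})$, then some equivariant map to $L(\omega_{x_{r_1}})$ (via the algebra structure on $\bigoplus L(n\omega_{x_1})^\vee$, or rather via $\Hom_{\mf{g}}$ between the relevant representations), then $L(\omega_{x_{r_1}}) \twoheadrightarrow F_1^*$; and the claim is that the relevant $x_1$-graded or $z_1$-graded component through which it must pass is the zero space, because $F_1 \subset L(\omega_{y_{r_2-2}})$ sits in the top $z_1$-degree while $F_1^* \subset L(\omega_{x_{r_1}})$ sits in the top $x_1$-degree, and the pairing/multiplication forces an intermediate component which is in too high a degree to be nonzero. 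Equivalently, I would specialize to $\sigma = e$, $R = \mb{C}$, $Y = 0$: there $\mb{F}$ is the split exact complex $F_3 \to F_2 \to F_1 \to F_0$ of Example~\ref{ex:split-exact-complex}, where $d_2 = (F_2 \hookrightarrow L(\omega_{y_{r_2-2}}) \twoheadrightarrow F_1)$ and $d_1^* = (F_0^* \hookrightarrow L(\omega_{x_{r_1}}) \twoheadrightarrow F_1^*)$ etc., and $d_1 d_2 = 0$ is immediate from exactness of the split complex. Finally, lift back: since the general $d_i$ is obtained from the $\sigma = e$ version by applying the $R$-linear automorphism $\exp(Y)\sigma$ (which, being a single compatible automorphism of the whole system, commutes with all the structural inclusions and projections in a way that intertwines the two composites), $d_1 d_2 = 0$ holds over $R$ as well. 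The same argument verbatim, with $(L(\omega_{y_{r_2-2}}), L(\omega_{z_{r_3}}))$ in place of $(L(\omega_{x_{r_1}}), L(\omega_{y_{r_2-2}}))$ and the $z_1$/$x_1$ roles swapped, gives $d_2 d_3 = 0$.

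The main obstacle is step three: carefully justifying that the composite of structural maps really does vanish, and in particular making precise the sense in which ``$\exp(Y)\sigma$ commutes with the structural maps'' so that the general case reduces cleanly to $\sigma = e$. The subtlety is that the inclusions $F_2 \hookrightarrow L(\omega_{y_{r_2-2}})$ and projections $L(\omega_{x_{r_1}}) \twoheadrightarrow F_1^*$ are only maps of $\mf{g}^{(x_1)}$- or $\mf{g}^{(z_1)}$-representations, not of $\mf{g}$, so one cannot naively commute them past $\exp(Y)\sigma$. The honest way around this is to note that $d_2$ as a map $F_2 \otimes R \to F_1 \otimes R$ is literally defined as the composite displayed in \S\ref{sec:gen-licci-differentials}, and to write $d_1 d_2$ out as the long composite through $L(\omega_{y_{r_2-2}})$ and $L(\omega_{x_{r_1}})$; the middle portion $F_1 \otimes R \hookrightarrow L(\omega_{y_{r_2-2}})\otimes R$ followed by $F_0^* \otimes R \hookrightarrow L(\omega_{x_{r_1}})\otimes R$ does not obviously chain up, so one needs the additional input that these fundamental representations are linked through the Plücker algebra $\mf{A}$ in a way compatible with a \emph{single} ambient automorphism $\exp(Y)\sigma$ of $G/P$. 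Once that compatibility is set up (it is essentially the statement that all the $d_i$ are restrictions of one geometric object — the parametrization of $C_\sigma$ — as explained in \S\ref{sec:liccires-base-ring}), the vanishing becomes a weight-space bookkeeping exercise that is cleanest to check at the torus-fixed point $\sigma = e$. I expect the authors' proof to be shorter than this and to argue directly at the level of weights, but the reduction-to-$\sigma = e$ strategy above is the conceptually safest route.
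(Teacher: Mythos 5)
There is a genuine gap at the step you yourself flag as the crux. The composite $d_1 d_2$ is \emph{not} a conjugate by $\exp(Y)\sigma$ of a fixed $\mf{g}$-equivariant "structural" map: the inclusion $F_2 \hookrightarrow L(\omega_{y_{r_2-2}})$ and the projection $L(\omega_{x_{r_1}}) \twoheadrightarrow F_1^*$ are only $\mf{g}^{(z_1)}$- resp.\ $\mf{g}^{(x_1)}$-equivariant, so $\exp(Y)\sigma$ does not commute past them, and your proposed repair via "compatibility through the Pl\"ucker algebra $\mf{A}$" never materializes into an actual identity. Indeed, if such commutation held, every $\mb{F}^\sigma$ would be conjugate to the split complex of Example~\ref{ex:split-exact-complex}, contradicting Theorem~\ref{thm:F-is-acyclic}, which shows $\mb{F}^\sigma$ resolves a nontrivial module for $\sigma \neq e$. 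Moreover, even granting a pointwise check, verifying $d_1d_2=0$ at $\sigma=e$, $Y=0$ takes place on a \emph{different} cell ($C_e$ is a point, with its own ring $\mb{C}$), so it says nothing about the polynomial identity on $C_\sigma$; a specialization/density argument would need a dense subset of $C_\sigma$ itself, which you do not produce. Finally, the "underlying $\mf{g}$-equivariant composite $F_2 \to L(\omega_{y_{r_2-2}}) \to L(\omega_{x_{r_1}}) \to F_1^*$" is ill-defined: there is no nonzero $\mf{g}$-map between these two distinct irreducibles, so the object whose vanishing you want to check is not even set up correctly.

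What the paper does instead is to make the composite factor through a \emph{single} representation on which $\exp(Y)\sigma$ acts: it writes $d_1 d_2$ as the contraction $\epsilon_1\colon F_1^*\otimes F_1 \to \mb{C}$ applied after $d_1^*\otimes_R d_2$, and the latter is by construction
\[
F_0^*\otimes F_2 \otimes R \hookrightarrow L(\omega_{x_{r_1}})\otimes L(\omega_{y_{r_2-2}})\otimes R \xto{\exp(Y)\sigma} L(\omega_{x_{r_1}})\otimes L(\omega_{y_{r_2-2}})\otimes R \twoheadrightarrow F_1^*\otimes F_1\otimes R .
\]
Since $F_0^*\otimes F_2$ lies in the Cartan component $L(\omega_{x_{r_1}}+\omega_{y_{r_2-2}})$ of the tensor product and $\exp(Y)\sigma$ preserves $\mf{g}$-subrepresentations, the image lands in the $\mf{sl}(F_1)$ part of $F_1^*\otimes F_1$ (also in the Cartan component), whereas the trace factor $\mb{C}\subset F_1^*\otimes F_1$ sits in the summand $L(\omega_{z_1})$; composing with $\epsilon_1$, which projects onto that $\mb{C}$, therefore gives zero, and the same argument with $L(\omega_{y_{r_2-2}})\otimes L(\omega_{z_{r_3}})$ and $\iota_2\colon \mb{C}\to F_2\otimes F_2^*$ handles $d_2 d_3$. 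This tensor-and-contract device is exactly the ingredient your outline is missing; without it, or some equally concrete substitute, the reduction to the split case does not go through.
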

	\begin{proof}
		In the following, $\otimes$ with no subscript means $\otimes_\mb{C}$. The composite $d_1 d_2$ is adjoint to the composite $\epsilon_1 (d_1^* \otimes_R d_2)$ where $\epsilon_1$ is the contraction $F_1^* \otimes F_1\to \mb{C}$ tensored with $R$. By construction the tensor product $d_1^* \otimes_R d_2$ is
		\[
		F_0^* \otimes F_2 \otimes R \hookrightarrow L(\omega_{x_{r_1}}) \otimes L(\omega_{y_{r_2-2}}) \otimes R \xto{\exp(Y)\sigma} L(\omega_{x_{r_1}}) \otimes L(\omega_{y_{r_2-2}}) \otimes R \twoheadrightarrow F_1^* \otimes F_1 \otimes R.
		\]
		The representation $F_1^* \otimes F_1$ is the sum $\mf{sl}(F_1)\oplus \mb{C}$. A highest weight vector for $\mf{sl}(F_1)$ is also a highest weight vector for $F_0^* \otimes F_2$, with weight $\omega_{x_{r_1}} + \omega_{y_{r_2-2}}$. In particular, $\mf{sl}(F_1)$ and $F_0^* \otimes F_2$ belong to the same irreducible $\mf{g}$-representation $L(\omega_{x_{r_1}} + \omega_{y_{r_2-2}})$ inside of the tensor product, whereas the $\mb{C}$ factor in $F_1^* \otimes F_1$ belongs to $L(\omega_{z_1})$. Thus the image of $d_1^* \otimes_R d_2$ lands in $\mf{sl}(F_1) \otimes R$. As $\epsilon_1$ is the projection onto the complementary $\mb{C}$ factor, the composite $\epsilon_1(d_1^* \otimes_R d_2)$ is zero as claimed.
		
		Similarly, the composite $d_2 d_3$ is adjoint to the composite $(d_2 \otimes_R d_3^*)\iota_2$ where $\iota_2$ is the canonical map $\mb{C}\to F_2 \otimes F_2^*$ tensored with $R$. The tensor product $d_2 \otimes_R d_3^*$ is
		\[
		F_2 \otimes F_2^* \otimes R \hookrightarrow L(\omega_{y_{r_2-2}}) \otimes L(\omega_{z_{r_3}}) \otimes R \xto{\exp(Y)\sigma} L(\omega_{y_{r_2-2}}) \otimes L(\omega_{z_{r_3}}) \otimes R \twoheadrightarrow F_1 \otimes F_3^* \otimes R.
		\]
		The representation $F_2 \otimes F_2^*$ is the sum $\mf{sl}(F_2) \oplus \mb{C}$. A highest weight vector for $\mf{sl}(F_2)$ is also a highest weight vector for $F_1 \otimes F_3^*$, with weight $\omega_{y_{r_2-2}}+\omega_{r_3}$. Again we have that $\mf{sl}(F_2)$ and $F_1 \otimes F_3^*$ belong to the same irreducible $\mf{g}$-representation $L(\omega_{x_{r_1}} + \omega_{y_{r_2-2}})$ inside of the tensor product, whereas the $\mb{C}$ factor in $F_2 \otimes F_2^*$ belongs to $L(\omega_{x_1})$. So the tensor product $d_1^* \otimes_R d_2$ is zero on the image of $\iota_2$.
	\end{proof}
	
	To prove the acyclicity of $\mb{F}$, we will make use of the Buchsbaum-Eisenbud acyclicity criterion. The \emph{rank} of a homomorphism $d$ between free $R$-modules is the maximum value of $r$ for which $\bigwedge^r d$ is nonzero, i.e. it is the size of the largest nonvanishing minor of $d$.
	\begin{definition}\label{def:true-grade}
		Let $R$ be a ring, $I \subset R$ an ideal, and
		\[
		\nu_n := \text{maximum length of an $R[x_1,\ldots,x_n]$-sequence in $IR[x_1,\ldots,x_n]$}.
		\]
		The \emph{(true) grade} of $I$, introduced by Northcott in \cite{Northcott76}, is $\grade I \coloneqq \sup_{n \geq 0} \nu_n$. If $R$ is Noetherian, then this recovers the usual notion of grade. 
	\end{definition}
	\begin{thm}[\cite{Buchsbaum-Eisenbud73}]\label{thm:acyclicity-criterion}
		Let $R$ be a ring. A complex
		\[
		0 \to F_n \xto{d_n} F_{n-1} \xto{d_{n-1}} \cdots \xto{d_2} F_2 \xto{d_1} F_0
		\]
		of free $R$-modules is exact if and only if
		\[
		\rank F_k = \rank d_k + \rank d_{k+1}
		\]
		and
		\[
		\grade I(d_k) \geq k
		\]
		for $k = 1,\ldots,n$, where $I(d_k)\coloneqq I_{\rank d_k}(d_k)$ is the ideal of $(\rank d_k)\times(\rank d_k)$ minors of $d_k$.
	\end{thm}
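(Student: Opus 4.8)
The plan is to establish both implications after reducing to the case of a Noetherian local ring, the general case then following by carrying the same arguments through with depth replaced by true grade (following Northcott), together with the observation that a finite free complex is exact if and only if it is exact after the faithfully flat base changes $R\to R[x_1,\dots,x_m]$ and $R\to R_{\mf m}$. Over a Noetherian ring, true grade agrees with ordinary grade on polynomial extensions by Definition~\ref{def:true-grade}, and none of the relevant quantities ($\rank d_k$, the ideals $I(d_k)$, exactness) are affected by these reductions, so it suffices to work over a Noetherian local ring $R$; the passage back to the non-Noetherian setting is one of the technical points I would treat with care.

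For necessity, I would first tensor the exact complex $\mb{F}$ with the total quotient ring $Q$ of $R$, a flat extension into which $R$ embeds; there every non-unit is a zerodivisor, so by McCoy's theorem an injective map of free modules splits (its maximal-size determinantal ideal is faithful, hence the unit ideal), and peeling $\mb{F}\otimes Q$ off one homological degree at a time from the top yields $\rank F_k=\rank d_k+\rank d_{k+1}$. The identical reasoning over each localization $R_{\mf p}$ gives the same equalities there; combined with $\rank(d_k\otimes R_{\mf p})\le\rank d_k$ (the vanishing $I_{r_k+1}(d_k)=0$ persists under base change) and a downward induction, this forces $\rank(d_k\otimes R_{\mf p})=\rank d_k$, so $I(d_k)R_{\mf p}=I(d_k\otimes R_{\mf p})$. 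To obtain the grade inequalities I would argue by contradiction: if $\grade I(d_k)<k$, pick $\mf p\supseteq I(d_k)$ with $\depth R_{\mf p}=\grade I(d_k)$; then $\operatorname{coker}(d_1\otimes R_{\mf p})$ has finite projective dimension $\le\depth R_{\mf p}<k$ by Auslander--Buchsbaum, so $\mb{F}\otimes R_{\mf p}$ is its minimal resolution plus a split exact complex and is split exact in homological degrees $\ge k$, whence $I(d_k)R_{\mf p}=R_{\mf p}$, contradicting $\mf p\supseteq I(d_k)$.

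For sufficiency, I would first verify that the two numerical hypotheses descend to every localization $R_{\mf p}$: since $\grade I(d_k)\ge k\ge1$ the ideal $I(d_k)$ contains a nonzerodivisor, which stays nonzero after localization, so (using also $I_{r_k+1}(d_k)=0$) the rank equalities survive, and the grade inequalities survive because $\Ext^i_R(R/I(d_k),R)=0$ for $i<k$ localizes. Then I would induct on $\dim R$. If $\dim R=0$, no proper ideal has positive grade, so every $I(d_k)=R$ and $\mb{F}$ is split exact. If $\dim R\ge1$, then every $\mf p\ne\mf m$ has $\dim R_{\mf p}<\dim R$, so $\mb{F}\otimes R_{\mf p}$ is exact by induction; hence each $H_i(\mb{F})$ with $i\ge1$ is supported at $\mf m$ alone, of finite length, and of depth $0$ if nonzero. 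Splitting off a trivial summand $0\to F_n\xrightarrow{\sim}F_n\to0$ whenever $I(d_n)=R$ (the shorter complex inherits all the hypotheses) reduces $\mb{F}$ either to a split exact complex or to one whose top determinantal ideal is proper; in the latter case $\depth R\ge\grade I(d_n)\ge n$, so $\depth F_i\ge i$ for all $i$, and the Peskine--Szpiro acyclicity lemma applies — each $H_i$ with $i\ge1$ vanishes or has depth $0$ — giving $H_i(\mb{F})=0$ for all $i\ge1$, i.e. $\mb{F}$ is exact.

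The main obstacle is the sufficiency direction, and specifically the Peskine--Szpiro acyclicity lemma sitting at its heart, whose proof is a careful depth chase through the short exact sequences $0\to B_i\to Z_i\to H_i\to0$ and $0\to Z_i\to F_i\to B_{i-1}\to0$ (with $Z_i=\ker d_i$, $B_i=\operatorname{im}d_{i+1}$) using the standard depth inequalities. Secondary points requiring attention are the bookkeeping showing that every hypothesis genuinely survives localization, and the reduction from an arbitrary commutative ring to the Noetherian local case, where true grade must play the role of depth throughout.
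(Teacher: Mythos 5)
The paper itself gives no proof of this statement: it is quoted from Buchsbaum--Eisenbud \cite{Buchsbaum-Eisenbud73}, with the subsequent remark crediting Northcott \cite{Northcott76} for removing the Noetherian hypothesis via true grade. So the comparison is with the standard literature argument rather than with anything in the paper. At the level of Noetherian rings your sketch is exactly that standard argument, and it is correct: rank additivity via the total quotient ring (using that a faithful finitely generated ideal in a Noetherian ring contains a nonzerodivisor, so McCoy's criterion forces the maximal minors to generate the unit ideal there), the grade inequalities by contradiction through Auslander--Buchsbaum at a prime $\mf{p}\supseteq I(d_k)$ with $\depth R_{\mf p}=\grade I(d_k)$, and sufficiency by checking the hypotheses localize, inducting on $\dim R$, splitting off trivial top summands, and invoking the Peskine--Szpiro acyclicity lemma. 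The small slip that $R\to R_{\mf m}$ is flat but not faithfully flat (one needs the whole family of localizations) is cosmetic.

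The genuine gap is the opening reduction. The theorem as stated here is for an \emph{arbitrary} ring with Northcott's true grade, and there is no reduction of that case to the Noetherian local case: base changes $R\to R[x_1,\dots,x_m]$ and localizations do not make $R$ Noetherian, and essentially every tool your two directions rely on is Noetherian-specific --- the Auslander--Buchsbaum formula, the identity $\grade I=\min\{\depth R_{\mf p}:\mf p\in V(I)\}$, the fact that a faithful finitely generated ideal contains a nonzerodivisor (prime avoidance over associated primes, which is what your ``faithful, hence the unit ideal'' step in the total quotient ring secretly uses), the $\Ext$-vanishing characterization of grade and its compatibility with localization, and induction on Krull dimension (possibly infinite in general). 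Extending the criterion beyond the Noetherian setting is precisely the content of Northcott's work: one must rework these steps around true grade (nonzerodivisors that only appear after adjoining indeterminates, an acyclicity lemma formulated for true grade), not merely substitute terminology, so labeling this ``a technical point to treat with care'' leaves the theorem unproved in the generality in which it is quoted. That said, as the remark following the theorem points out, the base rings actually used in this paper are finitely generated polynomial rings, so the Noetherian argument you give does cover every application made of the criterion here.
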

	\begin{remark}
		The original theorem statement in \cite{Buchsbaum-Eisenbud73} assumed moreover that $R$ is Noetherian, but Northcott showed in \cite{Northcott76} that it holds without this assumption provided one uses the notion of true grade. This will not be important for us at present, since our base ring is a finitely generated polynomial ring throughout this section, but it will be useful for some arguments involving potentially non-Noetherian rings in \S\ref{sec:GFR-HST}.
	\end{remark}
	As the ring $R$ is a polynomial ring, in particular Cohen-Macaulay, the grade of an ideal is the same as its codimension. Furthermore, since $R$ is a domain, the fact that $\mb{F}$ is a complex implies $\rank d_i \leq r_i$. Therefore it is sufficient to prove that $\grade I_{r_i}(d_i) \geq i$ for each $i$, from which $\rank d_i = r_i$ follows as a consequence. In fact, we will show that $\grade I_{r_i}(d_i) = 3$.
	
	For this, let $G/P = G/P_{x_1}^+$ as in \S\ref{sec:liccires-base-ring} and let $X^w \subset G/P$ be the codimension 3 opposite Schubert variety associated to $w = s_{z_1}s_u s_{x_1}$. For each cell $C_\sigma$ meeting $X^w$, it is known that the codimension of $\mc{N}_\sigma^w = X^w \cap C_\sigma$ inside of $C_\sigma$ is 3; see \cite[Lemma~7.3.10]{KumarBook}.
	\begin{lem}\label{lem:extremal-coords-on-X}
		The Pl\"ucker coordinates belonging to the bottom $z_1$-graded component $\bigwedge^{r_0} F_1 \subset L(\omega_{x_1})^\vee$ cut out $X^w$ set-theoretically in $G/P$. Thus on the cell $C_\sigma$, the ideal generated by the entries of
		\begin{equation}\label{eq:gen-licci-coords-on-KL}
			\bigwedge^{r_1} F_1 \otimes R \hookrightarrow L(\omega_{x_1})^\vee \otimes R \xto{(\exp(Y)\sigma)^{-1}} L(\omega_{x_1})^\vee \otimes R \twoheadrightarrow \mb{C} \otimes R
		\end{equation}
		has codimension 3, where the first map is inclusion of the bottom $z_1$-graded component and the last map is projection onto the lowest weight space (equivalently the bottom $x_1$-graded component).
	\end{lem}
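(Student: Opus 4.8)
The plan is to show that $X^w$ is, set-theoretically, the common zero locus $Z\subset G/P$ of the Pl\"ucker coordinates lying in the bottom $z_1$-graded component $\bigwedge^{r_1}F_1\subset L(\omega_{x_1})^\vee$, and then to restrict this to the cell $C_\sigma$. The first step is to pin down those coordinates. By \S\ref{sec:grading} the top $z_1$-graded component of $L(\omega_{x_1})$ is the irreducible $\mf{g}^{(z_1)}=\mf{sl}(F_1)\times\mf{sl}(F_3)$-module with highest weight the restriction of $\omega_{x_1}$; as $x_1$ lies in the $\mf{sl}(F_1)$-subdiagram at the node whose fundamental weight is $\bigwedge^{r_2}F_1$, and not in the $\mf{sl}(F_3)$-subdiagram, this component is $\bigwedge^{r_2}F_1\cong(\bigwedge^{r_1}F_1)^*$, so dually the bottom $z_1$-graded component of $L(\omega_{x_1})^\vee$ is $\bigwedge^{r_1}F_1$. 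As an $\mf{sl}(F_1)$-module this is minuscule, hence has one-dimensional weight spaces and only extremal weight vectors, so the Pl\"ucker coordinates it contains are exactly $\{p_\tau:\tau\in S\}$, where $S\coloneqq W^{x_1}\cap W_{z_1}$ is the set of minimal-length representatives of $W/W_{x_1}$ whose reduced words avoid $s_{z_1}$. (Indeed the $z_1$-degree of $p_\tau$ is the coefficient of $\alpha_{z_1}$ in $\omega_{x_1}-\tau\omega_{x_1}$, and for $\tau\in W^{x_1}$ the support of $\omega_{x_1}-\tau\omega_{x_1}$ equals $\operatorname{supp}(\tau)$, so this degree vanishes exactly when $s_{z_1}$ does not occur in $\tau$.)

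Next I would determine $Z$ cell by cell. Every root in $\Delta^-$ has nonpositive $\alpha_{z_1}$-coefficient, so $\bigwedge^{r_1}F_1$ is stable under $\mf{g}_\alpha$ for all $\alpha\in\Delta^-$, and therefore $Z$ is stable under the group generated by $\exp(\mf{g}_\alpha)$ for $\alpha\in\Delta_{\mathrm{re}}^-$. Since each opposite Schubert cell $C^{w'}$ ($w'\in W^{x_1}$) is a single orbit of this group, $Z\cap C^{w'}$ is either empty or all of $C^{w'}$, and it is all of $C^{w'}$ exactly when the torus-fixed point $w'v$ lies in $Z$, i.e. when $p_\tau(w'v)=0$ for every $\tau\in S$; since $p_\tau(w'v)\neq 0$ precisely for $\tau=w'$, this holds iff $w'\notin S$. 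Hence $Z=\coprod_{w'\in W^{x_1}\setminus S}C^{w'}$, and by the standard relation $C^{w'}\subseteq X^w\iff w'\geq w$ the identity $Z=X^w$ reduces to the combinatorial claim: for $\tau\in W^{x_1}$, the reduced words of $\tau$ contain $s_{z_1}$ iff $\tau\geq w=s_{z_1}s_us_{x_1}$ ($w$ having a unique reduced word).

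The main obstacle is this combinatorial claim—specifically the direction "$\tau$ uses $s_{z_1}\Rightarrow\tau\geq w$", the converse being immediate. Here I would argue: if $\tau\in W^{x_1}$ uses $s_{z_1}$, then $\operatorname{supp}(\tau)$ is connected and contains $x_1$ (as $\tau\neq e$) and $z_1$, hence also $u$. The unique right descent of $\tau$ is $s_{x_1}$, and a right descent of $\tau s_{x_1}$ must be a neighbor of $x_1$ in $T$—otherwise it would commute past $s_{x_1}$ and be a right descent of $\tau$—so it is $s_u$ or $s_{x_2}$. In the first case $\tau$ has a reduced word ending in $s_us_{x_1}$. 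In the second, $\tau s_{x_1}\in W^{x_2}$ and one repeats the analysis up the $x$-arm; at every stage the alternative "$s_{x_{j-1}}$ is the right descent" is excluded, since by the braid relation $s_{x_{j-1}}s_{x_j}s_{x_{j-1}}=s_{x_j}s_{x_{j-1}}s_{x_j}$ and commutation of non-adjacent nodes it would force $s_{x_j}$ to be a right descent of $\tau$, contrary to $\tau\in W^{x_1}$. Since the arm is finite and the "go one step further" alternative always leads to a contradiction, the only surviving possibility is that $\tau$ has a reduced word ending in $s_us_{x_1}$. Writing $\tau=\tau's_us_{x_1}$ with lengths adding, $\tau'$ still uses $s_{z_1}$, so $\tau'\geq s_{z_1}$, and two applications of the lifting property of the Bruhat order give $\tau\geq s_{z_1}s_us_{x_1}=w$.

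Finally, for the second assertion: by the description of the Pl\"ucker embedding of $C_\sigma$ in \S\ref{sec:liccires-base-ring}, the entries of \eqref{eq:gen-licci-coords-on-KL} are exactly the restrictions to $C_\sigma$ of the Pl\"ucker coordinates in $\bigwedge^{r_1}F_1$, so the ideal they generate defines $Z\cap C_\sigma=X^w\cap C_\sigma=\mc{N}_\sigma^w$ set-theoretically. By \cite[Lemma~7.3.10]{KumarBook} this Kazhdan-Lusztig variety has codimension $\ell(w)=3$ inside $C_\sigma\cong\mb{A}^{\ell(\sigma)}$ whenever it is nonempty (equivalently $\sigma\geq w$), and since $R$ is a polynomial ring its grade coincides with its codimension. (When $C_\sigma$ does not meet $X^w$ the ideal is the unit ideal and $\mb{F}$ splits off a copy of $R\to R$ in homological degree one.)
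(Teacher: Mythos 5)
Your proof is correct, and its overall skeleton matches the paper's: identify the bottom $z_1$-graded component with the extremal coordinates $p_\tau$ for $\tau\in W^{x_1}$ with $s_{z_1}\notin\operatorname{supp}(\tau)$, reduce the cutting-out statement to the Bruhat-order claim that such $\tau$ satisfy $\tau\geq s_{z_1}s_us_{x_1}$ exactly when $s_{z_1}$ occurs in $\tau$, and quote Kumar's Lemma~7.3.10 for the codimension of the Kazhdan--Lusztig variety. The two key sub-arguments, however, are genuinely different. Geometrically, the paper simply invokes the standard facts that $p_\rho$ vanishes on $X^w$ iff $\rho\not\geq w$ and that Schubert varieties are set-theoretically cut out by extremal Pl\"ucker coordinates; you instead note that the span of the bottom-component coordinates is stable under all negative root spaces, so their zero locus is a union of opposite cells, and you compare torus-fixed points with $X^w=\coprod_{w'\geq w}C^{w'}$. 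This trades one standard input for another (you need the Birkhoff-type statement that the opposite cells exhaust $G/P$, which is in Kumar but not in the paper's background), and in exchange it makes transparent why the extremal coordinates suffice. Combinatorially, the paper obtains the subword $(s_{z_1},s_u,s_{x_1})$ in two lines from the positivity of the coefficient of $\omega_{t_i}$ in $s_{t_{i-1}}\cdots s_{t_1}\omega_{x_1}$, a consequence of minimality of the coset representative; your descent/braid induction up the $x$-arm, producing a reduced word ending in $s_us_{x_1}$ and then lifting, is longer but also valid---just flesh out two compressed steps: connectedness of $\operatorname{supp}(\tau)$ for $\tau\in W^{x_1}$ (if the support split off a component not containing $x_1$, that commuting factor would lie in $W_{x_1}$, contradicting minimality), and the termination of the induction (reaching the end of the arm with no admissible descent forces $\tau=s_{x_{r_1}}\cdots s_{x_1}$, contradicting $s_{z_1}\in\operatorname{supp}(\tau)$). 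Your explicit remark about cells with $\sigma\not\geq w$, where the ideal is the unit ideal, is a clarification the paper handles only by restricting attention to cells meeting $X^w$ in the sentence preceding the lemma.
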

	\begin{proof}
		Following \S\ref{subsec:Plucker-coords-Schubert-cells}, for $\rho \in W^{x_1}$, let $p_\rho = \rho p_e$ denote the corresponding extremal Pl\"ucker coordinate, where $p_e \in L(\omega_{x_1})^\vee$ is a lowest weight vector. Then $X^w$ is set-theoretically cut out by $\{p_\rho : \rho \not\geq w\}$. The condition $\rho \not \geq w$ means that a reduced word for $\rho$ does not contain $(s_{z_1},s_u, s_{x_1})$ as a subword.
		
		Let $\rho = s_{t_N} \cdots s_{t_1}$ where $(s_{t_N},\ldots,s_{t_1})$ is a reduced word. The assumption that $\rho$ is a minimal length representative of $[\rho]\in W/W_{x_1}$ means that for all $i = 1,\ldots,N$, the weight $s_{t_{i-1}}s_{t_{i-2}}\cdots s_{t_1} \omega_{x_1}$ has a positive coefficient for $\omega_{t_i}$. From this it is easy to see that $t_i = z_1$ for some $i$ implies $t_j = u$ for some $j < i$. Since $t_1 = x_1$, we conclude $\rho \geq w$ if and only if $t_i = z_1$ for some $i$.
		
		Thus the extremal Pl\"ucker coordinates defining $X^w$ are exactly the extremal Pl\"ucker coordinates belonging to the bottom $z_1$-graded component of $L(\omega_{x_1})^\vee$. This component is dual to the representation of $\mf{g}^{(z_1)} = \mf{sl}(F_1)\times \mf{sl}(F_3)$ with highest weight $\omega_{x_1}$, so it is $\bigwedge^{r_0} F_1$. All the weight spaces in $\bigwedge^{r_0} F_1$ are extremal (i.e. $\bigwedge^{r_0} F_1$ is \emph{miniscule}), so this representation is equal to the span of the extremal Pl\"ucker coordinates vanishing on $X^w$, and we are done.
		
		The other part of the lemma statement is just reiterating the fact that
		\[
		L(\omega_{x_1})^\vee \otimes R \xto{(\exp(Y)\sigma)^{-1}} L(\omega_{x_1})^\vee \otimes R \twoheadrightarrow \mb{C} \otimes R
		\]
		gives the restriction of Pl\"ucker coordinates to the affine cell $C_\sigma = \exp(Y)\sigma v$.
	\end{proof}
	
	Using this lemma, we can prove the acyclicity of $\mb{F}$ via the Buchsbaum-Eisenbud acyclicity criterion. Before we do so, it is helpful to note a few representations appearing the $z_1$-graded decompositions of $L(\omega_{y_{r_2-2}})$ and $L(\omega_{z_{r_3}})$:
	\begin{itemize}
		\item Since
		\[
		s_{z_1} s_u s_{y_1} \cdots s_{y_{r_2-2}} \omega_{y_{r_2-2}} = \omega_{x_1} + \omega_{z_2} - \omega_{z_1},
		\]
		the $\mf{g}^{(z_1)}=\mf{sl}(F_1)\times\mf{sl}(F_3)$-representation with highest weight $\omega_{x_1} + \omega_{z_2}$ appears as an extremal subrepresentation in $L(\omega_{y_{r_2-2}})$. This is $\bigwedge^{r_1} F_1^* \otimes F_3$.
		\item Since
		\[
		s_{z_1} s_{z_2} \cdots s_{z_{r_3}} \omega_{z_{r_3}} = \omega_u - \omega_{z_1},
		\]
		the $\mf{g}^{(z_1)}=\mf{sl}(F_1)\times\mf{sl}(F_3)$-representation with highest weight $\omega_{u}$ appears as an extremal subrepresentation in $L(\omega_{z_{r_3}})$. This is $\bigwedge^{r_1+1} F_1^*$.
	\end{itemize}
	
	\begin{thm}\label{thm:F-is-acyclic}
		The complex $\mb{F}$ resolves a Cohen-Macaulay $R$-module supported on the Kazhdan-Lusztig variety $\mc{N}_\sigma^w = X^w \cap C_\sigma \subset C_\sigma = \Spec R$, where $w = s_{z_1} s_u s_{x_1}$.
	\end{thm}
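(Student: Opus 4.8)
\emph{Overall strategy.} The plan is to apply the Buchsbaum--Eisenbud acyclicity criterion (Theorem~\ref{thm:acyclicity-criterion}) and to read off the support and the Cohen--Macaulay property as a byproduct. Since $R$ is a polynomial ring it is a Cohen--Macaulay domain, so grade equals codimension, and since $\mb{F}$ is a complex over a domain we automatically have $\rank d_i\le r_i$. It therefore suffices to prove $\codim I_{r_i}(d_i)=3$ for $i=1,2,3$: the rank hypotheses of the criterion then follow (each $I_{r_i}(d_i)$ being a proper nonzero ideal forces $\rank d_i=r_i$), so $\mb{F}$ resolves $M:=\operatorname{coker} d_1$; and then $\operatorname{Supp}M=V(\operatorname{Fitt}_0 M)=V(I_{r_1}(d_1))$ while $\pdim_R M\le 3=\grade\operatorname{Ann}M$, so $M$ is perfect of grade $3$, hence Cohen--Macaulay. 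Thus the whole statement reduces to the three codimension estimates together with the identification $V(I_{r_1}(d_1))=\mc{N}_\sigma^w$. (When $\mc{N}_\sigma^w=\varnothing$, all the $I_{r_i}(d_i)$ are the unit ideal, $M=0$, and $\mb{F}$ is split exact --- a harmless degenerate case.)

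\emph{The cases $i=1$ and $i=3$.} For $i=1$ I would identify $I_{r_1}(d_1)$ with the ideal of Lemma~\ref{lem:extremal-coords-on-X}. Indeed, since $\dim F_0=r_1$, $I_{r_1}(d_1)$ is the image of $\bigwedge^{r_1}d_1\colon\bigwedge^{r_1}F_1\otimes R\to\bigwedge^{r_1}F_0\otimes R\cong R$; applying $\bigwedge^{r_1}$ to the composite defining $d_1$, the line $\bigwedge^{r_1}F_0^*\subset\bigwedge^{r_1}L(\omega_{x_{r_1}})$ has weight the telescoping sum of the entries of $Q_0'$, namely $\omega_{x_1}$, and since $F_0^*$ is the top $x_1$-graded component of $L(\omega_{x_{r_1}})$, with linearly ordered weights, this is a highest weight vector of $\bigwedge^{r_1}L(\omega_{x_{r_1}})$ generating an irreducible copy of $L(\omega_{x_1})$. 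Restricting the construction to this subrepresentation and dualizing recovers exactly the composite of Lemma~\ref{lem:extremal-coords-on-X}, so $V(I_{r_1}(d_1))=X^w\cap C_\sigma=\mc{N}_\sigma^w$, which has codimension $3$ by the cited fact about Kazhdan--Lusztig varieties. For $i=3$ I would invoke the $x\leftrightarrow z$ symmetry: the rightmost differential of $\mb{F}^*$ is $d_3^*$, so $I_{r_3}(d_3)=I_{r_3}(d_3^*)$, and by Proposition~\ref{prop:liccires-exchange-xz}, after the isomorphism $R\cong R'$ the complex $\mb{F}^*$ is the complex $\mb{F}'$ produced by the same construction from the double-coset representative $\sigma^{-1}$ with the $x$- and $z$-arms interchanged; there $d_3^*$ plays the role of ``$d_1$'', and the relevant opposite Schubert variety is $X^{w^{-1}}$ with $w^{-1}=s_{x_1}s_us_{z_1}$. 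Hence the $i=1$ argument applied to $\mb{F}'$ gives $\codim I_{r_3}(d_3)=\codim\mc{N}_{\sigma^{-1}}^{w^{-1}}=3$.

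\emph{The case $i=2$ (the main obstacle).} This middle case is the crux, as no symmetry reduces it to the others. The key point I would establish is that $\bigwedge^{r_2}d_2$ is itself a differential of the same shape, for the non-fundamental representation $L(\omega_{x_1}+\omega_{z_1})$: using that $F_2$ and $F_1$ are the top $x_1$- and top $z_1$-graded components of $L(\omega_{y_{r_2-2}})$, and that the first $r_2$ entries of both $Q_2$ and $Q_1$ sum to $\omega_{x_1}+\omega_{z_1}$, one sees that $\bigwedge^{r_2}F_2$ and $\bigwedge^{r_2}F_1$ are respectively the top $x_1$- and top $z_1$-graded components of the irreducible subrepresentation $L(\omega_{x_1}+\omega_{z_1})\subset\bigwedge^{r_2}L(\omega_{y_{r_2-2}})$, so that
\[
\bigwedge\nolimits^{r_2}d_2\;=\;\Bigl(\bigwedge\nolimits^{r_2}F_2\otimes R\hookrightarrow L(\omega_{x_1}+\omega_{z_1})\otimes R\xrightarrow{\exp(Y)\sigma}L(\omega_{x_1}+\omega_{z_1})\otimes R\twoheadrightarrow\bigwedge\nolimits^{r_2}F_1\otimes R\Bigr).
\]
Then I would embed $L(\omega_{x_1}+\omega_{z_1})\hookrightarrow L(\omega_{x_1})\otimes L(\omega_{z_1})$ as the Cartan component, on which $\exp(Y)\sigma$ acts diagonally, and split tensors: each entry of $\bigwedge^{r_2}d_2$ restricted to $C_\sigma$ becomes the product of a Pl\"ucker coordinate of $G/P$ lying in $\bigwedge^{r_1}F_1\subset L(\omega_{x_1})^\vee$ (these generate $I_{r_1}(d_1)$, by the $i=1$ case) with a matrix coefficient $q\mapsto\langle p_e,g_q\psi''\rangle$ of $L(\omega_{z_1})$, where $g_q=\exp(Y(q))\sigma$ and $\psi''$ ranges over the top $x_1$-graded component of $L(\omega_{z_1})$. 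The decisive observation is that one of these matrix coefficients is a nonzero constant: because $\sigma$ is minimal in $W_{z_1}\sigma$, no reduced word for $\sigma$ uses $s_{z_1}$, so the coefficient of $\alpha_{z_1}$ in $\omega_{x_1}-\sigma\omega_{x_1}$ vanishes, which places the extremal vector $v_{\sigma^{-1}\omega_{z_1}}$ in the top $x_1$-graded component of $L(\omega_{z_1})$; taking $\psi''=v_{\sigma^{-1}\omega_{z_1}}$ gives $\langle p_e,g_q v_{\sigma^{-1}\omega_{z_1}}\rangle=\langle p_e,\exp(Y(q))v_{\omega_{z_1}}\rangle$, a nonzero constant since $\exp(Y(q))$ fixes the highest weight vector. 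Hence the corresponding row of $\bigwedge^{r_2}d_2$ already generates $I_{r_1}(d_1)$ up to a unit, while every entry of $\bigwedge^{r_2}d_2$ lies in $I_{r_1}(d_1)$, so $I_{r_2}(d_2)=I_{r_1}(d_1)$, of codimension $3$. Assembling the three cases completes the argument; the representation-theoretic bookkeeping for $i=2$ --- locating $\bigwedge^{r_2}F_2$ and $\bigwedge^{r_2}F_1$ precisely inside $\bigwedge^{r_2}L(\omega_{y_{r_2-2}})$ and controlling the $L(\omega_{z_1})$-factor --- is the step I expect to demand the most care.
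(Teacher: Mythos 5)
Your overall reduction (Buchsbaum--Eisenbud criterion, perfection giving the Cohen--Macaulay property, support read off from $I_{r_1}(d_1)$), your identification of $I_{r_1}(d_1)$ with the Pl\"ucker coordinates of Lemma~\ref{lem:extremal-coords-on-X}, and your use of Proposition~\ref{prop:liccires-exchange-xz} for $i=3$ all agree with the paper (the symmetry argument for $d_3$ is exactly the alternative mentioned at the end of the paper's proof). Your setup for $i=2$ is also correct as far as it goes: $\bigwedge^{r_2}d_2$ does factor through the Cartan component $L(\omega_{x_1}+\omega_{z_1})\subset\bigwedge^{r_2}L(\omega_{y_{r_2-2}})$, and after embedding this into $L(\omega_{x_1})\otimes L(\omega_{z_1})$ (on which $\exp(Y)\sigma$ acts diagonally) a dimension count on the top graded pieces shows that, in suitable constant bases, every entry of $\bigwedge^{r_2}d_2$ is a product of an entry of \eqref{eq:gen-licci-coords-on-KL} with a matrix coefficient of $L(\omega_{z_1})$; hence $I_{r_2}(d_2)=I_{r_1}(d_1)\cdot\mc{M}$, where $\mc{M}$ is the ideal generated by those $L(\omega_{z_1})$-coefficients.

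The gap is your ``decisive observation.'' Minimality of $\sigma$ in $W_{P_{z_1}}\backslash W/W_{P_{x_1}}$ does \emph{not} mean that reduced words for $\sigma$ avoid $s_{z_1}$; it only forbids descents by the other simple reflections ($w=s_{z_1}s_us_{x_1}$ is itself such a minimal representative). In fact the coefficient of $\alpha_{z_1}$ in $\omega_{x_1}-\sigma\omega_{x_1}$ vanishes exactly when $\sigma\not\geq w$, i.e.\ exactly when $\mc{N}_\sigma^w=\varnothing$ and there is nothing to prove; whenever the theorem has content, $\sigma^{-1}v_{\omega_{z_1}}$ does \emph{not} lie in the top $x_1$-graded component of $L(\omega_{z_1})$ and no element of $\mc{M}$ is a unit. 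Concretely, in Example~\ref{ex:1nn1-resolution} with $\sigma=\sigma_{(n-1)/2}$ one has $d_2=Y_n$ a generic skew matrix, $I_{r_1}(d_1)=(\mathrm{Pf}_1,\ldots,\mathrm{Pf}_n)$, and $I_{n-1}(Y_n)=(\mathrm{Pf}_i\mathrm{Pf}_j)$ is the \emph{square} of the pfaffian ideal (here $\mc{M}$ is again the pfaffian ideal), so your conclusion $I_{r_2}(d_2)=I_{r_1}(d_1)$ is false. What your factorization actually gives is $V(I_{r_2}(d_2))=V(I_{r_1}(d_1))\cup V(\mc{M})$, so you still owe a proof that $\grade\mc{M}\geq 3$ --- a statement of essentially the same difficulty as the one you are trying to prove. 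The paper circumvents this by exhibiting explicit powers of each Pl\"ucker coordinate inside the minor ideals, namely $p^{r_3+1}\in I_{r_2}(d_2)$ and $p^{r_2-1}\in I_{r_3}(d_3)$: one compares the $(r_3+1)$-st symmetric power of \eqref{eq:gen-licci-coords-on-KL}, restricted to $L((r_3+1)\omega_{x_1})^\vee$, with the copy of that representation inside $\bigwedge^{f_2}L(\omega_{y_{r_2-2}})^\vee$ generated by $\bigwedge^{f_2}F_2^*$, and then does a cofactor expansion of a maximal minor whose $r_2$ columns come from $d_2^*$ (and symmetrically for $d_3$). To complete your argument you would either need to adopt that device or prove the grade bound on $\mc{M}$ directly.
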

	\begin{proof}	
		We will prove that powers of the Pl\"ucker coordinates \eqref{eq:gen-licci-coords-on-KL} can be found in $I_{r_i}(d_i)$ for $i = 1,2,3$. From Lemma~\ref{lem:extremal-coords-on-X} it then follows that $\grade I_{r_i}(d_i) = 3$. This means $\mb{F}^*$ is also acyclic, thus $\mb{F}$ resolves a perfect module, or equivalently a Cohen-Macaulay module since $R$ is a polynomial ring.
		
		The method for exhibiting powers of the extremal Pl\"ucker coordinates inside $I_{r_i}(d_i)$ was hinted at near the end of Example~\ref{ex:1nn1-resolution}, where we demonstrated that $I_{n-1}(d_2)$ contained the squares of entries in $d_1$.
		
		More generally, we will show the following. Let $\Omega$ denote the entries of the matrix \eqref{eq:gen-licci-coords-on-KL}, i.e. the extremal Pl\"ucker coordinates set-theoretically cutting out $X^w$ on the affine cell $C_\sigma$. For each Pl\"ucker coordinate $p \in \Omega$, we will show that $p \in I_{r_1}(d_1)$, $p^{r_3+1} \in I_{r_2}(d_2)$, and $p^{r_2-1} \in I_{r_3}(d_3)$.
		
		The situation for $d_1$ is relatively straightforward. The bottom $x_1$-graded component of $L(\omega_{x_{r_1}})^\vee$ is $F_0$, so the bottom $x_1$-graded component of $\bigwedge^{r_1} L(\omega_{x_{r_1}})^\vee$ is the one-dimensional representation $\bigwedge^{r_1} F_0$. Its weight is the sum of those in $F_0 \subset L(\omega_{x_{r_1}})^\vee$, which are given by $-Q_0'$ (c.f. \S\ref{sec:liccires-multigrading-Q}). This sum is $-\omega_{x_1}$, so $\bigwedge^{r_1}F_0$ belongs to the irreducible subrepresentation $L(\omega_{x_1})^\vee$. Therefore $\bigwedge^{r_1} d_1$ factors through $L(\omega_{x_1})^\vee \otimes R$ and the maximal minors of $d_1$ are exactly the Pl\"ucker coordinates $p \in \Omega$. As such, the cokernel of $d_1$ is supported on $\mc{N}_\sigma^w$.
		
		For $d_2$, we consider the $(r_3+1)$-th symmetric power of \eqref{eq:gen-licci-coords-on-KL}:
		\begin{equation}\label{eq:acyclic-pf-d2-matrix}
			S_{r_3+1} \bigwedge^{r_2} F_1^* \otimes R\hookrightarrow S_{r_3+1} L(\omega_{x_1})^\vee \otimes R \xto{(\exp(Y)\sigma)^{-1}}  S_{r_3+1} L(\omega_{x_1})^\vee \otimes R \twoheadrightarrow \mb{C} \otimes R.
		\end{equation}
		Here the powers $p^{r_3+1}$ for $p \in \Omega$ correspond to the weights in $S_{r_3+1} \bigwedge^{r_2} F_1^*$ in the $W_{z_1}$-orbit of the lowest weight $-(r_3+1)\omega_{x_1}$. So these powers $p^{r_3+1}$ still appear when we restrict to the irreducible subrepresentation of this lowest weight, generated by the bottom $x_1$-graded component $\mb{C}\subset S_{r_3 + 1} L(\omega_{x_1})^\vee$:
		\[
		S_{(r_3+1)^{r_2}} F_1^* \hookrightarrow L((r_3+1)\omega_{x_1})^\vee \otimes R \to L((r_3+1)\omega_{x_1})^\vee \otimes R \twoheadrightarrow \mb{C} \otimes R.
		\]
		The crucial point is that $L((r_3+1)\omega_{x_1})^\vee$ also resides in $\bigwedge^{f_2} L(\omega_{y_{r_2-2}})^\vee$ as the irreducible representation generated by its bottom $x_1$-graded component $\bigwedge^{f_2} F_2^* \cong \mb{C}$. Again, this can be seen by adding the weights $-Q_2$ in $F_2^*$ (c.f. \S\ref{sec:liccires-multigrading-Q}).
		
		Inside $\bigwedge^{f_2} L(\omega_{y_{r_2-2}})^\vee$, an analysis of weights shows that $\bigwedge^{f_2} F_2^*$ resides in the $\mf{g}^{(z_1)} = \mf{sl}(F_1)\times\mf{sl}(F_3)$-representation
		\[
		\bigwedge^{r_2} F_1^* \otimes \bigwedge^{r_3} (\bigwedge^{r_1} F_1 \otimes F_3^*) \subset \bigwedge^{f_2}(F_1 \oplus \bigwedge^{r_1} F_1 \otimes F_3^*) \subset \bigwedge^{f_2} L(\omega_{y_{r_2-2}})^\vee.
		\]
		Hence by comparing \eqref{eq:acyclic-pf-d2-matrix} to
		\[
		\bigwedge^{r_2} F_1^* \otimes \bigwedge^{r_3} (\bigwedge^{r_1} F_1 \otimes F_3^*) \otimes R \hookrightarrow \bigwedge^{f_2} L(\omega_{y_{r_2-2}})^\vee \otimes R \xto{(\exp(Y)\sigma)^{-1}} \bigwedge^{f_2} L(\omega_{y_{r_2-2}})^\vee \otimes R \twoheadrightarrow \bigwedge^{f_2} F_2^* \otimes R
		\]
		we see that for each $p \in \Omega$, the power $p^{r_3 + 1}$ appears as a maximal minor of
		\[
		L(\omega_{y_{r_2-2}})^\vee \otimes R \xto{(\exp(Y)\sigma)^{-1}} L(\omega_{y_{r_2-2}})^\vee \otimes R \twoheadrightarrow F_2^* \otimes R
		\]
		involving $r_2$ columns from $F_1^* \otimes R$. But the restriction of this map to $F_1^* \otimes R$ is none other than the dual of the differential $d_2$ by construction, and thus cofactor expansion of the determinant implies $p^{r_3+1} \in I_{r_2}(d_2)$ as claimed.
		
		The situation for $d_3$ is very similar to that of $d_2$. Consider the $(r_2-1)$-th symmetric power of \eqref{eq:gen-licci-coords-on-KL}:
		\begin{equation}\label{eq:acyclic-pf-d3-matrix}
			S_{r_2-1} \bigwedge^{r_1} F_1 \otimes R \hookrightarrow S_{r_2-1} L(\omega_{x_1})^\vee \otimes R \xto{(\exp(Y)\sigma)^{-1}}  S_{r_2-1} L(\omega_{x_1})^\vee \otimes R \twoheadrightarrow \mb{C} \otimes R.
		\end{equation}
		Again the powers $p^{r_2-2}$ for $p\in \Omega$ correspond to the weights in $S_{r_2-2} \bigwedge^{r_1}F_1$ in the $W_{z_1}$-orbit of the lowest weight $-(r_2-1)\omega_{x_1}$. Restricting to the irreducible subrepresentation $L((r_2-1)\omega_{x_1})^\vee$ we have
		\[
		S_{(r_2-1)^{r_1}} F_1 \hookrightarrow L((r_2-1)\omega_{x_1})^\vee \otimes R \to L((r_2-1)\omega_{x_1})^\vee \otimes R \twoheadrightarrow \mb{C} \otimes R.
		\]
		The reason for considering this subrepresentation is that it also appears in $\bigwedge^{f_2} L(\omega_{z_{r_3}})^\vee$ as the irreducible representation generated by its bottom $x_1$-graded component $\bigwedge^{f_2} F_2 \cong \mb{C}$, seen by adding the weights $-Q_2'$ in $F_2$ (c.f. \S\ref{sec:liccires-multigrading-Q}). An analysis of weights shows that $\bigwedge^{f_2} F_2$ is contained in
		\[
		\bigwedge^{r_3} F_3 \otimes \bigwedge^{r_2} (\bigwedge^{r_1+1} F_1) \subset \bigwedge^{f_2}(F_3 \oplus \bigwedge^{r_1+1} F_1) \subset \bigwedge^{f_2} L(\omega_{z_{r_3}})^\vee
		\]
		so comparing \eqref{eq:acyclic-pf-d3-matrix} to
		\[
		\bigwedge^{r_3} F_3 \otimes \bigwedge^{r_2} (\bigwedge^{r_1+1} F_1) \otimes R \hookrightarrow \bigwedge^{f_2} L(\omega_{z_{r_3}})^\vee \otimes R \xto{(\exp(Y)\sigma)^{-1}} \bigwedge^{f_2} L(\omega_{z_{r_3}})^\vee \otimes R \twoheadrightarrow \bigwedge^{f_2} F_2 \otimes R
		\]
		we see that for each $p \in \Omega$, the power $p^{r_2-1}$ appears as a maximal minor of
		\[
		L(\omega_{z_{r_3}})^\vee \otimes R \xto{(\exp(Y)\sigma)^{-1}} L(\omega_{z_{r_3}})^\vee \otimes R \twoheadrightarrow F_2 \otimes R
		\]
		involving all $r_3$ columns from $F_3 \otimes R$. These $r_3$ columns exactly comprise the differential $d_3$ by definition, so cofactor expansion proves $p^{r_2-1} \in I_{r_3}(d_3)$.
		
		Alternatively, one could deduce $\grade I_{r_3}(d_3) \geq 3$ by applying the already established result that $\grade I_{r_1}(d_1) \geq 3$ to the resolution $\mb{F}'$ discussed in \S\ref{sec:liccires-exchange-xz} and then using Proposition~\ref{prop:liccires-exchange-xz}. Geometrically, this amounts to showing that the maximal minors of $d_3$ can be interpreted as the extremal Pl\"ucker coordinates cutting out $\mc{N}_{\sigma^{-1}}^{w^{-1}}$ in $C_{\sigma^{-1}} \subset G/P_{z_1}^+$ as opposed to $\mc{N}_{\sigma}^w$ in $C_\sigma \subset G/P_{x_1}^+$.
	\end{proof}
	
	\section{The generic ring}\label{sec:Rgen}
	Hochster formally posed the question of searching for universal free resolutions in \cite{Hochster75}. The motivation is that these universal examples should codify the ``best possible'' structure theorems for free resolutions in a certain sense. To demonstrate, let us recast the Hilbert-Burch theorem in this language. Let $R$ be a local ring, and $I \subset R$ an ideal with $\pdim R/I = 2$. The minimal free resolution of $R/I$ has the form
	\begin{equation}\label{eq:GFR-length-two}
		0 \to R^{n-1} \xto{d_2} R^n \xto{d_1} R
	\end{equation}
	and the Hilbert-Burch theorem states that $I = aJ$ where $a \in R$ is a nonzerodivisor and $J = I_{n-1}(d_2)$ is generated by the maximal minors of $d_2$. Here is an equivalent way of stating the theorem:
	\begin{thm}[\cite{Hilbert1890}, \cite{Burch68}]\label{thm:GFR-Hilbert-Burch}
		Fix an integer $n \geq 2$ and let $R_\mathrm{univ}$ be a polynomial ring in the variables $\{x_{ij}\}_{1 \leq i \leq n, 1 \leq j \leq n-1}$ and one more variable $a_1$. Let $\mb{F}^\mathrm{univ}$ denote the complex
		\[
		0 \to R_\mathrm{univ}^{n-1} \xto{d_2} R_\mathrm{univ}^n \xto{d_1} R_\mathrm{univ}
		\]
		in which $d_2$ is the generic matrix with entries $x_{ij}$ and $d_1 = a_1 \bigwedge^{n-1} d_2^*$ for some fixed identification of $R_\mathrm{univ}^n \cong \bigwedge^{n-1} (R_\mathrm{univ}^n)^*$. Then if $R$ is any ring and $\mb{F}$ is a complex of the form \eqref{eq:GFR-length-two}, then there exists a unique homomorphism $R_\mathrm{univ} \to R$ specializing $\mb{F}^\mathrm{univ}$ to $\mb{F}$.
	\end{thm}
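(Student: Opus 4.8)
The plan is to reduce the universal property to a mild refinement of the Hilbert--Burch theorem plus one homological identity. Implicit here is that $\mb{F}$ is acyclic, i.e. genuinely a free resolution of $R/I$ with $I = I_1(d_1)$ and $\pdim_R R/I = 2$; this is the intended reading of ``a complex of the form \eqref{eq:GFR-length-two}'' and is in any case necessary for a specialization to exist (over a ring with zerodivisors one can exhibit non-acyclic $\mb{F}$ whose $d_1$ is not a scalar multiple of $\bigwedge^{n-1}d_2^*$). First I would observe that the map on the generic matrix is forced: any $\varphi\colon R_\mathrm{univ}\to R$ specializing $\mb{F}^\mathrm{univ}$ to $\mb{F}$ must send $x_{ij}$ to the $(i,j)$ entry of $d_2$. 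It then remains to pin down $\varphi(a_1)$: since $\varphi(d_1^\mathrm{univ}) = \varphi(a_1)\cdot\bigwedge^{n-1}d_2^*$, everything comes down to showing there is a \emph{unique} $a\in R$ with $d_1 = a\cdot\bigwedge^{n-1}d_2^*$, after which $\varphi(a_1)\coloneqq a$ is both the only candidate and an honest extension.

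The heart of the matter is therefore this refinement of Hilbert--Burch, which I would derive from the Buchsbaum--Eisenbud criterion. Since $\mb{F}$ is acyclic, Theorem~\ref{thm:acyclicity-criterion} gives $\grade I_{n-1}(d_2)\geq 2$. Write $J = I_{n-1}(d_2)$ and $\partial = \bigwedge^{n-1}d_2^*\colon R^n\to R$, whose entries are, up to sign, the maximal minors of $d_2$, so $I_1(\partial) = J$. Then $\rank d_2 = n-1$, $\rank\partial = 1$, the rank conditions of Theorem~\ref{thm:acyclicity-criterion} hold, and $\grade I_1(\partial) = \grade J\geq 1$, so the complex $0\to R^{n-1}\xto{d_2} R^n\xto{\partial} R$ is itself acyclic. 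Hence $\ker\partial = \operatorname{im} d_2$, and $\partial$ induces an isomorphism $R^n/\operatorname{im} d_2 \xrightarrow{\ \sim\ } J$. Because $d_1 d_2 = 0$, the map $d_1$ annihilates $\operatorname{im} d_2$ and factors as $R^n\twoheadrightarrow R^n/\operatorname{im} d_2 \xrightarrow{\ \sim\ } J \xrightarrow{\ \psi\ } R$ for a unique $\psi\in\Hom_R(J,R)$; on basis vectors this says $(d_1)_i = \psi(m_i)$, where $m_i$ is the $i$-th entry of $\partial$. The final point---what upgrades ``$d_1$ is a multiple of the minors in the total quotient ring'' to ``over $R$''---is that $\grade J\geq 2$ forces the natural map $R\to\Hom_R(J,R)$ to be an isomorphism (apply $\Hom_R(-,R)$ to $0\to J\to R\to R/J\to 0$ and use $\Hom_R(R/J,R) = \operatorname{Ext}^1_R(R/J,R) = 0$). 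So $\psi$ is multiplication by a unique $a\in R$, whence $d_1 = a\,\partial$; uniqueness of $a$ holds already because $\operatorname{Ann}_R(J) = 0$.

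The theorem now follows: set $\varphi\colon R_\mathrm{univ}\to R$ by $x_{ij}\mapsto (d_2)_{ij}$ and $a_1\mapsto a$. By construction $\varphi(d_2^\mathrm{univ}) = d_2$ and $\varphi(d_1^\mathrm{univ}) = a\cdot\bigwedge^{n-1}d_2^* = d_1$, so $\varphi$ specializes $\mb{F}^\mathrm{univ}$ to $\mb{F}$, and the first paragraph shows it is the only such map. I expect the one genuinely non-formal ingredient to be the identity $\Hom_R(J,R) = R$ for $\grade J\geq 2$ together with the realization that this is exactly what is needed---if instead one is content to cite the classical Hilbert--Burch theorem for the existence of $a$, there is essentially no obstacle at all, only bookkeeping. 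Over a non-Noetherian base one uses true grade (Definition~\ref{def:true-grade}) throughout, and if one wants the statement for arbitrary complexes rather than resolutions one should add $\grade I_{n-1}(d_2)\geq 1$ as a hypothesis, the argument being otherwise unchanged.
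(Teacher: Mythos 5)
Your argument is correct. The paper offers no proof of this statement---it is presented as a repackaging of the classical Hilbert--Burch theorem and cited to Hilbert and Burch---so the fair comparison is with the classical argument, and yours is essentially that argument: reduce the universal property to the existence and uniqueness of $a\in R$ with $d_1=a\bigwedge^{n-1}d_2^*$, obtain exactness of $0\to R^{n-1}\xto{d_2}R^n\xto{\partial}R$ (where $\partial=\bigwedge^{n-1}d_2^*$) from Theorem~\ref{thm:acyclicity-criterion}, and convert the induced map $J=I_{n-1}(d_2)\to R$ into multiplication by a unique element using $\Hom_R(J,R)\cong R$ when $\grade J\geq 2$. This is exactly the length-two instance of the Buchsbaum--Eisenbud multiplier statement (Theorem~\ref{thm:BE1}) that the paper later invokes when identifying $R_\mathrm{univ}$ with the multiplier ring $R_a$, and your reading of ``complex of the form \eqref{eq:GFR-length-two}'' as ``resolution'' is the intended one (compare Definition~\ref{def:GFR-UFR}). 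For the non-Noetherian caveat, rather than leaning on an Ext-characterization of true grade you can simply adjoin indeterminates to realize an honest length-two regular sequence in $J$ and then descend; descent is immediate because the entries of $d_1$ and $\partial$ already lie in $R$.

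One aside is wrong, though harmlessly so since it lies outside the statement: for arbitrary complexes the added hypothesis $\grade I_{n-1}(d_2)\geq 1$ does not suffice. Your own argument uses $\grade I_{n-1}(d_2)\geq 2$ twice---for exactness of the complex with $\partial$ at $R^n$, and for the vanishing of $\Ext^1_R(R/J,R)$---and the conclusion genuinely fails at grade $1$: over $R=\mb{C}[x,y]/(y^2-x^3)$ with $n=2$, $d_2=(x,y)^T$ and $d_1=(x^2,-y)$, one has $d_1d_2=0$ and $\grade(x,y)=1$, yet $d_1$ is not an $R$-multiple of the signed-minor vector $(y,-x)$ because $y\notin(x)$. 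The correct hypothesis for non-resolutions is acyclicity in grade $1$, i.e.\ $\grade I_{n-1}(d_2)\geq 2$, exactly as in Theorem~\ref{thm:BE1}.
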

	Furthermore, the complex $\mb{F}^\mathrm{univ}$ over $R_\mathrm{univ}$ is itself acyclic. This explains the suggestive notation used in the theorem: $\mb{F}^\mathrm{univ}$ is the \emph{universal example} of a free resolution of the form \eqref{eq:GFR-length-two}.
	\begin{definition}\label{def:GFR-UFR}
		Let $R$ be a ring and $\mb{F}$ a complex of free $R$-modules
		\[
		0 \to R^{f_n} \to \cdots \to R^{f_0}.
		\]
		We say the sequence $\underline{f} = (f_0,\ldots,f_n)$ is the \emph{format} of $\mb{F}$. Suppose that
		\begin{enumerate}
			\item $\mb{F}$ is acyclic, and
			\item for any ring $S$ and free resolution $\mb{G}$ over $S$ with the same format $(f_0,\ldots,f_n)$, there exists a unique homomorphism $R\to S$ specializing $\mb{F}$ to $\mb{G}$, in the sense that $\mb{F} \otimes_R S = \mb{G}$. Note that we require actual equality, not just an isomorphism.
		\end{enumerate}
		Then we say $(R,\mb{F})$ is the \emph{universal pair} for the format $\underline{f}$. We refer to $\mb{F}$ as a \emph{universal free resolution} and $R$ the associated \emph{universal ring}.
	\end{definition}
	The standard category theory argument regarding universal properties shows that, if a universal pair exists for a given format $\underline{f}$, then it is unique up to unique isomorphism.
	
	Conceptually, condition (2) expresses that $(R,\mb{F})$ encodes an equational structure theorem for free resolutions of format $\underline{f}$, and condition (1) expresses that this structure theorem is optimal since any other example must factor through $(R,\mb{F})$. Here the adjective ``equational'' qualifies that these structure theorems assert unique solutions to a system of equations involving the entries of $\mb{G}$. The acyclicity criterion Theorem~\ref{thm:acyclicity-criterion} is not equational, for instance.
	
	In \cite{Hochster75}, Hochster constructed the universal pair $(R_\mathrm{univ},\mb{F}^\mathrm{univ})$ for length 2 formats $\underline{f} = (f_0,f_1,f_2)$. We assume $f_0 \geq r_0$, $f_1 = r_1 + r_2$, and $f_2 = r_2$ for positive integers $r_1,r_2$ to avoid degenerate cases.
	
	Let $d_1$ and $d_2$ be matrices of indeterminates, where $d_1$ is $f_0 \times f_1$ and $d_2$ is $f_1 \times f_2$. The construction starts by taking the ring $R_c$ of generic complexes of format $\underline{f}$, which is a polynomial ring in the aforementioned indeterminates, modulo the ideal $I_1(d_1 d_2) + I_{r_1 + 1}(d_1) + I_{r_2 + 1}(d_2)$. This ring comes equipped with a tautological complex $\mb{F}^c$ whose differentials are $d_i$. The ideals $I_{r_1}(d_1)$ and $I_{r_2}(d_2)$ both have grade 1, so in view of Theorem~\ref{thm:acyclicity-criterion}, the goal is to increase $\grade I_{r_2}(d_2)$ to 2.
	
	This can be achieved by taking the \emph{ideal transform} with respect to $I = I_{r_2}(d_2)$, defined as
	\[
	R = \{h \in \operatorname{Frac} R_c : I^t h \subseteq R_c \text{ for some }t\}.
	\]
	With $\mb{F} = \mb{F}^c \otimes R$, Hochster proves that the pair $(R,\mb{F})$ is the universal pair for $\underline{f}$, and that it essentially encodes the first structure theorem of Buchsbaum and Eisenbud, which we now recall.
	\begin{definition}
		A complex $\mb{F}$ over a ring $R$ is \emph{acyclic in grade $c$} if $\mb{F} \otimes R_\mf{p}$ is acyclic for all primes $\mf{p}$ with $\grade \mf{p} \leq c$.
	\end{definition}
	Recall that by Theorem~\ref{thm:acyclicity-criterion}, acyclicity is equivalent to $\grade I_{r_i}(d_i) \geq i$ for all $i$. By contrast, acyclicity in grade $c$ amounts to the weaker requirement that $\grade I_{r_i}(d_i) \geq \min(i,c+1)$ instead.
	\begin{thm}[\cite{Buchsbaum-Eisenbud74}]\label{thm:BE1}
		Let $\mb{F}$ be a complex of free $R$-modules
		\[
		0 \to F_n \xto{d_n} F_{n-1} \xto{d_{n-1}} \cdots \xto{d_1} F_0
		\]
		that is acyclic in grade 1. Let $f_i = \rank F_i$ and $r_i = \rank d_i$. Fix identifications $\bigwedge^{f_i} F_i \cong R$. Then there exist uniquely determined maps $a_i \colon R \to \bigwedge^{r_i}F_{i-1}$ ($i = 1,\ldots,n$) so that
		\[
		\begin{tikzcd}
			\bigwedge^{r_i} F_i \ar[dr,"-\wedge a_{i+1}",swap]\ar[rr,"\bigwedge^{r_i} d_i"] && \bigwedge^{r_i} F_{i-1}\\
			& \bigwedge^{f_i} F_i \cong R \ar[ur,"a_i",swap]
		\end{tikzcd}
		\]
		commutes, where we set $a_{n+1}$ to be the identity $R \to \bigwedge^0 F_n = R$.
	\end{thm}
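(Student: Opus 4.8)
The plan is to prove the theorem by descending induction on $i$, building $a_n, a_{n-1}, \dots, a_1$ one at a time from $a_{n+1} = \mathrm{Id}$; the uniqueness of each $a_i$ will fall out of the construction rather than require a separate argument.

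\emph{Numerical preliminaries.} First I would pin down the combinatorics: $f_i = r_i + r_{i+1}$ (with $r_{n+1} = 0$) and $\grade I_{r_i}(d_i) \ge 1$ for $1 \le i \le n$. The grade bound is immediate from Theorem~\ref{thm:acyclicity-criterion} applied to the localizations $\mb{F}_{\mf p}$ with $\grade \mf p \le 1$, since acyclicity in grade $1$ means precisely $\grade I_{r_i}(d_i) \ge \min(i,2)$. For the rank identity, localize at a prime $\mf p$ of grade $0$: acyclicity in grade $1$ makes $\mb{F}_{\mf p}$ an exact complex of finite free modules over a depth-$0$ local ring, hence split exact by Auslander--Buchsbaum; as $I_{r_i}(d_i)$ contains a nonzerodivisor it cannot lie in $\mf p$, so $\operatorname{rank}(d_i \otimes R_{\mf p}) = r_i$, and comparing ranks in a split exact complex gives $f_i = r_i + r_{i+1}$. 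I would also record that $\bigwedge^{r_i} d_i$ has rank $1$ with $I_1(\bigwedge^{r_i} d_i) = I_{r_i}(d_i)$, since its $2 \times 2$ minors lie in $I_{r_i+1}(d_i) = 0$.

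\emph{The induction.} Put $a_{n+1} = \mathrm{Id}$ and, using $r_n = f_n$, put $a_n = \bigwedge^{f_n} d_n$; the level-$n$ triangle commutes because $-\wedge a_{n+1}$ is the identity of $\bigwedge^{r_n} F_n = R$. Suppose now $a_{i+1}\colon R \to \bigwedge^{r_{i+1}} F_i$ is in hand with its triangle commuting, and write $\tilde a_{i+1} = a_{i+1}(1)$. Since $r_i + r_{i+1} = f_i$, wedging with $\tilde a_{i+1}$ is a map $\mu\colon \bigwedge^{r_i} F_i \to \bigwedge^{f_i} F_i \cong R$ — this is exactly $-\wedge a_{i+1}$ — whose image is the ideal generated by the coordinates of $\tilde a_{i+1}$. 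I need two facts: (i) $\operatorname{im}\mu$ contains a nonzerodivisor, and (ii) $\ker\mu \subseteq \ker(\bigwedge^{r_i} d_i)$, with the induced map $\operatorname{im}\mu \to \bigwedge^{r_i} F_{i-1}$ extending to an $R$-linear map $a_i\colon R \to \bigwedge^{r_i} F_{i-1}$. Granting these, $\bigwedge^{r_i} d_i = a_i \circ \mu$ holds by construction, and uniqueness is automatic: two solutions differ by a map killed by $\operatorname{im}\mu$, hence by torsion-freeness of the free module $\bigwedge^{r_i} F_{i-1}$ they coincide. Fact (i) is cheap: from $\bigwedge^{r_{i+1}} d_{i+1} = a_{i+1} \circ (-\wedge a_{i+2})$ one reads off $I_{r_{i+1}}(d_{i+1}) = \operatorname{im}(-\wedge a_{i+2}) \cdot \operatorname{im}\mu$, so were $\operatorname{im}\mu$ contained in the zerodivisors it would, by prime avoidance, lie in an associated prime, dragging $I_{r_{i+1}}(d_{i+1})$ into that prime and contradicting $\grade I_{r_{i+1}}(d_{i+1}) \ge 1$.

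\emph{The main obstacle: fact (ii).} The inclusion $\ker\mu \subseteq \ker(\bigwedge^{r_i} d_i)$ reduces, since $\bigwedge^{r_i} F_{i-1}$ is torsion-free, to checking it after localizing at each associated prime $\mf p$; there $\mb{F}_{\mf p}$ is split exact, so choosing bases adapted to a splitting $F_j \otimes R_{\mf p} = \ker(d_j)_{\mf p} \oplus C_j$ with $d_j|_{C_j}$ an isomorphism onto $\operatorname{im}(d_j)_{\mf p}$ turns both $\mu$ and $\bigwedge^{r_i} d_i$ into explicit products of block isomorphisms and makes the inclusion a routine check. The genuinely delicate point is the extension to $R$: a priori one only obtains $a_i$ as multiplication by an element $\tilde a_i = t^{-1}\bigwedge^{r_i} d_i(\omega_0)$ of the total quotient ring, for a nonzerodivisor $t = \mu(\omega_0) \in \operatorname{im}\mu$, and one must show $\tilde a_i$ actually lies in the free module $\bigwedge^{r_i} F_{i-1}$. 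The cleanest way I would settle this is to first prove the theorem for the universal complex of the given format over a polynomial ring, where the $a_i$ can be exhibited as universal polynomial formulas in the entries of the $d_j$, and then specialize along the classifying homomorphism, exactly as in the Hilbert--Burch case recalled in Theorem~\ref{thm:GFR-Hilbert-Burch}. I expect the split-exact local bookkeeping for fact (ii), together with producing these polynomial formulas in the universal case, to be the main technical burden.
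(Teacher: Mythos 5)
Your overall architecture — the descending induction from $a_{n+1}=\Id$, the two facts (i) and (ii), uniqueness via torsion-freeness, and the reduction of the containment $\ker\mu\subseteq\ker\bigwedge^{r_i}d_i$ to split-exact localizations at associated primes — is the standard Buchsbaum--Eisenbud/Eagon--Northcott argument (the paper cites the result without proof), and those parts are sound modulo the Noetherian hypothesis implicit in your use of associated primes. The genuine gap is exactly at the step you flag as delicate, and your proposed fix cannot work: there is no complex of the given format, over a polynomial ring, that is acyclic in grade 1, universal for such complexes, and on which the $a_i$ are given by polynomial formulas in the entries of the $d_j$. The paper's \S\ref{sec:Rgen} makes this explicit: over the ring $R_c$ of generic complexes the ideal $I_{r_i}(d_i)$ has grade only $1$, the tautological complex is \emph{not} acyclic in grade 1, and the multipliers have to be \emph{adjoined} as new variables to form $R_a$; even in the Hilbert--Burch statement you invoke (Theorem~\ref{thm:GFR-Hilbert-Burch}), $a_1$ is an extra polynomial generator of $R_\mathrm{univ}$, not a polynomial in the $x_{ij}$. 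A direct check (e.g. $d_2=(x,y)^T$, $d_1=(cy,-cx)$ over $k[x,y,c]$, where $a_1=c$) shows no universal polynomial formula in the entries of the differentials can exist, since any such expression lands in the ideal $(x,y)$ after substitution. Specializing from $R_a$ instead would be circular: producing the classifying map $R_a\to R$ is precisely the assertion that the multipliers of $\mb{F}$ exist.

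The repair is the half of the hypothesis your argument never uses: acyclicity in grade 1 gives $\grade I_{r_{i+1}}(d_{i+1})\geq 2$ for $i+1\geq 2$, and your own identity $I_{r_{i+1}}(d_{i+1})=\operatorname{im}(-\wedge a_{i+2})\cdot\operatorname{im}\mu$ shows $I_{r_{i+1}}(d_{i+1})\subseteq\operatorname{im}\mu$, hence $\grade(\operatorname{im}\mu)\geq 2$ (and $\operatorname{im}\mu=R$ when $i=n$). With that, the extension is immediate: for $F=\bigwedge^{r_i}F_{i-1}$ free and $I=\operatorname{im}\mu$ of grade $\geq 2$ one has $\Hom_R(R/I,F)=\Ext^1_R(R/I,F)=0$, so restriction $\Hom_R(R,F)\to\Hom_R(I,F)$ is an isomorphism and the map induced on $\operatorname{im}\mu$ extends uniquely to $a_i\colon R\to F$; equivalently, in your total-quotient-ring formulation, pick a regular sequence $t_1,t_2\in\operatorname{im}\mu$ and note $\tilde a_i\in t_1^{-1}F\cap t_2^{-1}F=F$, the same regular-sequence trick the paper uses in Proposition~\ref{prop:localization-lemma}. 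With this substitution for your final step, the induction closes and the rest of your write-up stands.
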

	The original theorem was stated under the stronger assumption that $\mb{F}$ is acyclic. The weaker hypotheses here are due to Eagon and Northcott in \cite{Eagon-Northcott73}. The maps $a_i$ are called \emph{Buchsbaum-Eisenbud multipliers}.
	
	After Hochster's construction of the universal pair $(R_\mathrm{univ},\mb{F}^\mathrm{univ})$, many authors established various properties of the ring $R_\mathrm{univ}$. Let $R_a$ be the ring obtained by adjoining the Buchsbaum-Eisenbud multipliers to $R_c$. In Hochster's original treatment, the ring $R_\mathrm{univ}$ was shown to be the integral closure of $R_a$. Huneke later showed in \cite{Huneke81} that the ring $R_a$ is already integrally closed, so $R_\mathrm{univ} = R_a$. In \cite{Pragacz-Weyman90}, Pragacz and Weyman analyzed the relations in $R_\mathrm{univ}$. They also proved that it has rational singularities (this is the only statement so far that uses Assumption~\ref{ass:base-field}). Tchernev established numerous more properties of $R_\mathrm{univ}$ using Gr\"obner bases in \cite{Tchernev01}, and Kustin determined a free resolution of $R_\mathrm{univ}$ as a quotient of a polynomial ring in \cite{Kustin07}.
	
	In \cite{Hochster75}, Hochster expressed doubt in the existence of universal free resolutions for formats of length $\geq 3$, and proposed a weakening of universality in which the specialization is no longer required to be unique.
	\begin{definition}\label{def:GFR-GFR}
		Let $R$ be a ring and $\mb{F}$ an acyclic complex of free $R$-modules. Suppose that the format of $\mb{F}$ is $\underline{f}$, and that for any ring $S$ and free resolution $\mb{G}$ over $S$ with the same format $(f_0,\ldots,f_n)$, there exists a (not necessarily unique) homomorphism $R \to S$ specializing $\mb{F}$ to $\mb{G}$. Then we say $(R,\mb{F})$ is a \emph{generic pair} for the format $\underline{f}$. We refer to $\mb{F}$ as a \emph{generic free resolution} and $R$ the associated \emph{generic ring}.
	\end{definition}
	Unfortunately, the removal of unique specialization means that such generic pairs are not unique for a fixed format. Indeed, one could simply adjoin indeterminates to an existing generic ring without affecting its genericity.
	
	However, this is a necessary concession, as Bruns confirmed Hochster's doubt in \cite{Bruns84}: universal free resolutions do not exist for formats of length $\geq 3$. In the same paper, he also showed that for any nonnegative integers $r_0,r_1,\ldots,r_n$, the format
	\begin{equation}\label{eq:GFR-rank-condition-on-format}
		\underline{f} = (f_0,\ldots,f_n) = (r_0 + r_1,r_1 + r_2,\ldots,r_{n-1} + r_n, r_n)
	\end{equation}
	admits a generic pair (this condition on $\underline{f}$ is forced by linear algebra).
	
	Bruns's construction of generic free resolutions is via a procedure he calls ``generic exactification.'' Essentially, if $\mb{F}$ is a complex over $R$ and $H_i(\mb{F})\neq 0$ for some $i > 0$, we can pick a map $Z\colon R^N \to F_i$ surjecting onto the cycles which are nonzero in homology. Then we adjoin a generic $f_{i+1} \times N$ matrix of variables $X = [x_{ij}]$ to $R$ and quotient by the relation $d_{i+1}X = Z$ to obtain a new ring $R'$, thereby killing the cycles in $H_i(\mb{F})$. While this process may introduce new cycles, we at least have that the induced map $H_i(\mb{F}) \to H_i(\mb{F}\otimes R')$ is zero. Thus, taking $R_\mathrm{gen}$ to be the direct limit of the rings produced iteratively using this procedure, the complex $\mb{F}^\mathrm{gen} = R_\mathrm{gen}$ is acyclic by construction.
	
	While Bruns's proof is constructive, it does not reveal any properties of the ring $R_\mathrm{gen}$. For length 3 specifically, a more systematic construction of generic free resolutions was carried out in \cite{Weyman89} and \cite{Weyman18}, which we now discuss.
	
	\subsection{Weyman's generic free resolutions of length three}\label{sec:GFR-Weyman}
	If $(R_\mathrm{univ},\mb{F}^\mathrm{univ})$ is a universal pair for the format $\underline{f} = (f_0,f_1,\ldots,f_n)$, then the action of $\prod \GL(f_i)$ on $\mb{F}^\mathrm{univ}$ induces an action of $\prod \GL(f_i)$ on $R_\mathrm{univ}$.
	\begin{example}
		Let $\underline{f} = (f_0,f_1,f_2) = (1,n,n-1)$ and let $F_i = \mb{C}^{f_i}$. From Theorem~\ref{thm:GFR-Hilbert-Burch} we see that, under the action of $\prod \GL(F_i)$, the ring $R_\mathrm{univ}$ is
		\[
		R_\mathrm{univ} = \Sym [(F_1^* \otimes F_2) \oplus (F_0^* \otimes \bigwedge^{f_1} F_1 \otimes \bigwedge^{f_2} F_2^*)]
		\]
		where the variables $x_{ij}$ span the representation $F_1^* \otimes F_2$ and the Buchsbaum-Eisenbud multiplier $a_1$ spans the one-dimensional representation $F_0^* \otimes \bigwedge^{f_1} F_1 \otimes \bigwedge^{f_2} F_2^*$.
	\end{example}
	More generally, the universality property of $(R_a,\mb{F}^a)$ yields an action of $\prod \GL(F_i)$ on $R_a$. The decomposition of $R_a$ under this action is described in \cite{Pragacz-Weyman90}.
	
	If $(R_\mathrm{gen},\mb{F}^\mathrm{gen})$ is an abstract generic pair, there is no natural action of $\prod \GL(F_i)$ on $R_\mathrm{gen}$. Despite this, we can at least try to construct a generic pair taking symmetry into account. Moreover, it turns out that this process can result in the presence of \emph{more} symmetry, rather than less.
	
	\subsubsection{Construction of $\Rgen$}
	As in \S\ref{sec:example-res}, we fix positive integers $r_1 \geq 1$, $r_2 \geq 2$, $r_3 \geq 1$ and define $f_0 = r_1$, $f_1 = r_1 + r_2$, $f_2 = r_2 + r_3$, $f_3 = r_3$. Let $\underline{f} = (f_0,f_1,f_2,f_3)$ and $F_i = \mb{C}^{f_i}$. Often we will abuse notation and write $F_i$ when we really mean $F_i \otimes R$ for some ring $R$.
	
	Weyman constructed a candidate generic pair $(\Rgen(\underline{f}),\Fgen(\underline{f}))$ for the format $\underline{f}$ in \cite{Weyman89}, and verified its acyclicity in \cite{Weyman18}. We will typically suppress $\underline{f}$ from the notation. As mentioned previously, generic pairs are not unique. Henceforth when we say \emph{the} generic ring, we refer to the model $\Rgen$ specifically.
	\begin{remark}
		One can more generally consider formats where $f_0 \geq r_1$, but we will not do so here. The main case of interest will actually be when $f_0 = r_1 = 1$.
	\end{remark}
	We now briefly summarize the construction of $\Rgen$. The starting point is Theorem~\ref{thm:BE1}, which we restate with two small adjustments: we state it only for $c=3$ and we avoid identifying top exterior powers with the base ring in the interest of doing things $\GL(F_i)$-equivariantly.
	\begin{thm}\label{thm:BE1'}
		Let $0 \to F_3 \xto{d_3} F_2 \xto{d_2} F_1 \xto{d_1} F_0$ be a complex of free modules, acyclic in grade 1, of format $(f_0,f_1,f_2,f_3)$ over $R$. Then there are uniquely determined maps $a_3,a_2,a_1$, constructed as follows:
		\begin{itemize}
			\item $a_3$ is the top exterior power
			\[
			a_3 \colon \bigwedge^{f_3} F_3 \to \bigwedge^{f_3} F_2.
			\]
			\item $a_2$ is the unique map making the following diagram commute:
			\[
			\begin{tikzcd}
				\bigwedge^{r_2} F_2 \ar[rr,"\bigwedge^{r_2}d_2"] \ar[dr,"-\wedge a_3",swap] && \bigwedge^{r_2} F_1\\
				& \bigwedge^{f_3} F_3^* \otimes \bigwedge^{f_2} F_2 \ar[ur,dashed,"a_2",swap]
			\end{tikzcd}
			\]
			\item $a_1$ is the unique map making the following diagram commute:
			\[
			\begin{tikzcd}
				\bigwedge^{f_0} F_1 \ar[rr,"\bigwedge^{r_1}d_1"] \ar[dr,"-\wedge a_2",swap] && \bigwedge^{f_0} F_0\\
				& \bigwedge^{f_3} F_3 \otimes \bigwedge^{f_2} F_2^* \otimes \bigwedge^{f_1} F_1 \ar[ur,dashed,"a_1",swap]
			\end{tikzcd}
			\]
		\end{itemize}
	\end{thm}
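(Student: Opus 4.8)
The plan is to derive this from Theorem~\ref{thm:BE1}, of which it is just the case $n=3$ rephrased so as not to trivialize the top exterior powers $\bigwedge^{f_i}F_i$. I would settle uniqueness first, since that is the only part carrying real content, and then read off existence — together with independence of all auxiliary choices — from it.

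For uniqueness, observe that $a_3$ is forced: taking $a_4 = \Id$ as in Theorem~\ref{thm:BE1}, the first triangle degenerates to $a_3 = \bigwedge^{f_3} d_3$ (using $f_3 = r_3$). For $a_2$, if $a_2$ and $a_2'$ both complete the triangle then $(a_2 - a_2')\circ(-\wedge a_3) = 0$, so it suffices to check that the image of $-\wedge a_3 \colon \bigwedge^{r_2} F_2 \to \bigwedge^{f_3} F_3^* \otimes \bigwedge^{f_2} F_2$ contains a nonzerodivisor multiple of a basis vector of the rank-one free module $\bigwedge^{f_3} F_3^* \otimes \bigwedge^{f_2} F_2$; any $R$-linear map from this module to the free module $\bigwedge^{r_2} F_1$ killing such an element is zero (multiply through by the nonzerodivisor and use that the target is free). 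In coordinates, $a_3$ is the tuple of maximal minors of $d_3$ and $-\wedge a_3$ sends a basis vector $e_T$ with $|T| = r_2$ to $\pm$ the complementary maximal minor of $d_3$, so its image is $I_{r_3}(d_3)$ times a basis vector; since $\mb{F}$ is acyclic in grade $1$, $\grade I_{r_3}(d_3) \geq 1$ and this ideal contains a nonzerodivisor. The same mechanism handles $a_1$: the image of $-\wedge a_2$ is the ideal generated by the coordinates of $a_2$ (times a basis vector), and the relation $\bigwedge^{r_2} d_2 = a_2 \circ (-\wedge a_3)$ exhibits the maximal minors of $d_2$ as $R$-combinations of those coordinates, so the image contains $I_{r_2}(d_2)$, which again has positive grade.

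For existence I would fix trivializations $\epsilon_i \colon \bigwedge^{f_i} F_i \xrightarrow{\ \sim\ } R$, apply Theorem~\ref{thm:BE1} with $n = 3$ to obtain maps $\tilde a_i \colon R \to \bigwedge^{r_i} F_{i-1}$, and untwist: precompose $\tilde a_2$ with the canonical isomorphism $\bigwedge^{f_3} F_3^* \otimes \bigwedge^{f_2} F_2 \cong R$ assembled from $\epsilon_2$ and the dual of $\epsilon_3^{-1}$, and likewise assemble $a_1$ from $\tilde a_1$ and $\epsilon_1, \epsilon_2, \epsilon_3$. Chasing the triangles of Theorem~\ref{thm:BE1}, which involves only naturality of the wedge operation under the $\epsilon_i$, shows the resulting maps fit into the triangles displayed above; and by the uniqueness just proved they do not depend on the choice of the $\epsilon_i$. (Alternatively one could argue directly: localizing at a grade-zero prime makes $\mb{F}$ split exact, and there $-\wedge a_{i+1}$ is manifestly the projection of $\bigwedge^{r_i} F_i$ onto $\bigwedge^{r_i}$ of a complement of $\ker d_i$, which produces $a_i$ over the localization, after which one descends to $R$; but routing through Theorem~\ref{thm:BE1} is shorter.)

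The only genuine obstacle is the identification of the image of each $-\wedge a_{i+1}$ with an ideal of minors of positive grade; everything else is linear-algebra bookkeeping with top exterior powers. It is precisely here that the hypothesis ``acyclic in grade $1$'' is used — it supplies $\grade I_{r_i}(d_i) \geq 1$ — so over a general, possibly non-reduced ring $R$ these factorizations must be extracted from the grade hypothesis rather than taken for granted.
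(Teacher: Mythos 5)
Your proposal is correct and matches the paper's intent: the paper gives no separate proof of Theorem~\ref{thm:BE1'}, presenting it simply as a $\GL(F_i)$-equivariant restatement of Theorem~\ref{thm:BE1} (stated there with the Eagon--Northcott ``acyclic in grade 1'' hypothesis), which is exactly the route you take. Your untwisting of the trivializations $\bigwedge^{f_i}F_i \cong R$ and your uniqueness argument via the nonzerodivisors in $I_{r_3}(d_3)$ and in the coordinate ideal of $a_2$ (which contains $I_{r_2}(d_2)$) are sound and just fill in the routine details the paper leaves implicit.
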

	Note that $a_1$ is just a scalar. If $\grade I_{f_0}(d_1) \geq 2$, then $a_1$ is an isomorphism.
	
	Theorem~\ref{thm:BE1} can be used to construct a free complex $\mb{F}^a$ of the given format $\underline{f}$ over a ring $R_a$, called the Buchsbaum-Eisenbud multiplier ring, such that $\mb{F}^a$ is acyclic in grade 1 and is universal with respect to this property.
	
	The pair $(R_a,\mb{F}^a)$ is a good starting point for constructing the generic pair $(\Rgen,\Fgen)$, since if $\mb{F}$ is any resolution of the same format over some ring $R$, then of course it is acyclic in grade 1 and so there is a unique map $R_a \to R$ specializing $\mb{F}^a$ to $\mb{F}$.
	
	However, $\mb{F}^a$ is not acyclic: letting $d_i$ denote the differentials of $\mb{F}^a$, we have
	\[
	\grade I_{r_3} (d_3) = 2, \quad \grade I_{r_2} (d_2) = 2, \quad \grade I_{r_1} (d_1) = 1.
	\]
	From the perspective of the acyclicity criterion Theorem~\ref{thm:acyclicity-criterion}, the failure of $\mb{F}^a$ to be acyclic can be attributed to the insufficient grade of $I_{r_3} (d_3)$.
	
	Specifically we have $H_1(\mb{F}^a) \neq 0$, so one strategy to proceed would be to try and kill $H_1(\mb{F}^a)$ following \cite{Bruns84}. The first step would be to handle the Koszul cycles. However, it is not clear how to carry out this recursive procedure systematically in an explicit manner.
	
	The alternative approach carried out in \cite{Weyman89} is to look at the Koszul complex on $\bigwedge^{r_3} d_3$ instead. Explicitly, writing $\mc{K} = \bigwedge^{f_3} F_3^* \otimes \bigwedge^{f_3}F_2$ so that $\bigwedge^{r_3}d_3$ can be viewed as a map $R_a \to \mc{K}$, the fact that $\grade I_{r_3} (d_3) = 2$ implies the existence of nonzero $H^2$ in
	\[
	0 \to \bigwedge^0 \mc{K} \to \bigwedge^1 \mc{K} \to \bigwedge^2 \mc{K} \to \bigwedge^3 \mc{K}.
	\]
	Now the promised connection to representation theory gradually appears. First, in \cite{Weyman89}, the recursive procedure of killing $H^2$ in the complex above was carried out with the aid of a graded Lie algebra $\bigoplus_{i > 0} \mb{L}_i$ where we view $\mb{L}_i$ as residing in degree $-i$. Let
	\[
	\mb{L}^\vee \coloneqq \bigoplus_{i > 0} \mb{L}_i^*, \qquad \mbf{L} \coloneqq (\mb{L}^\vee)^* = \prod_{i > 0} \mb{L}_i.
	\]
	We call $\mbf{L}$ the \emph{defect Lie algebra}.
	
	There is a diagram
	\begin{equation*}\begin{tikzcd}
			0 \ar[r] & \bigwedge^0 \mc{K} \ar[r] & \bigwedge^1 \mc{K} \ar[r] & \bigwedge^2 \mc{K} \ar[r] & \bigwedge^3 \mc{K}\\
			&& \mb{L}^\vee \ar[u, "p"] \ar[r] & \bigwedge^2 \mb{L}^\vee \ar[u, "q"]
	\end{tikzcd}\end{equation*}
	where the dual of the lower horizontal map is the bracket in $\mbf{L}$. Let $p_i$ denote the restriction of $p$ to $\mb{L}_i^* \subset \mb{L}^\vee$ and similarly $q_i$ the restriction of $q = \bigwedge^2 p$ to $(\bigwedge^2 \mb{L}^\vee)_i$. The map $p_1$ is defined to lift a cycle constructed using the second structure theorem of \cite{Buchsbaum-Eisenbud74}. Since $\mb{L}^\vee$ is \emph{strictly} positively graded, $(\bigwedge^2 \mb{L}^\vee)_i$ only involves $\mb{L}_j$ for $j < i$, which allows for recursive computation of the cycles $q_m$ and their lifts $p_m$ for $m \geq 2$.
	
	For positive integers $m$, define $R_m'$ to be the ring obtained from $R_a$ by generically adjoining variables for the entries of $p_1,\ldots,p_m$ and quotienting by all relations they would satisfy on a split exact complex (see for instance \cite[Lemma 2.4]{Weyman89}). Let $R_m$ be the ideal transform of $R_m'$ with respect to $I_{r_2}(d_2) I_{r_3}(d_3)$. The ring $\Rgen$ is defined to be the limit of the rings $R_m$, and $\Fgen \coloneqq \mb{F}^a \otimes \Rgen$.
	
	The idea behind this construction is that adjoining the lifts $p_i$ ought to kill $H^2$ in the Koszul complex, and the ideal transform with respect to $I_{r_2}(d_2) I_{r_3}(d_3)$ ensures that we do not generate homology elsewhere in our complex. The acyclicity of $\Fgen$ was reduced to the exactness of certain 3-term complexes (c.f. \cite[Theorem 3.1]{Weyman89}), and this was later proven in \cite{Weyman18}.
	
	\subsubsection{Exponential action of $\mbf{L}$}
	Given a free resolution $\mb{F}$ over some ring $R$, a choice of maps $p_1,\ldots,p_m$ making
	\begin{equation}\begin{tikzcd}\label{eq:p-lifting}
			0 \ar[r] & R \ar[r,"\bigwedge^{r_3} d_3"] & \mc{K}\otimes R \ar[r] & \bigwedge^2 \mc{K}\otimes R \ar[r] & \bigwedge^3 \mc{K}\otimes R\\
			&& \mb{L}_i^* \otimes R \ar[u, "p_i"] \ar[r] & (\bigwedge^2 \mb{L}^\vee)_i \otimes R \ar[u, "q_i"]
	\end{tikzcd}\end{equation}
	commute determines a map $R_m' \to R$. This extends uniquely to the ideal transform $R_m$ since the image of $I_{r_2}(d_2) I_{r_3}(d_3)$ in $R$ has grade at least 2.
	\begin{lem}\label{lem:GFR-p-determines-w}
		There is a natural bijection
		\[
		\begin{tikzcd}
			\{\text{maps $w\colon \Rgen \to R$ specializing $\Fgen$ to $\mb{F}$}\} \ar[d,"\simeq",leftrightarrow]\\
			\{\text{choices of $\{p_i\}_{i > 0}$ making \eqref{eq:p-lifting} commute}\}
		\end{tikzcd}
		\]
	\end{lem}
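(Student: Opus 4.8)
The plan is to read the bijection directly off the three-step construction of $\Rgen=\varinjlim_m R_m$, matching each layer with one piece of the data recorded by a commuting diagram \eqref{eq:p-lifting}: the Buchsbaum--Eisenbud multiplier ring $R_a$ records the underlying complex $\mb{F}$ itself, the quotient $R_m'$ records a choice of lifts $p_1,\dots,p_m$, and passing to the ideal transform $R_m$ contributes nothing new because the relevant grade is already at least $2$. For the direction sending $\{p_i\}$ to a map $w$, most of the work is in fact already spelled out in the paragraph preceding the statement: a resolution $\mb{F}$ is in particular acyclic in grade $1$, so the universal property of $R_a$ gives a unique map $R_a\to R$ specializing $\mb{F}^a$ to $\mb{F}$; the hypothesis that $p_1,\dots,p_m$ make the degree $\le m$ part of \eqref{eq:p-lifting} commute says exactly that their entries satisfy the defining relations of $R_m'$ over $R_a$ (cf.\ \cite[Lemma~2.4]{Weyman89}), giving a compatible family of maps $R_m'\to R$; and since $\grade I_{r_2}(d_2)\ge 2$ forces $\grade\bigl(I_{r_2}(d_2)I_{r_3}(d_3)\bigr)\ge 2$ in $R$, each such map extends uniquely across the ideal transform to $R_m\to R$, so passing to the colimit yields $w\colon\Rgen\to R$. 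That $w$ specializes $\Fgen$ to $\mb{F}$ is then immediate: $\Fgen\otimes_{\Rgen}R=\mb{F}^a\otimes_{R_a}R=\mb{F}$.

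For the reverse direction I would take $w\colon\Rgen\to R$ specializing $\Fgen$ to $\mb{F}$, restrict along $R_a\to\Rgen$ to get a map specializing $\mb{F}^a$ to $\mb{F}$ --- which by the uniqueness in the universal property of $R_a$ is the canonical one, so $w$ is canonically an $R_a$-algebra map --- and then restrict along $R_m'\to R_m\to\Rgen$. Invoking the presentation of $R_m'$ as a quotient of a polynomial ring over $R_a$ (again \cite[Lemma~2.4]{Weyman89}), this restriction is the same datum as a tuple of lifts $p_1,\dots,p_m$ over $R$ making the degree $\le m$ part of \eqref{eq:p-lifting} commute; these tuples are compatible as $m$ increases, so together they furnish $\{p_i\}_{i>0}$ making all of \eqref{eq:p-lifting} commute.

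It then remains to check that the two assignments are mutually inverse and natural in $(R,\mb{F})$, which is routine bookkeeping with uniqueness statements: extracting lifts from a $w$ that was built out of $\{p_i\}$ returns the same $\{p_i\}$ tautologically, since by construction $w|_{R_m'}$ is precisely the map determined by $p_1,\dots,p_m$; conversely, two maps $\Rgen\to R$ that agree on $R_a$ and on every $R_m'$ must agree on every $R_m$ by uniqueness of the ideal-transform extension, hence agree on the colimit $\Rgen$. Naturality in $R$ is formal, as every step --- the universal property of $R_a$, the presentation of $R_m'$, the ideal transform, and the colimit --- is functorial.

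The one genuinely substantive point --- everything else being formal manipulation of universal properties --- is the identification of $R_a$-algebra maps $R_m'\to R$ with tuples of lifts $(p_1,\dots,p_m)$ over $R$. This rests on knowing that the ideal of ``relations satisfied on a split exact complex'' used to define $R_m'$ cuts out exactly the locus where \eqref{eq:p-lifting} commutes in degrees $\le m$, and in particular imposes no spurious constraints beyond the commutativity of those squares. Since this is precisely the heart of Weyman's construction in \cite{Weyman89}, in the write-up I would isolate it as the cited input rather than reprove it, but it is the load-bearing ingredient of the lemma.
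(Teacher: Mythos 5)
Your proposal is correct and follows essentially the same route as the paper, whose proof simply points to the preceding discussion: lifts $p_1,\dots,p_m$ correspond to maps $R_m'\to R$, these extend uniquely over the ideal transform because $I_{r_2}(d_2)I_{r_3}(d_3)$ has grade at least $2$ in $R$, and one passes to the limit $\Rgen=\lim R_m$. The only nitpick is that the grade bound on the product needs $\grade I_{r_3}(d_3)\ge 2$ as well (which holds since $\mb{F}$ is acyclic, giving grade $3$), not just $\grade I_{r_2}(d_2)\ge 2$ as you phrased it.
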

	\begin{proof}
		This follows from the preceding discussion since $\Rgen = \lim R_m$.
	\end{proof}
	Furthermore, having chosen $p_i$ for $i < m$, the diagram \eqref{eq:p-lifting} shows that the non-uniqueness of $p_m$ lifting $q_m$ is exactly $\Hom(\mb{L}_m^* \otimes R,R) = \mb{L}_m \otimes R$. In \cite{Weyman89}, the action of $\mbf{L}$ on $\Rgen$ by derivations is described. Specifically, elements $u \in \mb{L}_n$ act on $\Rgen$ by $R_{n-1}$-linear derivations. It is sufficient to describe how they affect (the entries of) $p_{n+k}$ for $k \geq 0$, and this is as follows: the derivation $D_u$ sends $p_n^*$ to
	\[
	\mc{K}^* \xto{\bigwedge^{r_3} d_3^*} \Rgen \xto{u} \mb{L}_n \otimes \Rgen 
	\]
	and $p_{n+k}^*$ to
	\[
	\mc{K}^* \xto{p_k^*} \mb{L}_k \otimes \Rgen \xto{[u,-]} \mb{L}_{n+k} \otimes \Rgen.
	\]
	These are just restatements of the formulas given in \cite[Prop. 2.11]{Weyman89} and \cite[Thm. 2.12]{Weyman89} respectively.
	
	These formulas naturally extend to an arbitrary element $X \in \mb{L} = \prod_{i > 0} \mb{L}_i$; the resulting derivation is well-defined because $\mb{L}_{>n}$ acts by zero on $R_n$. In a slight abuse of notation, we will also write $X$ for the corresponding derivation. Homomorphisms $\Rgen \to R$ correspond to $R$-algebra homomorphisms $\Rgen \otimes R \to R$, and the Lie algebra $\mb{L}\otimes R$ acts on $\Rgen \otimes R$.
	
	For $X \in \mb{L} \otimes R$, the action of $\exp X \coloneqq \sum_{i\geq 0} \frac{1}{i!}X^i$ on $\Rgen \otimes R$ is well-defined since every element of $\Rgen \otimes R$ is killed by a sufficiently high power of $X$. Since $X$ acts by an $(R_a\otimes R)$-linear derivation, it follows formally that $\exp X$ acts by an automorphism fixing $R_a\otimes R$. Such automorphisms completely describe the non-uniqueness of the map $\Rgen \to R$ given a particular resolution $(R,\mb{F})$, as the following observation from \cite{Ni-exp} shows.
	
	\begin{thm}\label{thm:parametrize}
		Let $\mb{F}$ be a resolution of length three over $R$ and let $\Rgen$ be the generic ring for the associated format. Fix a $\mb{C}$-algebra homomorphism $w\colon \Rgen \to R$ specializing $\mb{F}^\mathrm{gen}$ to $\mb{F}$. Then $w$ determines a bijection
		\[
		\mbf{L}\cotimes R \coloneqq \prod_{i > 0} (\mb{L}_i \otimes R) \simeq \{\text{$\mb{C}$-algebra homomorphisms $w'\colon \Rgen \to R$ specializing $\mb{F}^\mathrm{gen}$ to $\mb{F}$}\}.
		\]
		Note that a $\mb{C}$-algebra homomorphism $\Rgen \to R$ can be viewed as an $R$-algebra homomorphism $\Rgen \otimes R\to R$. The correspondence above identifies $X \in \mbf{L} \cotimes R$ with the map $w\exp X$ obtained by precomposing $w$ with the action of $\exp X$ on $\Rgen\otimes R$. 
	\end{thm}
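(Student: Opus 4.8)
The plan is to deduce the theorem from Lemma~\ref{lem:GFR-p-determines-w}. Write $\mc{P}$ for the set of $\mb{C}$-algebra homomorphisms $w'\colon\Rgen\to R$ specializing $\mb{F}^\mathrm{gen}$ to $\mb{F}$; by that lemma, $\mc{P}$ is identified with the set of systems $\{p_i\}_{i>0}$ of lifts making \eqref{eq:p-lifting} commute, so in particular a homomorphism in $\mc{P}$ is determined by the lifts it induces. Each $X\in\mbf{L}\cotimes R$ acts on $\Rgen\otimes R$ by a locally nilpotent $(R_a\otimes R)$-linear derivation, so $\exp X$ is a well-defined automorphism fixing $R_a\otimes R$; since the differentials of $\mb{F}^\mathrm{gen}$ live over $R_a$, the composite $w\exp X$ again belongs to $\mc{P}$, and $X\mapsto w\exp X$ is the map to be shown bijective. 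Because $\mbf{L}\cotimes R=\prod_{i>0}(\mb{L}_i\otimes R)$ is complete for its grading, Baker--Campbell--Hausdorff makes $X\mapsto\exp X$ a bijection of $\mbf{L}\cotimes R$ onto a group $G$ of automorphisms of $\Rgen\otimes R$ acting on $\mc{P}$ by precomposition; so the goal reduces to showing that $G$ acts simply transitively on $\mc{P}$, the orbit map at $w$ then being the sought bijection.

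The two facts that power both halves come from the explicit derivation formulas recalled in \S\ref{sec:Rgen}. First, for $u\in\mb{L}_n$ the derivation $D_u$ annihilates $p_\ell^*$ when $\ell<n$ (it is $R_{n-1}$-linear and $p_\ell$ has entries in $R_{n-1}$), sends $p_n^*$ to $u\circ\bigwedge^{r_3}d_3^*$, and sends $p_\ell^*$ to $[u,-]\circ p_{\ell-n}^*$ when $\ell>n$; thus $D_u$ never raises the index $\ell$ and lowers it only through a bracket. Second, $u\circ\bigwedge^{r_3}d_3^*$ has entries in $R_a$, hence is killed by every $D_{u'}$ with $u'\in\mb{L}_{n'}$, $n'\ge n$. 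Together these give: if $X\in\mbf{L}\cotimes R$ has $X_j=0$ for $j<m$, then $\exp X$ fixes $p_i^*$ for $i<m$ and sends $p_m^*$ to precisely $p_m^*+X_m\circ\bigwedge^{r_3}d_3^*$ (the sole first-order term, and it is $X$-fixed, so there are no higher ones).

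For freeness, I would suppose $w\exp X=w$ with $X\ne0$ and take $m$ minimal with $X_m\ne0$. Applying both sides to $p_m^*$ and using the displayed formula of the previous paragraph together with Lemma~\ref{lem:GFR-p-determines-w} yields $X_m\circ\bigwedge^{r_3}\delta_3^*=0$, where $\delta_3=w(d_3)$ is the last differential of $\mb{F}$. The image of $\bigwedge^{r_3}\delta_3^*\colon\mc{K}^*\otimes R\to R$ is $I_{r_3}(\delta_3)$, which has grade $\ge3$ by Theorem~\ref{thm:acyclicity-criterion} since $\mb{F}$ is a resolution; in particular it contains a nonzerodivisor, so $X_m=0$, a contradiction.

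For transitivity, given $w'\in\mc{P}$ I would build an automorphism $g=\prod_{m\ge1}\exp(Y_m)$ with $Y_m\in\mb{L}_m\otimes R$ such that $wg$ and $w'$ induce the same system $\{p_i\}$, whence $wg=w'$ by Lemma~\ref{lem:GFR-p-determines-w}, and then note $g=\exp X$ for a unique $X$ by Baker--Campbell--Hausdorff. The $Y_m$ are chosen recursively so that $w\bigl(\prod_{k\le m}\exp Y_k\bigr)$ and $w'$ induce the same $p_1,\dots,p_m$; this is preserved by the later factors, which fix $p_i^*$ for $i\le m$. At stage $m$, since $\mb{L}^\vee$ is strictly positively graded, $(\bigwedge^2\mb{L}^\vee)_m$ involves only $\mb{L}_{<m}$, so the cycle that $p_m$ must lift in \eqref{eq:p-lifting} depends only on $p_1,\dots,p_{m-1}$, hence agrees for $w\bigl(\prod_{k<m}\exp Y_k\bigr)$ and $w'$; by the remark in \S\ref{sec:Rgen} that two lifts of a fixed cycle differ by a unique element of $\mb{L}_m\otimes R$, there is a unique $Y_m$ repairing the discrepancy on $p_m^*$, and by the second paragraph this is exactly the adjustment the factor $\exp Y_m$ makes. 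The point needing the most care is this interlocking recursion: altering the low-degree lifts $p_{<m}$ changes the cycle $p_m$ has to lift, and it is the iterated-commutator tail of $\exp X$ (equivalently, the Baker--Campbell--Hausdorff rearrangement of $\prod\exp Y_m$) that carries the choices along coherently; without it one would only get a non-canonical parametrization by $\mbf{L}\cotimes R$. The sole substantive commutative-algebra input is the grade bound on $I_{r_3}(\delta_3)$.
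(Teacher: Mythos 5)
Your proposal is correct and follows essentially the same route as the paper's proof: both rest on Lemma~\ref{lem:GFR-p-determines-w}, the explicit formulas for how $\mb{L}_n$ acts on the entries of the $p_\ell$, the grade of $I_{r_3}(d_3)$ (so that lifts of a fixed cycle $q_m$ differ by a unique element of $\mb{L}_m \otimes R$), and a degree-by-degree recursion exploiting the strict positivity of the grading on $\mb{L}^\vee$. Your packaging as ``trivial stabilizer plus transitivity, then recombine $\prod_m \exp Y_m$ via Baker--Campbell--Hausdorff'' is only a mild reorganization of the paper's direct recursive construction of the unique $X=\sum u_i$ with $w\exp X = w'$, with uniqueness checked at each step.
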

	\begin{proof}
		By Lemma~\ref{lem:GFR-p-determines-w}, the homomorphism $w\colon \Rgen \otimes R \to R$ is completely determined by the choice of the structure maps $p_i$. For $X \in \mbf{L} \cotimes R$, let us write $X = \sum_{i > 0} u_i$ where $u_i \in \mb{L}_i \otimes R$, and let $X_n = \sum_{i=1}^n u_i$ denote the partial sums.
		
		Precomposing $w$ by $\exp X$ or $\exp X_n$ has the same effect on the structure maps $p_k$ for $k \leq n$. Acting by $\exp X$ on $p_1$, we get
		\[
		p_1 + (\bigwedge^{r_3} d_3)u_1^*.
		\]
		Here $u_1^*$ means the dual of $R \xto{u_1} \mb{L}_1 \otimes R$. All possible choices of the structure map $p_1$ are obtained by lifting a particular map $q_1$ in the diagram \eqref{eq:p-lifting}, so it follows that choices of $u_1 \in \mb{L}_1 \otimes R$ correspond to choices for the structure map $p_1$.
		
		Once $X_{n-1}$ has been computed, $u_n \in \mb{L}_n \otimes R$ can be similarly determined by comparing $p_n$ with $p_n'$. Acting by $\exp X$ on $p_n$ gives
		\[
		(p_n + p_{n-1}[u_1,-]^* + \cdots) + (\bigwedge^{r_3} d_3)u_n^*.
		\]
		The first part consists of terms involving $u_k$ for $k<n$, which have already been determined. Once again, \eqref{eq:p-lifting} shows that there is a unique choice of $u_n \in \mb{L}_n \otimes R$ that makes the whole expression equal to $p_n'$.
		
		Proceeding inductively in this fashion, we construct $X \in \mbf{L} \cotimes R$ with the desired property, and the uniqueness at each step is evident as well.
	\end{proof}
	If we view the generic pair as describing an equational structure theorem for $\mb{F}$, this tells us that the general solution to our system of equations is readily obtained once we have a particular solution.
	
	The task of finding a particular solution (a map $w\colon \Rgen \to R$ specializing $\Fgen$ to $\mb{F}$) may be difficult, but here is a helpful observation.
	\begin{prop}\label{prop:equivariant-p}
		Let $R$ be a ring and $\mb{F}$ a free resolution
		\[
		0 \to F_3 \otimes R \to F_2 \otimes R \to F_1 \otimes R \to F_0 \otimes R.
		\]
		\begin{enumerate}
			\item Suppose a group $G$ acts on the free modules $F_i \otimes R$ and the differentials of $\mb{F}$ are $G$-equivariant. Since $\prod \GL(F_i \otimes R)$ acts on $\Rgen \otimes R$ and on $\mb{L}_m^*$, we have an induced action of $G$ on $\Rgen \otimes R$ and $\mb{L}_m^*$. If the maps $p_m$ in \eqref{eq:p-lifting} are chosen to be $G$-equivariant, then the induced map $w\colon \Rgen \otimes R \to R$ is also $G$-equivariant.
			\item Suppose $R$ is graded and $\mb{F}$ is a graded free resolution where the differentials are homogeneous of degree zero. This induces a grading on $\Rgen \otimes R$ and $\mb{L}_m^*$ for each $m$, and it is possible to choose all $p_m$ in \eqref{eq:p-lifting} to be homogeneous of degree zero. The corresponding $w \colon \Rgen\otimes R \to R$ is then also homogeneous of degree zero.
		\end{enumerate}
	\end{prop}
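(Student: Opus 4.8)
The plan is to deduce both statements from the uniqueness contained in Lemma~\ref{lem:GFR-p-determines-w}, exploiting that \emph{every} ingredient of Weyman's recursion --- the Buchsbaum--Eisenbud multiplier ring $R_a$, the Koszul complex on $\bigwedge^{r_3} d_3$, the bracket of $\mbf{L}$, and the universal structure maps $p_m$ on $\Rgen$ --- is functorial in the tuple $(F_0,\dots,F_3)$ and is therefore $\prod \GL(F_i)$-equivariant. Granting this, part (2) will in turn reduce to part (1) applied with $G = \mb{G}_m$, since a grading is nothing but a $\mb{G}_m$-action and ``homogeneous of degree zero'' means ``$\mb{G}_m$-equivariant''; the only extra input needed for (2) is that the structure maps can actually be \emph{chosen} homogeneous of degree zero.

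For part (1) I would argue by conjugation. Fix $g \in G$. Since the differentials $d_i$ are $G$-equivariant, the complex $\mb{F}$ is $G$-equivariant, so the conjugate $g \circ w \circ g^{-1} \colon \Rgen \otimes R \to R$ is again an $R$-algebra homomorphism specializing $\Fgen$ to $\mb{F}$ (it is $R$-linear because $g$ acts in the same way on the source and target copies of $R$). Under the bijection of Lemma~\ref{lem:GFR-p-determines-w}, $w$ corresponds to the structure maps obtained by pushing the universal $p_m$ forward along $w$; because the universal $p_m$ are $\prod\GL(F_i)$-equivariant, the structure maps attached to $g \circ w \circ g^{-1}$ are precisely the conjugates $g \cdot p_m \cdot g^{-1}$, taken with respect to the induced actions on $\mb{L}_m^* \otimes R$ and $\mc{K} \otimes R$. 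By hypothesis each $p_m$ is $G$-equivariant, so $g \cdot p_m \cdot g^{-1} = p_m$; thus $g \circ w \circ g^{-1}$ and $w$ have the same structure maps, and injectivity in Lemma~\ref{lem:GFR-p-determines-w} forces $g \circ w \circ g^{-1} = w$. As $g$ was arbitrary, $w$ is $G$-equivariant.

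For part (2), the remaining point is to check, by induction on $m$ along the recursion \eqref{eq:p-lifting}, that the $p_m$ may be taken homogeneous of degree zero. The map $p_1$ lifts the canonical cycle built from the second structure theorem of \cite{Buchsbaum-Eisenbud74}; since that cycle is produced by a universal, degree-preserving recipe in the $d_i$, it is homogeneous of degree zero, and any lift of it through the (degree-zero) Koszul differential has a homogeneous degree-zero component which is still a lift. For $m \geq 2$ the map $q_m$ is assembled from the $p_j$ with $j < m$ via $\bigwedge^2 p$ followed by the Koszul differential; by the inductive hypothesis this is homogeneous of degree zero, as is the dual-of-bracket map $\mb{L}_m^* \otimes R \to (\bigwedge^2 \mb{L}^\vee)_m \otimes R$, so the constraint defining $p_m$ is a homogeneous degree-zero lifting problem which, being solvable by Weyman's construction, admits a homogeneous degree-zero solution (take the degree-zero piece of any solution). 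Having made all these choices, part (2) follows from part (1) with $G = \mb{G}_m$.

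The step I expect to require the most care is the bookkeeping that makes the recursion genuinely grading- (resp.\ $G$-)equivariant: one must fix the induced grading on each $\mb{L}_m^*$ --- coming from its $\prod\GL(F_i)$-representation structure together with the gradings on the $F_i$ --- and verify that, with $\mc{K} = \bigwedge^{f_3} F_3^* \otimes \bigwedge^{f_3} F_2$ and all ambient exterior and tensor powers given their natural induced gradings, the maps $\bigwedge^{r_3} d_3 \colon R \to \mc{K} \otimes R$, the Koszul differentials, and the bracket on $\mbf{L}$ are all homogeneous of degree zero. This is routine once set up, but it is the only place where the argument is more than formal; with it in hand, both the homogeneous-lifting argument for (2) and the conjugation argument for (1) are immediate consequences of Lemma~\ref{lem:GFR-p-determines-w}.
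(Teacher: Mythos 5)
Your proposal is correct and follows essentially the same route as the paper: the paper's proof also rests on the bijection of Lemma~\ref{lem:GFR-p-determines-w} together with the $\prod\GL(F_i)$-equivariance of the recursive construction of the cycles $q_m$, and for the graded case it uses exactly your trick of taking any lift of the (inductively homogeneous) $q_m$ and discarding the components not homogeneous of degree zero. Your conjugation argument for (1) and the reduction of (2) to a $\mb{G}_m$-action are just slightly more explicit packagings of what the paper declares "evident" from the construction of $\Rgen = \lim R_m$.
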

	The construction of the cycle $q_1$ in terms of the differentials $d_i$ and the construction of $q_m$ in terms of $p_1,\ldots,p_{m-1}$ are both $\prod \GL(F_i)$-equivariant. So in the setting of Proposition~\ref{prop:equivariant-p}, (1) the cycle $q_m$ is $G$-equivariant provided $p_1,\ldots,p_{m-1}$ are, and similarly (2) $q_m$ is homogeneous of degree zero provided $p_1,\ldots,p_{m-1}$ are.
	\begin{proof}
		This is evident from the construction of $\Rgen = \lim R_m$ and Lemma~\ref{lem:GFR-p-determines-w}. In the graded setting, it is always possible to recursively take $p_m$ which is homogeneous of degree zero: simply take any lift of $q_m$ (which is homogeneous of degree zero by induction) and discard the components which are not homogeneous of degree zero.
	\end{proof}
	
	\subsubsection{The critical representations}
	
	Recall that we have fixed parameters $r_1 \geq 1$, $r_2 \geq 2$, $r_3 \geq 1$, from which our format $\underline{f}$ is defined as $(f_0,f_1,f_2,f_3) = (r_1,r_1+r_2,r_2+r_3,r_3)$. As in \S\ref{sec:example-res}, consider the diagram $T = T_{r_1+1,r_2-1,r_3+1}$
	\[
	\begin{tikzcd}[column sep=small, row sep=small]
		x_{r_1} \ar[r,dash] & \cdots \ar[r,dash] & \colorX{x_1} \ar[r,dash] & u\ar[r,dash]\ar[d,dash] & y_1\ar[r,dash] &\ar[r,dash] \cdots\ar[r,dash] & y_{r_2-2}\\
		&&& \colorZ{z_1}\ar[d,dash]\\
		&&& \vdots\ar[d,dash]\\
		&&& z_{r_3}
	\end{tikzcd}
	\]
	and let $\mf{g}$ denote the associated Kac-Moody Lie algebra. We define subalgebras $\mf{sl}(F_i)$ of $\mf{g}$ as in \S\ref{sec:gen-licci-differentials}, so e.g. $\mf{g}^{(z_1)} = \mf{sl}(F_1)\times \mf{sl}(F_3)$.
	
	One of the main observations driving the results of \cite{Weyman18} is that
	\begin{align*}
		\mb{L}^\vee = \mf{n}_{z_1}^+ = \bigoplus_{\alpha >_{z_1} 0} \mf{g}_\alpha, \qquad \mbf{L} = \hat{\mf{n}}_{z_1}^- = \prod_{\alpha <_{z_1} 0} \mf{g}_\alpha,
	\end{align*}
	and that the action of $\bigoplus \mb{L}_i$ on $\Rgen$ extends to an action of $\mf{g}$. We refer to \S\ref{bg:lie-grading1} for explanations regarding notation here. Using this connection, Weyman was able to
	\begin{enumerate}
		\item decompose $\Rgen$ into representations of $\mf{sl}(F_0) \times \mf{sl}(F_2) \times \mf{g}$, and
		\item prove the needed exactness of certain complexes from \cite{Weyman89} thereby proving $\Fgen$ is acyclic.
	\end{enumerate}
	Interestingly, while (2) was the original goal, we will never actually use the acyclicity of $\Fgen$. This may come as a surprise, but notice that the classical Hilbert-Burch theorem does not actually claim the universal example to be acyclic; that is a separate result! Similarly, the first structure theorem of Buchsbaum and Eisenbud does not include the statement that $(R_a,\mb{F}^a)$ is universal for finite free complexes acyclic in grade 1. In essence, the fact that these theorems can be recast as ``universal examples'' is a certification that they are the \emph{best possible} structure theorems for their respective objects. From this perspective, it is less surprising that $(\Rgen,\Fgen)$ has utility independent of the acyclicity of $\Fgen$.
	
	On the other hand, (1) is essential to our results, as it allows us to define and analyze so-called ``higher structure maps,'' which will be the topic of \S\ref{sec:GFR-HST}.
	
	We also recall Assumption~\ref{ass:base-field} that although we work over $\mb{C}$ throughout, the results remain valid over $\mb{Q}$. Indeed, the Lie algebra $\mf{g}$ may be defined using the same generators and relations as in \S\ref{bg:lie-construction} over $\mb{Q}$, which results in the \emph{split form} of $\mf{g}$. Its representation theory parallels the situation over $\mb{C}$, and the construction and decomposition of $\Rgen$ remains valid.
	
	\begin{remark}\label{rem:gl-vs-sl}
		There is one subtle point, which is that while the ring $\Rgen$ certainly has an action of $\prod \mf{gl}(F_i)$ by construction, this action does \emph{not} come from an inclusion of $\prod \mf{gl}(F_i)$ into $\mf{sl}(F_0)\times \mf{sl}(F_2) \times \mf{g}$. Nor is it correct to say that $\mf{gl}(F_0) \times \mf{gl}(F_2) \times \mf{g}$ acts on $\Rgen$, since the action of $\mf{gl}(F_2)$ does not commute with the action of $\mf{g}$.
		\begin{enumerate}
			\item Rather, if we let
			\[
			M = \bigwedge^{f_3} F_3 \otimes \bigwedge^{f_2} F_2^* \otimes \bigwedge^{f_1} F_1
			\]
			then the $\prod \mf{gl}(F_i)$-equivariant description of $\mb{L}_1$ from \cite{Weyman89} is
			\[
			\mb{L}_1 = \bigwedge^{r_1+1} F_1^* \otimes F_3 \otimes M.
			\]
			Given an irreducible lowest weight representation $L(\omega)^\vee$ of $\mf{g}$, the action of $\prod \mf{gl}(F_i)$ on $L(\omega)^\vee$ can be inferred from its action on any $z_1$-graded component, e.g. the bottom one.
			\item\label{item:identify-a1} The Buchsbaum-Eisenbud multiplier $a_1$ from Theorem~\ref{thm:BE1'} is a map $M \to \bigwedge^{f_0} F_0$. If $\grade I_{r_0}(d_0) \geq 2$, then $a_1$ is an isomorphism and we may instead view
			\[
			\mb{L}_1 = \bigwedge^{r_1+1} F_1^* \otimes F_3 \otimes \bigwedge^{f_0} F_0
			\]
			which is sometimes more convenient, especially when dealing with resolutions of cyclic modules.
		\end{enumerate}
	\end{remark}

	\begin{example}
		If $\underline{f} = (1,5,6,2)$, then the diagram is $T_{2,3,3} = E_6$. Writing $\mf{g}_i$ for the component of $\mf{g}$ in $z_1$-degree $i$ (i.e. the sum of root spaces $\mf{g}_\alpha$ where the coefficient of $\alpha_{z_1}$ in $\alpha$ is equal to $i$), we have
		\begin{align*}
			\mf{g}_2 &= \bigwedge^4 F_1 \otimes \bigwedge^2 F_3^* \otimes S_2 M^*\\
			\mf{g}_1 &= \bigwedge^2 F_1 \otimes F_3^* \otimes M^*\\
			\mf{g}_0 &= \mf{sl}(F_1) \times \mf{sl}(F_3) \times \mb{C}\\
			\mf{g}_{-1} &= \bigwedge^2 F_1^* \otimes F_3 \otimes M\\
			\mf{g}_{-2} &= \bigwedge^4 F_1^* \otimes \bigwedge^2 F_3 \otimes S_2 M
		\end{align*}
		and $\mbf{L} = \mf{g}_{-2} \oplus \mf{g}_{-1}$.
	\end{example}
	\begin{example}
		If $\underline{f} = (1,6,8,3)$, then the diagram is $T_{2,4,4} = E_7^{(1)}$. Writing $\mf{g}_i$ for the component of $\mf{g}$ in $z_1$-degree $i$, we have
		\begin{align*}
			\vdots\\
			\mf{g}_{4} &= S_{2^2,1^4} F_1 \otimes S_{2,1^2} F_3^* \otimes S_4 M^* \\
			\mf{g}_{3} &= S_{2,1,1,1,1} F_1 \otimes S_{2,1} F_3^* \otimes S_3 M^* \\
			\mf{g}_{2} &= \bigwedge^4 F_1 \otimes \bigwedge^2 F_3^* \otimes S_2 M^* \\
			\mf{g}_{1} &= \bigwedge^2 F_1 \otimes F_3^* \otimes M^* \\
			\mf{g}_0 &= \mf{sl}(F_1) \times \mf{sl}(F_1) \times \mb{C}^2\\
			\mf{g}_{-1} &= \bigwedge^2 F_1^* \otimes F_3 \otimes M \\
			\mf{g}_{-2} &= \bigwedge^4 F_1^* \otimes \bigwedge^2 F_3 \otimes S_2 M\\
			\mf{g}_{-3} &= S_{2,1,1,1,1} F_1^* \otimes S_{2,1} F_3 \otimes S_3 M\\
			\mf{g}_{-4} &= S_{2^2,1^4} F_1^* \otimes S_{2,1^2} F_3 \otimes S_4 M\\
			\vdots
		\end{align*}
		and $\mbf{L}$ is an infinite product in this case. Its decomposition is periodic, satisfying
		\[
		\mb{L}_{i+3} \cong \mb{L}_i \otimes \bigwedge^6 F_1^* \otimes \bigwedge^3 F_3 \otimes S_3 M
		\]
		for all $i > 0$, stemming from the fact that $\mf{g}$ is an affine Lie algebra. This fact is also responsible for the extra copy of $\mb{C}$ appearing in $\mf{g}_0$; c.f. \S\ref{bg:lie-construction}.
	\end{example}
	
	The decomposition of $\Rgen$ into representations for the product $\mf{sl}(F_0) \times \mf{sl}(F_2) \times \mf{g}$ is detailed in \cite{Weyman18}. Of these representations, there are a few of particular interest, which we call the \emph{critical representations}---they are the ones generated by the entries of the differentials $d_i$ and Buchsbaum-Eisenbud multipliers $a_i$ for $\Fgen$. We denote these representations by $W(d_i)$ and $W(a_i)$ respectively. Let $L(\omega)$ be the irreducible representation with highest weight $\omega$ so that $L(\omega)^\vee$ is the irreducible representation with lowest weight $-\omega$. The aforementioned representations are
	\begin{align}
		\begin{split}\label{eq:crit-reps}
			W(d_3) &= F_2^* \otimes L(\omega_{z_{r-1}})^\vee\\
			&= F_2^* \otimes [F_3 \oplus M^* \otimes\bigwedge^{f_0+1} F_1 \oplus \cdots]\\
			W(d_2) &= F_2 \otimes L(\omega_{y_{q-1}})^\vee\\
			&= F_2 \otimes [F_1^* \oplus M^* \otimes F_3^* \otimes \bigwedge^{f_0} F_1 \oplus  \cdots]\\
			W(d_1) &= F_0^* \otimes L(\omega_{x_{p-1}})^\vee\\
			&= F_0^* \otimes [F_1 \oplus M^* \otimes F_3^* \otimes \bigwedge^{f_0+2} F_1 \oplus \cdots]\\
			W(a_3) &= \bigwedge^{f_3} F_2^* \otimes L(\omega_{z_{1}})^\vee\\
			&= \bigwedge^{f_3} F_2^* \otimes [\bigwedge^{f_3} F_3 \oplus \cdots]\\
			W(a_2) &= \bigwedge^{f_2} F_2 \otimes L(\omega_{x_1})^\vee\\
			&= \bigwedge^{f_2} F_2 \otimes [\bigwedge^{r_2}F_1^* \otimes \bigwedge^{f_3} F_3^* \oplus \cdots]\\
			W(a_1) &= \bigwedge^{f_0} F_0^* \otimes \bigwedge^{f_1} F_1 \otimes \bigwedge^{f_2} F_2^* \otimes \bigwedge^{f_3} F_3
		\end{split}
	\end{align}
	where the stated decompositions are into representations of $\prod \mf{gl}(F_i)$, not just $\prod \mf{sl}(F_i)$, following Remark~\ref{rem:gl-vs-sl}.
	\begin{remark}\label{rem:a3-in-d3}
		Although we will not use this fact (so we do not include a proof of it here), the ring $\Rgen$ is generated by $W(d_3)$, $W(d_2)$, $W(d_1)$, $W(a_2)$, and $W(a_1)$. The representation $W(a_3)$ is not needed because it is contained in $S_{r_3}W(d_3)$:
		\[
		W(a_3) = \bigwedge^{f_3} F_2^* \otimes L(\omega_{z_1})^\vee \subseteq \bigwedge^{f_3} F_2^* \otimes \bigwedge^{f_3} L(\omega_{z_{r_3}})^\vee \subseteq S_{r_3} W(d_3).
		\]
	\end{remark}
	
	\subsection{Higher structure maps}\label{sec:GFR-HST}
	Given a map $w\colon \Rgen \to R$ for a complex $(R,\mb{F})$, we denote by $w^{(i)}$ the restriction of $w$ to the representation $W(d_i) \subset R_\mathrm{gen}$ and $w^{(a_i)}$ the restriction of $w$ to the representation $W(a_i)$. We typically view these maps as being $R$-linear with source $L(\omega)^\vee \otimes R$, e.g. we think of $w^{(3)}$ as an $R$-linear map
	\[
	w^{(3)}\colon L(\omega_{z_{r-1}})^\vee \otimes R = [F_3 \oplus M^* \otimes\bigwedge^{f_0+1} F_1 \oplus \cdots] \otimes R \to F_2\otimes R
	\]
	as opposed to a $\mb{C}$-linear map $F_2 \otimes L(\omega_{z_{r-1}})^\vee \to R$.
	
	We use $w^{(*)}_j$ to denote the restriction of $w^{(*)}$ to the $j$-th $z_1$-graded component of the representation, indexed so that $j=0$ corresponds to the bottom graded piece. For instance, $w^{(i)}_0 = d_i$ for $i = 1,3$ and $w^{(2)}_0 = d_2^*$. We call the maps $w^{(*)}_{>0}$ (a specific choice of) \emph{higher structure maps} for $\mb{F}$. Let us demonstrate Theorem~\ref{thm:parametrize} from this perspective.
	\begin{example}\label{ex:multiplication}
		Consider a free resolution $\mb{F}$ of format $(1,f_1,f_2,f_3)$ resolving $R/I$ where $\operatorname{depth} I \geq 2$, and make the identification described in Remark~\ref{rem:gl-vs-sl} \eqref{item:identify-a1}. The structure maps $w^{(i)}_1$ give a choice of multiplicative structure on $\mb{F}$; see \cite[Prop. 7.1]{Lee-Weyman19}. Explicitly, such a resolution has the (non-unique) structure of a commutative differential graded algebra, and the non-uniqueness is evidently seen from the fact that the multiplication $\bigwedge^2 F_1 \to F_2$ may be chosen as any lift in the diagram
		\begin{equation}\label{eq:example-w31}\begin{tikzcd}
				0 \ar[r] & F_3 \ar[r] & F_2 \ar[r] & F_1 \ar[r] & R\\
				&&& \bigwedge^2 F_1 \ar[ul, dashed] \ar[u]
		\end{tikzcd}\end{equation}
		where the map $\bigwedge^2 F_1 \to F_1$ is given by $e_1 \wedge e_2 \mapsto d_1(e_1) e_2 - d_1(e_2) e_1$. Indeed, we have that $\mb{L}_1 = F_3 \otimes \bigwedge^2 F_1^*$, which is exactly the non-uniqueness witnessed here.
		
		Now suppose that $w\colon R_\mathrm{gen} \to R$ (equivalently, $R \otimes R_\mathrm{gen} \to R$) is one choice of higher structure maps for $\mb{F}$, and take an element $X = \sum_{i > 0} u_i \in \mb{L} \otimes R$ using the same notation as before. Let $w' = w\exp(X)$, i.e.
		\[
		w' = w\left(1 + u_1 + \left(\frac{1}{2}u_1^2 + u_2\right) + \cdots\right)
		\]
		Note that $u_k$ maps $W(d_i)_{j}$ to $W(d_i)_{j-k}$. If we restrict the above equation to the representation $W(d_3)$ and expand it degree-wise, we get
		\begin{align*}
			w'^{(3)}_0 &= w^{(3)}_0\\
			w'^{(3)}_1 &= w^{(3)}_1 + w^{(3)}_0 u_1\\
			w'^{(3)}_2 &= w^{(3)}_2 + w^{(3)}_1 u_1 + w^{(3)}_0 \left(\frac{1}{2}u_1^2 + u_2\right)\\
			\vdots
		\end{align*}
		The first equation reflects that the underlying complex is still the same $\mb{F}$. The next equation shows that the new multiplication, viewed as a map $F_2^* \otimes \bigwedge^2 F_1 \to R$, was obtained from the old one by adding the composite
		\[
		F_2^* \otimes \bigwedge^2 F_1 \xto{1 \otimes u_1} F_2^* \otimes F_3 \xto{d_3} R.
		\]
		Here $u_1 \in \mb{L}_1 = F_3 \otimes \bigwedge^2 F_1^*$ could've been any map $\bigwedge^2 F_1 \to F_3$, and this exactly matches what we see in \eqref{eq:example-w31}.
	\end{example}
	We next establish some straightforward representation theory lemmas, which will be used to highlight the importance of a particular subspace in $W(a_3) \subset \Rgen$.
	\begin{lem}
		Let $b \in L(\omega_{z_1})^\vee$ be a lowest weight vector. The subspace
		\[
		V \coloneqq \{Xb : X \in \mf{g}\} \subseteq L(\omega_{z_1})^\vee
		\]
		is a representation of $\mf{n}_{z_1}^-$, and thus a $\mbf{L}$-representation.
	\end{lem}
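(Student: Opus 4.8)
The plan is to note that $V=\mf{g}\cdot b$ is the image of the $\mb{C}$-linear map $\mf{g}\to L(\omega_{z_1})^\vee$, $X\mapsto Xb$, so it is automatically a subspace, and that checking $\mf{n}_{z_1}^-$-stability reduces to checking $Y(Xb)\in V$ for every $Y\in\mf{n}_{z_1}^-$ and every $X\in\mf{g}$, since the vectors $Xb$ span $V$.

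The single fact I would use is that $\mf{n}_{z_1}^-$ annihilates $b$. This is because $\mf{n}_{z_1}^-\subseteq\mf{n}^-$: a root $\alpha$ with negative $\alpha_{z_1}$-coefficient cannot be positive, as every root of a Kac--Moody algebra has all of its simple-root coordinates of one sign, so $\alpha<0$; and $b$, being a lowest weight vector, is killed by $\mf{n}^-$. (Equivalently: $b$ spans the bottom $z_1$-graded component of $L(\omega_{z_1})^\vee$, and $\mf{n}_{z_1}^-$ strictly lowers $z_1$-degree.) Granting this, the computation is one line: for $Y\in\mf{n}_{z_1}^-$ and $X\in\mf{g}$, working in $\mc{U}(\mf{g})$ where $YX=XY+[Y,X]$,
\[
Y(Xb)=X(Yb)+[Y,X]b=[Y,X]\,b\in\mf{g}\cdot b=V,
\]
using $Yb=0$ and $[Y,X]\in\mf{g}$. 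Since the vectors $Xb$ span $V$, this gives $\mf{n}_{z_1}^-\cdot V\subseteq V$, i.e.\ $V$ is a subrepresentation of $\mf{n}_{z_1}^-$.

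For the final clause, I would observe that $V$ is a graded subspace of $L(\omega_{z_1})^\vee$ for the $z_1$-grading — its negatively graded parts vanish, since those parts of $\mf{g}$ lie in $\mf{n}_{z_1}^-$ and kill $b$ — and that this grading is bounded below. Hence $\mf{n}_{z_1}^-$ acts locally nilpotently on $V$, and, exactly as in the locally-nilpotent formalism of \S\ref{bg:lie-exp}, its action extends to the completion $\mbf{L}=\hat{\mf{n}}_{z_1}^-$, which then also preserves $V$. I do not foresee a genuine difficulty here; if anything, the only step worth pausing on is this passage from $\mf{n}_{z_1}^-$ to its completion $\mbf{L}$, and it is routine once one invokes the boundedness below of the $z_1$-grading on $L(\omega_{z_1})^\vee$.
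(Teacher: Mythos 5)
Your proof is correct and follows essentially the same route as the paper: the one-line computation $Y(Xb) = [Y,X]b$ in $\mc{U}(\mf{g})$ using $Yb=0$, followed by the observation that the $z_1$-grading on the lowest weight module $L(\omega_{z_1})^\vee$ is bounded below, so the action of $\mf{n}_{z_1}^-$ extends (via truncation/local nilpotence) to the completion $\mbf{L}$. The only difference is that you spell out why $Yb=0$ for $Y\in\mf{n}_{z_1}^-$, which the paper leaves implicit; this is a harmless addition.
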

	\begin{proof}
		Let $Y \in \mf{n}_{z_1}^-$. Then
		\[
		YX b = [Y,X]b + XYb = [Y,X]b
		\]
		since $Yb= 0$. The representation $L(\omega_{z_1})^\vee$ is a lowest weight representation, so for any $Y' \in \hat{\mf{n}}_{z_1}^-$, there exists a truncation $Y \in \mf{n}_{z_1}^-$ of $Y$ such that $Yb = Y'b$, so $\mbf{L}$ also acts on $V$.
	\end{proof}
	\begin{lem}
		The map
		\[
		\mb{C} \oplus \mb{L}^\vee \to V
		\]
		sending $1 \in \mb{C}$ to $b$ and $X \in \mb{L}^\vee = \mf{n}_{z_1}^+$ to $Xb$ is an isomorphism of vector spaces.
	\end{lem}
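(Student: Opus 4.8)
The plan is to compute the annihilator $\mf{a} \coloneqq \{X \in \mf{g} : Xb = 0\}$ of $b$. If $Xb = 0$ and $g \in \mf{g}_0$, then $[g,X]b = gXb - Xgb = -\chi(g)\,Xb = 0$, where $\chi\colon \mf{g}_0 \to \mb{C}$ is the character of the $\mf{g}_0$-action on the line $\mb{C}b$ (here we use $\mf{g}_0 b = \mb{C}b$, justified below). Hence $\mf{a}$ is $\mathrm{ad}(\mf{g}_0)$-stable, and in particular graded by the $z_1$-degree. I claim $\mf{a} = \mf{n}_{z_1}^- \oplus \mf{g}^{(z_1)}$, so that $\mf{g} = \mf{a} \oplus \mb{C}h_{z_1} \oplus \mf{n}_{z_1}^+$ and $V = \{Xb : X \in \mf{g}\}$ is precisely the image of the proposed map, which will then be the asserted isomorphism.

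First I would check the easy containments and deduce surjectivity. We have $\mf{n}_{z_1}^- \subseteq \mf{n}^-$ (a negative root has nonpositive $z_1$-coefficient), so $\mf{n}_{z_1}^- b = 0$. Also $\mf{g}^{(z_1)} \subseteq \mf{a}$: indeed $f_i b = 0$ for every $i$ since $b$ is a lowest weight vector, while for $i \neq z_1$ one has $\langle \alpha_i^\vee, -\omega_{z_1}\rangle = 0$, so $b$ spans a trivial $\mf{sl}_2$-subrepresentation of the integrable module $L(\omega_{z_1})^\vee$ and thus $e_i b = 0$ as well. In particular $\mf{g}_0 b = (\mf{g}^{(z_1)} \oplus \mb{C}h_{z_1})b = \mb{C}b$, so $V = \mf{g}b = \mf{g}_0 b + \mf{n}_{z_1}^+ b = \mb{C}b + \mf{n}_{z_1}^+ b$, which is exactly the image of $\mb{C} \oplus \mb{L}^\vee \to V$. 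Since this map respects $z_1$-gradings (sending $\mb{C}$ into degree $0$ and the degree-$j$ piece $\mf{g}_j$ of $\mb{L}^\vee = \mf{n}_{z_1}^+$ into degree $j$), its injectivity reduces to showing $\mf{a} \cap \mf{g}_j = 0$ for every $j \geq 1$.

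For the remaining vanishing, suppose $0 \neq X \in \mf{g}_j$ with $Xb = 0$. Let $(\,\cdot\,,\,\cdot\,)$ be the invariant bilinear form on $\mf{g}$; it is nondegenerate and restricts to a perfect pairing between $\mf{g}_{-j}$ and $\mf{g}_j$ (see \cite{KumarBook}), so we may pick $Y \in \mf{g}_{-j} \subseteq \mf{n}^-$ with $(Y,X) \neq 0$. Then $Yb = 0$, whence $0 = Y(Xb) = [Y,X]b$. On the other hand a short computation with the invariant form shows $[Y,X]b = j\,(Y,X)\,b$: the assignment $(Y,X) \mapsto [Y,X]b$ is bilinear $\mf{g}_{-j}\times\mf{g}_j \to \mb{C}b$, so it suffices to treat opposite root vectors $Y = f_\beta$, $X = e_\beta$ with $(e_\beta, f_\beta) = 1$ and $\beta$ of $z_1$-degree $j$ (pairs supported on distinct roots contribute to weights other than $-\omega_{z_1}$), where $[f_\beta, e_\beta] = -t_\beta \in \mf{h}$ acts on $b$ by $\langle t_\beta, \omega_{z_1}\rangle = (\beta, \omega_{z_1})$, and the form on $\mf{h}^*$ with $\langle \alpha_i,\omega_k\rangle = \delta_{ik}$ gives $(\beta,\omega_{z_1}) = j$. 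Since $j \geq 1$ and $(Y,X) \neq 0$, this contradicts $[Y,X]b = 0$, so no such $X$ exists and $\mf{a}\cap\mf{g}_j = 0$.

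The main obstacle is exactly this last step: ruling out ``defect'' in the higher graded pieces $\mf{g}_j$. The surjectivity and the structural identity $V \cong \mb{C} \oplus \mb{L}^\vee$ are otherwise formal consequences of $\mf{n}^- b = 0$ and $\mf{g}^{(z_1)}b = 0$. The invariant-form computation is what makes the injectivity uniform — it produces, for a hypothetical annihilating $X \in \mf{g}_j$, an explicit $Y \in \mf{n}_{z_1}^-$ bringing $Xb$ back to a nonzero multiple of $b$ — and in particular it avoids having to invoke irreducibility of the $\mf{g}_j$ as $\mf{g}^{(z_1)}$-modules and works whether or not $\mf{g}$ is finite-dimensional.
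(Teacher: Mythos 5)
Your proof is correct, but it takes a genuinely different route from the paper's. The paper argues structurally: it sets $\mf{p} \coloneqq \{X \in \mf{g} : Xb \in \mb{C}b\}$, notes that this is a subalgebra containing the maximal parabolic $\mf{p}_{z_1}^-$, and that $\mf{p} \subsetneq \mf{g}$ because $L(\omega_{z_1})^\vee$ is not the trivial representation; maximality of $\mf{p}_{z_1}^-$ then forces $\mf{p} = \mf{p}_{z_1}^-$, so $\mb{L}^\vee \cap \mf{p} = 0$ gives injectivity, and surjectivity comes from writing $X = X^+ + X^-$ with $X^+ \in \mf{n}_{z_1}^+$, $X^- \in \mf{p}_{z_1}^-$, exactly as in your first two paragraphs. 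You instead compute the annihilator of $b$ head-on: integrability gives $\mf{g}^{(z_1)}b = 0$, and the invariant bilinear form rules out a nonzero $X \in \mf{g}_j$, $j \geq 1$, killing $b$. The paper's route buys brevity and avoids the invariant form entirely (it only uses the standard fact that a subalgebra strictly containing a maximal standard parabolic is all of $\mf{g}$); your route buys an explicit identification of the full annihilator $\mf{n}_{z_1}^- \oplus \mf{g}^{(z_1)}$ by a self-contained computation, at the cost of invoking symmetrizability for the invariant form --- harmless here, since $T_{p,q,r}$ is simply laced.

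One small point worth tightening: the identity $[Y,X]b = j\,(Y,X)\,b$, as literally stated, requires the cross terms $[f_\beta, e_\alpha]b$ with $\alpha \neq \beta$ to vanish, not merely to land in weight spaces other than $-\omega_{z_1}$. They do vanish: $[f_\beta,e_\alpha]$ lies in a root space of $z_1$-degree zero, hence in $\mf{g}^{(z_1)}$, which you have already shown annihilates $b$. Alternatively, run the last step by comparing only the $\mb{C}b$-components of $0 = [Y,X]b$, which is what your parenthetical suggests. Either repair is immediate, so this is cosmetic rather than a gap.
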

	\begin{proof}
		It is easy to see that the subspace
		\[
		\mf{p} \coloneqq \{ X \in \mf{g} : Xb \in \mb{C}b\} \subset \mf{g}
		\]
		is a subalgebra. Moreover it contains the maximal parabolic $\mf{p}_{z_1}^- = \bigoplus_{\alpha \leq_{z_1} 0} \mf{g}_\alpha$. Since $L(\omega_{z_1})^\vee$ is not the trivial representation, we have $\mf{p}_{z_1}^- \subseteq \mf{p} \subsetneq \mf{g}$. Therefore $\mf{p} = \mf{p}_{z_1}^-$, so $\mb{L}^\vee \cap \mf{p} = 0$ and the map is injective.
		
		Given any $X \in \mf{g}$, we may express $X$ as $X^+ + X^-$ where $X^+ \in \mf{n}_{z_1}^+$ and $X^- \in \mf{p}_{z_1}^-$. Then $Xb = X^+ b + X^- b$ where $X^- b \in \mb{C}b$, showing surjectivity.
	\end{proof}
	\begin{remark}\label{rem:p-in-a3}
		The action of $\mb{L}_i$ on $\mb{C} \oplus \mb{L}^\vee$ looks very similar to its action on the entries of $p_m$ as discussed around Theorem~\ref{thm:parametrize}. In fact, we expect that the restriction of $w\colon \Rgen \to R$ to
		\[
		\bigwedge^{r_2} F_2^* \otimes \bigwedge^{r_3} F_3 \otimes \mb{L}_m^* \subset \bigwedge^{r_2} F_2^* \otimes \bigwedge^{r_3} F_3 \otimes [\mb{C} \oplus \mb{L}^\vee] \subseteq W(a_3)
		\]
		should exactly recover the structure map $p_m$. While this should not be too difficult to prove, it would be somewhat technical and unnecessary for our purposes, so we leave it as a guess. Some of the results below should (in principle) be consequences of this statement, but of course we do not assume this statement in their proofs.
	\end{remark}
	In the following, we will make some arguments using regular sequences. If the rings involved are not Noetherian, then it may be necessary to adjoin variables in view of Definition~\ref{def:true-grade}, but this has no effect on the conclusions.
	
	\begin{lem}\label{lem:V-exp-props}
		Let $h \in R$ be a nonzerodivisor, let $\pi \colon [\mb{C} \oplus \mb{L}^\vee] \otimes R \twoheadrightarrow \mb{C} \otimes R$ be projection onto the first factor, and let $\gamma\colon \mb{L}^\vee \otimes R \to \mb{C} \otimes R$ be any map. Then
		\begin{itemize}
			\item there is a unique $X \in \mbf{L} \cotimes R_h$ such that $(h\pi+\gamma) = h\pi \exp X$, and
			\item if $S$ is a ring containing $R$ and $X' \in \mbf{L} \cotimes S$ satisfies $(h\pi+\gamma) = h\pi \exp X'$, then $X'$ must belong to $\mbf{L} \cotimes R_h$ and thus equal $X$.
		\end{itemize}
	\end{lem}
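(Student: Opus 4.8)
The plan is to build $X$ degree by degree, following the grading on $\mathbf L = \prod_{i>0} \mathbb L_i$ and matching components of the equation $h\pi + \gamma = h\pi\exp X$ one layer at a time. First I would restrict attention to the $z_1$-graded structure on $[\mathbb C \oplus \mathbb L^\vee]\otimes R$: the summand $\mathbb C$ sits in degree $0$ and $\mathbb L^\vee = \bigoplus_{i>0}\mathbb L_i^*$ has $\mathbb L_i^*$ in degree $i$. Writing $X = \sum_{i>0} u_i$ with $u_i \in \mathbb L_i \cotimes R_h$ (to be determined), the operator $\exp X$ acts on $[\mathbb C\oplus\mathbb L^\vee]\otimes R$, and because each $u_i$ lowers $z_1$-degree by $i$, the composite $h\pi\exp X$ restricted to the degree-$m$ piece $\mathbb L_m^* \otimes R$ is $h$ times the ``degree-drop-to-$0$'' component of $\exp X$, which is a universal polynomial expression in $u_1,\dots,u_m$ of the shape $h\,(u_m + (\text{iterated brackets and products of } u_1,\dots,u_{m-1}))$ paired against the canonical map $\mathbb L_m^* \otimes R \to \mathbb C\otimes R$. (The appearance of brackets rather than just products is exactly the Baker–Campbell–Hausdorff phenomenon recorded in \S\ref{bg:lie-exp}, applied to the $\mathbf L$-action on $[\mathbb C\oplus\mathbb L^\vee]\otimes R$.) Meanwhile $\gamma$ restricted to $\mathbb L_m^*\otimes R$ is some given map $\gamma_m\colon \mathbb L_m^*\otimes R \to \mathbb C\otimes R$, i.e.\ an element of $\mathbb L_m\otimes R$. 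So the degree-$m$ component of the desired equation reads
\[
h\,\iota(u_m) + (\text{known terms in } u_1,\dots,u_{m-1}) = \gamma_m,
\]
where $\iota\colon \mathbb L_m \cotimes R_h \to \mathbb L_m \cotimes R_h$ is the identity (after identifying $\operatorname{Hom}(\mathbb L_m^*\otimes R,\mathbb C\otimes R)$ with $\mathbb L_m\otimes R$). Since $h$ is a nonzerodivisor on $R$, it is invertible in $R_h$, so this determines $u_m \in \mathbb L_m\cotimes R_h$ uniquely in terms of the already-chosen $u_1,\dots,u_{m-1}$. Induction on $m$ then produces a unique $X = \sum u_i \in \mathbf L \cotimes R_h$, proving the first bullet.

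For the second bullet, suppose $S \supseteq R$ and $X' = \sum u_i' \in \mathbf L \cotimes S$ also satisfies $h\pi + \gamma = h\pi\exp X'$. Running the same degree-by-degree analysis with $X'$ in place of $X$, the degree-$1$ component gives $h\,u_1' = \gamma_1$, an equation inside $\mathbb L_1\otimes S$; since $\gamma_1 \in \mathbb L_1 \otimes R$ and $h$ is a nonzerodivisor in $R$ (hence $h$ divides $\gamma_1$ uniquely in $R$, as $R_h \cap R = R$ inside $\operatorname{Frac}$-type localization — more precisely $u_1' = \gamma_1/h$ already lies in $\mathbb L_1 \cotimes R_h$), we get $u_1' = u_1$. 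Proceeding inductively, at stage $m$ the equation is $h\,u_m' = \gamma_m - (\text{known terms in } u_1'=u_1,\dots,u_{m-1}'=u_{m-1})$; the right-hand side lies in $\mathbb L_m \cotimes R$ by the inductive hypothesis (everything in sight is defined over $R$, except possibly $u_m'$), and dividing by the nonzerodivisor $h$ forces $u_m' \in \mathbb L_m\cotimes R_h$ with $u_m' = u_m$. Hence $X' = X \in \mathbf L\cotimes R_h$.

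The main obstacle — really the only point requiring care — is making precise the claim that the degree-$m$ component of $h\pi\exp X$, as a map $\mathbb L_m^*\otimes R \to \mathbb C\otimes R$, equals $h$ times $u_m$ plus a term depending only on $u_{<m}$, \emph{with the coefficient of $u_m$ being exactly $1$} (so that division by $h$ is legitimate and the recursion closes). This comes down to the observation that in $\exp X = \operatorname{Id} + X + \tfrac12 X^2 + \cdots$, the only term that can send $\mathbb L_m^*$ (degree $m$) into $\mathbb C$ (degree $0$) \emph{while involving} $u_m$ is the single term $X$ itself acting by $u_m\colon \mathbb L_m^* \to \mathbb C$; every higher term $\tfrac{1}{k!}X^k$ with $k\ge 2$ that lands in degree $0$ from degree $m$ must be a composite of maps $u_{i_1},\dots,u_{i_k}$ with $i_1+\cdots+i_k = m$ and each $i_j\ge 1$, hence each $i_j < m$. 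I would also remark, as the excerpt does just before the lemma, that if $R$ or $R_h$ fails to be Noetherian one adjoins polynomial variables so that ``nonzerodivisor'' and ``true grade'' behave as expected; this changes nothing in the argument. Finally, one should double-check that $\gamma$ being an arbitrary $R$-linear map $\mathbb L^\vee\otimes R\to\mathbb C\otimes R$ (not assumed homogeneous) causes no trouble: it does not, since we only ever use its graded pieces $\gamma_m$, which are automatically $R$-linear maps $\mathbb L_m^*\otimes R\to\mathbb C\otimes R$.
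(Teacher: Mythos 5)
Your proposal is correct and takes essentially the same route as the paper: the paper's proof merely notes that the precomposition action of $\exp(\mbf{L}\cotimes S)$ on $\pi$ has trivial stabilizer and that one can ``solve for $X$ explicitly in a manner similar to the proof of Theorem~\ref{thm:parametrize}'' (informally $X = \log(\pi + \gamma/h)$), and your degree-by-degree recursion is exactly the omitted computation, with the key observation that only the linear term of $\exp X$ can drop $\mb{L}_m^*$ to $\mb{C}$ while involving $u_m$. Two harmless nitpicks: the Baker--Campbell--Hausdorff aside is unnecessary (expanding $\exp X$ for a single $X$ produces only composites of the $u_i$, no brackets), and ``the canonical pairing'' should be weakened to ``a perfect pairing'' between $\mb{L}_m$ and $\mb{L}_m^*$ induced by the action on $\mb{C}\oplus\mb{L}^\vee$ --- the same nondegeneracy the paper uses implicitly in asserting that $\pi X = 0$ forces $X = 0$.
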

	\begin{proof}
		For $X \in \mbf{L} \cotimes S$, we have $\pi X = 0$ only when $X = 0$. Thus, the precomposition action of $\exp (\mbf{L}\cotimes S)$ on $\pi$ has trivial stabilizer, showing uniqueness. One can solve for $X$ explicitly in a manner similar to the proof of Theorem~\ref{thm:parametrize}, showing it must be an element of $\mbf{L}\cotimes R_h$. One should informally think of $X$ as $\log (\pi + \gamma/h)$; we omit the details.
	\end{proof}
	
	The next result says that we do not lose any information by only considering the structure maps $w^{(i)}$, since they uniquely determine $w$. In fact, we only need the differentials and part of $w^{(a_3)}$, which we recall can be computed from $w^{(3)}$ (c.f. Remark~\ref{rem:a3-in-d3}).
	
	\begin{prop}\label{prop:HST-determined-by-a3}
		Let $w$ and $w'$ be two maps $\Rgen \to R$ specializing $\Fgen$ to the same resolution $\mb{F}$ over some ring $R$. Viewing them as maps $\Rgen \otimes R \to R$, write $\bar{w},\bar{w}'$ for their restrictions to
		\[
		\bigwedge^{r_2} F_2^* \otimes \bigwedge^{r_3} F_3 \otimes [\mb{C} \oplus \mb{L}^\vee] \otimes R \subseteq W(a_3) \otimes R.
		\]
		Then there is a unique element $X \in \mbf{L} \cotimes R$ such that $\bar{w}' = \bar{w}\exp X$.
		
		In particular, if $\bar{w} = \bar{w}'$, then $X = 0$ and $w = w'$.
	\end{prop}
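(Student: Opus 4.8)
\emph{The reduction.} The plan is to deduce everything from the uniqueness built into Lemma~\ref{lem:V-exp-props}. The \emph{existence} of $X$ is immediate from Theorem~\ref{thm:parametrize}, which furnishes a (unique) $X\in\mbf{L}\cotimes R$ with $w'=w\exp X$ as maps $\Rgen\otimes R\to R$. Since $W(a_3)$ is one of the $\mbf{L}$-stable summands of $\Rgen$, the displayed subspace $\bigwedge^{r_2}F_2^*\otimes\bigwedge^{r_3}F_3\otimes[\mb{C}\oplus\mb{L}^\vee]\otimes R$ is an $\mbf{L}$-subrepresentation of $W(a_3)\otimes R$ on which $\mbf{L}$ — hence each $\exp X$ with $X\in\mbf{L}\cotimes R$ — acts through the tensor factor $[\mb{C}\oplus\mb{L}^\vee]\cong V$ only; this is exactly the content of the two lemmas proved just above (that $V\subseteq L(\omega_{z_1})^\vee$ is $\mbf{L}$-stable and $V\cong\mb{C}\oplus\mb{L}^\vee$). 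Restricting $w'=w\exp X$ to this subspace therefore gives $\bar w'=\bar w\exp X$. So the whole proposition reduces to the claim: \emph{if $X\in\mbf{L}\cotimes R$ and $\bar w\exp X=\bar w$, then $X=0$.} Granting this, if $\bar w\exp X_1=\bar w\exp X_2$ then $\bar w\exp Z=\bar w$ where $\exp Z=\exp X_1\exp(-X_2)\in\exp(\mbf{L}\cotimes R)$ by Baker--Campbell--Hausdorff, so $Z=0$ and $X_1=X_2$; and if $\bar w=\bar w'$ then $\bar w\exp X=\bar w$, whence $X=0$ and $w'=w\exp X=w$.

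\emph{Producing a nonzerodivisor.} To prove the claim I would first locate a nonzerodivisor inside the image of the bottom-degree part of $\bar w$. By Theorem~\ref{thm:BE1'}, the restriction of $\bar w$ to the summand $\bigwedge^{r_2}F_2^*\otimes\bigwedge^{r_3}F_3\otimes\mb{C}\otimes R$ — i.e. to the bottom $z_1$-graded component of $W(a_3)$ — is the Buchsbaum--Eisenbud multiplier $a_3=\bigwedge^{f_3}d_3$ of $\mb{F}$, whose entries are the maximal minors of $d_3$ and hence generate $I_{r_3}(d_3)$ (recall $\rank d_3=f_3=r_3$). Since $\mb{F}$ is a resolution, $\grade I_{r_3}(d_3)\geq 3$ by Theorem~\ref{thm:acyclicity-criterion}, in particular it is positive. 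Adjoining indeterminates $t_1,\dots,t_N$ to $R$ — harmless in view of Definition~\ref{def:true-grade} and the remark preceding this proposition — a generic combination of these minors is a nonzerodivisor; concretely, setting $R'=R[t_1,\dots,t_N]$, we may choose $\xi_0\in\bigwedge^{r_2}F_2^*\otimes\bigwedge^{r_3}F_3\otimes R'$ such that $h:=\bar w(\xi_0\otimes b)\in R'$ is a nonzerodivisor, where $b$ spans the $\mb{C}$-summand of $\mb{C}\oplus\mb{L}^\vee$. Because $R\hookrightarrow R'$, it suffices to prove $X=0$ in $\mbf{L}\cotimes R'$.

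\emph{Contracting into Lemma~\ref{lem:V-exp-props} and concluding.} Now I would contract $\bar w$ against $\xi_0$: put $\mu\colon V\otimes R'\to R'$, $\mu(v)=\bar w(\xi_0\otimes v)$, so that $\mu=h\pi+\gamma$ in the notation of Lemma~\ref{lem:V-exp-props}, with $\pi\colon V\otimes R'\to\mb{C}b\otimes R'=R'$ the projection and $\gamma:=\mu|_{\mb{L}^\vee\otimes R'}$. As $\exp X$ acts on the displayed subspace through the factor $[\mb{C}\oplus\mb{L}^\vee]$ only, the hypothesis $\bar w\exp X=\bar w$ restricts to $\mu\exp X=\mu$. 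Lemma~\ref{lem:V-exp-props}, applied over the base ring $R'$, supplies $X_0\in\mbf{L}\cotimes R'[h^{-1}]$ with $h\pi\exp X_0=h\pi+\gamma=\mu$; consequently
\[
h\pi\exp Z'=\mu\exp X=\mu=h\pi\exp X_0,\qquad\text{where }\exp Z'=\exp X_0\exp X\in\exp\big(\mbf{L}\cotimes R'[h^{-1}]\big).
\]
By the uniqueness assertion of Lemma~\ref{lem:V-exp-props} we get $Z'=X_0$, hence $\exp X_0\exp X=\exp X_0$, so $\exp X=1$, and therefore $X=0$ (the exponential is injective, by comparison of lowest-degree terms). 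The main point to get right is precisely this reduction — organizing the hypothesis into exactly the shape that Lemma~\ref{lem:V-exp-props} consumes — together with the verification that $\exp X$ genuinely acts only through the $[\mb{C}\oplus\mb{L}^\vee]$ factor of the displayed subspace; once those are in place the remainder is routine bookkeeping with the exponential series.
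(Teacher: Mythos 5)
Your proposal is correct and follows essentially the same route as the paper: existence of $X$ comes from Theorem~\ref{thm:parametrize}, a nonzerodivisor $h$ in the image of $a_3$ is extracted from $\grade I_{r_3}(d_3)\geq 1$ (adjoining variables if needed), and uniqueness is obtained by contracting against a suitable element of $\bigwedge^{r_2}F_2^*\otimes\bigwedge^{r_3}F_3$ and invoking Lemma~\ref{lem:V-exp-props}. You merely spell out the Baker--Campbell--Hausdorff and trivial-stabilizer bookkeeping that the paper compresses into ``the uniqueness follows from Lemma~\ref{lem:V-exp-props}.''
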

	The existence of such an element $X$ is already known by Theorem~\ref{thm:parametrize}; the substance of this statement is that $X$ is completely determined by comparing $\bar{w}$ and $\bar{w}'$. If Remark~\ref{rem:p-in-a3} were true, this would be immediate given Lemma~\ref{lem:GFR-p-determines-w}.
	\begin{proof}
		Since $\mb{F}$ is acyclic, $\grade I_{r_3}(d_3) = 3 \geq 1$ so there is some $e \in \bigwedge^{r_3}F_3\otimes \bigwedge^{r_2}F_2^*$ such that $h = a_3(e) \in R$ is a nonzerodivisor. Let $\bar{w}_e$ and $\bar{w}'_e$ denote the restrictions of $\bar{w}$ and $\bar{w}'$ to $\mb{C}e \otimes [\mb{C} \oplus \mb{L}^\vee]$, viewed as maps
		\[
		[\mb{C} \oplus \mb{L}^\vee] \otimes R \to (\mb{C}e)^* \otimes R \cong R
		\]
		From Theorem~\ref{thm:parametrize}, we know there exists $X \in \mbf{L} \cotimes R$ with the property that $\bar{w}_e = \bar{w}_e' \exp X$. The uniqueness follows from Lemma~\ref{lem:V-exp-props}.
	\end{proof}
	
	It will often be convenient to manipulate higher structure maps $w^{(i)}$ over a larger ring containing $R$, for instance a localization in which $\mb{F}$ becomes split exact. The next result ensures that, even if we work in a larger ring, it is easy to tell when $w$ factors through $R$.
	
	\begin{prop}\label{prop:localization-lemma}
		Let $R \subset S$ be two rings, $\mb{F}$ a resolution over $R$ with the property that $\mb{F}\otimes S$ is also acyclic, and $w' \colon \Rgen \to S$ a map specializing $\Fgen$ to $\mb{F} \otimes S$.
		
		Suppose the restriction of $w'$ to
		\[
		\bigwedge^{r_2} F_2^* \otimes \bigwedge^{r_3} F_3 \otimes [\mb{C} \oplus \mb{L}^\vee] \subseteq W(a_3)
		\]
		is $R$-valued. Then $w'$ factors through $R$.
	\end{prop}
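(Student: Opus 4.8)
\textit{Proof sketch.}
The plan is to pin down the ambiguity in specializations of $\Fgen$ via Theorem~\ref{thm:parametrize}, and then feed the hypothesis on $W(a_3)$ into Lemma~\ref{lem:V-exp-props} in order to descend from $S$ back down to $R$. First I would fix a map $w_0\colon\Rgen\to R$ specializing $\Fgen$ to $\mb F$; such a $w_0$ exists by Lemma~\ref{lem:GFR-p-determines-w}, the structure maps $p_m$ of \eqref{eq:p-lifting} being liftable over $R$ because $\mb F$ is acyclic over $R$, so $\grade I_{r_3}(d_3)=3$ kills the relevant $H^2$ of the Koszul complex on $\bigwedge^{r_3}d_3$. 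Writing $\iota\colon R\hookrightarrow S$, the composite $\iota w_0$ specializes $\Fgen$ to $\mb F\otimes S$, which is acyclic by hypothesis, so Theorem~\ref{thm:parametrize} provides a unique $X\in\mbf L\cotimes S$ with $w'=(\iota w_0)\exp X$. It then suffices to prove $X\in\mbf L\cotimes R$: granting this, $\exp X$ acts on $\Rgen\otimes R$, the map $w\coloneqq w_0\exp X\colon\Rgen\to R$ still specializes $\Fgen$ to $\mb F$ (since $\exp X$ fixes $R_a\otimes R$, so the differentials and multipliers are unchanged), and $\iota w=(\iota w_0)\exp X=w'$ by compatibility of the $\mbf L$-action with the base change $\Rgen\otimes R\to\Rgen\otimes S$; hence $w'$ factors through $R$.

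To establish $X\in\mbf L\cotimes R$ I would mimic the proof of Proposition~\ref{prop:HST-determined-by-a3}. For $e\in\bigwedge^{r_2}F_2^*\otimes\bigwedge^{r_3}F_3$, restrict $\iota w_0$ and $w'$ to $\mb{C}e\otimes[\mb{C}\oplus\mb{L}^\vee]$, viewed as maps $[\mb{C}\oplus\mb{L}^\vee]\otimes(-)\to(-)$. Since $\mbf L$ acts on this subspace only through the factor $[\mb{C}\oplus\mb{L}^\vee]\cong V\subseteq L(\omega_{z_1})^\vee$ and fixes its lowest weight vector, each restriction has the form $h\pi+\gamma$ with $h=a_3(e)\in R$ and $\gamma\colon\mb L^\vee\otimes(-)\to(-)$: for $\iota w_0$ one gets $\gamma=\gamma_0$, which is $R$-valued because $w_0$ is, and for $w'$ one gets $\gamma=\gamma'$, which is $R$-valued \emph{by hypothesis}. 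I would choose $e$ so that $h=a_3(e)$ is a nonzerodivisor in both $R$ and $S$: the values $a_3(e)$ generate $I_{r_3}(d_3)$, an ideal of grade $\geq 1$ in both rings, so the $\mb C$-subspace of $R$ they span lies in no associated prime of $R$ or of $S$ and thus contains such a nonzerodivisor by prime avoidance for subspaces over the infinite field $\mb C$. By the first bullet of Lemma~\ref{lem:V-exp-props}, $h\pi+\gamma_0=h\pi\exp X_0$ for a unique $X_0\in\mbf L\cotimes R_h$; combining with $w'=(\iota w_0)\exp X$ and base-changing to $S_h$ (legitimate as $h$ is a nonzerodivisor in $S$) gives $h\pi+\gamma'=h\pi\exp(X_0\bullet X)$, where $\bullet$ denotes Baker--Campbell--Hausdorff. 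Since $\gamma'$ is $R$-valued, the second (descent) bullet of Lemma~\ref{lem:V-exp-props} forces $X_0\bullet X\in\mbf L\cotimes R_h$, and hence $X\in\mbf L\cotimes R_h$.

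The final step, which I expect to be the main obstacle, is to descend from all of these $R_h$ back to $R$, since a single localizing element only places $X$ in $\mbf L\cotimes R_h$. To fix this I would choose $h_1=a_3(e_1)$ and $h_2=a_3(e_2)$ which are nonzerodivisors in $S$ and form a regular sequence in $R$ --- again available by prime avoidance applied to the $\mb C$-subspace of values $a_3(e)$, which generates $I_{r_3}(d_3)$ of grade $3$ in $R$ --- and run the previous paragraph for each. Then $X\in(\mbf L\cotimes R_{h_1})\cap(\mbf L\cotimes R_{h_2})$. Because $\grade(h_1,h_2)R\geq 2$, the \v{C}ech complex $0\to R\to R_{h_1}\oplus R_{h_2}\to R_{h_1h_2}$ is exact at its first two terms, so $R_{h_1}\cap R_{h_2}=R$ inside $R_{h_1h_2}$; since each $\mb L_i$ is a $\mb C$-vector space, hence flat, $\mb L_i\otimes(-)$ commutes with this intersection, and therefore $X\in\mbf L\cotimes R$, completing the argument. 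The delicate point throughout is that the localizing elements must simultaneously be of the special shape $a_3(e)$ (so Lemma~\ref{lem:V-exp-props} applies), nonzerodivisors in $S$ (so the manipulations over $S_h$ are valid), and a regular sequence in $R$ (so that $R_{h_1}\cap R_{h_2}=R$); arranging all three via prime avoidance is the technical heart of the proof.
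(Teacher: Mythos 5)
Your proposal is correct and follows essentially the same route as the paper: fix a specialization $w_0\colon\Rgen\to R$, use Theorem~\ref{thm:parametrize} to write $w'=(w_0\otimes S)\exp X$, apply Lemma~\ref{lem:V-exp-props} to the restrictions on $\mb{C}e_i\otimes[\mb{C}\oplus\mb{L}^\vee]$ to place $X$ in $\mbf{L}\cotimes R_{h_1}$ and $\mbf{L}\cotimes R_{h_2}$ for a regular sequence $h_1,h_2$ of values of $a_3$, and conclude via $R_{h_1}\cap R_{h_2}=R$. Your extra care (spelling out the Baker--Campbell--Hausdorff step needed to reduce $(h\pi+\gamma_0)\exp X$ to the lemma's form, and arranging via prime avoidance that the $h_i$ are also nonzerodivisors in $S$ so the descent to $R$ is clean) only makes explicit what the paper's terser argument leaves implicit.
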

	\begin{proof}
		This proof is similar to the preceding one. This time we use that $\grade I_{r_3}(d_3) = 3 \geq 2$ so there are $e_1,e_2 \in \bigwedge^{r_3}F_3\otimes \bigwedge^{r_2}F_2^*$ such that $h_1 = a_3(e_1)$ and $h_2 = a_3(e_2)$ form a regular sequence.
		
		Since $\mb{F}$ is acyclic, we may pick a $w\colon \Rgen \to R$ specializing $\Fgen$ to $\mb{F}$. By Theorem~\ref{thm:parametrize}, there exists $X \in \mbf{L} \cotimes S$ such that $w' = (w\otimes S)\exp X$.
		
		Let $\bar{w}_e$ and $\bar{w}_e'$ denote the restrictions of $w \otimes S$ and $w'$ to $(\mb{C}e_1 \oplus \mb{C}e_2) \otimes [\mb{C} \oplus \mb{L}^\vee] \subset W(a_3)$, viewed as maps
		\[
		[\mb{C} \oplus \mb{L}^\vee] \otimes S \to (\mb{C}e_1 \oplus \mb{C}e_2)^* \otimes S \cong S^2.
		\]
		We have $\bar{w}_e' = \bar{w}_e \exp X$. Applying Lemma~\ref{lem:V-exp-props} to the first row of $\bar{w}_e$ and $\bar{w}_e'$, we find that $X \in \mbf{L} \cotimes R_{h_1}$. Applying it to the second row, we find that $X \in \mbf{L} \cotimes R_{h_2}$. Since $h_1,h_2$ is a regular sequence, we have $R_{h_1} \cap R_{h_2} = R$ and $X \in \mbf{L} \cotimes R$, from which it follows that $w'$ factors through $R$ if viewed as a map from $\Rgen$.
	\end{proof}
	
	\subsubsection{Computing particular higher structure maps}
	So far we have studied relationships between different choices of $w\colon \Rgen \to R$ specializing $\Fgen$ to a given $\mb{F}$, but we have not discussed how to effectively compute a particular choice of $w$ given a resolution $\mb{F}$ in the first place.
	
	In general this is a difficult problem, but with the presence of some symmetry, Proposition~\ref{prop:equivariant-p} can often simplify the task, or at least allow us to deduce qualitative properties about particularly nice choices of $w$.
	
	\begin{example}\label{ex:max-ideal-1683}
		Let $R = \mb{C}[t_1,t_2,t_3]$ with the standard $\mb{Z}$-grading and let $I = (t_1,t_2,t_3)^2$. The minimal graded free resolution of $R/I$ is
		\[
		\mb{F} \colon 0\to R^3(-4) \to R^8(-3) \to R^6(-2) \to R.
		\]
		As per Proposition~\ref{prop:equivariant-p}, there is a choice of $w\colon\Rgen \otimes R\to R$ specializing $\Fgen$ to $\mb{F}$ that is homogeneous of degree zero. We identify $M\cong R$ as in Remark~\ref{rem:gl-vs-sl}. The structure maps have the form
		\begin{align*}
			w^{(1)} \colon [F_1 \oplus \bigwedge^3 F_1 \oplus \cdots]\otimes R &\to R\\
			w^{(2)} \colon [F_1^* \oplus F_1 \otimes F_3^* \oplus \cdots] \otimes R &\to F_2^* \otimes R\\
			w^{(3)} \colon [F_3 \oplus \bigwedge^2 F_1 \oplus \cdots]\otimes R &\to F_2 \otimes R
		\end{align*}
		In this example $\mb{L}_1 = \bigwedge^2 F_1^* \otimes F_3$ is concentrated in degree zero, causing all generators of each representation $L(\omega)^\vee$ to be in the same degree. So with a homogeneous choice of $w$, all entries of $w^{(1)}$ have degree 2, whereas all entries of $w^{(2)}$ and $w^{(3)}$ are linear. 
	\end{example}
	
	\begin{example}\label{ex:d3-split}
		Let $\mb{F}$ be a resolution where $d_3$ is a split inclusion. After a change of coordinates, we assume it has the form
		\[
		0 \to F_3 \otimes R \to (F_3 \oplus Z)\otimes R \to F_1 \otimes R \to F_0 \otimes R
		\]
		where $Z = \mb{C}^{r_2}$ and $d_3$ maps $F_3 \otimes R$ identically to itself.
		
		There is an action of $G = \GL(F_3 \otimes R)$ on this resolution, and the differentials are equivariant with respect to it. The Koszul complex on $\bigwedge^{r_3}(d_3)$ is just a split exact complex, so we can certainly pick lifts $p_m$ which are $G$-equivariant, e.g. by using a $G$-equivariant splitting.
		
		Let $w \colon \Rgen \otimes R \to R$ be the $G$-equivariant map obtained in this manner. The maps $w^{(i)}$ have the form
		\begin{align*}
			[F_3 \oplus \bigwedge^{f_0+1} F_1 \otimes M^* \oplus \cdots] \otimes R &\to (F_3 \oplus Z) \otimes R\\
			[F_1^* \oplus F_3^* \otimes \bigwedge^{f_0}F_1 \otimes M^* \oplus \cdots]\otimes R &\to (F_3^* \oplus Z^*) \otimes R\\
			[F_1 \oplus F_3^* \otimes \bigwedge^{f_0+2} F_1 \otimes M^* \oplus \cdots] \otimes R &\to F_0 \otimes R
		\end{align*}
		Note that $M = \bigwedge^{f_3} F_3 \otimes \bigwedge^{f_2}(F_3^* \oplus Z^*) \otimes \bigwedge^{f_1} F_1$ is a trivial representation of $G$. Since $\mb{L}_1^* = \bigwedge^2 F_1 \otimes F_3^* \otimes M^*$, we accumulate an additional factor of $F_3^*$ every time we go up in the $z_1$-grading in each representation. Thus by $G$-equivariance considerations the only components $w^{(i)}_j$ that have any chance of being nonzero are:
		\begin{align*}
			w^{(3)}_1 \colon \bigwedge^{f_0+1}F_1 \otimes M^* \otimes R &\to Z \otimes R \subset (F_3 \oplus Z )\otimes R\\
			w^{(2)}_1 \colon \bigwedge^{f_0} F_1 \otimes F_3^* \otimes M^* \otimes R&\to F_3^* \otimes R \subset (F_3^* \oplus Z^*) \otimes R
		\end{align*}
		in addition to the maps $w^{(i)}_0$ which are obviously nonzero since they give the differentials of the resolution.
		
		We also note that for the map $w^{(a_2)}$
		\[
		[\bigwedge^{r_2} F_1^* \otimes \bigwedge^{f_3} F_3^* \oplus \cdots] \otimes R \to \bigwedge^{r_2} Z^* \otimes \bigwedge^{r_3} F_3^* \otimes R
		\]
		only the bottom component, namely $a_2$ itself, is nonzero by the same considerations.
	\end{example}
	Hence it would be beneficial to at least understand how to compute $w^{(3)}_1$ and $w^{(2)}_1$ explicitly. In \cite[Prop. 7.1]{Lee-Weyman19}, it is described\footnote{In the referenced paper, it was assumed that $a_1\colon M \to \bigwedge^{f_0} F_0^*$ is an isomorphism.} how to compute these maps via a comparison map from a Buchsbaum-Rim complex, which we now recall. We write $F_i$ to mean $F_i \otimes R$ in the following. Theorem~\ref{thm:BE1'} gives a factorization
	\[
	\begin{tikzcd}
		\bigwedge^{r_1} F_1 \ar[rr, "\bigwedge^{r_1}d_1"]\ar[dr] &&\bigwedge^{r_1} F_0\\
		& M \ar[ur, "a_1", swap]
	\end{tikzcd}
	\]
	in particular a map $\beta\colon M^* \otimes \bigwedge^{r_1}F_1 \to R$, which is essentially $a_2^*$ after appropriate identifications. It is straightforward to check that the composite
	\[
	M^* \otimes \bigwedge^{r_1+1}F_1 \to M^* \otimes \bigwedge^{r_1} F_1 \otimes F_1 \xto{\beta \otimes 1} F_1 \xto{d_1} F_0
	\]
	is zero, thus we can lift through $d_2$ to obtain a map
	\[
	w^{(3)}_1 \colon M^*\otimes \bigwedge^{r_1+1}F_1 \to F_2.
	\]
	The difference of the two maps
	\begin{gather*}
		M^* \otimes \bigwedge^{r_1}F_1 \otimes F_2 \xto{\beta \otimes 1} F_2 \\
		M^* \otimes \bigwedge^{r_1}F_1 \otimes F_2 \xto{1 \otimes d_2} M^* \otimes \bigwedge^{r_1}F_1 \otimes F_1 \to M^* \otimes \bigwedge^{r_1+1} F_1 \xto{w^{(3)}_1} F_2
	\end{gather*}
	has image landing in $\ker d_2$, and thus it can be lifted through $d_3$ to obtain
	\[
	w^{(2)}_1 \colon M^* \otimes \bigwedge^{r_1}F_1 \otimes F_2 \to F_3.
	\]
	In the case that $r_0 = 1$, these maps can be viewed as giving a choice of multiplication on the resolution
	\[
	0 \to M^* \otimes F_3 \to M^* \otimes F_2 \to M^* \otimes F_1 \xto{\beta} R
	\]
	recovering what was illustrated in Example~\ref{ex:multiplication}.
	
	A very important special case of Example~\ref{ex:d3-split} is when the entire complex $\mb{F}$ is split exact, e.g. we take $\mb{F}$ to be the split exact complex
	\[
	\mb{F}^\ssc \colon 0 \to F_3 \to F_3 \oplus Z \to F_0 \oplus Z \to F_0
	\]
	of $\mb{C}$-vector spaces. Here $M = \bigwedge^{f_0} F_0$, and a direct computation shows that there is a unique $G = \GL(F_0) \times \GL(F_3) \times \GL(Z)$-equivariant choice of $w$. For this choice of $w$, direct computation with the explicit definitions of $w^{(3)}_1$ and $w^{(2)}_1$ above shows that
	\[
	w^{(3)}_1 \colon \bigwedge^{f_0+1} F_1 \otimes \bigwedge^{f_0} F_0^* = Z \oplus F_0^* \otimes \bigwedge^2 Z \oplus \cdots  \to F_3 \oplus Z
	\]
	maps $Z$ identically to itself and is zero on all other factors by $G$-equivariance. Similarly
	\[
	w^{(2)}_1 \colon \bigwedge^{f_0} F_1 \otimes F_3^* \otimes \bigwedge^{f_0} F_0^* = (\mb{C} \oplus F_0^* \otimes Z \oplus \bigwedge^2 F_0^* \otimes \bigwedge^2 Z \oplus \cdots)\otimes F_3^* \to F_3^* \oplus Z^* = F_2^*
	\]
	maps $F_3^*$ identically to itself and is zero on all other factors.
	
	\begin{remark}\label{rem:split-exact-revisited}
		This complex $\mb{F}^\ssc$ already appeared in Example~\ref{ex:split-exact-complex}. In fact, the higher structure maps $w_\ssc^{(i)}$ are implicit in that example, dual to the inclusions
		\begin{align*}
			F_0^* &\hookrightarrow L(\omega_{x_{r_1}})\\
			F_2 &\hookrightarrow L(\omega_{y_{r_2-2}})\\
			F_2^* &\hookrightarrow L(\omega_{z_{r_3}}).
		\end{align*}
		After we project onto $F_1^*$, $F_1$, and $F_3^*$ (which is dual to the inclusion of the bottom $z_1$-graded components), we get $(w^{(1)}_0)^* = d_1^*$, $(w^{(2)}_0)^* = d_2$, and $(w^{(3)}_0)^* = d_3^*$.
	\end{remark}
	Here is an equivalent restatement of the preceding remark. View $\mf{sl}(F_0) \times \mf{sl}(F_2)$ as the subalgebra $\mf{g}^{(x_1)}$ of $\mf{g}$ (c.f. \S\ref{sec:gen-licci-differentials}).
	\begin{thm}\label{thm:ssc}
		There exists a $\mb{C}$-algebra homomorphism $w_\mathrm{ssc}\colon \Rgen \to \mb{C}$ so that
		\begin{align*}
			w_\ssc^{(1)} \colon L(\omega_{x_{r_1}}) &\twoheadrightarrow F_0\\
			w_\ssc^{(2)} \colon L(\omega_{y_{r_2-2}}) &\twoheadrightarrow F_2^*\\
			w_\ssc^{(3)} \colon L(\omega_{z_{r_3}}) &\twoheadrightarrow F_2
		\end{align*}
		are given by projection onto the bottom $x_1$-graded component. Note that this determines $w_\ssc$ completely by Proposition~\ref{prop:HST-determined-by-a3}. Furthermore, $w^{(a_2)}$ is also projection onto its bottom $x_1$-graded component, which is its lowest weight space.
		
		The map $w_\ssc$ specializes $\Fgen$ to the \emph{standard split complex} of $\mb{C}$-vector spaces
		\[
		\mb{F}^\ssc \colon 0 \to F_3 \to F_2 \oplus Z \to F_0 \oplus Z \to F_0
		\]
		where $Z = \mb{C}^{r_2}$.
	\end{thm}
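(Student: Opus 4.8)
\emph{Construction of $w_\ssc$.} The plan is to realize $w_\ssc$ by an equivariance argument and then recognize its components among maps already computed in \S\ref{sec:GFR-HST}. The fully split complex $\mb{F}^\ssc$ is acyclic and carries an action of $G = \GL(F_0)\times\GL(F_3)\times\GL(Z)$, where $F_1\cong F_0\oplus Z$ and $F_2\cong F_3\oplus Z$, with respect to which all differentials are equivariant; moreover the Koszul complex on $\bigwedge^{r_3}d_3$ is split exact, so we may choose $G$-equivariant lifts $p_m$ in \eqref{eq:p-lifting}. By Proposition~\ref{prop:equivariant-p}(1) the resulting map $w_\ssc\colon\Rgen\to\mb{C}$ is $G$-equivariant and, by construction, specializes $\Fgen$ to $\mb{F}^\ssc$. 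As noted just before the theorem, this $G$-equivariant $w_\ssc$ is the unique one.

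\emph{Its components.} Since $\GL(F_3)\subseteq G$ and $d_3$ is a split inclusion, the equivariance analysis of Example~\ref{ex:d3-split} applies: the only possibly nonzero components of $w_\ssc^{(i)}$ ($i=1,2,3$) and $w_\ssc^{(a_2)}$ are the differentials $w_\ssc^{(i)}_0$, the degree-one maps $w_\ssc^{(3)}_1$ and $w_\ssc^{(2)}_1$, and the bottom component $a_2$ of $w_\ssc^{(a_2)}$. Furthermore, the explicit computation recorded after Example~\ref{ex:d3-split} for the fully split case identifies $w_\ssc^{(3)}_1$ with the map carrying the summand $Z$ of its source identically onto $Z\subseteq F_2$ and vanishing on the other summands, $w_\ssc^{(2)}_1$ with the map carrying the summand $F_3^*$ identically onto $F_3^*\subseteq F_2^*$ and vanishing on the other summands, and $a_2$ with the rank-one Buchsbaum--Eisenbud multiplier of $\mb{F}^\ssc$.

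\emph{Recognition and the main obstacle.} It remains to see that these data, assembled, are precisely projection onto the bottom $x_1$-graded component. Using the $\mf{g}^{(x_1)}=\mf{sl}(F_0)\times\mf{sl}(F_2)$-decomposition of each critical representation (cf. \S\ref{sec:grading}) together with the identifications $F_i\cong F_i^*$ and $L(\omega)\cong L(\omega)^\vee$ fixed in \S\ref{sec:liccires-exchange-xz}, one checks that the bottom $x_1$-graded component of $L(\omega_{x_{r_1}})^\vee$ is a copy of $F_0$ inside the bottom $z_1$-graded component, that of $L(\omega_{y_{r_2-2}})^\vee$ is a copy of $F_2^*$ meeting $z_1$-degrees $0$ and $1$ in $Z^*$ and $F_3^*$, that of $L(\omega_{z_{r_3}})^\vee$ is a copy of $F_2$ meeting $z_1$-degrees $0$ and $1$ in $F_3$ and $Z$, and that of $L(\omega_{x_1})^\vee$ is the lowest weight line (since $\omega_{x_1}$ restricts to zero on $\mf{g}^{(x_1)}$). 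The crucial observation, proved by a short count of the simple roots occurring along the relevant $\mf{sl}(F_0)$- and $\mf{sl}(F_2)$-chains, is that each such component lies in $z_1$-degrees $\leq 1$; it is therefore governed entirely by the components $w_\ssc^{(i)}_0$, $w_\ssc^{(i)}_1$ (and $a_2$) above, on which the maps restrict to the identity of the target module, while every other $x_1$-graded component lies either in $z_1$-degree $\geq 2$ (killed in Example~\ref{ex:d3-split}) or among the degree-one summands that those maps annihilate. This yields the stated descriptions of $w_\ssc^{(i)}$ and $w_\ssc^{(a_2)}$; that $w_\ssc$ is then completely determined follows from Remark~\ref{rem:a3-in-d3} (so that $w_\ssc^{(a_3)}$ is recovered from $w_\ssc^{(3)}$) together with Proposition~\ref{prop:HST-determined-by-a3}, and the specialization to $\mb{F}^\ssc$ is built in. I expect this recognition step to be the main obstacle: keeping straight the two gradings, the dualities, and the determinantal twists, and confirming that the support of the already-computed structure maps is exactly the bottom $x_1$-graded component and nothing larger.
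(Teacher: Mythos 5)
Your proposal is correct and takes essentially the same route as the paper: the paper obtains Theorem~\ref{thm:ssc} as a restatement of the computation immediately preceding it (the unique $G$-equivariant choice of $w$ for the fully split complex, the vanishing of all other components by equivariance as in Example~\ref{ex:d3-split}, and the explicit identification of $w^{(3)}_1$, $w^{(2)}_1$ and $a_2$), together with Remark~\ref{rem:split-exact-revisited} identifying these data with the bottom $x_1$-graded projections already implicit in Example~\ref{ex:split-exact-complex}. Your weight-count recognition step just spells out that identification explicitly (a small phrasing slip: the annihilated complements in $z_1$-degree zero, e.g.\ $F_0^* \subset F_1^*$ under $d_2^*$, belong alongside the degree-one summands you mention), so the argument matches the paper's.
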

	
	The comment regarding $w^{(a_2)}$ in this theorem implies the following.
	\begin{cor}\label{cor:GP-homogeneous-ring-wa2}
		Let $w\colon \Rgen \to R$ specialize $\Fgen$ to a resolution $\mb{F}$. Then there is a unique ring homomorphism
		\[
		\bigoplus_{n \geq 0} L(n\omega_{x_1})^\vee \to R
		\]
		equal to $w^{(a_2)}$ in degree 1. Here the source is the homogeneous coordinate ring of $G/P = G/P_{x_1}$; c.f. Lemma~\ref{lem:extremal-coords-on-X}.
		
		If $w^{(a_2)}$ is surjective, then the above determines a map
		\[
		\Spec R \to G/P_{x_1} \subset \mb{P}(\widehat{L}(\omega_{x_1})).
		\]
		This map lands in the complement of $X^{s_{z_1} s_u s_{x_1}}$ if and only if $a_2$ generates the unit ideal.
	\end{cor}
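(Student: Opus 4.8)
The plan is to establish, in order, the ring homomorphism $\mf{A}\to R$ (where $\mf{A}=\bigoplus_{n\geq0}L(n\omega_{x_1})^\vee$ is the homogeneous coordinate ring of $G/P_{x_1}$), then the morphism $\Spec R\to G/P_{x_1}$, then the criterion for it to avoid $X^{s_{z_1} s_u s_{x_1}}$. Uniqueness of the ring map is immediate: $\mf{A}$ is generated as a $\mb{C}$-algebra by its degree-one piece $L(\omega_{x_1})^\vee$, so any such map is determined by its restriction there, which must be $w^{(a_2)}$ (after fixing a generator of $\bigwedge^{f_2}F_2$). For existence, write $\mf{A}=(\Sym L(\omega_{x_1})^\vee)/I_\Plucker$; it then suffices to prove that the algebra map $\Sym L(\omega_{x_1})^\vee\to R$ determined by $w^{(a_2)}$ annihilates $I_\Plucker$, i.e.\ that the entries of the Buchsbaum--Eisenbud multiplier $a_2$ of $\mb{F}$ satisfy the Pl\"ucker relations of $G/P_{x_1}$. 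This is the substance of the corollary, and I would reduce it to the split-exact case by localization: since $\mb{F}$ is a resolution, $\grade I_{r_i}(d_i)\geq1$ for all $i$, so (adjoining variables per Definition~\ref{def:true-grade} if necessary) one may choose a nonzerodivisor $H\in R$ lying in all three ideals for which $\mb{F}\otimes R_H$ is split exact with free terms, whence $\mb{F}\otimes R_H\cong\mb{F}^\ssc\otimes R_H$ after a base change $g\in\prod\GL(F_i\otimes R_H)$. Because $H$ is a nonzerodivisor, $R\hookrightarrow R_H$, so it is enough to verify the Pl\"ucker relations over $R_H$. (Alternatively, one could read the relations off directly from the decomposition of $\Rgen$ in \cite{Weyman18}, but the localization argument is self-contained given the results above.)

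Over $R_H$ I would argue as follows. Transplant $w_\ssc$ of Theorem~\ref{thm:ssc} along $g$ (using the $\prod\GL(F_i)$-action on $\Rgen$) to obtain a map $w''\colon\Rgen\to R_H$ specializing $\Fgen$ to $\mb{F}\otimes R_H$. By Theorem~\ref{thm:ssc}, $w_\ssc^{(a_2)}$ is projection onto the lowest weight space of $L(\omega_{x_1})^\vee$, which is evaluation at the Borel-fixed point $v\in G/P_{x_1}$ and therefore kills $I_\Plucker$. The coordinate change $g$ acts on $W(a_2)=\bigwedge^{f_2}F_2\otimes L(\omega_{x_1})^\vee$ trivially through $\GL(F_0)$, by a scalar through $\GL(F_2)$ (the determinant on $\bigwedge^{f_2}F_2$, harmless on $\mb{P}(L(\omega_{x_1}))$ once a generator is fixed), and through $\GL(F_1)\times\GL(F_3)$ via the inclusions $\mf{sl}(F_1),\mf{sl}(F_3)\hookrightarrow\mf{g}$ together with central tori; so on $\mb{P}(L(\omega_{x_1}))$ it factors through $G$ and preserves $G/P_{x_1}$, hence preserves $I_\Plucker$, and $(w'')^{(a_2)}$ still kills $I_\Plucker\otimes R_H$. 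Finally Theorem~\ref{thm:parametrize} gives $w\otimes R_H=w''\exp(X)$ for a unique $X\in\mbf{L}\cotimes R_H$; since $\mbf{L}=\hat{\mf{n}}_{z_1}^-\subseteq\mf{g}$ acts on $\Rgen$ by derivations, $\exp(X)$ restricts on the $\mf{g}$-subrepresentation $W(a_2)$ to an automorphism arising from $\exp(\hat{\mf{n}}_{z_1}^-\otimes R_H)$, which likewise preserves $I_\Plucker$ (it extends compatibly to a derivation action on $\Sym L(\omega_{x_1})^\vee$ and on its quotient $\mf{A}$). Composing, $(w\otimes R_H)^{(a_2)}$ kills $I_\Plucker\otimes R_H$, and descending along $R\hookrightarrow R_H$ produces the ring homomorphism $\varphi\colon\mf{A}\to R$.

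Granting $\varphi$, the remaining two claims are formal. Since $G/P_{x_1}=\operatorname{Proj}\mf{A}$, surjectivity of $w^{(a_2)}$ forces $\varphi(\mf{A}_1)$ to generate the unit ideal of $R$, so gluing the maps $\Spec R[\varphi(\xi)^{-1}]\to D_+(\xi)$ over $\xi\in\mf{A}_1$ yields a morphism $f\colon\Spec R\to\operatorname{Proj}\mf{A}$; the hypothesis that $w^{(a_2)}$ is nonzero on only finitely many weight spaces makes $f$ factor through a finite-dimensional linear subspace, hence an honest morphism into the ind-variety $G/P_{x_1}\subseteq\mb{P}(L(\omega_{x_1}))$. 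For the criterion, Lemma~\ref{lem:extremal-coords-on-X} says $X^{s_{z_1} s_u s_{x_1}}$ is cut out set-theoretically in $G/P_{x_1}$ by the Pl\"ucker coordinates in the bottom $z_1$-graded component of $L(\omega_{x_1})^\vee$, and these pull back under $f$ to the entries of $a_2$, since the bottom $z_1$-graded component of $w^{(a_2)}$ recovers $a_2$ (cf.\ Example~\ref{ex:d3-split} and \eqref{eq:crit-reps}). Thus $f^{-1}(X^{s_{z_1} s_u s_{x_1}})=V(I(a_2))\subseteq\Spec R$ set-theoretically, and this is empty --- i.e.\ $f$ avoids $X^{s_{z_1} s_u s_{x_1}}$ --- exactly when $I(a_2)=R$.

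The step I expect to cause the most trouble is the split-exact case, and within it the bookkeeping of how $\prod\GL(F_i)$ acts on $W(a_2)$ relative to the Pl\"ucker ideal: one must check carefully, in view of the subtlety flagged in Remark~\ref{rem:gl-vs-sl} (where the $\mf{gl}(F_2)$-action does not commute with $\mf{g}$), that both the base change $g$ and the $\exp(X)$-twist of Theorem~\ref{thm:parametrize} act by transformations of $\mb{P}(L(\omega_{x_1}))$ preserving $G/P_{x_1}$. Everything else is manipulation with $\operatorname{Proj}$, the acyclicity of $\mb{F}$ (used only to produce $H$ and to invoke Theorem~\ref{thm:parametrize}), and Lemma~\ref{lem:extremal-coords-on-X}.
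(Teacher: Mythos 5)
Your proposal is correct and follows essentially the same route as the paper's own proof: uniqueness from degree-one generation, existence by reducing the Pl\"ucker relations to the split-exact case via localization along a nonzerodivisor, verifying them there using Theorem~\ref{thm:ssc} together with Theorem~\ref{thm:parametrize} (since $\prod\GL(F_i)$ and $\exp\mbf{L}$ act on the homogeneous coordinate ring of $G/P$), and then deducing the morphism to the ind-variety and the $X^{s_{z_1}s_us_{x_1}}$ criterion from Lemma~\ref{lem:extremal-coords-on-X}. You simply spell out details the paper leaves terse (the choice of $H$, the equivariance bookkeeping, the Proj gluing); the only nitpick is that the relations to be checked are among all entries of $w^{(a_2)}$, not just its bottom component $a_2$, but your actual argument handles the full map correctly.
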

	\begin{proof}
		The homogeneous coordinate ring of $G/P$ is generated in degree 1 so the uniqueness is clear; we need to check that the map is well-defined. As noted in Theorem~\ref{thm:ssc}, this is certainly true for $w = w_\ssc$, so the result for split $\mb{F}$ follows from Theorem~\ref{thm:parametrize} since $\GL(F_i)$ and $\exp \mbf{L}$ both act on the homogeneous coordinate ring of $G/P$.
		
		The ring $\bigoplus_{n \geq 0} L(n\omega_{x_1})^\vee$ is a quotient of $\Sym L(n\omega_{x_1})^\vee$ by Pl\"ucker relations. An arbitrary $\mb{F}$ is split after localization, and relations which hold over the localization must also hold over $R$.
		
		The other statement about $X^{s_{z_1} s_u s_{x_1}}$ follows from Lemma~\ref{lem:extremal-coords-on-X}.
	\end{proof}
	
	\subsubsection{Addition of a split part}\label{subsec:format-expansion}
	Let $\mb{F}$ be a resolution of format $\underline{f}$, and let $\mb{G}$ be a split exact complex. In this section, we study how one can deduce a choice of higher structure maps for $\mb{F}\oplus \mb{G}$ starting from a choice of higher structure maps for $\mb{F}$. Although this may seem like a peculiar question to consider, it is significant for a few reasons:
	\begin{itemize}
		\item We would like to use the theory of higher structure maps to define things which are intrinsic to the module resolved by $\mb{F}$. Currently Theorem~\ref{thm:parametrize} can only compare higher structure maps for resolutions of the same format.
		\item Later, we will study how higher structure maps evolve under linkage of perfect ideals as an extension of Theorem~\ref{thm:mapping-cone}. The Betti numbers can increase or decrease during this process, so it is mandatory to know how to adjust higher structure maps appropriately.
	\end{itemize}
	We will find that the answer, although simple to state and prove, is surprisingly subtle. In addition to the parameters $r_i$, $f_i$ already fixed, let $n_1,n_2,n_3 \geq 0$ be integers and let $N_i = \mb{C}^{n_i}$. Let $\mb{F}$ be a resolution of format $\underline{f}$ and let $\mb{F}'$ denote its direct sum with the split exact complex
	\[
	0 \to N_3 \otimes R \to (N_3 \oplus N_2)\otimes R \to (N_2 \oplus N_1)\otimes R \to N_1 \otimes R.
	\]
	Write $\underline{f}' = (f_0+n_1,f_1+n_1+n_2,f_2+n_2+n_3,f_3+n_3)$ for the format of $\mb{F}'$ and let $T'$ be the corresponding enlarged diagram
	\[
	\begin{tikzcd}[column sep=5pt,row sep=5pt]
		x_{r_1 + n_1} \ar[r,dash] & \cdots \ar[r,dash] & x_{r_1+1} \ar[r,dash] & x_{r_1} \ar[r,dash] & \cdots \ar[r,dash] & \colorX{x_1} \ar[r,dash] & u\ar[r,dash]\ar[d,dash] & y_1\ar[r,dash] & \ar[r,dash] \cdots\ar[r,dash] & y_{r_2-2} \ar[r,dash]& y_{r_2-1} \ar[r,dash]& \cdots\ar[r,dash] & y_{r_2+n_2-2}\\
		&&&&&& \colorZ{z_1}\ar[d,dash]\\
		&&&&&& \vdots\ar[d,dash]\\
		&&&&&& z_{r_3}\ar[d,dash]\\
		&&&&&& z_{r_3+1}\ar[d,dash]\\
		&&&&&& \vdots\ar[d,dash]\\
		&&&&&& z_{r_3+n_3}
	\end{tikzcd}
	\]
	which contains $T$ as a subdiagram. Let $\mf{g}'$ denote the Kac-Moody Lie algebra associated to $T'$; by deleting the vertices $x_{r_1+1}$, $y_{r_2-1}$, and $z_{r_3+1}$ we observe
	\[
	\mf{g}'^{(x_{r_1+1},y_{r_2-1},z_{r_3+1})} = \mf{g} \times \mf{sl}(N_1) \times \mf{sl}(N_2) \times \mf{sl}(N_3).
	\]
	For brevity we will call this $\mf{g}'_0$. In the following, $L(\omega,\mf{g}')$ denotes the irreducible highest weight representation of $\mf{g}'$ with weight $\omega$. We continue to use $L(\omega)$ to denote representations of $\mf{g}$. The bottom $x_1$-graded components of the three extremal representations of $\mf{g}'$ are:
	\begin{align*}
		F_0' = F_0 \oplus N_1 &\subset L(\omega_{x_{r_1+n_1}},\mf{g}')^\vee\\
		F_2'^* = F_2^* \oplus N_2^* \oplus N_3^* &\subset L(\omega_{y_{r_2+n_2-2}},\mf{g}')^\vee\\
		F_2' = F_2 \oplus N_2 \oplus N_3 &\subset L(\omega_{z_{r_3+n_3}},\mf{g}')^\vee.
	\end{align*}
	By examining the weights, we can see which representations of $\mf{g}'_0$ these components belong to, and these are all extremal:
	\begin{equation}\label{eq:GFR-bottom-after-expansion}
		\begin{split}
			F_0 \oplus N_1 &\subset L(\omega_{x_{r_1}})^\vee \oplus N_1\\
			F_2^* \oplus N_2^* \oplus N_3^* &\subset L(\omega_{y_{r_2-2}})^\vee \oplus N_2^* \oplus (N_3^* \otimes L(\omega_{x_1})^\vee)\\
			F_2 \oplus N_2 \oplus N_3 &\subset L(\omega_{z_{r_3}})^\vee \oplus (N_2 \otimes L(\omega_{x_1})^\vee) \oplus N_3.
		\end{split}
	\end{equation}
	For the upcoming Definition~\ref{def:I-lambda-mu} and its later uses, it will be helpful to adjust from \eqref{eq:crit-reps} and write $W(a_2)$ as
	\[
	W(a_2) = M^* \otimes L(\omega_{x_1})^\vee = \bigwedge^{f_1} F_1^* \otimes \bigwedge^{f_2} F_2 \otimes \bigwedge^{f_3} F_3^* \otimes [\bigwedge^{r_1} F_1 \oplus \cdots]
	\]
	i.e. we move a factor of $\bigwedge^{f_3} F_3^* \otimes \bigwedge^{f_1} F_1^*$ to the other side and think of $w^{(a_2)}$ as a map
	\[
	[\bigwedge^{r_1} F_1 \oplus \cdots] \otimes R \to \bigwedge^{f_1} F_1 \otimes \bigwedge^{f_2} F_2^* \otimes \bigwedge^{f_3} F_3 \otimes R = M \otimes R.
	\]
	Unlike the three extremal representations considered in \eqref{eq:GFR-bottom-after-expansion}, the one-dimensional bottom $x_1$-graded component of $L(\omega_{x_1}, \mf{g}')^\vee$ is entirely contained in a single $\mf{g}'_0$-representation, namely\footnote{The $\bigwedge^{n_1} N_1$ factor is to keep everything $\prod \GL(F_i) \times \prod \GL(N_i)$-equivariant.} $L(\omega_{x_1})^\vee \otimes \bigwedge^{n_1} N_1$. Analogously to \eqref{eq:GFR-bottom-after-expansion} we have
	\begin{equation}\label{eq:GFR-bottom-after-expansion'}
		\bigwedge^{f_1} F_1 \otimes \bigwedge^{f_2} F_2^* \otimes \bigwedge^{f_3} F_3 \otimes \bigwedge^{n_1} N_1 \subset L(\omega_{x_1})^\vee \otimes \bigwedge^{n_1} N_1 \subset L(\omega_{x_1},\mf{g}')^\vee.
	\end{equation}
	\begin{thm}\label{thm:format-expansion}
		Let $w\colon \Rgen \to R$ specialize $\Fgen$ to $\mb{F}$. Let $(\Rgen',{\Fgen}')$ denote the generic pair associated to the format $\underline{f}'$. Then there is a $w'\colon \Rgen'\to R$ specializing ${\Fgen}'$ to $\mb{F}'$ such that:
		\begin{itemize}
			\item $w'^{(1)}$ is the composite
			\[
			\begin{tikzcd}[ampersand replacement=\&,outer sep=3pt]
				L(\omega_{x_{r_1+n_1}},\mf{g}')^\vee \otimes R\ar[d,two heads]\\
				(L(\omega_{x_{r_1}})^\vee \oplus N_1)\otimes R \ar[d,"{(w^{(1)},\, \Id)}"]\\
				(F_0 \oplus N_1)\otimes R
			\end{tikzcd}
			\]
			\item $w'^{(2)}$ is the composite
			\[
			\begin{tikzcd}[ampersand replacement=\&,outer sep=3pt]
				L(\omega_{y_{r_2+n_2-2}},\mf{g}')^\vee \otimes R\ar[d,two heads]\\
				(L(\omega_{y_{r_2-2}})^\vee \oplus N_2^* \oplus (N_3^* \otimes L(\omega_{x_1})^\vee))\otimes R \ar[d,"{(w^{(2)} ,\, \Id ,\, \Id \otimes w^{(a_2)})}"]\\
				(F_2^* \oplus N_2^* \oplus N_3^*) \otimes R
			\end{tikzcd}
			\]
			\item $w'^{(3)}$ is the composite
			\[
			\begin{tikzcd}[ampersand replacement=\&,outer sep=3pt]
				L(\omega_{z_{r_3+n_3}},\mf{g}')^\vee \otimes R\ar[d,two heads]\\
				(L(\omega_{z_{r_3}})^\vee \oplus (N_2 \otimes L(\omega_{x_1})^\vee) \oplus N_3) \otimes R \ar[d,"{(w^{(2)} ,\, \Id\otimes w^{(a_2)} ,\, \Id) }"]\\
				(F_2 \oplus N_2 \oplus N_3) \otimes R
			\end{tikzcd}
			\]
			\item $w'^{(a_2)}$ is the composite
			\[
			\begin{tikzcd}[ampersand replacement=\&,outer sep=3pt]
				L(\omega_{x_1},\mf{g}')^\vee \otimes R\ar[d,two heads]\\
				L(\omega_{x_1})^\vee \otimes \bigwedge^{n_1} N_1 \otimes R \ar[d,"w^{(a_2)} \otimes \Id"]\\
				\bigwedge^{f_1} F_1 \otimes \bigwedge^{f_2} F_2^* \otimes \bigwedge^{f_3} F_3 \otimes \bigwedge^{n_1}N_1 \otimes R
			\end{tikzcd}
			\]
		\end{itemize}
		Each map denoted with a $\twoheadrightarrow$ is given by projection onto the $\mf{g}'_0$-representations identified in \eqref{eq:GFR-bottom-after-expansion} and \eqref{eq:GFR-bottom-after-expansion'}.
	\end{thm}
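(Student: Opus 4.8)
The plan is to verify the theorem by directly checking that the claimed maps $w'^{(i)}$ and $w'^{(a_2)}$ arise from a genuine algebra homomorphism $w'\colon \Rgen' \to R$ specializing $\Fgen'$ to $\mb{F}'$, using the characterization of such homomorphisms via structure maps $p_i$ from Lemma~\ref{lem:GFR-p-determines-w} together with the rigidity results Proposition~\ref{prop:HST-determined-by-a3} and Proposition~\ref{prop:localization-lemma}. The cleanest route exploits the two special cases already in hand: first the totally split case (Theorem~\ref{thm:ssc}), and then the general case via the parametrization Theorem~\ref{thm:parametrize}. The key structural input is that $T$ sits inside $T'$ as a subdiagram, so $\mf{g} \hookrightarrow \mf{g}'$, and the defect Lie algebra $\mbf{L}' = \hat{\mf n}_{z_1}^-(\mf{g}')$ contains $\mbf{L} = \hat{\mf n}_{z_1}^-(\mf{g})$ as a sub(pro-)Lie-algebra; moreover the three extremal representations of $\mf{g}'$ restrict, on their bottom $x_1$-graded components, to the $\mf{g}'_0 = \mf{g}\times\mf{sl}(N_1)\times\mf{sl}(N_2)\times\mf{sl}(N_3)$-representations written in \eqref{eq:GFR-bottom-after-expansion}, and similarly for $W(a_2)$ via \eqref{eq:GFR-bottom-after-expansion'}.

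First I would treat the case where $\mb{F} = \mb{F}^\ssc$ is itself the standard split complex over $\mb{C}$, so $\mb{F}'$ is again a standard split complex, now of format $\underline f'$. Here Theorem~\ref{thm:ssc} (applied to $\underline f'$) identifies $w_\ssc'^{(i)}$ as projection onto the bottom $x_1$-graded component of the relevant extremal $\mf{g}'$-representation; one then observes that this projection factors as the composite described in the theorem statement, because projecting $L(\omega,\mf{g}')^\vee$ onto its bottom $x_1$-graded component can be performed in two stages—first onto the $\mf{g}'_0$-subrepresentations of \eqref{eq:GFR-bottom-after-expansion}, and then (on the $L(\omega_\bullet)^\vee$ summand) onto \emph{its} bottom $x_1$-component, which by Theorem~\ref{thm:ssc} for $\underline f$ is exactly $w_\ssc^{(i)}$; the $N_j$ summands are handled by the identity maps, and the mixed summands $N_3^*\otimes L(\omega_{x_1})^\vee$ and $N_2\otimes L(\omega_{x_1})^\vee$ are handled by $\Id\otimes w_\ssc^{(a_2)}$ using that $w_\ssc^{(a_2)}$ is projection onto the lowest weight space (the last sentence of Theorem~\ref{thm:ssc}). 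The claim for $w_\ssc'^{(a_2)}$ is the analogous two-stage factorization of projection onto the one-dimensional bottom $x_1$-graded component of $L(\omega_{x_1},\mf{g}')^\vee$. By Proposition~\ref{prop:HST-determined-by-a3} these identities on the critical representations pin down $w_\ssc'$ completely, so the split case is settled.

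Next I would bootstrap to arbitrary $\mb{F}$ using Theorem~\ref{thm:parametrize}. Localizing $R$ at a suitable element makes $\mb{F}$ split exact; over such a localization, $\mb{F}\otimes R_h$ is obtained from $\mb{F}^\ssc\otimes R_h$ by the action of $\prod\GL(F_i)(R_h)$ and $\exp(\mbf{L}\cotimes R_h)$, and correspondingly $w = w_\ssc \exp(X)$ for some $X\in\mbf{L}\cotimes R_h$ (with a $\GL$-change of basis). The recipe in the theorem statement is manifestly functorial in $w$: replacing each $w^{(i)}$ by $w^{(i)}\exp(X)$ and $w^{(a_2)}$ by $w^{(a_2)}\exp(X)$ produces exactly $w'^{(i)}$, $w'^{(a_2)}$ replaced by their composites with $\exp(X)$, where we view $X\in\mbf{L}\cotimes R_h \subset \mbf{L}'\cotimes R_h$ via $\mf{g}\hookrightarrow\mf{g}'$—the point being that $X$, lying in $\mbf{L}$, acts on the $\mf{g}'_0$-subrepresentations of \eqref{eq:GFR-bottom-after-expansion} and \eqref{eq:GFR-bottom-after-expansion'} only through the $L(\omega_\bullet)^\vee$ and $L(\omega_{x_1})^\vee$ summands (it kills the $N_j$ factors since those are $\mf{sl}(N_j)$-representations trivial over $\mf{g}$), which is consistent with the block form of the recipe. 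Hence the composites in the theorem are the higher structure maps of $w' := w_\ssc'\exp(X)$, a genuine map $\Rgen'\to R_h$ specializing $\Fgen'$ to $\mb{F}'\otimes R_h$. Finally, to descend from $R_h$ to $R$: since $\mb{F}$—and hence $\mb{F}'$—is acyclic already over $R$, one checks that the restriction of $w'$ to $\bigwedge^{r_2'}F_2'^*\otimes\bigwedge^{r_3'}F_3'\otimes[\mb{C}\oplus\mbf{L}'^\vee]$ is $R$-valued (each building block—$w^{(i)}$, $w^{(a_2)}$, the identity maps, and the projections—is defined over $R$), and then Proposition~\ref{prop:localization-lemma} forces $w'$ to factor through $R$.

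The main obstacle I anticipate is the bookkeeping in the descent/consistency step: verifying carefully that $\mbf{L}\cotimes R \hookrightarrow \mbf{L}'\cotimes R$ is compatible with the two-stage projections, i.e. that acting by $\exp(X)$ with $X\in\mbf{L}$ \emph{before} projecting to \eqref{eq:GFR-bottom-after-expansion}–\eqref{eq:GFR-bottom-after-expansion'} agrees with acting by it \emph{after}, on the $L(\omega_\bullet)^\vee$ summands, and does nothing on the $N_j$ summands. This is ultimately a weight-space computation—$\mf{n}_{z_1}^-(\mf{g})$ raises $x_1$-degree without touching the $N_j$-directions, so it preserves each summand of \eqref{eq:GFR-bottom-after-expansion}—but it must be stated with enough care that the functoriality argument is airtight. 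A secondary point requiring attention is confirming that $W(a_2')$ for the enlarged format really does have its bottom $x_1$-component contained in the single $\mf{g}'_0$-representation $L(\omega_{x_1})^\vee\otimes\bigwedge^{n_1}N_1$ as asserted in \eqref{eq:GFR-bottom-after-expansion'} (as opposed to decomposing further), since this is what makes the recipe for $w'^{(a_2)}$ unambiguous; this again follows from a direct weight computation on the extremal representation $L(\omega_{x_1},\mf{g}')^\vee$.
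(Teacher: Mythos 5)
Your proposal is correct and follows essentially the same route as the paper's proof: reduce to the standard split complex, where the claim is immediate from Theorem~\ref{thm:ssc} applied to the enlarged format, and then handle general $\mb{F}$ by equivariance of the recipe under $\prod\GL$ and $\exp(\mbf{L})$ (Theorem~\ref{thm:parametrize}) after localizing to the split case, descending back to $R$ via Proposition~\ref{prop:localization-lemma}. The paper compresses the equivariance and two-stage projection checks that you spell out, but the underlying argument is the same.
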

	We include $w'^{(a_2)}$ not because it is necessary to describe $w'$, but to point out that it remains essentially unchanged---only its source has been enlarged.
	\begin{proof}
		Since the proposed construction of $w'$ from $w$ is $\prod \GL(F_i)$-equivariant and $\mf{g}'_0$-equivariant, it suffices to prove the statement for $w_\ssc$. If $\mb{F}$ is arbitrary, then after localization it is isomorphic to a split exact complex, thus the result would follow from Theorem~\ref{thm:parametrize} and Proposition~\ref{prop:localization-lemma}.
		
		But for $w = w_\ssc$, this statement is immediate given Theorem~\ref{thm:ssc}, since by \eqref{eq:GFR-bottom-after-expansion} this construction of $w'$ simply yields $w'_\ssc\colon \Rgen' \to \mb{C}$ for the larger format $\underline{f}'$.
	\end{proof}
	
	Suppose that $\mb{F}$ resolves an $R$-module $B$. We are now finally ready to make some definitions which are intrinsic to $B$.
	
	\begin{lem}\label{lem:well-def-pt1}
		Let $R$ be a ring and let $U \subset \Rgen$ be any subspace that is closed under the actions of $\prod \GL(F_i)$ and $\mbf{L}$. Let $B$ be an $R$-module and suppose $w\colon \Rgen \to R$ specializes $\Fgen$ to $\mb{F}$ resolving $B$. Then the ideal $w(U)R$ depends only on $B$ and not on the choice of $w$.
	\end{lem}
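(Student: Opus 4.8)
The plan is to show that if $(\mb{F},w)$ and $(\mb{F}',w')$ are any two valid pairs — $\mb{F},\mb{F}'$ resolutions of $B$, and $w,w'$ specializing $\Fgen$ to them — then $w(U)R=w'(U)R$. Note first that $\mb{F}$ and $\mb{F}'$ automatically have the same format, since both are specializations of the single tautological complex $\Fgen=\Fgen(\underline{f})$ over the ring $\Rgen$ containing $U$; and that $w(U)R$ is exactly the image $w(U\otimes R)$ under the $R$-linear extension $w\colon \Rgen\otimes R\to R$. There are two sources of ambiguity to dispose of: (a) the choice of $w$ for a fixed $\mb{F}$, and (b) the choice of $\mb{F}$ among resolutions of $B$ of the given format. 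The recurring mechanism will be that $U$, being closed under $\prod\GL(F_i)$, is a subrepresentation of the rational $\prod\GL(F_i)$-module $\Rgen$, and being closed under $\mbf{L}$ is an $\mbf{L}$-submodule; hence for every $\mb{C}$-algebra $S$ the submodule $U\otimes S\subseteq \Rgen\otimes S$ is preserved by $\prod\GL(F_i\otimes S)$ and by every automorphism $\exp X$ with $X\in\mbf{L}\cotimes S$ (the latter because $X$ acts by a derivation preserving $U\otimes S$ and is locally nilpotent, so $\exp X$ is a locally finite sum of such operators, with inverse $\exp(-X)$).

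For (a), if $w$ and $w'$ both specialize $\Fgen$ to the same $\mb{F}$, then Theorem~\ref{thm:parametrize} gives $w'=w\circ\exp X$ for some $X\in\mbf{L}\cotimes R$, whence
\[
w'(U\otimes R)=w\bigl(\exp X\,(U\otimes R)\bigr)=w(U\otimes R),
\]
so $w'(U)R=w(U)R$. This step is immediate once Theorem~\ref{thm:parametrize} is in hand; all of the substance is encapsulated there.

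For (b), I would check the desired equality of ideals locally, at each maximal ideal $\mf{m}\subset R$, since an ideal is determined by its localizations. After base change to the local ring $R_{\mf{m}}$, the complexes $\mb{F}\otimes R_{\mf{m}}$ and $\mb{F}'\otimes R_{\mf{m}}$ are resolutions of $B_{\mf{m}}$ of the same format, and $w\otimes R_{\mf{m}}$, $w'\otimes R_{\mf{m}}$ specialize $\Fgen$ to them. Over a local ring every finite free resolution decomposes as the direct sum of the (unique up to isomorphism) minimal resolution of the module with a split exact complex, and split exact complexes of a fixed format are isomorphic; hence there is a chain isomorphism $\Phi=(\Phi_i)$, $\Phi_i\in\GL(F_i\otimes R_{\mf{m}})$, between $\mb{F}\otimes R_{\mf{m}}$ and $\mb{F}'\otimes R_{\mf{m}}$. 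This $\Phi$ induces an automorphism $g_\Phi$ of $\Rgen\otimes R_{\mf{m}}$; since $\Phi$ is a chain map, $g_\Phi$ transports the specialized complex by the corresponding change of basis, so $(w'\otimes R_{\mf{m}})\circ g_\Phi$ again specializes $\Fgen$ to $\mb{F}\otimes R_{\mf{m}}$, and it agrees with $w'\otimes R_{\mf{m}}$ on $U\otimes R_{\mf{m}}$ because $g_\Phi$ preserves $U\otimes R_{\mf{m}}$. Applying step (a) over $R_{\mf{m}}$ to the two specializations $w\otimes R_{\mf{m}}$ and $(w'\otimes R_{\mf{m}})\circ g_\Phi$ of $\Fgen$ to $\mb{F}\otimes R_{\mf{m}}$ gives $(w(U)R)_{\mf{m}}=(w'(U)R)_{\mf{m}}$, and letting $\mf{m}$ vary yields $w(U)R=w'(U)R$.

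The step I expect to require the most care is (b): over a non-local $R$ two resolutions of $B$ of the same format need not be isomorphic as complexes, which is precisely why I pass to $R_{\mf{m}}$ first. The remaining points are bookkeeping that I would want to spell out: that the property ``specializes $\Fgen$ to $\mb{F}$'' is preserved under the base change $R\to R_{\mf{m}}$; that precomposing a specialization with $g_\Phi$ indeed produces the change-of-basis image of the specialized complex (so a chain isomorphism $\Phi$ of complexes yields a specialization of the other complex); and that $U\otimes S$ is genuinely stable under $\exp X$, which follows from the facts recalled before Theorem~\ref{thm:parametrize} that $\mbf{L}$ acts by derivations fixing $R_a\otimes S$ and that $\mb{L}_{>n}$ acts nilpotently.
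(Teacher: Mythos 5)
Your proposal is correct and follows essentially the same route as the paper's own proof: reduce to the local case by localizing, note that over a local ring the two resolutions of $B$ (of the fixed format) are isomorphic and hence related by $\prod \GL(F_i \otimes R)$, invoke Theorem~\ref{thm:parametrize} to relate different $w$'s for a fixed $\mb{F}$ via $\exp(\mbf{L}\cotimes R)$, and use that $U$ is stable under both actions. Your extra bookkeeping (stability of $U\otimes S$ under $\exp X$, compatibility with base change) is exactly what the paper leaves implicit, so there is nothing to add.
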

	\begin{proof}
		Let $w'\colon \Rgen \to R$ be another map specializing $\Fgen$ to a resolution $\mb{F}'$ of $B$. To show that $w(U)R = w'(U)R$, it suffices to check after localizing at each prime of $R$, so we reduce at once to the case that $R$ is local.
		
		In this situation, the resolutions $\mb{F}$ and $\mb{F}'$ of $B$ must be isomorphic, hence related by the action of $\prod GL(F_i \otimes R)$. Different choices of $w\colon \Rgen \otimes R \to R$ specializing $\Fgen$ to a fixed $\mb{F}$ are related by $\exp(\mbf{L}\cotimes R)$ by Theorem~\ref{thm:parametrize}. As $U \otimes R$ is closed under both of these actions, the result follows.
	\end{proof}
	Note that in this lemma, we fix the format of $\mb{F}$. With the aid of Theorem~\ref{thm:format-expansion}, we can improve this, but first we make some definitions.
	\begin{definition}\label{def:I-lambda-mu}
		Let $V = S_\lambda F_1 \otimes S_\mu F_3^*$ be an irreducible representation in the $z_1$-graded decomposition of
		\[
		L(\omega_{x_1})^\vee = \bigwedge^{r_1} F_1 \oplus \cdots.
		\]
		The \emph{degree} of $V$ is $|\mu|$, and by our description of $\mb{L}_1^*$ we necessarily have $|\lambda| = r_1 + |\mu|(r_1+1)$. This degree gives the $z_1$-graded component in which $V$ appears, where we index so that $\bigwedge^{r_1} F_1$ is in degree 0. If $V$ is moreover an extremal representation, then it has multiplicity 1 inside of $L(\omega_{x_1})^\vee$, so it is well-defined to write $\mb{I}_{\leq (\lambda,\mu)}$ for the $\mbf{L}$-representation generated by
		\[
		\bigwedge^{f_2}F_2^* \otimes \bigwedge^{f_3}F_3 \otimes V \subset \bigwedge^{f_2}F_2^* \otimes \bigwedge^{f_3}F_3 \otimes L(\omega_{x_1})^\vee = W(a_2)
		\]
		in $\Rgen$, or equivalently the Demazure module generated by a highest weight vector of $V$.
	\end{definition}
	Let $b \in L(\omega_{x_1})^\vee$ be a lowest weight vector for $\mf{g}^{(z_1)}$. Then $b\in L(\omega_{x_1},\mf{g}')^\vee$ is also a lowest weight vector for $\mf{g}'^{(z_1)}$ in $L(\omega_{x_1},\mf{g}')^\vee$. Explicitly, suppose $b$ is a lowest weight vector for
	\[
	V = S_\lambda F_1 \otimes S_\mu F_3^* \subset L(\omega_{x_1})^\vee.
	\]
	If $\lambda = (\lambda_1,\ldots,\lambda_{f_1})$, define
	\[
	\lambda' = (\underset{\text{$n_1$ times}}{\underbrace{1+|\mu|,\ldots,1+|\mu|}},\lambda_1,\lambda_2,\ldots,\lambda_{f_1}).
	\]
	Then an analysis of weights\footnote{In particular, this computation requires taking the suppressed $S_{|\mu|}M^*$ factor into account.} shows that $b$ is a lowest weight vector for
	\[
	S_{\lambda'} F_1' \otimes S_\mu F_3'^* \subset L(\omega_{x_1},\mf{g}')^\vee.
	\]
	\begin{remark}
		In \S\ref{sec:HST-linkage}, we will always fix $r_1 = 1$, so the adjustment to $\lambda$ will not be needed.
	\end{remark}
	If $b$ is extremal in $L(\omega_{x_1})^\vee$ then it is also extremal in $L(\omega_{x_1},\mf{g}')^\vee$. The extremal $\mf{g}^{(z_1)}$-representations in $L(\omega_{x_1})^\vee$ are indexed by $W_{z_1} \backslash W / W_{x_1}$: if $\sigma \in \leftindex^{z_1}W^{x_1}$ and $b \in L(\omega_{x_1})^\vee$ is a lowest weight vector (for $\mf{g}$), then $\sigma b$ is a lowest weight vector for an extremal $\mf{g}^{(z_1)}$-representation.
	\begin{remark}\label{rem:limit-double-coset}
		Combinatorially, if we let $W'$ denote the Weyl group for $\mf{g}'$, we have a natural inclusion
		\[
		W_{z_1} \backslash W / W_{x_1} \hookrightarrow W'_{z_1} \backslash W' / W'_{x_1}.
		\]
		Consequently we may consider the union of all of these sets as we allow the arms of $T$ to grow arbitrarily large:
		\[
		\lim_{r_1,r_2,r_3 \to \infty} W(T)_{P_{z_1}} \backslash W(T) / W(T)_{P_{x_1}}
		\]
		We will often treat $\sigma$ as a minimal length representative of some element in this limit. The smallest $T$ for which $[\sigma] \in W(T)_{P_{z_1}} \backslash W(T) / W(T)_{P_{x_1}}$ is just the smallest diagram that contains all the reflections needed to write a reduced word for $\sigma$.
	\end{remark}
	\begin{definition}\label{def:HSI}
		Let $R$ be a ring, and let $B$ be an $R$-module which admits a free resolution
		\[
		\mb{F} \colon 0\to F_3 \to F_2 \to F_1 \to F_0
		\]
		of some format $\underline{f}$ (with $f_i < \infty$). Let $(\Rgen,\Fgen)$ be the associated generic ring and choose a $w\colon \Rgen \to R$ specializing $\Fgen$ to $\mb{F}$.
		
		If $\sigma \in \leftindex^{z_1}W^{x_1}$ and $S_\lambda F_1 \otimes S_\mu F_3^*$ is the corresponding extremal representation, we define the \emph{higher structure ideal} $\HSI_\sigma(B) \coloneqq w(\mb{I}_{\leq(\lambda,\mu)})R$.
	\end{definition}
	\begin{prop}
		The ideals $\HSI_{\sigma}(B)$ are well-defined.
	\end{prop}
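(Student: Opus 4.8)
The plan is to establish two separate independences: that $w(\mb{I}_{\leq(\lambda,\mu)})R$ does not depend on the choice of $w$ for a fixed resolution $\mb{F}$ of $B$, and that the resulting ideal does not depend on $\mb{F}$, even as the Betti numbers (and hence the format and the ambient diagram) vary. For the first, I would simply check that $\mb{I}_{\leq(\lambda,\mu)}$ meets the hypotheses of Lemma~\ref{lem:well-def-pt1}. It is closed under $\mbf{L}$ by construction, being an $\mbf{L}$-submodule of $\Rgen$. It is also closed under $\prod\GL(F_i)$: the generating subspace $\bigwedge^{f_2}F_2^*\otimes\bigwedge^{f_3}F_3\otimes V$, with $V=S_\lambda F_1\otimes S_\mu F_3^*$, is a $\prod\GL(F_i)$-subrepresentation of $W(a_2)$ (it is one of the $\mf{sl}(F_1)\times\mf{sl}(F_3)$-isotypic summands of a single $z_1$-graded component, and $\prod\GL(F_i)$ only refines that decomposition by central characters), and $\prod\GL(F_i)$ normalizes the $\mbf{L}$-action on $\Rgen$, so the $\mbf{L}$-submodule it generates is again $\prod\GL(F_i)$-stable. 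Lemma~\ref{lem:well-def-pt1} then gives that $\HSI_\sigma(B)=w(\mb{I}_{\leq(\lambda,\mu)})R$ depends only on $B$ among resolutions of $B$ of the fixed format $\underline f$.

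For the second, I would localize $R$ at an arbitrary prime (an equality of ideals may be checked prime by prime), reducing to $R$ local. Over a local ring any two finite free resolutions of $B$ differ by contractible complexes, and any contractible complex concentrated in homological degrees $0,\dots,3$ has the block form $0\to N_3\to N_3\oplus N_2\to N_2\oplus N_1\to N_1$. Combining this with the fixed-format statement already proven, the whole proposition reduces to showing that replacing $\mb{F}$ by $\mb{F}'=\mb{F}\oplus\mb{G}$ for such a $\mb{G}$ leaves $\HSI_\sigma(B)$ unchanged; here $\sigma$ is viewed as the same element of the double-coset set in the larger Weyl group via Remark~\ref{rem:limit-double-coset}, so $\mu$ is unchanged while $\lambda$ is replaced by the padding $\lambda'$ described before Definition~\ref{def:HSI}.

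To handle the addition of a split part I would invoke Theorem~\ref{thm:format-expansion}: it provides a $w'$ specializing ${\Fgen}'$ to $\mb{F}'$ whose restriction to the $W(a_2)$-representation of the larger format is the composite of the projection $\pi\colon L(\omega_{x_1},\mf{g}')^\vee\to L(\omega_{x_1})^\vee\otimes\bigwedge^{n_1}N_1$ with $w^{(a_2)}\otimes\Id$. Since $\HSI_\sigma$ only uses the restriction of the structure map to $W(a_2)$, computing $\HSI_\sigma(B)$ with this $w'$ (legitimate by the fixed-format case) gives the ideal generated by $(w^{(a_2)}\otimes\Id)(\pi(\mb{I}_{\leq(\lambda',\mu)}))$. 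Thus everything comes down to the purely representation-theoretic identity
\[
\pi\bigl(\mb{I}_{\leq(\lambda',\mu)}\bigr)=\mb{I}_{\leq(\lambda,\mu)}\otimes\textstyle\bigwedge^{n_1}N_1,
\]
the one-dimensional twist being irrelevant for the ideal generated.

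I expect this identity to be the main obstacle. Both sides are $\mbf{L}$-submodules concentrated in $z_1$-degrees $0,\dots,|\mu|$ --- on the left because $\mbf{L}'$ strictly lowers $z_1$-degree and the generating extremal space of $\mb{I}_{\leq(\lambda',\mu)}$ sits in degree $|\mu|$. Because $\mf{g}\subset\mf{g}'_0$ and $\pi$ is $\mf{g}'_0$-equivariant, $\pi$ is in particular $\mbf{L}$-equivariant, so it carries the $\mbf{L}$-submodule generated by any set to the $\mbf{L}$-submodule generated by its $\pi$-image; applied to the generating weight space this already yields the inclusion $\supseteq$, using the weight computation preceding Definition~\ref{def:HSI} (which accounts for the suppressed $S_{|\mu|}M^*$ factor and is exactly what turns $\lambda$ into $\lambda'$, leaving $\mu$ fixed) to identify $\pi$ of that generating weight space with the generator of $\mb{I}_{\leq(\lambda,\mu)}\otimes\bigwedge^{n_1}N_1$. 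The reverse inclusion is the delicate point: $\mb{I}_{\leq(\lambda',\mu)}$ is a priori generated over the larger $\mbf{L}'$, not over $\mbf{L}$, so one must check that the generators of $\mbf{L}'$ supported on the new vertices of $T'$ (and the copies of $\mf{sl}(N_i)$ sitting in $\mf{g}'_0$, which act trivially on the target of $\pi$) contribute nothing after projection. This is a weight-bookkeeping argument driven by the $z_1$-degree bound and the explicit form $\mb{L}_1=\bigwedge^{f_0+1}F_1^*\otimes F_3\otimes M$, which dictates exactly how weights accumulate as one descends the $z_1$-grading, and I expect the bulk of the work to lie here; as a sanity check, the split model $\mb{F}=\mb{F}^\ssc$ (where both ideals are the unit ideal, by Theorem~\ref{thm:ssc}) is consistent with the claim.
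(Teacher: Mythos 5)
Your overall route is the same as the paper's: handle a fixed format with Lemma~\ref{lem:well-def-pt1} (your verification of its hypotheses for $\mb{I}_{\leq(\lambda,\mu)}$ is correct and is left implicit in the paper), reduce arbitrary resolutions to the addition of a split exact summand, and then invoke Theorem~\ref{thm:format-expansion} together with the weight analysis preceding Definition~\ref{def:HSI} (the passage $\lambda\mapsto\lambda'$, $\mu$ fixed) to handle the change of format. The difference is one of completeness: the paper's proof simply asserts that ``the crucial point is that Theorem~\ref{thm:format-expansion} guarantees $\HSI_\sigma(B)$ does not change,'' whereas you correctly unwind what this requires --- namely that $w'^{(a_2)}=(w^{(a_2)}\otimes\Id)\circ\pi$ turns the question into a statement about $\pi\bigl(\mb{I}_{\leq(\lambda',\mu)}\bigr)$ --- prove the inclusion giving $\HSI_\sigma(\text{enlarged})\supseteq\HSI_\sigma(\text{original})$, and then leave the reverse containment (that adding a split part cannot \emph{enlarge} the ideal) as ``the bulk of the work.'' That reverse containment is genuinely needed and is not proved in your proposal, so as written there is a gap, located exactly at the step the paper dispatches by citation.

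To close it you do not need the full subspace identity; it suffices to show $w^{(a_2)}\bigl(\pi(\mb{I}_{\leq(\lambda',\mu)})\bigr)\subseteq w(\mb{I}_{\leq(\lambda,\mu)})R$, and a useful structural observation is that the negative root spaces of $\mf{g}'$ supported on the new vertices span an ideal $\mf{c}$ of $\mbf{L}'$ with $\mbf{L}'=\mf{c}\rtimes\mbf{L}$ (a negative root outside $\Delta(\mf{g})$ keeps a nonzero coefficient on a new simple root after bracketing with anything in $\mbf{L}$). Hence $\mb{I}_{\leq(\lambda',\mu)}=\mc{U}(\mbf{L})\,\mc{U}(\mf{c})\,V'$ with $V'$ the generating extremal subspace, and since $\pi$ is $\mbf{L}$-equivariant and $\mb{I}_{\leq(\lambda,\mu)}\otimes\bigwedge^{n_1}N_1$ is $\mbf{L}$-stable, everything reduces to the single containment $\pi\bigl(\mc{U}(\mf{c})V'\bigr)\subseteq\mb{I}_{\leq(\lambda,\mu)}\otimes\bigwedge^{n_1}N_1$, which is where your weight bookkeeping (tracking the coefficients on the new simple roots and the $z_1$-degree bound $\leq|\mu|$) actually has to be carried out. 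Either supply that computation or argue, as the paper implicitly does, that the identification of extremal vectors for $\sigma\in W\subseteq W'$ under the $\mf{g}'_0$-equivariant embedding $L(\omega_{x_1})^\vee\otimes\bigwedge^{n_1}N_1\hookrightarrow L(\omega_{x_1},\mf{g}')^\vee$ forces it; without one of these the proof is not complete.
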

	\begin{proof}
		Let $\mb{F}$ and $\mb{F}'$ be two different free resolutions of $B$. By adding split complexes to both, we may assume that they have the same format. The crucial point is that Theorem~\ref{thm:format-expansion} guarantees $\HSI_{\sigma}(B)$ does not change in this process.
		
		Once they have the same format, the result follows from Lemma~\ref{lem:well-def-pt1}. The discussion preceding Definition~\ref{def:HSI} guarantees that the notation $\HSI_\sigma$ is unambiguous.
	\end{proof}
	\begin{example}
		Suppose $I$ is a grade 3 perfect ideal in a local Noetherian ring $(R,\mf{m})$. Choose $w\colon \Rgen\to R$ specializing $\Fgen$ to a minimal free resolution of $R/I$. Then the Buchsbaum-Eisenbud multiplier $a_1$ is an isomorphism, and we identify $W(a_2)$ with $W(d_1)$. For $e\in W$ the identity, $\HSI_e(R/I) = I$.
		
		If $I$ is not the unit ideal, then it is a complete intersection exactly when the multiplication $\bigwedge^3 F_1 \otimes R\to F_3 \otimes R$ is nonzero mod $\mf{m}$. This multiplicative structure appears in the higher structure maps as
		\[
		w^{(1)}_1 \colon \bigwedge^3 F_1 \otimes F_3^* \otimes R \to R
		\]
		where the extremal representation $\bigwedge^3 F_1 \otimes F_3^* \subset L(\omega_{x_1})^\vee$ corresponds to $\sigma = s_{z_1} s_u s_{x_1}$. Hence we see that $\HSI_\sigma(R/I)$ cuts out the \emph{non-c.i. locus}, in the sense that for $\mf{p} \in \Spec R$, $\HSI_\sigma(R/I) \subseteq \mf{p}$ if and only if $I_\mf{p} \subseteq R_\mf{p}$ is a complete intersection (or the unit ideal).
		
		In the next section, we will see that the sum $\sum_\sigma \HSI_\sigma(R/I)$, i.e. the image of $w^{(1)}$, is the \emph{non-licci locus}.
	\end{example}
	\begin{example}
		For $I = (t_1,t_2,t_3)^2 \subset \mb{C}[t_1,t_2,t_3]$ from Example~\ref{ex:max-ideal-1683}, we see that
		\[
		I = \HSI_e(R/I) \subseteq \HSI_\sigma(R/I) \subseteq I
		\]
		since all entries of $w^{(1)}$ have degree 2. Thus $\HSI_\sigma(R/I) = I$ for all $\sigma$.
	\end{example}
	\begin{prop}\label{prop:classifying-cell}
		Let $B$ be a module over a local ring $(R,\mf{m},k)$ and suppose that $w^{(a_2)} \otimes k \neq 0$ for some choice of $w\colon \Rgen \to R$ specializing $\Fgen$ to a resolution of $B$.
		
		Then there is a unique $\sigma$ such that
		\[
		\HSI_\rho(B) = (1) \iff \rho \geq \sigma.
		\]
		Furthermore, there exists a choice of $w'\colon \Rgen\to R$ specializing $\Fgen$ to a resolution $\mb{F}'$ isomorphic to $\mb{F}$ such that $w'^{(a_2)}$ determines a map
		\[
		\Spec R \to C_\sigma \subset G/P_{x_1}
		\]
		and $w'^{(a_2)} \otimes k$ gives the torus-fixed $k$-point of $C_\sigma$.
	\end{prop}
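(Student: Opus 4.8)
The plan is to reinterpret the structure map $w^{(a_2)}$ as a morphism to $G/P_{x_1}$ via Corollary~\ref{cor:GP-homogeneous-ring-wa2}, and then to translate the condition $\HSI_\rho(B)=(1)$ — using that $R$ is local — into an incidence statement between the torus-fixed point that records the closed point and the Demazure submodules $\mb{I}_{\leq(\lambda_\rho,\mu_\rho)}\subset W(a_2)$, an incidence that is governed by the Bruhat order. First I would fix $w$ with $w^{(a_2)}\otimes k\neq 0$; over the local ring this forces the entries of $w^{(a_2)}$ to generate the unit ideal, i.e. $w^{(a_2)}$ is surjective, so once the finite-support hypothesis is arranged (see below) Corollary~\ref{cor:GP-homogeneous-ring-wa2} produces a morphism $\phi\colon\Spec R\to G/P_{x_1}$ agreeing with $w^{(a_2)}$ in degree $1$. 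In particular the closed point $\Spec k\to\Spec R$ lands in a single Schubert cell of $G/P_{x_1}$.

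Next I would normalize $w$. Two groups act on the set of admissible choices: $\prod\GL(F_i\otimes R)$, which replaces $\mb{F}$ by an isomorphic resolution and acts on $G/P_{x_1}$, and $\exp(\mbf{L}\cotimes R)$, which fixes $\mb{F}$ (Theorem~\ref{thm:parametrize}); both preserve the homogeneous coordinate ring of $G/P_{x_1}$ and hence act on $\phi$. The Weyl groups of $\GL(F_0),\GL(F_1),\GL(F_2),\GL(F_3)$ are generated by simple reflections avoiding $s_{z_1}$, hence sit inside $W_{P_{z_1}}$, so if the closed point lands in $C_{\sigma_0}$ and $\sigma$ denotes the minimal-length representative of the double coset $W_{P_{z_1}}\sigma_0 W_{P_{x_1}}$, choosing $w_1\in W_{P_{z_1}}$ with $w_1\sigma_0\in\sigma W_{P_{x_1}}$ lets me move the closed point into $C_\sigma$. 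Using the remaining tori in $\prod\GL(F_i\otimes R)$ together with $\exp(\mbf{L}\cotimes R)$ I would then arrange that the closed point is sent to the torus-fixed point $\sigma v$ of $C_\sigma$ and that $\phi$ factors through $C_\sigma$ itself. For the standard split complex this is transparent by Theorem~\ref{thm:ssc}, where $w_\ssc^{(a_2)}$ is projection onto the lowest weight space, so $\phi$ is the point $e\cdot v$; an arbitrary $\mb{F}$ becomes split after localization, so Theorem~\ref{thm:parametrize} and Proposition~\ref{prop:localization-lemma} transfer this to the general case. Call the result $w'$; since $\phi'$ now lands in a finite-dimensional cell, the finite-support hypothesis is automatic, and $w'^{(a_2)}$ genuinely determines the asserted map $\Spec R\to C_\sigma\subset G/P_{x_1}$ with $w'^{(a_2)}\otimes k$ the torus-fixed point.

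It then remains to compute $\HSI_\rho(B)=w'(\mb{I}_{\leq(\lambda_\rho,\mu_\rho)})R$ for this $w'$. Over the local ring this ideal equals $(1)$ precisely when the reduction $\bar w'$ is nonzero on $\mb{I}_{\leq(\lambda_\rho,\mu_\rho)}\subset W(a_2)$. Because $\bar w'^{(a_2)}$ is evaluation of Pl\"ucker coordinates at the torus-fixed point $\sigma v$, and the weight space of $L(\omega_{x_1})^\vee$ of weight $-\sigma\omega_{x_1}$ is one-dimensional (spanned by the extremal Pl\"ucker coordinate $p_\sigma$), $\bar w'$ is nonzero on a weight-graded subspace of $W(a_2)$ exactly when that subspace contains $p_\sigma$. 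Finally, $\mb{I}_{\leq(\lambda_\rho,\mu_\rho)}$ is by construction the parabolic Demazure module generated from $p_\rho$ by $\hat{\mf{p}}_{z_1}^-=\mf{g}^{(z_1)}\oplus\mbf{L}$; since $L(\omega_{x_1})^\vee$ is a lowest weight module in which $\mbf{L}\subset\hat{\mf{n}}^-$ moves weights toward the lowest weight vector $p_e$, the standard characterization of parabolic Demazure modules shows it contains $p_\sigma$ if and only if $\sigma\leq\rho$ in the Bruhat order on $W_{P_{z_1}}\backslash W/W_{P_{x_1}}$. Chaining the equivalences gives $\HSI_\rho(B)=(1)\iff\rho\geq\sigma$. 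The uniqueness of $\sigma$ is immediate — it is the cell of the closed point's image, equivalently the unique minimal element of $\{\rho:\HSI_\rho(B)=(1)\}$ — and the monotonicity $\mb{I}_{\leq(\lambda_{\rho'},\mu_{\rho'})}\supseteq\mb{I}_{\leq(\lambda_\rho,\mu_\rho)}$ for $\rho'\geq\rho$ confirms this set really is an up-set.

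The main obstacle is the normalization of $w$: not merely moving the closed point to a torus-fixed point $\sigma v$ with $\sigma$ a double-coset representative, but arranging that \emph{all} of $\Spec R$ maps into the single cell $C_\sigma$, so that $w'^{(a_2)}$ defines a map to $C_\sigma$ rather than to a union of cells. This requires tracking how $\exp(\mbf{L}\cotimes R)$ and the $\prod\GL(F_i\otimes R)$-action interact with the $z_1$-grading, reduced to the split case via Theorem~\ref{thm:ssc} and glued back by Proposition~\ref{prop:localization-lemma}. I expect the rest — the reduction to the residue field, the torus-fixed-point evaluation, and the Demazure-module combinatorics — to be routine once this normalization is in place.
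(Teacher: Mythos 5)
Your proposal follows the same route as the paper: Corollary~\ref{cor:GP-homogeneous-ring-wa2} turns $w^{(a_2)}$ into a point of $G/P_{x_1}$, $\sigma$ is read off from where the closed point lands (equivalently, from the lowest extremal $\mf{g}^{(z_1)}$-component on which $w^{(a_2)}\otimes k\neq 0$), and the equivalence $\HSI_\rho(B)=(1)\iff\rho\geq\sigma$ is the Demazure/Bruhat combinatorics of which extremal weight vectors $p_\sigma$ lie in $\mb{I}_{\leq(\lambda_\rho,\mu_\rho)}$. The paper's own proof is little more than a pointer to an external cell-adjustment proposition, so your write-up actually supplies more of the argument than the printed one; the second half (reduction mod $\mf{m}$, one-dimensionality of extremal weight spaces, monotonicity of the Demazure modules) is correct as you state it.

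The one place your sketch needs sharpening is exactly the step you flag. The group you are allowed to act by is $\exp(\mbf{L}\cotimes R)\rtimes\prod\GL(F_i\otimes R)$, whose infinitesimal version is the completed negative parabolic $\hat{\mf{n}}_{z_1}^-\oplus\prod\mf{gl}(F_i)\supseteq\hat{\mf{p}}_{z_1}^-$; it is the \emph{unipotent radicals} (the lower-triangular parts of $\GL(F_1),\GL(F_3)$ together with $\exp(\mbf{L})$), not the tori, that move a $k$-point within its orbit to $\sigma v$ and then absorb the off-cell coordinates of the $R$-point. The reason the whole of $\Spec R$ can be pushed into $C_\sigma$ (and not merely into the open set $\{p_\sigma\neq 0\}\cong\exp(\sigma\hat{\mf{n}}_{x_1}^-\sigma^{-1})\sigma v$, which is where locality of $R$ initially places it) is that minimality of $\sigma$ in its double coset forces $\mf{n}_\sigma\subseteq\mf{n}_{z_1}^+$, so that $\sigma\hat{\mf{n}}_{x_1}^-\sigma^{-1}$ splits as $\mf{n}_\sigma$ plus a complement lying inside $\hat{\mf{p}}_{z_1}^-$; a Baker--Campbell--Hausdorff argument then lets you cancel the complementary factor by an element of the acting group, leaving an $R$-point of $\exp(\mf{n}_\sigma\otimes R)\sigma v=C_\sigma(R)$. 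Your appeal to "split after localization" does not substitute for this, since the normalizing group element must be produced over $R$ itself. With that lemma in place (it is the content of the omitted Proposition~\ref{prop:cell-adjustment}), your argument goes through.
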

	\begin{proof}
		The first statement follows from knowing that $w^{(a_2)}$ determines a ring homomorphism
		\[
		\bigoplus_{n \geq 0} L(n\omega_{x_1})^\vee \to R
		\]
		from Corollary~\ref{cor:GP-homogeneous-ring-wa2}. The desired $\sigma$ corresponds to the lowest $\GL(F_1) \times \GL(F_3)$-representation $S_\lambda F_1 \otimes S_\mu F_3^*$ on which $w^{(a_2)} \otimes k \neq 0$, which is necessarily an extremal representation. Geometrically, this amounts to saying that the $k$-point of $G/P_{x_1}$ determined by $w^{(a_2)} \otimes k \neq 0$ lies in some $\GL(F_1) \times \GL(F_3)$-translate of the Schubert cell $C_\sigma$.
		
		The other claim is that we may adjust the choice of $w$ using the action of the parabolic $P_{z_1}^-$ so that it has the desired properties. This is a purely representation-theoretic lemma, but we include an explicit proof for completeness. For brevity, let $V = L(\omega_{\colorX{x_{1}}})^\vee$. There exists $g \in \GL(F_1 \otimes R) \times \GL(F_3 \otimes R)$ such that $(w^{(a_2)} g) \otimes k$ is nonzero on the lowest weight space $V_\omega$ of $S_\lambda F_1 \otimes S_\mu F_3^*$, which has weight $\omega = \sigma(-\omega_{x_1})$. Indeed, we may take $g$ to be a pair of permutation matrices. To ease notation slightly we write $\colorX{x}$ for $\colorX{x_{1}}$.
		
		The adjusted map $w^{(a_2)} g$ describes an $R$-point of $G/P$ which lands in the big open cell $\sigma C^e$. As such, there exists a
		\[
		Y \in \prod_{\sigma^{-1}\alpha <_\colorX{x} 0} (\mf{g}_\alpha \otimes R)
		\]
		such that $w^{(a_2)} g = \pi_\omega (\exp Y)$ where $\pi_\omega \colon V \otimes R \to R$ is a projection onto the extremal weight space $V_\omega$. Recall that $\sigma^{-1}\alpha <_\colorX{x} 0$ means that the coefficient of $\alpha_\colorX{x}$ in $\sigma^{-1}\alpha$ is negative, or equivalently that $\langle \sigma^{-1}\alpha, h_\colorX{x} \rangle < 0$.
		
		Furthermore, using Lemma~\ref{lem:BCH-split-up-positively-graded-Lie-algebra}, it is possible to solve for
		\[
		Y^- \in \prod_{\alpha < 0, \sigma^{-1}\alpha <_\colorX{x} 0} (\mf{g}_\alpha\otimes R), \qquad Y^+ \in \prod_{\alpha > 0, \sigma^{-1}\alpha <_\colorX{x} 0} (\mf{g}_\alpha \otimes R)
		\]
		with the property that $\exp Y = \exp(Y^+) \exp(Y^-)$.
		
		We then define $w'$ to be the map obtained by precomposing $w$ with the action of $g\exp(-Y^-)$ on $\Rgen$, so that
		\[
		w^{(a_2)} g\exp(-Y^-) = \pi_\omega (\exp Y)\exp(-Y^-) = \pi_\omega (\exp Y^+).
		\]
		This is an $R$-point of $C_\sigma$. Finally, since $w^{(a_2)} g \otimes k$ was zero on all weight spaces lower than $V_\omega$ by assumption, we necessarily have $Y^+ \otimes k =0$. Hence $\pi_\omega (\exp Y^+) \otimes k$ yields the torus-fixed $k$-point of $C_\sigma$, as desired.
	\end{proof}
	This strongly suggests that for resolutions of grade 3 perfect ideals, where we identify $w^{(a_2)}$ with $w^{(1)}$, the condition $w^{(1)} \otimes k \neq 0$ characterizes when such an ideal is licci, and we will show this in the next section.
	
	\section{Higher structure maps for $\mb{F}^{\sigma}$}\label{sec:generic-HSM}
	In Remark~\ref{rem:split-exact-revisited}, we saw that the construction of $\mb{F}^\sigma$ for $\sigma = e \in W$ implicitly gave a choice of higher structure maps for $\mb{F}^\ssc$. This is more generally true for \emph{any} choice of $\sigma$: starting with $w_\ssc \colon \Rgen \to \mb{C}$, we first base-change to $R = \mb{C}[C_\sigma]$ to get $w\colon \Rgen \otimes R \to R$, with
	\begin{align*}
		w^{(1)}\colon L(\omega_{x_{r_1}})^\vee \otimes R &\twoheadrightarrow F_0 \otimes R\\
		w^{(2)}\colon L(\omega_{y_{r_2-2}})^\vee \otimes R &\twoheadrightarrow F_2^* \otimes R\\
		w^{(3)}\colon L(\omega_{z_{r_3}})^\vee \otimes R &\twoheadrightarrow F_2 \otimes R.
	\end{align*}
	Next we precompose by the action of $(\exp(Y)\sigma)^{-1}$ on $\Rgen \otimes R$, which is well-defined because $\Rgen$ decomposes into integrable representations of $\mf{g}$. This yields a map $w\colon \Rgen \otimes R \to R$ with
	\begin{align*}
		w^{(1)}\colon L(\omega_{x_{r_1}})^\vee \otimes R &\xto{(\exp(Y)\sigma)^{-1}} L(\omega_{x_{r_1}})^\vee \otimes R \twoheadrightarrow F_0 \otimes R\\
		w^{(2)}\colon L(\omega_{y_{r_2-2}})^\vee \otimes R &\xto{(\exp(Y)\sigma)^{-1}}L(\omega_{y_{r_2-2}})^\vee \otimes R \twoheadrightarrow F_2^* \otimes R\\
		w^{(3)}\colon L(\omega_{z_{r_3}})^\vee \otimes R &\xto{(\exp(Y)\sigma)^{-1}}L(\omega_{z_{r_3}})^\vee \otimes R \twoheadrightarrow F_2 \otimes R.
	\end{align*}
	Now we restrict to the bottom $z_1$-graded components to get the differentials $w^{(1)}_0 = d_1$, $w^{(2)}_0 = d_2^*$, and $w^{(3)}_0 = d_3$ for the complex $\Fgen$ specializes to:
	\begin{align*}
		d_1\colon F_1 \otimes R \hookrightarrow L(\omega_{x_{r_1}})^\vee \otimes R &\xto{(\exp(Y)\sigma)^{-1}} L(\omega_{x_{r_1}})^\vee \otimes R \twoheadrightarrow F_0 \otimes R\\
		d_2^*\colon F_1^* \otimes R \hookrightarrow L(\omega_{y_{r_2-2}})^\vee \otimes R &\xto{(\exp(Y)\sigma)^{-1}}L(\omega_{y_{r_2-2}})^\vee \otimes R \twoheadrightarrow F_2^* \otimes R\\
		d_3\colon F_3 \otimes R \hookrightarrow L(\omega_{z_{r_3}})^\vee \otimes R &\xto{(\exp(Y)\sigma)^{-1}}L(\omega_{z_{r_3}})^\vee \otimes R \twoheadrightarrow F_2 \otimes R.
	\end{align*}
	and we see that $w$ specializes $\Fgen$ to the resolution $\mb{F}^\sigma$.
	
	Similarly, we have that $w^{(a_2)}$ is
	\[
	L(\omega_{x_1})^\vee \otimes R \xto{(\exp(Y)\sigma)^{-1}}L(\omega_{x_1})^\vee \otimes R \twoheadrightarrow \bigwedge^{f_2} F_2^* \otimes \bigwedge^{f_3} F_3 \otimes R.
	\]
	Let $\mf{m} \subset R$ be the ideal of variables, and $\kk = R/\mf{m} = \mb{C}$. Since $Y \otimes \kk = 0$, the map $w^{(a_2)} \otimes \kk$ is nonzero only on the extremal weight space corresponding to $\sigma$.
	
	Before we analyze the maps $w^{(i)}$ further, we make a brief digression to discuss some combinatorics of weights that will be helpful in the sequel.

	\section{Higher structure maps and linkage}\label{sec:HST-linkage}
	We now apply the theory of higher structure maps to the study of linkage. Our first task is to analyze how these maps behave under linkage. Although not coming from $\Rgen$, similar ideas have been used in the past, notably in the paper \cite{AKM} of Avramov, Kustin, and Miller. In \cite{GNW-E6}, the situation is explicitly worked out for some maps $w^{(i)}_j$ where $j$ is small.
	
	\begin{example}\label{ex:split-exact-triality}
		Let us revisit Remark~\ref{rem:split-exact-revisited} regarding the situation for the standard split complex $\mb{F}^\ssc$. In Theorem~\ref{thm:ssc} we observed that $\mb{F}^\ssc$ admits a particularly simple choice of higher structure maps, where
		\begin{equation}\label{eq:HSTlink-ssc-ex}
			\begin{split}
				w^{(1)}\colon L(\omega_{x_1})^\vee &\twoheadrightarrow \mb{C} \\
				w^{(2)}\colon L(\omega_{y_{r_2-2}})^\vee &\twoheadrightarrow F_2^*\\
				w^{(3)}\colon L(\omega_{z_{r_3}})^\vee &\twoheadrightarrow F_2
			\end{split}
		\end{equation}
		are just given by projection onto the bottom $x_1$-graded component.
		
		Suppose we interchange the roles of the $y$ and $z$ arms, using the vertex $y_1$ in place of $z_1$ and defining $F_1'$, $F_2'$, and $F_3'$ for the new diagram analogously to how $F_1$, $F_2$, and $F_3$ were defined for the original diagram. If the original diagram corresponded to the format ${\underline{f}} = (1,f_1,f_2,f_3) = (1,3+d,2+d+t,t)$, then the new diagram is for the format $\underline{f}' = (1,3+t,2+d+t,d)$. Theorem~\ref{thm:ssc} tells us that
		\begin{align*}
			L(\omega_{x_{r_1}}) &\twoheadrightarrow \mb{C}\\
			L(\omega_{z_{r_3}}) &\twoheadrightarrow F_2'^*\\
			L(\omega_{y_{r_2-2}}) &\twoheadrightarrow F_2'
		\end{align*}
		describes higher structure maps for the standard split complex $(\mb{F}^\ssc)'$ of format $\underline{f}'$. But we have not changed the role of the vertex $x_1$, so if we identify $F_2^* \cong F_2'$ then these are the same maps as in \eqref{eq:HSTlink-ssc-ex}, just with the roles of $w^{(2)}$ and $w^{(3)}$ interchanged! It is only \emph{after} we restrict to the bottom graded components that we really violate the symmetry: the bottom $y_1$-graded components recover the differentials of $(\mb{F}^\ssc)'$, whereas the bottom $z_1$-graded components recover the differentials of $\mb{F}^\ssc$.
		
		Hence the \emph{same} maps from Theorem~\ref{thm:ssc}, decomposed and interpreted with respect to the vertex $y_1$ instead of $z_1$, describe structure maps for $(\mb{F}^\ssc)'$.
	\end{example}
	
	In the following, let $\underline{f} = (1,3+d,2+d+t,t)$ and $\underline{f}' = (1, 3+t,2+d+t,d)$. We write $(\Rgen,\Fgen) = (\Rgen(\underline{f}),\Fgen(\underline{f}))$ and $(\Rgen',{\Fgen}') = (\Rgen(\underline{f}'),\Fgen(\underline{f}'))$. Just as how we defined $F_i, \mbf{L}, \ldots$, we let $F_i',\mbf{L}',\ldots$, be the analogous objects for $\underline{f}'$.
	
	If $w\colon \Rgen \to R$ specializes $\Fgen$ to some split complex $\mb{F}$ over $R$, then there exists
	\[
	g\in \exp (\mbf{L} \cotimes R) \rtimes \prod \GL(F_i \otimes R)
	\]
	such that $w_\ssc g = w$ as maps $\Rgen \otimes R \to R$, where $w_\ssc g$ means to precompose $w_\ssc$ by the action of $g$ on $\Rgen$.
	
	The idea is that, since we have identified $w_\ssc$ with $w'_\ssc$ in Example~\ref{ex:split-exact-triality} (1), we perform the same action on $w'_\ssc$ to define $w' = w'_\ssc g$. This yields a map $\Rgen' \otimes R \to R$ with $w^{(i)} = w'^{(i)}$ by construction.
	
	Here there is a slight subtlety. We already know that $\mf{sl}(F_2) \times \mf{g}$ acts on $\Rgen'$, simply because this ring comes equipped with an action of $\mf{sl}(F_2') \times \mf{g}$ and we already identified $F_2' \cong F_2^*$ to have $w_\ssc$ and $w'_\ssc$ match up in Example~\ref{ex:split-exact-triality}. However, it takes slightly more effort to explain why $\mf{gl}(F_i)$ acts on $\Rgen'$. It suffices to show that this action can be seen using the actions of $\mf{gl}(F_i')$ and $\mf{g}$, which we already know to act on $\Rgen'$, and for this it is helpful to consider the $(y_1,z_1)$-bigrading.
	
	\subsection{Decomposing with respect to the $(y_1,z_1)$-bigrading}
	We let $K = \mb{C}^3$, and we view $\mf{sl}(K)$ as the subalgebra of $\mf{g}$ corresponding to the vertices $u,x_1$ in that order (c.f. Example~\ref{ex:inclusion-of-sl}). Given the decomposition of a $\mf{g}$-representation into $y_1$ or $z_1$-graded components, we can further decompose into the $(y_1,z_1)$-bigrading, in which each component is a representation of
	\[
	\mf{g}^{(y_1,z_1)} = \mf{sl}(K) \times \mf{sl}(F_3) \times \mf{sl}(F_3').
	\]
	At the level of $\SL$-representations, this amounts to writing $F_1 = F_3'^* \oplus K$ and $F_1' = F_3^* \oplus K$ (the duals are because of the order of vertices; see \S\ref{sec:gen-licci-differentials}). However, to correctly relate the actions of $\mf{gl}(F_i)$ and $\mf{gl}(F_i')$, we will instead use
	\begin{equation}\label{eq:yz-bigrading-rule}
		\begin{split}
			F_1' &= (F_3^* \otimes \bigwedge^3 K) \oplus K\\
			F_1 &= (F_3'^* \otimes \bigwedge^3 K) \oplus K\\
			F_2' &= F_2^* \otimes \bigwedge^3 K.
		\end{split}
	\end{equation}
	These formulas are motivated by Theorem~\ref{thm:mapping-cone}. Using the above to decompose into representations of $\mf{gl}(K) \times \mf{gl}(F_3) \times \mf{gl}(F_3')$, we find that we get the desired identifications
	\begin{equation}\label{eq:HSTlink-identify-linked-formats}
		\begin{split}
			\Rgen \supset W(d_1) = [F_1 \oplus \bigwedge^3 F_1 \otimes F_3^* \oplus \cdots] &= [F_1' \oplus \bigwedge^3 F_1' \otimes F_3'^* \oplus \cdots] = W(d_1)' \subset \Rgen'\\
			\Rgen \supset W(d_2) = F_2 \otimes [F_1^* \oplus F_1 \otimes F_3^* \oplus \cdots] &= F_2'^* \otimes [F_3' \oplus \bigwedge^2 F_1' \oplus \cdots] = W(d_3)' \subset \Rgen'\\
			\Rgen \supset W(d_3) = F_2^* \otimes [F_3 \oplus \bigwedge^2 F_1 \oplus \cdots] &= F_2' \otimes [F_1'^* \oplus F_1' \otimes F_3'^* \oplus \cdots] = W(d_2)' \subset \Rgen'
		\end{split}
	\end{equation}
	To see this, it is sufficient to verify the statement for scalars in $\mf{gl}(F_i)$ since the result holds automatically at the level of $\mf{sl}(F_i)$-representations. In lieu of this, we display some of the $(y_1,z_1)$-bigraded components in the representations $W(d_i)$ as it clarifies the situation, and we will need it shortly.
	
	We display the $y_1$-grading horizontally and the $z_1$-grading vertically. To the left, the $z_1$-graded components are written, as well as their corresponding $z_1$-degree. On the bottom, the same is done for the $y_1$-graded components. The decomposition of $W(d_1) = W(d_1)'$ is
	\bgroup\def\arraystretch{1.5}
	\begin{equation}\label{eq:w1-bigrading}\begin{array}{cc|cccc}
			
			& &&\iddots&\iddots\\
			
			\vdots & \vdots &  & F_3'^* \otimes \bigwedge^2 F_3^* \otimes S_{3,2,2} K & \iddots & \iddots\\
			
			F_3^* \otimes \bigwedge^3 F_1 & 1 & F_3^*\otimes \bigwedge^3 K & F_3'^* \otimes F_3^* \otimes S_{2,2,1} & \bigwedge^2 F_3'^* \otimes F_3^* \otimes S_{3,2,2} K & \iddots\\
			
			F_1 & 0 & K & F_3'^* \otimes \bigwedge^3 K \\
			
			\hline
			&& 0 & 1 & \cdots \\
			& & F_1' & F_3'^* \otimes \bigwedge^3 F_1' & \cdots
	\end{array}\end{equation}
	\egroup
	For $W(d_3) = W(d_2)'$, we have factored out the representation $\bigwedge^3 K \otimes F_2^*$, which is the same as $F_2'$. This is concentrated in bidegree $(0,0)$. On the right, we have displayed the bigrading for the remaining part using the same conventions as above.
	\bgroup\def\arraystretch{1.5}
	\begin{equation}\begin{array}{c}
			\bigwedge^3 K \otimes F_2^*\\ (= F_2')
		\end{array} \otimes \left[\begin{array}{cc|cccc}
			\vdots & \vdots & & & \iddots\\
			
			\bigwedge^3 K^* \otimes F_3^* \otimes \bigwedge^4 F_1 & 2 &  & F_3'^* \otimes F_3^* \otimes \bigwedge^3 K & \iddots & \iddots\\
			
			\bigwedge^3 K^* \otimes \bigwedge^2 F_1 & 1 & K^* & F_3'^* \otimes K & \bigwedge^2 F_3'^* \otimes \bigwedge^3 K& \\
			
			\bigwedge^3 K^* \otimes F_3 & 0 & F_3 \otimes \bigwedge^3 K^* \\
			
			\hline
			&& 0 & 1 & 2 & \cdots \\
			& & F_1'^* & F_3'^* \otimes F_1' & \bigwedge^2 F_3'^* \otimes \bigwedge^3 F_1' & \cdots
		\end{array}\right]\end{equation}
	\egroup
	Finally, for $W(d_2) = W(d_3)'$ we have
	\bgroup\def\arraystretch{1.5}
	\[\begin{array}{c}
		\bigwedge^3 K^* \otimes F_2\\ (= F_2'^*)
	\end{array} \otimes\left[\begin{array}{cc|cccc}
		\vdots & \vdots & & & \iddots\\
		
		\bigwedge^3 K \otimes \bigwedge^2 F_3^* \otimes \bigwedge^3 F_1 & 2 &  & \bigwedge^2 F_3^* \otimes S_{2,2,2}K& \iddots & \iddots\\
		
		\bigwedge^3 K \otimes F_3^* \otimes F_1 & 1 & & F_3^* \otimes S_{2,1,1}K & F_3'^* \otimes F_3^* \otimes S_{2,2,2}K & \\
		
		\bigwedge^3 K \otimes F_1^* & 0 & F_3' & \bigwedge^2 K \\
		
		\hline
		&& 0 & 1 & 2 & \cdots \\
		&& F_3' & \bigwedge^2 F_1' & F_3'^* \otimes \bigwedge^4 F_1' & \cdots
	\end{array}\right]\]
	\egroup
	
	Using the above, we obtain:
	\begin{thm}\label{thm:HST-linked-pair}
		Let $w\colon \Rgen \to R$ specialize $\Fgen$ to a resolution $\mb{F}$ of $R/I$. Then the maps $w^{(i)}$, re-interpreted using \eqref{eq:HSTlink-identify-linked-formats}, define a map $w' \colon \Rgen' \to R$ with the property that $w^{(i)} = w'^{(i)}$.
		
		Furthermore, if $w^{(1)}(K \otimes R) = (\alpha_1,\alpha_2,\alpha_3) \subset R$ is a complete intersection, where $K \subset L(\omega_{x_1})^\vee$ is the bottom $(y_1,z_1)$-bigraded component as in \eqref{eq:w1-bigrading}, then $w'$ specializes ${\Fgen}'$ to the resolution of $R/((\alpha_1,\alpha_2,\alpha_3):I)$ described in Theorem~\ref{thm:mapping-cone}.
	\end{thm}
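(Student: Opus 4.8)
\emph{Strategy.} There are two things to prove: (i) $w'$ is a well-defined $\mathbb{C}$-algebra homomorphism $\Rgen' \to R$ with $w'^{(i)} = w^{(i)}$ under \eqref{eq:HSTlink-identify-linked-formats}, and (ii) under the complete intersection hypothesis, $w'$ specializes ${\Fgen}'$ to the Ferrand resolution of Theorem~\ref{thm:mapping-cone}. For (i) I would reduce to the case that $\mb{F}$ is split exact over $R$: an arbitrary $\mb{F}$ resolving $R/I$ (with $\grade I\geq 1$) becomes split exact after localizing at any prime not containing $I$, so the reduction proceeds exactly as in the proofs of Corollary~\ref{cor:GP-homogeneous-ring-wa2} and Proposition~\ref{prop:localization-lemma}; here one uses that $W(a_3)'\subseteq S_d W(d_3)' = S_d W(d_2)$, so the subspace $\bigwedge^{r_2'}(F_2')^*\otimes\bigwedge^{r_3'}F_3'\otimes[\mb{C}\oplus(\mbf{L}')^\vee]$ of $W(a_3)'$ that pins down $w'$ by Proposition~\ref{prop:HST-determined-by-a3} is computed from the already $R$-valued map $w^{(2)}$. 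For split exact $\mb{F}$ we have $w = w_\ssc g$ with $g\in\exp(\mbf{L}\cotimes R)\rtimes\prod\GL(F_i\otimes R)$; since Example~\ref{ex:split-exact-triality} identifies $w_\ssc$ with $w'_\ssc$ and, via the $(y_1,z_1)$-bigrading analysis and \eqref{eq:yz-bigrading-rule}, the group $\exp\mbf{L}\rtimes\prod\GL(F_i)$ acts on $\Rgen'$ compatibly, the definition $w' := w'_\ssc g$ makes sense and gives $w'^{(i)} = w^{(i)}$ by construction.

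\emph{Identifying the specialized complex.} Write $\mb{F}' := {\Fgen}'\otimes_{\Rgen'}R$ via $w'$, of format $\underline{f}'$. Its differentials are $d_i' = w'^{(i)}_0$, the bottom $y_1$-graded components read off the bigraded tables. In particular $F_1' = K \oplus (F_3^*\otimes\bigwedge^3 K)$ with $K$ in $(y_1,z_1)$-bidegree $(0,0)$ and $F_3^*\otimes\bigwedge^3 K$ in bidegree $(0,1)$; on $K$ one has $d_1' = w^{(1)}|_K = (\alpha_1,\alpha_2,\alpha_3)$ by hypothesis, and on $F_3^*\otimes\bigwedge^3 K$ the map $d_1'$ is the restriction to $\bigwedge^3 K\subseteq\bigwedge^3 F_1$ of the structure map $w^{(1)}_1\colon\bigwedge^3 F_1\otimes F_3^*\otimes R\to R$, which by Example~\ref{ex:multiplication} is the component $\bigwedge^3 F_1\to F_3$ of a differential graded algebra structure on $\mb{F}$. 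The analogous reading of $d_2' = (w'^{(2)}_0)^*$ and $d_3' = w'^{(3)}_0$ off the corresponding tables expresses them through $d_2$, $d_3$, the Koszul differentials on $K$, and the remaining multiplicative pieces $w^{(2)}_1$, $w^{(3)}_1$.

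\emph{Comparison with Theorem~\ref{thm:mapping-cone}.} On the other side, build the chain map $\psi\colon\mb{K}\to\mb{F}$ using the natural choices: $\psi_1$ the inclusion $K\hookrightarrow F_1$ (legitimate because $d_1|_K = \underline{\alpha}$), and $\psi_2,\psi_3$ assembled from the same differential graded algebra multiplication that appears in the maps $w^{(i)}_1$. Since $\psi_1$ is then a split injection, the dual of the mapping cone, twisted by $\bigwedge^3 K$, splits off a trivial complex $0\to R^3\xto{\Id}R^3\to 0$ in homological degrees $2$ and $3$; this accounts precisely for the discrepancy $(0,0,3,3)$ between the Ferrand format $(1,3+t,5+d+t,3+d)$ and $\underline{f}' = (1,3+t,2+d+t,d)$, and by Theorem~\ref{thm:format-expansion} the remaining summand is a complex of format $\underline{f}'$. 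One then matches $\mb{F}'$ with this summand differential by differential, tracking the Koszul entries, the $d_i$, and the components of $\psi$ through the identifications \eqref{eq:yz-bigrading-rule}--\eqref{eq:HSTlink-identify-linked-formats}. Acyclicity of $\mb{F}'$ and $H_0(\mb{F}') = R/((\alpha_1,\alpha_2,\alpha_3):I)$ follow from Theorem~\ref{thm:mapping-cone}.

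\emph{Main obstacle.} The delicate point is the last one: verifying that the multiplicative data $w^{(1)}_1,w^{(2)}_1,w^{(3)}_1$, organized by the $(y_1,z_1)$-bigrading, reproduce \emph{on the nose} (signs and twists included) the components $\psi_1,\psi_2,\psi_3$ and the cone differentials, so that $\mb{F}'$ is literally the Ferrand complex with its trivial summand removed rather than merely a complex with the same format and $H_0$. This requires combining the identification of higher structure maps with differential graded algebra structures (Example~\ref{ex:multiplication} and \cite{Lee-Weyman19}), the classical construction of $\psi$ out of such a structure, and a careful audit of the isomorphisms in \eqref{eq:yz-bigrading-rule}; everything else in the argument is formal once the bigrading bookkeeping set up before the theorem is in place.
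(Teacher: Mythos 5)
Your proposal follows essentially the same route as the paper's proof: reduce the first statement to the split exact case and define $w' = w'_\ssc g$ using the compatible action of $\exp(\mbf{L}\cotimes R)\rtimes\prod\GL(F_i\otimes R)$ on $\Rgen'$, then read the differentials of the specialized complex off the $(y_1,z_1)$-bigraded tables and match them with the mapping cone built from $\psi_1=$ the inclusion $K\hookrightarrow F_1$, $\psi_2=w^{(3)}_1$, $\psi_3=w^{(1)}_1$, concluding via Theorem~\ref{thm:mapping-cone}. The ``sign-and-twist audit'' you single out as the main obstacle is dispatched in the paper without any componentwise check: since the complex in question is a specialization of ${\Fgen}'$, it is automatically a complex, so one only needs a consistent identification of $F_3^*\otimes\bigwedge^3 K\oplus K$ as the standard representation of $\mf{gl}(F_1')$ in the source of $d_1'$ and the target of $d_2'$, after which the identification with the Ferrand resolution (and hence the statement about what it resolves) follows. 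Conversely, your explicit cancellation of the split summand $0\to R^3\to R^3\to 0$ accounting for the format discrepancy $(0,0,3,3)$ is a step the paper leaves implicit, so on that point your write-up is the more careful of the two.
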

	
	\begin{proof}
		As the first statement is about $w'$ satisfying the requisite relations in $\Rgen'$, we can perform the usual reduction to the split exact case, since any $\mb{F}$ is split on a dense open set. Now that we know $\mf{gl}(F_i)$ acts on $\Rgen'$, the claim follows from the argument sketched at the beginning of this section.
		
		We will abuse notation and write $F_i$ to mean $F_i \otimes R$ below, and similarly for $F_i'$. For the other part of the theorem, $w'$ specializes ${\Fgen}'$ to some complex
		\[
		0\to F_3' \xto{d_3'} F_2' \xto{d_2'} F_1' \xto{d_1'} R.
		\]
		To determine the differentials $d_i'$, we need only look at the bottom $y_1$-graded components of each $w'^{(i)} = w^{(i)}$.
		
		Examining the bigraded decompositions above, we find from \eqref{eq:w1-bigrading} that the differential $d_1'$ has two components in its source, being the sum of
		\begin{gather*}
			K  \hookrightarrow F_1  \xto{d_1} R\\
			F_3^* \otimes \bigwedge^3 K  \xto{w^{(1)}_1} R
		\end{gather*}
		The differential $d_2'$ has those two components in its target:
		\begin{gather*}
			F_2' = F_2^* \otimes \bigwedge^3 K \xto{d_3^*} F_3^* \otimes \bigwedge^3 K\hookrightarrow F_1'\\
			F_2' = F_2^* \otimes \bigwedge^3 K \xto{(w^{(3)}_1)^*} (\bigwedge^2 F_1)^* \otimes \bigwedge^3 K\to K  \hookrightarrow F_1'.
		\end{gather*}
		and $d_3'$ is just the part of $d_2^*$ given by
		\[
		F_3' \hookrightarrow F_1^* \otimes \bigwedge^3 K \xto{d_2^*} F_2^* \otimes \bigwedge^3 K = F_2'.
		\]
		Compare this to the comparison map from the Koszul complex on $w^{(1)}(K\otimes R)$ to $\mb{F}$ induced by the multiplicative structure $w^{(i)}_1$:
		\[
		\begin{tikzcd}
			0 \ar[r] & F_3 \ar[r] & F_2 \ar[r] & F_1 \ar[r] & R\\
			0 \ar[r] & \bigwedge^3 K \ar[r] \ar[u,"w^{(1)}_1"] & \bigwedge^2 K \ar[r] \ar[u,"w^{(3)}_1"] & K \ar[r] \ar[u,hook] & R \ar[u,equals]
		\end{tikzcd}
		\]
		By a miracle we have reconstructed the complex
		\[
		0 \to F_3' \xto{d_3'} F_2^* \otimes \bigwedge^3 K \xto{d_2'} F_3^* \otimes \bigwedge^3 K \oplus K \xto{d_1'} R
		\]
		described in Theorem~\ref{thm:mapping-cone}!
		
		The conscientious reader may worry that we have not carefully checked the coefficients for the two parts of $d_1'$ and $d_2'$ to ensure that $d_1' d_2' = 0$. But note that this is the specialization of ${\Fgen}'$ via $w'$, so it is guaranteed to be a complex. One just needs to consistently identify $F_3^* \otimes \bigwedge^3 K \oplus K$ as the standard representation of $\mf{gl}(F_1')$ in both the source of $d_1'$ and the target of $d_2'$.
	\end{proof}
	
	\subsection{Ranks of $w^{(i)} \otimes k$ and linkage}
	Throughout this subsection, we will work in a local Noetherian ring $(R,\mf{m},k)$. If $I \subset R$ is a grade $c$ perfect ideal and $\mb{F}$ is a minimal free resolution of $R/I$, we define the \emph{deviation} of $I$ to be $d(I) = \rank F_1 - c$, and the \emph{type} of $R/I$ (which we also call the type of $I$) to be $t(R/I) = \rank F_c$.
	\begin{definition}
		If $I \subset R$ is a grade 3 perfect ideal, let $\NL(I) \coloneqq \sum_\sigma \HSI_\sigma (R/I)$. In other words, it is the image of the map $w^{(1)}\colon L(\omega_{x_1})^\vee \otimes R \to R$ for any choice of $w\colon \Rgen \to R$ specializing $\Fgen$ to a resolution of $R/I$.
	\end{definition}
	
	\begin{thm}\label{thm:non-licci-locus}
		Let $(R,\mf{m},k)$ be a local Noetherian ring, $I \subset R$ a grade 3 perfect ideal, and $w\colon \Rgen \to R$ a map specializing $\Fgen$ to a minimal resolution $\mb{F}$ of $R/I$.
		\begin{enumerate}
			\item The ideal $\NL(I)$ is invariant under linkage.
			\item The ideal $I$ is licci if and only if $\NL(I) = (1)$.
			\item If $w^{(3)} \otimes k \neq 0$ then there exists $I'$ in the even linkage class of $I$ such that either $t(R/I') < t(R/I)$ or $I' = (1)$.
			\item If $w^{(2)} \otimes k \neq 0$ then there exists $I'$ in the even linkage class of $I$ such that either $d(I') < d(I)$ or $I' = (1)$.
		\end{enumerate}
	\end{thm}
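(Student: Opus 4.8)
The plan is to prove parts (3) and (4) together---they are mirror images under linkage---and then to deduce (1) and (2); I describe the approach for (4). The crucial input is the dictionary supplied by Theorem~\ref{thm:HST-linked-pair}: when one links $I$ to $J = (\underline\alpha):I$ along a complete intersection $\underline\alpha = d_1(K) \subseteq I$, the comparison map $\psi$ from the Koszul complex on $\underline\alpha$ to a resolution $\mb{F}$ of $R/I$ is assembled from the inclusion $K \hookrightarrow F_1$ together with the restrictions $w^{(1)}_1|_{\bigwedge^3 K}$ and $w^{(3)}_1|_{\bigwedge^2 K}$ of the degree-$1$ structure maps, and the transposes $\psi_i^*$ occur as off-diagonal blocks of the differentials of the mapping-cone resolution $\mb{F}_J$. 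Hence, when $\mb{F}$ is minimal and $\underline\alpha \subseteq \mf{m}$, the ranks modulo $\mf{m}$ of the differentials of $\mb{F}_J$---which govern how far $\mb{F}_J$ is from being minimal, hence the Betti numbers of $R/J$---are exactly the ranks mod $\mf{m}$ of $\psi_1$ and of $w^{(i)}_1|_{\bigwedge^j K}$. For generic $\underline\alpha$ only the ``expected'' non-minimality survives (coming from $\psi_1$, a split inclusion of rank $3$), which recovers the fact that a generic link interchanges the arm lengths $d$ and $t$; any further non-minimality is forced by $w^{(2)}_1$ or $w^{(3)}_1$ surviving mod $\mf{m}$.

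Now for (4): assume $w^{(2)} \otimes k \neq 0$. The mechanism is that, after a chain of linkages, the surviving nonvanishing gets realized as the restriction to some $\bigwedge^j K$ of a degree-$1$ structure map of the resolution at hand; by the dictionary above this makes the next mapping-cone resolution strictly less minimal than a generic link would predict, lowering $\mu$ and hence $d$. Concretely I would: (i) propagate the nonvanishing through linkages until it is nonzero on a decomposable element of a degree-$1$ component---or else the classifying element $\sigma$ of Proposition~\ref{prop:classifying-cell} descends to $s_{z_1}s_u s_{x_1}$, meaning $I$ is already a complete intersection and one link gives $I' = (1)$; (ii) choose the complete intersection for the next link so that this decomposable element lies in $\bigwedge^2 K$, keeping the regular sequence regular (both being open conditions); (iii) conclude from Theorem~\ref{thm:HST-linked-pair} that $\mu$ strictly drops for the linked ideal, so that---passing to the even linkage class---we obtain $I'$ with $d(I') < d(I)$, or $I' = (1)$ if the Betti numbers collapse entirely. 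Part (3) is the mirror statement, with $w^{(3)}$ in place of $w^{(2)}$ and $t$ in place of $d$.

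I expect the main obstacle to be the two soft steps above. First, one must check that the rank contributed to $d_2$ of the final mapping cone by $w^{(i)}_1|_{\bigwedge^2 K}$ genuinely adds to the generic contribution of $\psi_1$ rather than overlapping it, so that the net effect is a true drop in $\mu$ (resp.\ in $t$). Second, and more seriously, one must make precise the propagation in step (i): how linkage acts on the double cosets $W_{P_{z_1}} \backslash W / W_{P_{x_1}}$ indexing $\sigma$---via the $y \leftrightarrow z$ exchange of Example~\ref{ex:split-exact-triality} together with a strict decrease of $\ell(\sigma)$---and that this drives the surviving nonvanishing down into degree $1$. This is essentially the combinatorial core of the classification in \S\ref{sec:classify}, so I expect it to absorb most of the effort. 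Granting (3) and (4), part (1) is immediate: Theorem~\ref{thm:HST-linked-pair} gives $\NL(J) = \operatorname{im} w'^{(1)} = \operatorname{im} w^{(1)} = \NL(I)$, with Theorem~\ref{thm:format-expansion} and Lemma~\ref{lem:well-def-pt1} ensuring independence of the (possibly non-minimal) resolutions. Finally, part (2) follows by induction on $\ell(\sigma)$: a complete intersection has $\NL = (1)$ since its top multiplication $\bigwedge^3 F_1 \to F_3$ is nonzero mod $\mf{m}$; the ``only if'' direction is immediate from (1); and for ``if'', if $\NL(I) = (1)$ and $I$ is not a complete intersection then (3) or (4) produces an even-linked ideal with strictly smaller $\ell(\sigma)$ and still $\NL = (1)$ by (1), so the descent terminates.
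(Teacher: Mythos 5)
Your part (1) is fine and matches the paper (it is immediate from Theorem~\ref{thm:HST-linked-pair}), but the rest of the plan has two genuine gaps. First, the logical order you propose --- prove (3)/(4) and then deduce (2) by descent --- does not work as stated. The hypothesis $\NL(I)=(1)$ only says $w^{(1)}\otimes k\neq 0$; it does not give you the hypotheses of (3) or (4), which concern $w^{(2)}\otimes k$ and $w^{(3)}\otimes k$. The implication ``licci $\Rightarrow$ $w^{(2)},w^{(3)}$ surjective'' is Proposition~\ref{prop:licci-w-surj}, which in the paper is a consequence of this very theorem, so invoking anything of that kind here is circular. Concretely, your descent can stall at an ideal with $w^{(2)}\otimes k=w^{(3)}\otimes k=0$ but $w^{(1)}_j\otimes k\neq 0$ only in some degree $j\geq 2$, and nothing in your proposal rules this out. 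Moreover (3) and (4) as stated produce drops in $t$ or $d$, not in $\ell(\sigma)$, so ``induction on $\ell(\sigma)$'' does not parse without an additional (unproved) link between Betti-number drops and the classifying element of Proposition~\ref{prop:classifying-cell}.

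Second, the engine of the paper's proof is exactly the step you leave as a black box. The paper proves (2) ``if'' directly from $w^{(1)}\otimes k\neq 0$, and (3)/(4) by the same mechanism applied to a nonzero row of $w^{(3)}\otimes k$ (resp.\ $w^{(2)}\otimes k$): one alternately acts by elements of $\GL(F_1\otimes k)$ and $\GL(F_1'\otimes k)$ on the structure map, using the key representation-theoretic fact that the two type $A$ subdiagrams corresponding to $F_1$ and $F_1'$ cover $T$, so any weight space that is simultaneously lowest for both is the lowest weight space of $L(\omega_{x_1})^\vee$ for all of $\mf{g}$; hence the nonvanishing mod $k$ can be driven to the lowest weight space, and lifting the group elements to $\GL(F_1\otimes R)$, $\GL(F_1'\otimes R)$ realizes this as an explicit chain of links (with $\GL(F_1')$-moves interpreted via Theorem~\ref{thm:HST-linked-pair}). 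For (3)/(4) this keeps the format fixed and makes $w'^{(3)}_0\otimes k\neq 0$ (resp.\ $w'^{(2)}_0\otimes k\neq 0$), i.e.\ the mapping-cone resolution is non-minimal in the relevant spot, giving $t(R/I')<t(R/I)$ (resp.\ $d(I')<d(I)$) or $I'=(1)$; no separate rank bookkeeping for the mapping cone is needed. Your ``propagation step (i)'' is precisely this content, and deferring it to the combinatorics of \S\ref{sec:classify} is not viable, since that section depends on the present theorem. Until you supply an argument of this alternating-reduction type (or an equivalent), neither (3)/(4) nor (2) is established.
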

	Since $\NL(I)$ is defined using structure maps for a free resolution of $R/I$, it commutes with localization. So for $R$ not necessarily local, point (2) allows us to interpret
	\[
	V(\NL(I)) = \{ \mf{p} \in \Spec R : I_\mf{p} \subset R_\mf{p} \text{ is not licci or the unit ideal}\}
	\]
	as the \emph{non-licci locus} of $I$, explaining the notation $\NL(I)$.
	
	\begin{proof}
		Statement (1) follows immediately from Theorem~\ref{thm:HST-linked-pair}. The ``only if'' implication of (2) is also immediate, since $I=(1)$ satisfies $\NL(I) = (1)$, and any licci ideal can be linked in some number of steps to the unit ideal.
		
		Now we prove the ``if'' statement. Since $w^{(1)} \otimes k \neq 0$, there exist elements $g_1,g_1',\ldots,g_N,g_N'$, where $g_i \in \GL(F_1 \otimes k)$ and $g_i' \in \GL(F_1' \otimes k)$, such that $(w^{(1)}\otimes k)g_1 g_1' \cdots g_N g_N'$ is nonzero on the lowest weight space of $L(\omega_{x_1})^\vee$. Such elements exist because any weight space that is a lowest weight space for both $\GL(F_1)$ and $\GL(F_1')$ must be a lowest weight space for $\mf{g}$, as the type $A$ subdiagrams corresponding to $\GL(F_1)$ and $\GL(F_1')$ cover the whole diagram $T$. So it is always possible to use either the action of $\GL(F_1)$ or $\GL(F_1')$ to make $L(\omega_{x_1})^\vee \otimes k \to k$ nonzero on lower weight spaces, until it is nonzero on the lowest weight space.
		
		To show that $I$ is licci, we inductively pick lifts $\tilde{g}_i \in \GL(F_1\otimes R)$ of $g_i$ and $\tilde{g}_i' \in \GL(F_1'\otimes R)$ of $g_i'$ so that
		\begin{align*}
			I &= w^{(1)} \tilde{g}_1(F_1 \otimes R)\\
			&\sim w^{(1)} \tilde{g}_1(F_1'\otimes R)\\
			&= w^{(1)} \tilde{g}_1\tilde{g}_1'(F_1'\otimes R)\\
			&\sim w^{(1)}\tilde{g}_1\tilde{g}_1'(F_1\otimes R)\\
			&= w^{(1)}\tilde{g}_1\tilde{g}_1'\tilde{g}_2(F_1\otimes R)\\
			&\quad \vdots\\
			&= w^{(1)}\tilde{g}_1 \tilde{g}_1' \cdots \tilde{g}_N \tilde{g}_N' (F_1'\otimes R) = (1).
		\end{align*}
		In more detail, since three general linear combinations of generators for a grade 3 ideal will form a regular sequence, a general lift $\tilde{g}_1$ of $g_1$ will result in $w^{(1)} \tilde{g}_1(K \otimes R)$ being a complete intersection, so that Theorem~\ref{thm:HST-linked-pair} gives the first link above. We repeat this process to choose lifts $\tilde{g}_1', \tilde{g}_2, \tilde{g}_2', \ldots$ for the subsequent links. Recall that $(w^{(1)}\otimes k)g_1 g_1' \cdots g_N g_N'$ is nonzero on the lowest weight space by construction, hence the last ideal in the sequence of links is the unit ideal.
		
		Statements (3) and (4) can be proved in the same manner. If some row of $w^{(3)} \otimes k$ is nonzero, then the exact same argument applied to that row (instead of $w^{(1)}$) shows that there exists a sequence of links $I = I_0 \sim I_1 \cdots \sim I_{2N} = I'$ and a resolution $\mb{F}'$ of $R/I'$ having the same format $(1,f_1,f_2,f_3)$ as the original resolution $\mb{F}$, but such that $w'^{(3)}_0 \otimes k \neq 0$ for $\mb{F}'$. This means either $I' = (1)$ or $t(R/I') < f_3 = t(R/I)$ as desired. The proof of (4) is completely analogous.
	\end{proof}
	
	In Theorem~\ref{thm:format-expansion}, we saw that the addition of a split part to a resolution $\mb{F}$ causes copies of $w^{(a_2)}$ (here identified with $w^{(1)}$) to appear in the new maps $w^{(3)}$ and $w^{(2)}$. Combining this with Theorem~\ref{thm:non-licci-locus}, we see that there is a dichotomy in how the ranks of $w^{(3)}$ and $w^{(3)}$ behave, depending on whether $I$ is licci.
	\begin{prop}\label{prop:licci-w-surj}
		If $I$ is licci then $w^{(3)}$ and $w^{(2)}$ are surjective.
	\end{prop}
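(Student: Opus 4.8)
The plan is to argue by induction along a chain of links from $I$ down to a complete intersection, transporting surjectivity across each link via Theorem~\ref{thm:HST-linked-pair}, with Theorem~\ref{thm:format-expansion} and Theorem~\ref{thm:parametrize} ensuring that the surjectivity of $w^{(2)}$ and $w^{(3)}$ is, for a licci ideal, intrinsic to the ideal.

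I would first record a reduction. For a fixed resolution $\mb{F}$ of $R/I$, any two maps $w,w'\colon\Rgen\to R$ specializing $\Fgen$ to $\mb{F}$ differ by $w' = w\exp X$ with $X\in\mbf{L}\cotimes R$ (Theorem~\ref{thm:parametrize}); since $\exp X$ acts on the $\mf{g}$-submodule $W(d_i)\subset\Rgen$ by an automorphism, $w'^{(i)} = w^{(i)}\circ(\exp X)|_{W(d_i)\otimes R}$, so surjectivity of $w^{(i)}$ does not depend on $w$. Moreover, when $I$ is licci it does not depend on $\mb{F}$ either: over a local ring every free resolution is the minimal one plus a split exact complex, so it suffices to compare $\mb{F}$ with $\mb{F}\oplus\mb{G}$ for $\mb{G}$ split, and by Theorem~\ref{thm:format-expansion} (using $w^{(a_2)} = w^{(1)}$, valid since $\grade I = 3\geq 2$ forces $a_1$ to be an isomorphism) the maps $w'^{(2)},w'^{(3)}$ for $\mb{F}\oplus\mb{G}$ are block-diagonal, the blocks being $w^{(2)}$ (resp.\ $w^{(3)}$), the identity maps on the split summands, and copies of $\Id\otimes w^{(a_2)}$. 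As projecting a surjection onto a summand of its target gives a surjection, $w'^{(i)}$ is surjective iff $w^{(i)}$ and $w^{(a_2)} = w^{(1)}$ are; and $w^{(1)}$ is surjective because $I$ licci means $\NL(I) = (1)$ by Theorem~\ref{thm:non-licci-locus}(2).

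Then I would induct on the length $N(I)$ of a shortest chain of links from $I$ to a grade $3$ complete intersection; this is finite exactly because $I$ is licci, and every ideal along such a chain is again a licci grade $3$ perfect ideal. For $N(I) = 0$, $I$ is a complete intersection and its minimal resolution is the Koszul complex, a specialization of $\mb{F}^\sigma$ for the appropriate $\sigma$ in Example~\ref{ex:1nn1-resolution} (take $n = 3$); by the description of the higher structure maps of $\mb{F}^\sigma$ at the start of Section~\ref{sec:generic-HSM}, $w^{(2)}$ and $w^{(3)}$ for $\mb{F}^\sigma$ are each a projection of some $L(\omega)^\vee\otimes R$ onto a graded component precomposed with an automorphism, hence surjective, and surjectivity survives the right-exact specialization. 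For $N(I) = n\geq 1$, I would pick a link $I\sim I'$ with $N(I')<n$; then $I'$ is licci, so by the reduction above and the induction hypothesis $w^{(2)},w^{(3)}$ are surjective for every resolution of $R/I'$ and every map $w$. Realizing the link inside the $\Rgen$-framework as in the proof of Theorem~\ref{thm:non-licci-locus} (acting on $w$ by a suitable element of $\GL(F_1\otimes R)$ so that $w^{(1)}(K\otimes R)$ is the complete intersection linking $I$ to $I'$), Theorem~\ref{thm:HST-linked-pair} produces a mapping-cone resolution $\mb{E}$ of $R/I'$ with $w^{(2)}_{\mb{F}} = w^{(3)}_{\mb{E}}$ and $w^{(3)}_{\mb{F}} = w^{(2)}_{\mb{E}}$ under \eqref{eq:HSTlink-identify-linked-formats}. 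As $w^{(2)}_{\mb{E}},w^{(3)}_{\mb{E}}$ are surjective, so are $w^{(2)}_{\mb{F}},w^{(3)}_{\mb{F}}$, and then once more by the reduction $w^{(2)},w^{(3)}$ are surjective for every resolution of $R/I$.

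The step I expect to require the most care is the one in the inductive step where an abstract link $I\sim I'$ must be matched to the hypothesis of Theorem~\ref{thm:HST-linked-pair} that $w^{(1)}(K\otimes R)$ be a complete intersection --- that is, realizing it by the correct $\GL(F_1\otimes R)$-action on $w$ exactly as in the proof of Theorem~\ref{thm:non-licci-locus} --- after which the argument is formal. The supporting observation that the block-diagonal shape furnished by Theorem~\ref{thm:format-expansion} lets surjectivity be read off one summand at a time is elementary.
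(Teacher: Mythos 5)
Your proof is correct and follows essentially the same route as the paper's: surjectivity is transported along a linkage chain via Theorem~\ref{thm:HST-linked-pair}, with Theorem~\ref{thm:format-expansion} together with the surjectivity of $w^{(1)}$ (i.e.\ $\NL(I)=(1)$ for licci $I$) handling changes of resolution and format. The only cosmetic differences are that the paper anchors the induction at the unit ideal via Theorem~\ref{thm:ssc} rather than at a complete intersection via $\mb{F}^\sigma$, and that you spell out the independence-of-choices reduction that the paper leaves implicit.
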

	\begin{proof}
		This statement is true for the unit ideal by Theorem~\ref{thm:ssc}, preserved under linkage fixing a diagram $T$ by Theorem~\ref{thm:HST-linked-pair}, and preserved by addition of a split exact complex by Theorem~\ref{thm:format-expansion}. Note that this last point is reliant on $w^{(1)}$ being surjective.
	\end{proof}
	The situation is quite different for non-licci perfect ideals:
	\begin{lem}\label{lem:nonlicci-rank-invariants}
		Let $I$ be a grade 3 perfect ideal in a local Noetherian ring $R$. Suppose that $I$ is not licci. Let $w \colon \Rgen \to R$ specialize $\Fgen$ to a resolution $\mb{F}$ of $R/I$, with format $\underline{f}$. Then the quantities
		\[
		f_3 - \rank (w^{(3)} \otimes k),\quad f_1 - 3 - \rank(w^{(2)} \otimes k)
		\]
		are intrinsic to $I$, and are interchanged under linkage.
	\end{lem}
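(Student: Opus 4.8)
\emph{Proof proposal.} The plan is to reduce everything to the behavior of the critical representations $W(d_2)$, $W(d_3)$ under the three operations ``change of $w$'', ``addition of a split summand'', and ``single direct link'', which are governed respectively by Theorem~\ref{thm:parametrize}, Theorem~\ref{thm:format-expansion}, and Theorem~\ref{thm:HST-linked-pair}. Since all the maps involved commute with localization, we may assume $(R,\mf{m},k)$ is local; since the quantities visibly involve the Koszul length $3$, we work throughout with resolutions of the cyclic module $R/I$, so that $f_0=1$, and by Remark~\ref{rem:gl-vs-sl}~\eqref{item:identify-a1} (valid as $\grade I=3\ge 2$) we identify $w^{(a_2)}$ with $w^{(1)}$. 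First I would handle dependence on $w$ for a \emph{fixed} resolution $\mb{F}$. Two choices differ by $w'=w\exp X$ with $X\in\mbf{L}\cotimes R$ (Theorem~\ref{thm:parametrize}); since $W(d_2),W(d_3)$ are $\mf{g}$-subrepresentations of $\Rgen$ on which $\exp X$ restricts to an $R$-linear automorphism of the $L(\omega)^\vee$-factor (still locally nilpotent after $\otimes k$, as $X$ lowers the $z_1$-grading on an integrable representation), the maps $w'^{(2)},w'^{(3)}$ are obtained from $w^{(2)},w^{(3)}$ by precomposition with a source automorphism, whence $\rank(w'^{(i)}\otimes k)=\rank(w^{(i)}\otimes k)$.

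Next I would show independence of the choice of resolution. Over a local ring, any resolution of $R/I$ with $f_0=1$ is the minimal one plus a split exact complex $0\to N_3\to N_3\oplus N_2\to N_2\to 0$, i.e.\ a split part as in \S\ref{subsec:format-expansion} with $n_1=0$, so it suffices to check one such addition. By Theorem~\ref{thm:format-expansion} the new $w'^{(3)}$ is, after a surjection, the block map into $F_2\oplus N_2\oplus N_3$ with components $w^{(3)}$, $\mathrm{Id}_{N_2}\otimes w^{(a_2)}$, $\mathrm{Id}_{N_3}$, and $w'^{(2)}$ is the block map into $F_2^*\oplus N_2^*\oplus N_3^*$ with components $w^{(2)}$, $\mathrm{Id}_{N_2}$, $\mathrm{Id}_{N_3}\otimes w^{(a_2)}$. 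As these components have pairwise disjoint targets, ranks add:
\[
\rank(w'^{(3)}\otimes k)=\rank(w^{(3)}\otimes k)+n_2\rank(w^{(a_2)}\otimes k)+n_3,\qquad
\rank(w'^{(2)}\otimes k)=\rank(w^{(2)}\otimes k)+n_2+n_3\rank(w^{(a_2)}\otimes k).
\]
Here the hypothesis enters: as $I$ is not licci, Theorem~\ref{thm:non-licci-locus}(2) gives $\NL(I)\ne(1)$, i.e.\ $w^{(1)}\otimes k=0$, so $w^{(a_2)}\otimes k=0$ and the correction terms collapse to $+n_3$ and $+n_2$, which exactly match $f_3'=f_3+n_3$ and $f_1'-3=(f_1-3)+n_2$ (recall $n_1=0$). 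Hence $f_3'-\rank(w'^{(3)}\otimes k)=f_3-\rank(w^{(3)}\otimes k)$ and $f_1'-3-\rank(w'^{(2)}\otimes k)=f_1-3-\rank(w^{(2)}\otimes k)$; both quantities thus depend only on $I$ (take the minimal resolution).

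For the interchange, let $I'=(\alpha_1,\alpha_2,\alpha_3):I$ be a direct link with $(\alpha_1,\alpha_2,\alpha_3)\subset I$ a regular sequence. Arguing as in the proof of Theorem~\ref{thm:non-licci-locus}—possibly enlarging $\mb{F}$ by a split summand and acting by $\GL(F_1\otimes R)$, neither of which affects the quantities by the previous steps—we may choose $\mb{F}$ and $w$ with $w^{(1)}(K\otimes R)=(\alpha_1,\alpha_2,\alpha_3)$, where $K\subset L(\omega_{x_1})^\vee$ is the bottom $(y_1,z_1)$-bigraded component (so $w^{(1)}|_K=d_1|_K$). Then Theorem~\ref{thm:HST-linked-pair} produces $w'\colon\Rgen'\to R$ specializing $\Fgen'$ to a resolution of $R/I'$, and the identifications \eqref{eq:HSTlink-identify-linked-formats} (which swap the roles of $d_2$ and $d_3$) give $w'^{(3)}=w^{(2)}$ and $w'^{(2)}=w^{(3)}$, hence $\rank(w'^{(3)}\otimes k)=\rank(w^{(2)}\otimes k)$ and $\rank(w'^{(2)}\otimes k)=\rank(w^{(3)}\otimes k)$. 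Writing $\underline f=(1,3+d,2+d+t,t)$ so that $\underline f'=(1,3+t,2+d+t,d)$, one has $f_3'=d=f_1-3$ and $f_1'-3=t=f_3$, so
\[
f_3'-\rank(w'^{(3)}\otimes k)=(f_1-3)-\rank(w^{(2)}\otimes k),\qquad
f_1'-3-\rank(w'^{(2)}\otimes k)=f_3-\rank(w^{(3)}\otimes k);
\]
the ordered pair for $I'$ is that for $I$ with its two entries swapped, which (together with the intrinsic part applied to $I'$) is the claim.

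I expect the real content to lie in the second paragraph: intrinsicness hinges on the correction terms in Theorem~\ref{thm:format-expansion} being \emph{exactly} $n_3$ and $n_2$, which fails without the non-licci hypothesis (compare Proposition~\ref{prop:licci-w-surj}), so the argument must be arranged so that $w^{(a_2)}\otimes k=0$ is invoked at precisely that point. The remaining technical points—that precomposition with $\exp X$ is a source automorphism after $\otimes k$, that disjoint targets make ranks additive, and that a prescribed direct link is realized by a suitable pair $(\mb{F},w)$—are routine given the cited results.
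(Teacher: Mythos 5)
Your proposal is correct and follows essentially the same route as the paper's proof: invariance of $\rank(w^{(i)}\otimes k)$ under change of $w$ and of resolution, the split-addition bookkeeping of Theorem~\ref{thm:format-expansion} combined with $w^{(1)}\otimes k=0$ in the non-licci case (so the correction terms are exactly $n_3$ and $n_2$), and the swap coming from Theorem~\ref{thm:HST-linked-pair} together with the format change to $(1,f_3+3,f_2,f_1-3)$. The paper's proof is a terser version of this, citing Lemma~\ref{lem:well-def-pt1} for the first step; the extra details you supply (realizing a prescribed link via $\GL(F_1\otimes R)$-adjustment, rank additivity, and noting $I'$ remains non-licci) are exactly the routine points it leaves implicit.
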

	\begin{proof}
		Fixing the format of $\mb{F}$, any two choices of $w$ will have the same quantities $\rank (w^{(i)} \otimes k)$ by Lemma~\ref{lem:well-def-pt1}. Since $I$ is not licci, $w^{(1)} \otimes k = 0$. Theorem~\ref{thm:format-expansion} implies that the addition of a split part of format $(n_1,n_1+n_2,n_2+n_3,n_3)$ to $\mb{F}$ increases both $f_3$ and $\rank (w^{(3)} \otimes k)$ by $n_3$. Similarly it increases both $f_1$ and $\rank (w^{(2)} \otimes k)$ by $n_2$. 
		
		Theorem~\ref{thm:HST-linked-pair} shows that the quantities are interchanged by a link, since the new format is $(1,f_3+3,f_2,f_1-3)$.
	\end{proof}
	
	\begin{thm}\label{thm:nonlicci-minimal-rep}
		Assume the setup of Lemma~\ref{lem:nonlicci-rank-invariants}. Then there is an ideal $I'$ in the even linkage class of $I$ such that
		\[
		t(R/I') = f_3 - \rank(w^{(3)} \otimes k), \quad d(I') = f_1 - 3 - \rank(w^{(2)}\otimes k).
		\]
		In particular, if $\mb{F}'$ is a minimal free resolution of $R/I'$ and $w'$ is a choice of higher structure maps for $\mb{F}'$, then $w'^{(i)} \otimes k = 0$ for all $i$. The ideal $I'$ has minimal deviation and type in its even linkage class.
	\end{thm}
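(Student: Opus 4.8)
The plan is to descend through the even linkage class of $I$, applying parts (3) and (4) of Theorem~\ref{thm:non-licci-locus} repeatedly to push the Betti numbers down to the minimal values allowed by Lemma~\ref{lem:nonlicci-rank-invariants}. Set $s_3 := f_3 - \rank(w^{(3)}\otimes k)$ and $s_2 := f_1 - 3 - \rank(w^{(2)}\otimes k)$; by Lemma~\ref{lem:nonlicci-rank-invariants} these integers are intrinsic to $I$, and since a single link merely interchanges them, they are unchanged along even linkage. First I would record an easy lower bound: every grade $3$ perfect ideal $J$ even-linked to $I$ satisfies $\NL(J) = \NL(I) \neq (1)$ by Theorem~\ref{thm:non-licci-locus}(1)--(2), hence is not licci, and in particular $J \neq (1)$; so the minimal free resolution of $R/J$ has a format $(1,f_1^J,f_2^J,f_3^J)$ with $t(R/J) = f_3^J$ and $d(J) = f_1^J - 3$, and Lemma~\ref{lem:nonlicci-rank-invariants} applied to that resolution gives $t(R/J) = s_3 + \rank(w_J^{(3)}\otimes k) \ge s_3$ and $d(J) = s_2 + \rank(w_J^{(2)}\otimes k) \ge s_2$.

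Next I would choose $I'$ in the even linkage class of $I$ minimizing $t(R/J) + d(J)$ (the set of such values is a nonempty subset of $\mathbb{Z}_{\ge 0}$, so a minimum exists). Let $\mb{F}'$ be the minimal free resolution of $R/I'$, of format $(1,f_1',f_2',f_3')$, and $w'$ a choice of higher structure maps for $\mb{F}'$. The heart of the argument is to show $w'^{(3)}\otimes k = w'^{(2)}\otimes k = 0$. Suppose instead $w'^{(3)}\otimes k \ne 0$. The construction in the proof of Theorem~\ref{thm:non-licci-locus}(3) then yields an ideal $I''$ even-linked to $I'$ (hence to $I$) together with a resolution $\mb{G}$ of $R/I''$ of the \emph{same} format $(1,f_1',f_2',f_3')$ whose last differential is a unit matrix mod $\mf{m}$; cancelling trivial summands from $\mb{G}$ — which, since $f_0 = 1$, can only have format $(0,1,1,0)$ or $(0,0,1,1)$, at least one being of the latter kind — the minimal resolution of $R/I''$ has format $(1, f_1'-a, f_2'-a-b, f_3'-b)$ with $a \ge 0$ and $b \ge 1$. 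Then $t(R/I'') + d(I'') = t(R/I') + d(I') - (a+b) < t(R/I') + d(I')$, contradicting the choice of $I'$. The case $w'^{(2)}\otimes k \ne 0$ is handled identically via Theorem~\ref{thm:non-licci-locus}(4), the cancelled summand now having format $(0,1,1,0)$ so that $a \ge 1$. Since $I'$ is not licci, its non-licci locus $\NL(I')$ — which is the image of $w'^{(1)}$ — lies in $\mf{m}$, so $w'^{(1)}\otimes k = 0$ as well; thus $w'^{(i)}\otimes k = 0$ for all $i$.

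Finally, feeding $w'^{(i)}\otimes k = 0$ back into Lemma~\ref{lem:nonlicci-rank-invariants} gives $t(R/I') = f_3' = s_3$ and $d(I') = f_1' - 3 = s_2$, which are exactly the asserted values; combined with the lower bound of the first paragraph, $I'$ simultaneously attains the minimum type and the minimum deviation in its even linkage class. I expect the one genuinely delicate point to be the Betti-number bookkeeping inside the descent step — verifying that a single application of Theorem~\ref{thm:non-licci-locus}(3) or (4) \emph{strictly} decreases $t(R/-) + d(-)$ of the \emph{minimal} resolution. This rests on extracting from the proof of Theorem~\ref{thm:non-licci-locus} the stronger fact that the linked ideal produced there carries a resolution of the original format with a non-minimal $d_3$ (resp.\ $d_2$), together with the elementary fact that for a cyclic module the only trivial summands that can be split off a free resolution have format $(0,1,1,0)$ or $(0,0,1,1)$.
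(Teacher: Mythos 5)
Your proposal is correct and takes essentially the paper's approach: the paper's proof is exactly the descent you describe ("repeated application of Theorem~\ref{thm:non-licci-locus} and Lemma~\ref{lem:nonlicci-rank-invariants} until both $w^{(2)}\otimes k$ and $w^{(3)}\otimes k$ are zero"), and your minimizer-of-$t+d$ plus contradiction formulation is the same descent with the bookkeeping (cancellation of trivial $(0,1,1,0)$/$(0,0,1,1)$ summands, the lower bounds from Lemma~\ref{lem:nonlicci-rank-invariants}, and $w'^{(1)}\otimes k=0$ from non-licciness) made explicit. One small wording fix: the resolution produced by Theorem~\ref{thm:non-licci-locus}(3) has last differential with a \emph{unit entry} mod $\mf{m}$ (i.e.\ $w'^{(3)}_0\otimes k\neq 0$), not a unit matrix, which is all your cancellation step needs.
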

	\begin{proof}
		This follows from repeated application of Theorem~\ref{thm:non-licci-locus} and Lemma~\ref{lem:nonlicci-rank-invariants} until both $w^{(2)} \otimes k$ and $w^{(3)} \otimes k$ are zero.
	\end{proof}
	
	\section{Classification of grade three licci ideals}\label{sec:classify}
	Fix a format $\underline{f}$ and its corresponding diagram $T$. Let $(R_\sigma,\mb{F}^\sigma)$ be the resolution constructed in \S\ref{sec:example-res} for $\sigma \in \leftindex^{z_1}W^{x_1}$, so $R_\sigma = \mb{C}[C_\sigma]$ is a polynomial ring in $\ell(\sigma)$ variables. Write $R_\sigma/I_\sigma$ for the module resolved by $\mb{F}^\sigma$. This is the coordinate ring of $\mc{N}_\sigma^w \subset C_\sigma$. Let $\mf{m}_\sigma$ denote the maximal ideal generated by the variables of $R_\sigma$.
	
	\begin{prop}\label{prop:generic-example-is-licci}
		If $\sigma \neq e$, the ideal $(I_\sigma)_{\mf{m}_\sigma}$ is a grade 3 licci ideal in $(R_\sigma)_{\mf{m}_\sigma}$. 
	\end{prop}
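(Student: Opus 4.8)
The plan is to verify, for the local ring $(R_\sigma)_{\mf{m}_\sigma}$, the two hypotheses of Theorem~\ref{thm:non-licci-locus}(2): that $(I_\sigma)_{\mf{m}_\sigma}$ is a grade three perfect ideal, and that $\NL((I_\sigma)_{\mf{m}_\sigma}) = (1)$. For the first, Theorem~\ref{thm:F-is-acyclic} tells us $\mb{F}^\sigma$ is a length-three free resolution of a Cohen--Macaulay module $R_\sigma/I_\sigma$ whose support $\mc{N}_\sigma^w$ (with $w = s_{z_1}s_u s_{x_1}$) has codimension three in $C_\sigma = \Spec R_\sigma$; in particular $\grade I_\sigma = 3$, so $I_\sigma$ is a proper nonzero ideal. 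The hypothesis $\sigma \neq e$ is precisely what forces $\mf{m}_\sigma$ to lie in this support (so that $(I_\sigma)_{\mf{m}_\sigma}$ is again proper): as observed in the proof of Lemma~\ref{lem:extremal-coords-on-X}, the extremal Pl\"ucker coordinate $p_\rho$ vanishes on $X^w$ exactly when $\rho \not\geq w$, and these vanishing coordinates are exactly the ones spanning the bottom $z_1$-graded component $\bigwedge^{r_0} F_1 \subset L(\omega_{x_1})^\vee$, a single extremal $\mf{g}^{(z_1)}$-representation — the one attached to the trivial double coset. Hence, for a minimal length double coset representative, $\sigma \not\geq w \iff \sigma = e$, so $\sigma \neq e$ gives $\sigma \geq w$, whence the torus-fixed point $\sigma v$ — the origin of $C_\sigma$, i.e. the closed point cut out by $\mf{m}_\sigma$ — lies on $X^w$. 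Therefore $I_\sigma \subseteq \mf{m}_\sigma$, and $(I_\sigma)_{\mf{m}_\sigma}$, being resolved by the length-three free complex $\mb{F}^\sigma \otimes (R_\sigma)_{\mf{m}_\sigma}$ over the regular local ring $(R_\sigma)_{\mf{m}_\sigma}$, is perfect of grade three.

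For the second hypothesis, I would use the distinguished $w\colon \Rgen \to R_\sigma$ specializing $\Fgen$ to $\mb{F}^\sigma$ constructed in \S\ref{sec:generic-HSM}, obtained from $w_{\ssc}$ by base change to $R_\sigma$ followed by precomposition with $(\exp(Y)\sigma)^{-1}$. Since $Y \otimes \kk = 0$, the reduction $w^{(a_2)} \otimes \kk$ is the projection of $L(\omega_{x_1})^\vee$ onto its (one-dimensional) lowest weight space precomposed with $\sigma^{-1}$, and so it is nonzero on the lowest weight vector $\sigma b$ of the extremal $\mf{g}^{(z_1)}$-representation $V_\sigma = S_\lambda F_1 \otimes S_\mu F_3^*$ attached to $\sigma$ (here $b$ is a lowest weight vector of $L(\omega_{x_1})^\vee$). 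By construction $V_\sigma$ generates, hence lies inside, the Demazure module $\mb{I}_{\leq(\lambda,\mu)} \subseteq W(a_2)$ of Definition~\ref{def:I-lambda-mu}, so the ideal $\HSI_\sigma(R_\sigma/I_\sigma) = w(\mb{I}_{\leq(\lambda,\mu)})R_\sigma$ contains $w^{(a_2)}(\sigma b)$, an element with nonzero constant term, hence a unit in $(R_\sigma)_{\mf{m}_\sigma}$. Since $\HSI_\sigma$ is independent of the chosen resolution and commutes with localization, $\HSI_\sigma((I_\sigma)_{\mf{m}_\sigma}) = (1)$, and therefore $\NL((I_\sigma)_{\mf{m}_\sigma}) = (1)$. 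Theorem~\ref{thm:non-licci-locus}(2) then gives that $(I_\sigma)_{\mf{m}_\sigma}$ is licci.

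The only nonroutine steps are the two just used: checking that $\sigma \neq e$ puts $\mf{m}_\sigma$ in the support of the resolved module — so that $(I_\sigma)_{\mf{m}_\sigma}$ is a genuine proper ideal rather than the unit ideal — and tracing the structure maps of $\mb{F}^\sigma$ from \S\ref{sec:generic-HSM} through Definition~\ref{def:I-lambda-mu} to see that $\HSI_\sigma$ (hence $\NL$) becomes the unit ideal. Everything else is an assembly of results already established, and no new estimate or construction is required.
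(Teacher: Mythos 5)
Your proposal is correct and follows essentially the same route as the paper: establish that $I_\sigma$ is a grade $3$ perfect ideal via \S\ref{sec:liccires-acyclicity}, observe from the structure maps of \S\ref{sec:generic-HSM} that $w^{(a_2)} \otimes \kk$ is nonzero exactly on the extremal weight space attached to $\sigma$ so that $\HSI_\sigma(R_\sigma/I_\sigma)$ becomes the unit ideal after localizing at $\mf{m}_\sigma$, and conclude by Theorem~\ref{thm:non-licci-locus}(2). Your write-up merely fills in details the paper leaves implicit (in particular that $\sigma \neq e$ forces $\sigma \geq s_{z_1}s_u s_{x_1}$, so the origin of $C_\sigma$ lies on $\mc{N}_\sigma^w$ and the localized ideal is proper), which is a welcome but not substantively different elaboration.
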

	\begin{proof}
		If $\sigma = e$ then $I_\sigma$ is the unit ideal. Otherwise it is a grade 3 perfect ideal as established in \S\ref{sec:liccires-acyclicity}. This proposition then follows from Theorem~\ref{thm:non-licci-locus}, and the observation that $\HSI_\sigma(R_\sigma/I_\sigma) = (1)$ follows from \S\ref{sec:generic-HSM}.
	\end{proof}
	
	Let $I$ be a perfect ideal in a local Noetherian ring $(R,\mf{m},k)$. Let $S$ be another local Noetherian ring, let $\phi\colon R \to S$ be a local homomorphism, and let $J = \phi(I)S$. Recall from \S\ref{sec:intro} that $J$ is a \emph{specialization} of $I$ if $\grade J \geq \grade I$, in which case it follows that in fact $\grade J = \grade I$ and $J$ is also perfect.
	
	In Definition~\ref{def:GFR-UFR}, we discussed the notion of specialization of complexes. We have that $J$ is a specialization of $I$ if and only if the minimal free resolution of $R/I$ specializes via $\phi$ to a minimal free resolution of $S/J$ (see \cite[Proposition~6.14]{Hochster75}).
	
	Henceforth we suppose that $\grade I = 3$ and the minimal free resolution of $R/I$ has format $\underline{f}$. If $J$ is a specialization of $I$ then $\HSI_\rho(S/J) = (\HSI_\rho(R/I))S$ for all $\rho$. As we assumed $\phi$ to be a local homomorphism, $\HSI_\rho(S/J) = (1)$ if and only if $\HSI_\rho(R/I) = (1)$.
	
	If $I$ is licci, then Theorem~\ref{thm:non-licci-locus} says
	\[
	\NL(I) = \sum_\sigma \HSI_\sigma(R/I) = (1).
	\]
	By Proposition~\ref{prop:classifying-cell}, there exists a unique minimal $\sigma$ for which $\HSI_\sigma(R/I) = (1)$. We denote this $\sigma$ by $\Psi(I)$. The above argument shows that $\Psi(I)$ is preserved under specialization.
	\begin{example}\label{ex:Psi-generic}
		From \S\ref{sec:generic-HSM} we know that $\Psi((I_\sigma)_{\mf{m}_\sigma}) = \sigma$.
	\end{example}
	
	Now we see the true significance of the resolutions constructed in \S\ref{sec:example-res}: they yield the generic resolutions for all grade 3 licci ideals.
	
	\begin{thm}\label{thm:grade-3-licci-classification}
		Let $I$ be a grade 3 licci ideal in a local Noetherian ring $(R,\mf{m},k)$. Then there exists a unique $\sigma$ for which $(I_\sigma)_{\mf{m}_\sigma}$ specializes to $I$, i.e. there is a homomorphism $\phi\colon R_\sigma \to R$ with $\phi(\mf{m}_\sigma) \subseteq \mf{m}$ specializing $\mb{F}^\sigma$ to a resolution of $R/I$. This unique $\sigma$ is equal to $\Psi(I)$.
	\end{thm}
	To be precise, the definition of $\mb{F}^\sigma$ depends on the diagram $T$. We can simply take the smallest diagram $T$ on which $\sigma$ is defined (c.f. Remark~\ref{rem:limit-double-coset}); enlarging $T$ only amounts to adding a split exact summand to $\mb{F}^\sigma$.
	\begin{proof}
		The uniqueness of $\sigma$ follows from Example~\ref{ex:Psi-generic} and the fact that $\Psi$ is preserved under specialization. From Proposition~\ref{prop:classifying-cell}, we get a map $w\colon \Rgen \to R$ such that
		\begin{enumerate}
			\item $w$ specializes $\Fgen$ to a minimal resolution of $R/I$,
			\item $w^{(1)}$ describes an $R$-point of the Schubert cell $C_\sigma$, and
			\item $w^{(1)}\otimes k$ is the torus-fixed $k$-point $\sigma v \in C_\sigma$.
		\end{enumerate}
		By (2), $w^{(1)}$ describes a homomorphism $\phi\colon R_\sigma \to R$. Furthermore, point (3) ensures that $\phi(\mf{m}_\sigma) \subseteq \mf{m}$. By construction
		\[
		\phi(I_\sigma)R = w^{(1)}(F_1 \otimes R) = I
		\]
		where $F_1 \otimes R \subset L(\omega_{x_1})^\vee \otimes R$ is the bottom $z_1$-graded component, and the claim follows.
	\end{proof}
	Note that we do \emph{not} claim $w\colon \Rgen \to R$ and $\phi\colon R_\sigma \to R$ specialize $\Fgen$ and $\mb{F}^\sigma$ to identical resolutions over $R$; by construction they have the same differential $d_1$ but the differentials $d_2,d_3$ may be different. Of course, they can be made to be equal after adjusting $w$ further using the action of $\prod \GL(F_i\otimes R)$, but that is unnecessary for the proof.
	
	As a corollary, we have a classification of grade 3 licci ideals up to specialization.
	\begin{thm}
		Let $T = T_{2,d+1,t+1}$ be the diagram associated to the format $(1,3+d,2+d+t,t)$. Consider the map
		\[
		\begin{tikzcd}
			\left\{
			\begin{matrix}
				\text{grade 3 licci ideals with deviation $\leq d$ and}\\
				\text{type $\leq t$ in local Noetherian rings}
			\end{matrix}\right\}\ar[d, "\Psi"]\\
			W_{z_1} \backslash W / W_{x_1} - [e]
		\end{tikzcd}
		\]
		sending $I \subset R$ to the minimal $\sigma$ for which $\HSI_\sigma(R/I) = (1)$.
		\begin{enumerate}
			\item $\Psi$ is surjective.
			\item If $J_1 \subset S_1$ and $J_2 \subset S_2$ are grade 3 licci ideals with deviation $\leq d$ and type $\leq t$, then $\Psi(J_1) = \Psi(J_2)$ if and only if there is an ideal specializing to both $J_1$ and $J_2$.
		\end{enumerate} 
	\end{thm}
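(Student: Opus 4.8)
Both statements will be deduced from Theorem~\ref{thm:grade-3-licci-classification} together with a single deformation-invariance principle for the higher structure ideals, already implicit in the discussion preceding Theorem~\ref{thm:grade-3-licci-classification}. Namely: if $(A,\mf{m}_A)\to(A',\mf{m}_{A'})$ is a local homomorphism of Noetherian local rings and $L\subset A$ is a grade $3$ perfect ideal with $\grade(LA')=3$, then a resolution of $A/L$ base-changes to one of $A'/LA'$, so $\HSI_\rho(A'/LA')=\HSI_\rho(A/L)\,A'$ for every $\rho$; in particular $\HSI_\rho(A/L)=(1)\iff\HSI_\rho(A'/LA')=(1)$, the reverse implication by Nakayama. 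A deformation, being built from the elementary moves of adjoining variables (a faithfully flat local extension) and quotienting by a regular sequence that remains regular modulo the perfect ideal (a local homomorphism preserving grade $3$, with the resolution base-changing), therefore preserves the set $\{\rho:\HSI_\rho=(1)\}$. Since for a licci ideal one has $\NL=(1)$, hence $w^{(a_2)}\otimes k\neq 0$, Proposition~\ref{prop:classifying-cell} guarantees that this set has a unique minimal element, so $\Psi$ and its extension to the limiting double coset space (Remark~\ref{rem:limit-double-coset}) are well-defined and are deformation invariants. Finally, for a common deformation $A/L$ one gets $\NL(L)=(1)$ by the same principle together with Theorem~\ref{thm:non-licci-locus}(2), so $L$ is itself licci.

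\textbf{Surjectivity.} Given $\sigma\in W_{P_{z_1}}\backslash W/W_{P_{x_1}}$ with $\sigma\neq[e]$, build the generic resolution $(R_\sigma,\mb{F}^\sigma)$ of \S\ref{sec:example-res} for the fixed diagram $T=T_{2,d+1,t+1}$, so $R_\sigma=\mb{C}[C_\sigma]$ is a polynomial ring in $\ell(\sigma)$ variables and $\mb{F}^\sigma$ has format $(1,3+d,2+d+t,t)$. Localize at $\mf{m}_\sigma$ and complete to obtain a power series ring $\widehat{R_\sigma}=\mb{C}\llbracket\{X\}\rrbracket$ and, by Proposition~\ref{prop:generic-example-is-licci} (licci-ness survives completion), a grade $3$ licci ideal $\widehat{I_\sigma}$. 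Its minimal resolution has Betti numbers at most those of $\mb{F}^\sigma$, so its deviation is $\leq d$ and its type is $\leq t$; thus $\widehat{I_\sigma}$ lies in the source of $\Psi$. By \S\ref{sec:generic-HSM}, for the map $w$ specializing $\Fgen$ to $\mb{F}^\sigma$ the reduction $w^{(a_2)}\otimes k$ at $\mf{m}_\sigma$ is nonzero on exactly the extremal weight space indexed by $\sigma$, so Proposition~\ref{prop:classifying-cell} identifies $\sigma$ as the minimal $\rho$ with $\HSI_\rho=(1)$; since $\HSI$ commutes with localization and completion, $\Psi(\widehat{I_\sigma})=\sigma$.

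\textbf{Part (2).} For ``$\Leftarrow$'', let $A/L$ be a common deformation, i.e.\ a grade $3$ perfect ring specializing to both $R/I$ and $S/J$. Each specialization is a finite composite of the elementary moves above, so by the invariance principle $\{\rho:\HSI_\rho(R/I)=(1)\}=\{\rho:\HSI_\rho(A/L)=(1)\}=\{\rho:\HSI_\rho(S/J)=(1)\}$, and taking minimal elements gives $\Psi(I)=\Psi(J)$. For ``$\Rightarrow$'', let $\sigma=\Psi(I)=\Psi(J)$ and take $(R_\sigma,\mb{F}^\sigma)$ with its module $R_\sigma/I_\sigma$ as above. Theorem~\ref{thm:grade-3-licci-classification} gives a local homomorphism $\phi\colon R_\sigma\to R$ with $\phi(\mf{m}_\sigma)\subseteq\mf{m}$, $\phi(I_\sigma)R=I$, and $\grade I=3$; I claim $\phi$ exhibits $R/I$ as a deformation of $R_\sigma/I_\sigma$. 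Writing $R=\mb{C}\llbracket X_1,\dots,X_N\rrbracket$ and $P=\widehat{R_\sigma}\cotimes_{\mb{C}}R=\mb{C}\llbracket\underline{t},\underline{X}\rrbracket$, the ring $P/I_\sigma P=(\widehat{R_\sigma}/\widehat{I_\sigma})\cotimes R$ is Cohen--Macaulay of dimension $\ell(\sigma)+N-3$, and the $\ell(\sigma)$ elements $\theta_i:=t_i-\phi(t_i)\in\mf{m}_P$ generate the kernel of the surjection $P/I_\sigma P\twoheadrightarrow R/I$ onto a ring of dimension $N-3$; a dimension drop of $\ell(\sigma)$ effected by $\ell(\sigma)$ elements in a Cohen--Macaulay local ring forces $(\theta_i)$ to be a regular sequence on $P/I_\sigma P$, and likewise on $P$. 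Since $P$ is obtained from $\widehat{R_\sigma}$ by adjoining the variables $\underline{X}$, this is exactly a deformation. By symmetry $S/J$ is also a deformation of $R_\sigma/I_\sigma$, so $R_\sigma/I_\sigma$ is a common deformation of $R/I$ and $S/J$.

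\textbf{Main obstacle.} The delicate point is the last paragraph: upgrading the comparison map $\phi$ of Theorem~\ref{thm:grade-3-licci-classification} from an arbitrary grade-preserving specialization to an \emph{honest} deformation, and, relatedly, confirming that ``admitting a common deformation'' is precisely the equivalence generated by the two elementary moves used throughout; the Cohen--Macaulay dimension count for the regular sequence $(\theta_i)$ is the essential input and should be presented carefully. All remaining content is the bookkeeping of tracking the conditions $\HSI_\rho=(1)$ through flat base change and regular-sequence reductions, for which the groundwork has already been laid (Lemma~\ref{lem:well-def-pt1}, Theorem~\ref{thm:format-expansion}, Theorem~\ref{thm:non-licci-locus}, and the discussion preceding Theorem~\ref{thm:grade-3-licci-classification}).
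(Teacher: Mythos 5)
Your proposal is correct and follows essentially the same route as the paper: surjectivity via Proposition~\ref{prop:generic-example-is-licci} together with the computation of $w^{(a_2)}\otimes \kk$ from \S\ref{sec:generic-HSM}, the ``if'' direction from deformation-invariance of the condition $\HSI_\rho = (1)$, and the ``only if'' direction from the specializations supplied by Theorem~\ref{thm:grade-3-licci-classification}. The only organizational difference is that the paper packages the common deformation as the single ring $(B \cotimes R \cotimes S)/(I_\sigma)$, whereas you exhibit $\widehat{R_\sigma}/\widehat{I_\sigma}$ directly as deforming to each of $R/I$ and $S/J$, spelling out via the Cohen--Macaulay dimension count the regular-sequence verification that the paper leaves implicit.
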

	\begin{proof}
		The map is surjective by Proposition~\ref{prop:generic-example-is-licci} and Example~\ref{ex:Psi-generic}. The ``if'' part of (2) follows from $\Psi$ being preserved under specialization. The ``only if'' part of (2) follows from Theorem~\ref{thm:grade-3-licci-classification}.
	\end{proof}
	
	In other words, the relation $\approx$ from \S\ref{sec:intro} is an equivalence relation for grade 3 licci ideals, and $\Psi$ distinguishes the equivalence classes: $J_1 \approx J_2$ if and only if $\Psi(J_1) = \Psi(J_2)$.
	\begin{definition}\label{def:Herzog-class}
		The \emph{Herzog class} of a grade 3 licci ideal is its equivalence class under the relation $\approx$.
	\end{definition}
	Note that in \cite{Herzog80}, Herzog worked in the setting of power series rings in finitely many variables over a field $k$. He studied the relation $\approx'$ defined by having a common deformation, where $I\subset R = k\llbracket X_1,\ldots,X_m \rrbracket$ is a deformation of $J \subset S = k\llbracket Y_1,\ldots,Y_n\rrbracket$ (or more accurately $R/I$ is a deformation of $S/J$) if $S/J$ is the quotient of $R/I$ by a regular sequence on $R/I$.
	
	We finish by noting that $\approx$ and $\approx'$ are the same in this setting, and hence for grade 3 licci ideals, Definition~\ref{def:Herzog-class} generalizes the existing notion of Herzog classes in the literature. 
	
	\begin{lem}
		Let $J_1\subset S_1 = k\llbracket X_1,\ldots,X_m \rrbracket$ and $J_2 \subset S_2 = k\llbracket Y_1,\ldots,Y_n\rrbracket$ be grade 3 licci ideals. Then $J_1 \approx J_2$ if and only if $J_1 \approx' J_2$.
	\end{lem}
	\begin{proof}
		The ``if'' implication follows from the observation that if $I \subset R$ is a deformation of $J \subset S$, then the quotient map $R \to S$ makes $J$ a specialization of $I$. For the other implication, suppose that $J_1 \approx J_2$. Then there is some ideal specializing to both $J_1$ and $J_2$, and by Theorem~\ref{thm:grade-3-licci-classification} we can take this ideal to be $I_\sigma$ for some $\sigma$. Let $A$ be the $\mf{m}_\sigma$-adic completion of $R_\sigma$. Let $\phi_1\colon A \to S_1$ and $\phi_2\colon A \to S_2$ be the maps specializing $I_\sigma A$ to $J_1$ and $J_2$. Now consider $A/I_\sigma A \cotimes S_1 \cotimes S_2$. Quotienting by the variables of $S_2$ and the elements $X_i - \phi_1(X_i)$ yields $S_1/J_1$. Similarly, quotienting by the variables of $S_1$ and the elements $Y_i - \phi_2(Y_i)$ yields $S_2/J_2$. We conclude that $J_1 \approx' J_2$.
	\end{proof}
	
	\section{Conclusion}\label{sec:conclusion}
	Using the machinery of higher structure maps arising from $\Rgen$, we have given a complete classification of grade 3 licci ideals (over a field of characteristic zero). We may loosely summarize this as follows:
	\begin{thm*}
		There exists a countable list of licci ideals $I_\sigma \subset R_\sigma$, indexed by some combinatorial data $\sigma$, with the following property: for any grade 3 licci ideal $I$ in a local Noetherian ring $R$, there exists a unique $\sigma$ for which there is a local homomorphism $R_\sigma \to R$ specializing $I_\sigma$ to $I$. Furthermore, the minimal free resolutions of $I_\sigma$ are known.
	\end{thm*}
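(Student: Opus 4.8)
The plan is to recognize this informal statement as a consolidation of results already established, so the proof consists of assembling them. I would take the promised list to be $\{(R_\sigma,I_\sigma,\mb{F}^\sigma)\}$ from \S\ref{sec:example-res}, with $\sigma$ ranging over the minimal length representatives of the double cosets in $\lim W_{P_{z_1}}\backslash W/W_{P_{x_1}}$ of Remark~\ref{rem:limit-double-coset}, excluding the class of $e$; this index set is countable. Here $R_\sigma=\mb{C}[C_\sigma]$ is a polynomial ring and $I_\sigma$ is the ideal with $R_\sigma/I_\sigma=\mb{C}[\mc{N}_\sigma^w]$ for $w=s_{z_1}s_us_{x_1}$. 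For each such $\sigma$, Theorem~\ref{thm:F-is-acyclic} shows $\mb{F}^\sigma$ resolves $R_\sigma/I_\sigma$; coarsening the $Q$-grading on $R_\sigma$ via $Q\to\mb{Z}\alpha_{z_1}$ makes every variable strictly negatively graded and the differentials homogeneous of degree zero, so from the explicit multidegrees recorded in \S\ref{sec:liccires-multigrading-Q} one checks that no entry of any $d_i$ is a unit and $\mb{F}^\sigma$ is the minimal free resolution, while Proposition~\ref{prop:generic-example-is-licci} shows $(I_\sigma)_{\mf{m}_\sigma}$ is grade $3$ licci. When $\sigma$ is defined only on a diagram $T$ larger than the smallest one carrying a reduced word for it, the resulting $\mb{F}^\sigma$ changes by a split exact summand and $I_\sigma$ by adjunction of variables, so I would fix $I_\sigma$ once and for all by always using the smallest such $T$, exactly as in the proof of Theorem~\ref{thm:grade-3-licci-classification}.

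For the existence half, given a grade $3$ licci ideal $I$ in a local Noetherian ring $(R,\mf{m},k)$ (necessarily proper), I would choose a minimal free resolution $\mb{F}$ of $R/I$ of some format $\underline{f}$ and a map $w\colon\Rgen\to R$ specializing $\Fgen$ to $\mb{F}$. Theorem~\ref{thm:non-licci-locus}(2) gives $\NL(I)=(1)$, equivalently $w^{(a_2)}\otimes k\neq 0$; then Proposition~\ref{prop:classifying-cell} produces the unique $\sigma$, minimal in the Bruhat order with $\HSI_\sigma(R/I)=(1)$, and this $\sigma$ is intrinsic to $R/I$ since $\HSI_\sigma$ is. Theorem~\ref{thm:grade-3-licci-classification} then yields a homomorphism $\phi\colon R_\sigma\to R$ with $\phi(\mf{m}_\sigma)\subseteq\mf{m}$ specializing $\mb{F}^\sigma$ to a resolution of $R/I$; restricting to the bottom $z_1$-graded component gives $\phi(I_\sigma)R=I$, so $\phi$ specializes $I_\sigma$ to $I$.

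For uniqueness, suppose $\phi'\colon R_{\sigma'}\to R$ is a local homomorphism with $\phi'(I_{\sigma'})R=I$. Since $I_{\sigma'}$ is perfect of grade $3$ and $\grade I=3$, the resolution $\mb{F}^{\sigma'}$ specializes to a resolution of $R/I$, hence $\HSI_\rho(R/I)=\phi'(\HSI_\rho(R_{\sigma'}/I_{\sigma'}))R$ for every $\rho$, and because $\phi'$ is local, $\HSI_\rho(R/I)=(1)$ if and only if $\HSI_\rho(R_{\sigma'}/I_{\sigma'})=(1)$. By the computation in \S\ref{sec:generic-HSM} — for the canonical $w$ attached to $\mb{F}^{\sigma'}$ one has $w^{(a_2)}\otimes\mb{C}$ supported exactly on the extremal weight space indexed by $\sigma'$ — Proposition~\ref{prop:classifying-cell} identifies $\sigma'$ as the minimal $\rho$ with $\HSI_\rho(R_{\sigma'}/I_{\sigma'})=(1)$. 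Combining, $\sigma'$ is the minimal $\rho$ with $\HSI_\rho(R/I)=(1)$, i.e. the $\sigma$ of the previous paragraph, so the index is unique; the final clause ``the minimal free resolutions are known'' is then immediate, as these are the complexes $\mb{F}^\sigma$ written down in \S\ref{sec:example-res}.

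I expect no genuinely new obstacle, since all substantive content lives upstream; if forced to name the load-bearing inputs, they are Theorem~\ref{thm:non-licci-locus}(2) (the characterization of licciness by $\NL(I)=(1)$, whose hard direction rests on the linkage transformation Theorem~\ref{thm:HST-linked-pair} plus the fact that the type-$A$ subdiagrams for $\GL(F_1)$ and $\GL(F_1')$ cover $T$), Theorem~\ref{thm:grade-3-licci-classification} (which invokes Proposition~\ref{prop:classifying-cell} and the identification of $R$-points of $C_\sigma$ with homomorphisms out of $R_\sigma$), and the acyclicity Theorem~\ref{thm:F-is-acyclic} (which depends on the Schubert-geometric fact that $\mc{N}_\sigma^w$ has codimension $3$ in $C_\sigma$). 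The only loose ends needing care within this assembly are the minimality of $\mb{F}^\sigma$, extracted from the multidegrees of \S\ref{sec:liccires-multigrading-Q}, and the bookkeeping that ensures the $\sigma$ appearing in existence and in uniqueness is the same intrinsic combinatorial datum attached to $R/I$ — which is precisely what the well-definedness of $\HSI_\sigma$ together with Proposition~\ref{prop:classifying-cell} supplies.
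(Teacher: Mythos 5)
Your proposal is correct and follows essentially the same route as the paper: the statement is an informal consolidation of \S\ref{sec:example-res}, \S\ref{sec:generic-HSM}, and \S\ref{sec:classify}, and you assemble exactly those ingredients (Theorem~\ref{thm:F-is-acyclic}, Proposition~\ref{prop:generic-example-is-licci}, Theorem~\ref{thm:non-licci-locus}(2), Proposition~\ref{prop:classifying-cell}, Theorem~\ref{thm:grade-3-licci-classification}, and the invariance of $\HSI_\sigma$ under local specialization), with the uniqueness of $\sigma$ handled just as in the discussion surrounding Theorem~\ref{thm:grade-3-licci-classification}. The only place you assert slightly more than the paper does is the claim that $\mb{F}^\sigma$ is itself minimal on the smallest diagram supporting $\sigma$ (the paper only exhibits the resolutions and their multigrading), but since the complexes are explicitly known this does not affect the validity of the argument for the stated summary.
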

	
	The natural next step would be to extend this theory to licci ideals of grade $c \geq 4$. Unfortunately, the theory of higher structure maps is specific to the case of $c=3$. However, using a very different approach, the following was shown in \cite[Theorem~2.28]{Ni-thesis}:
	\begin{thm*}
		Let $c \geq 2$, $d \geq 0$, and $t \geq 1$ be a triple of integers satisfying the inequality
		\[
		\frac{1}{c-1} + \frac{1}{d+1} + \frac{1}{t+1} > 1.
		\]
		Then there exists a finite list of grade $c$ licci ideals $I_\sigma \subset R_\sigma$, indexed by some combinatorial data $\sigma$, with the following property: for any grade $c$ licci ideal $I$ in a local Noetherian ring $R$ satisfying $d(I) \leq d$ and $t(R/I) \leq t$, there exists a $\sigma$ for which there is a local homomorphism $R_\sigma \to R$ specializing $I_\sigma$ to $I$.
	\end{thm*}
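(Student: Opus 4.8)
The plan is to mirror the grade-three classification of \S\ref{sec:classify}, but since $\Rgen$ and the theory of higher structure maps are unavailable for $c \geq 4$, the generic examples $(R_\sigma, I_\sigma)$ must be produced directly, by iterated \emph{generic linkage} in the sense of Buchweitz and Herzog rather than as Kazhdan--Lusztig varieties inside Schubert cells. Fix $c \geq 2$, $d \geq 0$, $t \geq 1$ with $\tfrac{1}{c-1}+\tfrac{1}{d+1}+\tfrac{1}{t+1}>1$, and attach to this triple the $T$-shaped diagram $T_{c-1,\,d+1,\,t+1}$; the hypothesis is exactly the assertion that this diagram is of finite type, so its Weyl group $W$ is finite. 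I would take the combinatorial index set $\Sigma$ to be (a subset of) the finite double coset space $W_{P_{z_1}}\backslash W / W_{P_{x_1}}$, realized concretely as the set of finite chains of \emph{minimal links} that begin at a grade $c$ complete intersection and keep deviation $\leq d$ and type $\leq t$ throughout.

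The first step is to package the generic link as a universal construction. Given a grade $c$ perfect ideal $J$ over a ring $S$ with a chosen free resolution and a chosen regular sequence of length $c$ among a minimal set of its generators, Theorem~\ref{thm:mapping-cone} produces a resolution of the linked ideal; adjoining indeterminates for a \emph{generic} such regular sequence and passing to the appropriate ideal transform yields a generic link, universal among all links obtained by specializing that regular sequence. Starting from $(x_1,\dots,x_c)\subset\mb{C}[x_1,\dots,x_c]$ and iterating this according to the combinatorial data recorded by $\sigma\in\Sigma$, one obtains the desired pairs $(R_\sigma,I_\sigma)$: each $R_\sigma$ is, after localizing at the irrelevant ideal, a regular local ring, and each $I_\sigma$ is grade $c$ licci by construction. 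In parallel one runs the Betti-number recursion coming from the mapping cone of Theorem~\ref{thm:mapping-cone}: a link sends the full graded Betti table of $R/J$ to that of $R/(K:J)$ up to cancellation, and I would use only that cancellation can decrease Betti numbers, so that along a chain of minimal links the raw deviation and type stay within the region where deviation is $\leq d$ and type is $\leq t$ precisely when the corresponding combinatorics stays inside $W_{P_{z_1}}\backslash W / W_{P_{x_1}}$; finiteness of $W$ then bounds $\Sigma$.

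The specialization statement comes next. Let $I\subset R$ be an arbitrary grade $c$ licci ideal with $d(I)\leq d$ and $t(R/I)\leq t$. Using minimal linkage for licci ideals (cf.\ Proposition~\ref{prop:minimal-linkage}), $I$ can be linked to the unit ideal by a finite chain of minimal links; the Betti-table bookkeeping above shows this chain never leaves the region where deviation is $\leq d$ and type is $\leq t$, so it is recorded by some $\sigma\in\Sigma$. Reversing the chain and invoking, at each stage, the universality of the generic link, one obtains by composition a homomorphism $R_\sigma\to R$ carrying $I_\sigma$ to $I$; since each regular sequence in the chain is chosen inside a proper ideal, the composite is local. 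Finiteness of the list $\{I_\sigma\}_{\sigma\in\Sigma}$ is then immediate.

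The main obstacle I anticipate is the termination-and-finiteness analysis intertwined with cancellation in the mapping cone. Three things need care: that a licci ideal really can be brought to a complete intersection through minimal links alone, so the process stops; that the raw mapping-cone invariants under minimal linkage admit an explicit recursion matching the action of a simple reflection on double cosets of $W(T_{c-1,d+1,t+1})$, so that boundedness of $(d,t)$ is equivalent to the Dynkin inequality $\tfrac{1}{c-1}+\tfrac{1}{d+1}+\tfrac{1}{t+1}>1$; and that the unavoidable non-uniqueness in how a given link is realized by specializing a generic link — the higher-grade shadow of the $\exp$-action of $\mbf{L}$ in Theorem~\ref{thm:parametrize} — can be parametrized well enough to produce an honest local lift of the whole chain. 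The representation-theoretic identification of $\Sigma$ with Weyl-group combinatorics, which in grade $3$ was handed to us by the geometry of $G/P$, must here be established by hand from the linkage recursion, and I expect that to be the bulk of the work.
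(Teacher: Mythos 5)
There is a genuine gap, and it sits exactly where you yourself place ``the bulk of the work.'' This theorem is not proved in the present paper at all --- it is quoted from the thesis, and the paper tells you what the real engine is: in finite type, \emph{unions of opposite Schubert varieties are linearly defined}, and the ADE inequality $\tfrac1{c-1}+\tfrac1{d+1}+\tfrac1{t+1}>1$ is used precisely as the hypothesis that makes $T_{c-1,d+1,t+1}$ a Dynkin diagram so that this geometric fact applies; that is what substitutes, for $c\geq 4$, for the higher-structure-map machinery, and it is what drives both the determination of $\sigma$ from a chosen chain of links and the construction of $R_\sigma\to R$. Your plan has no replacement for this input. Iterated Huneke--Ulrich-style generic linkage by itself only reproduces the Buchweitz--Herzog statement that each Herzog class admits a generic example --- a countable list, one per even linkage class --- and the identification of your index set with the finite double coset space $W_{P_{z_1}}\backslash W/W_{P_{x_1}}$ is asserted, not proved: for $c\geq 4$ there is no known sense (no $\Rgen$, no $w^{(i)}$, no Schubert cell attached to a resolution) in which a link acts like an interchange of arms or a simple reflection on any invariant of the resolution, so the ``Betti-number recursion matching the Weyl group'' has no mechanism behind it. Note also the internal warning sign: in your outline the inequality enters only to make $W$ finite, while the specialization statement is supposed to follow from universality of generic links alone; if that worked, it would yield the countable-list classification for arbitrary $(c,d,t)$, which the paper explicitly says is open (it would require the linear-definedness statement in Kac--Moody type).

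Two further steps are unsupported as stated. First, the claim that a licci ideal with $d(I)\leq d$, $t(R/I)\leq t$ can be brought to a complete intersection through \emph{minimal} links whose intermediate ideals never exceed deviation $d$ and type $t$ is a substantive assertion: in grade $3$ the analogous control is proved in this paper only via the nonvanishing of $w^{(3)}\otimes k$ and $w^{(2)}\otimes k$ (Theorem~\ref{thm:non-licci-locus}(3),(4)), and the mapping-cone bookkeeping of Theorem~\ref{thm:mapping-cone} with ``cancellation can only help'' does not give it --- intermediate links can a priori have larger type or deviation, and the minimal-linkage input you cite is a grade-$3$ statement here. Second, composing specializations of generic links along a reversed chain into a single local homomorphism $R_\sigma\to R$ needs, at every stage, that the specialized regular sequence stay of grade $c$ and that the localizations/ideal transforms in the generic-link tower be compatible with the specialization; this is plausible but is exactly the kind of coherence that, in grade $3$, is handled by Theorem~\ref{thm:parametrize} and its consequences, and for $c\geq 4$ it needs an argument. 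In short, the proposal is a reasonable sketch of a linkage-theoretic alternative, but the finiteness-and-combinatorics step and the bounded-invariants step are the theorem, and they are missing; the cited proof closes them with Schubert-variety geometry in finite type rather than with a linkage recursion.
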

	
	We now contrast this with what we have shown for $c=3$. First of all, there is assumption on the triple $(c,d,t)$. The ideals $I_\sigma$ are related to the representation theory of the Lie algebra associated to $T_{c-1,d+1,t+1}$, which is a Dynkin diagram (i.e. of type ADE) exactly when the inequality is satisfied. Accordingly, we call such $(c,d,t)$ ``ADE triples.''
	
	The proof of this theorem relies on the fact that, in the finite type setting, unions of opposite Schubert varieties are linearly defined. In this setting, the distinction between Schubert varieties and opposite Schubert varieties is superficial, as the positive and negative Borels are conjugate by the longest element of the Weyl group. In the Kac-Moody setting, this is no longer the case---so while it is still true that unions of Schubert varieties are linearly defined, this no longer implies the statement for unions of opposite Schubert varieties. This latter statement does not seem to be known in the representation theory literature. If it were proven, then the assumption on $(c,d,t)$ may be dropped (and we would again have a countable list of ideals $I_\sigma$ rather than a finite one).
	
	Another shortcoming is that the theorem does not assert uniqueness of $\sigma$, although we again conjecture it to be unique. In the proof of the theorem, given a licci ideal $I$, the determination of $\sigma$ and the construction of $R_\sigma \to R$ depend on a particular choice of links from $I$ to a complete intersection. A priori, a different choice of links may result in a specialization from a different $R_\sigma$. By contrast, for $c=3$, the uniqueness of $\sigma$ is guaranteed by the theory of higher structure maps---specifically, Lemma~\ref{lem:well-def-pt1}. To establish the uniqueness of $\sigma$ beyond $c=3$, it would suffice to find features preserved under deformation that distinguish $I_\sigma$ from $I_{\sigma'}$ for $\sigma \neq \sigma'$.
	
	The finite list of ideals $I_\sigma$ associated to ADE triples $(c,d,t)$ have all been implemented in Macaulay2 \cite{ADEPerfectIdeals}. As such, we can obtain their free resolutions as well. However, for some of the more complicated examples, Macaulay2 has difficulty computing the free resolutions explicitly. Furthermore, we do not have a uniform description of their differentials as we do for $c=3$ (c.f. \S\ref{sec:example-res}). In fact, we do not even have a way to determine the Betti numbers of $I_\sigma$ given $\sigma$. For $c=3$ this was a non-issue, as the deviation and type completely determine the Betti numbers.

	\printbibliography
\end{document}